\date{}
\newtheorem{Theorem}{Theorem}[section]
\newtheorem{Lemma}{Lemma}[section]
\newcommand\R{\mbox{\bf R}}
\newcommand\C{\mbox{\bf C}}
\newcommand\N{\mbox{\bf N}}
\newcommand\Z{\mbox{\bf Z}}
\newcommand\z{\mbox{\bf z}}
\newcommand\T{\mbox{\bf T}}
\newcommand\SR{\mbox{\scriptsize\bf R}}
\newcommand\ST{\mbox{\scriptsize\bf T}}
\newcommand{\definition}{{\lower .5ex
  \hbox{$\>\>\stackrel{\triangle}{=}\>\>$} }}
\newcommand\supp{\mathop{\rm supp}}
\begin{document}

\baselineskip=22pt
\thispagestyle{empty}
\mbox{}
\bigskip

\begin{center}
{\Large \bf Strichartz estimates for      orthonormal  functions and  probabilistic
 convergence of density functions of  compact operators on manifolds}\\[1ex]

{Wei Yan\footnote{Email:011133@htu.edu.cn}$^a$,
Jinqiao Duan\footnote{Email:Duan@gbu.edu.cn}$^{b}$,
 Jianhua Huang\footnote{Email: jhhuang@nudt.edu.cn}$^{c}$,
 Haoyuan Xu\footnote{Email: hyxu@hust.edu.cn}$^{d}$,
Meihua Yang\footnote{Email: yangmeih@hust.edu.cn}$^{d}$}\\[1ex]

{$^a$School of Mathematics and Information Science, Henan
Normal University,}\\
{Xinxiang, Henan 453007,   China}\\[1ex]
{$^b$ Department of Mathematics, Great Bay University, Dongguan, Guangdong 523000, China}

{$^c$College of Science, National University of Defense  Technology,}\\
{ Changsha, Hunan 410073,  China}\\[1ex]

{$^d$ School of Mathematics and Statistics, Huazhong University
 of Science and Technology, Wuhan, 430074, China}
\end{center}

\noindent{\bf Abstract.}In this paper, we establish
 some  Strichartz estimates  for   orthonormal  functions and their applications as well as
probabilistic convergence of density
 functions related to some compact operators on manifolds.
Firstly,  we  present the suitable bound of
$\int_{a\leq|s|\leq b}e^{isx}s^{-1+i\beta}ds(\beta \in \R,a\geq0,0<b\leq \infty),$
$\int_{a\leq s\leq b}e^{isx}s^{-1+i\beta}ds(\beta \in \R,\beta\neq0,a,b\in \R)$,
which extends the result of Page 204  of
 Vega (Restriction theorems and the Schr\"odinger multiplier
  on the torus.
 Partial differential equations
  with minimal smoothness and applications,  199-211,  1992.).
  Secondly, we prove that  $\left|\beta\int_{a}^{b}e^{isx}s^{-1+i\beta}ds\right|
\leq C(1+e^{-\pi \beta})(1+|\beta|)^{2}(\beta \in \R,a,b\in \R),$
  which extends Lemma 1 of Bez et al.
(Forum of Mathematics, Sigma, 9(2021), 1-52).
  Thirdly, we establish some Strichartz estimates for orthonormal
  functions
 and Schatten bound with space-time norms
related to  elliptic operator and non-elliptic operator  on $\R^{d},$
 which   extend the result of Theorems 8, 9 of
 Frank and Sabin (Amer. J. Math. 139(2017), 1649-1691.);
 we also establish some Strichartz estimates for orthonormal functions
 and Schatten bound with space-time norms on $\T^{d}$ related to
 non-elliptic operator.
 Fourthly, we  establish the Strichartz estimates for
  orthonormal  functions related to Boussinesq operator
 on $\R$.
 Fifthly,  we
 establish  the convergence
 result of some compact operators
 and    nonlinear
 part of the solution to
some operator equations in Schatten norms.
Finally, inspired by  the work of  Hadama  and   Yamamoto
 (Probabilistic Strichartz
estimates in Schatten classes and
 their applications to Hartree equation, arxiv:2311.02713v1.),
 for $\gamma_{0}\in \mathfrak{S}^{2}$ which is  the Hilbert-Schmidt class,
  we establish the probabilistic convergence
of density functions  of some compact operators on
 $\R^{d},\T^{d},\Theta=\left\{x\in \R^{3}:|x|<1\right\}$  with full
 randomization, which improves the result of Corollary 1.2 of
Bez et al. (Selecta Math.  26(2020),  24 pp) with $x\in \R$
in the probabilistic sense.

 \medskip

\noindent {\bf Keywords}:Strichartz estimates  for   orthonormal
  functions; Suitable bound of some complex integrals; Probabilistic
convergence of density functions  with full  randomization

\medskip
\noindent {\bf Corresponding Author:}Meihua Yang

\medskip
\noindent {\bf Email Address:}yangmeih@hust.edu.cn

\medskip
\noindent {\bf 2020-Mathematics Subject Classification}:
Primary-35Q41, 42B20; Secondary-47B10

\leftskip 0 true cm \rightskip 0 true cm

\newpage

\baselineskip=20pt

\bigskip
\bigskip

\tableofcontents

\section{Introduction}
\medskip

\setcounter{Theorem}{0} \setcounter{Lemma}{0}\setcounter{Definition}{0}\setcounter{Proposition}{1}

\setcounter{section}{1}

We begin by studying the infinite system of some dispersive
equation with Hartree-type nonlinearity in the following form:
\begin{eqnarray}
&&i\frac{du_{j}}{dt}=P(D)u_{j}+(w*\rho) u_{j},j\in \N^{+},x\in \R^{d},t\in I\subset \R,\label{1.01}\\
&&u_{j}(x,0)=f_{j}(x).\label{1.02}
\end{eqnarray}
Here $f_{j}$ forms an orthonormal family in $L^{2}(\R^{d})$, $w(x):\R^{d}\rightarrow \R$,
$\rho=\sum\limits_{j=1}^{+\infty}|u_{j}|^{2}$ and
$P(D)$ is defined as follows:
\begin{eqnarray}
P(D)f(x)=\int_{\SR^{d}}e^{ix\cdot\xi}P(\xi)\mathscr{F}_{x}(\xi)d\xi,D=
\frac{1}{i}(\partial_{x_{1}},\cdot\cdot\cdot,\partial_{x_{d}}),\label{1.03}
\end{eqnarray}
where $x\cdot\xi=\sum\limits_{j=1}^{d}x_{j}\xi_{j}$.
By using a direct computation, we derive the operator valued form
 of (\ref{1.01})
\begin{eqnarray}
&&i\frac{d \gamma(t)}{dt}=[P(D)+w*\rho_{\gamma},\gamma],\label{1.04}\\
&&\gamma(0)=\gamma_{0}.\label{1.05}
\end{eqnarray}
Here $\gamma(t)=\sum\limits_{j=1}^{\infty}|u_{j}\rangle \langle u_{j}|$ is
 bounded and self-adjoint operator
 on $L^{2}(\R^{d})$, $\rho_{\gamma}$
is the density function of $\gamma$.

When $w=0$  in (\ref{1.01}) and (\ref{1.04})
 and  $\gamma_{0}=
\sum\limits_{j=1}^{\infty}\lambda_{j}|f_{j}\rangle\langle f_{j}|$, we have
that the evolved operator
\begin{eqnarray}
\gamma(t):=e^{-itP(D)}\gamma_{0}e^{itP(D)}=
\sum\limits_{j=1}^{\infty}\lambda_{j}
|e^{-itP(D)}f_{j}\rangle\langle e^{-itP(D)}f_{j}|.\label{1.06}
\end{eqnarray}
Here we have used Dirac's notation  for the rank-one operator
  $|u\rangle\langle v|$
$f\longrightarrow\langle v,f\rangle u=u\int_{I}\overline{v}fdx$,
 where $I\subset \R^{d}.$
Because the $f_{j}(j\in \N^{+})$ form an orthonormal system,
then $\lambda_{j}$ are precisely the
eigenvalues of the operator $\gamma$.
By using a direct computation, we derive the kernel of $\gamma (t)$ as
 follows:
\begin{eqnarray}
K(x,y,t)=\sum\limits_{j=1}^{\infty}\lambda_{j} (e^{-itP(D)}f_{j})(x,t)
(\overline{e^{-itP(D)}f_{j}})(y,t).\label{1.07}
\end{eqnarray}
Thus, the density function of $\gamma(t)$ is $\rho_{\gamma(t)}=K(x,x,t)=
\sum\limits_{j=1}^{\infty}\lambda_{j} |e^{-itP(D)}f_{j}|^{2}$.
Thus, we have
\begin{eqnarray}
&&Trace(\gamma(t))=\int_{\SR^{d}}K(x,x,t)dx=
\sum\limits_{j=1}^{\infty}\lambda_{j}\int_{\SR^{d}}|e^{-itP(D)}f_{j}|^{2}dx\nonumber\\&&
=\sum\limits_{j=1}^{\infty}\lambda_{j}\|f_{j}\|_{L^{2}}^{2}=
\sum\limits_{j=1}^{\infty}\lambda_{j}.\label{1.08}
\end{eqnarray}
One  of our goal in  this paper is to establish the Strichartz estimates
 for density function of $\gamma(t)$:
\begin{eqnarray}
&&\left\|\sum\limits_{j=1}^{\infty}\lambda_{j}
|e^{-itP(D)}f_{j}(x,t)|^{2}\right\|_{L_{t\in I_{1}}^{p}L_{x\in I_{2}}^{q}}
=\left\|\rho_{\gamma(t)}\right\|_{L_{t\in I_{1}}^{p}L_{x\in I_{2}}^{q}}\nonumber\\
&&=\left[\int_{I_1}
\left(\int_{I_{2}}\left|\rho_{\gamma(t)}\right|^{q}dx\right)^{p/q}dt\right]^{\frac{1}{p}}\nonumber\\
&&
\leq C\left[\sum\limits_{j=1}^{\infty}
|\lambda_{j}|^{\alpha}\right]^{\frac{1}{\alpha}}\leq
C\left\|\gamma(t)\right\|_{\mathfrak{S}^{\alpha}}(1\leq \alpha<\infty)\label{1.09},
\end{eqnarray}
which is the duality form of
\begin{eqnarray*}
\left\|We^{-itP(D)}(e^{-itP(D)})^{*}\overline{W}\right\|_{\mathfrak{S}^{\alpha^{\prime}}}\leq C
\|W\|_{L_{t\in I_{1}}^{2p^{\prime}}L_{x\in I_{2}}^{2q^{\prime}}}^{2}.
\end{eqnarray*}
Here, $\lambda_{j}(j\in \N^{+})$ are eigenvalues of $\gamma(t)$
 and we use the fact that
\begin{eqnarray}
\left[Trace(|\gamma(t)|^{\alpha})\right]^{\frac{1}{\alpha}}=
\left[\sum\limits_{j=1}^{\infty}|\lambda_{j}|^{\alpha}\right]^{\frac{1}{\alpha}}
=\left\|\gamma(t)\right\|_{\mathfrak{S}^{\alpha}}(1\leq \alpha<\infty)\label{1.010},
\end{eqnarray}
which results from the spectral mapping Theorem of Page 227 in \cite{Y}.
Here $\mathfrak{S}^{\alpha}=\mathfrak{S}^{\alpha}(L^{2}(\R^{d})$ denotes
the Schatten space based on $L^{2}(\R^{d})$, more precisely,
$\mathfrak{S}^{\alpha}$ is the space of all compact operator $A$ on
$L^{2}(\R^{d})$ such that
$Trace|A|^{\alpha}<\infty.$
We define $\|A\|_{\mathfrak{S}^{\infty}}=\|A\|_{L^{2}\longrightarrow L^{2}}$.
$\mathfrak{S}^{2}$ is the Hilbert-Schmidt class and
the $\mathfrak{S}^{2}$ norm is given by $\|A\|_{\mathfrak{S}^{2}}=
\|K_{A}\|_{L^{2}(\SR^{d}\times \SR^{d})},$ where $K_{A}$ is the integral
kernel of $A.$
For the elementary properties of $\mathfrak{S}^{\alpha},$
we refer the readers to \cite{S}.
Obviously, (\ref{1.09}) is considered as the extended version of the
 following Strichartz estimates
\begin{eqnarray}
\left\|e^{-itP(D)}f\right\|_{L_{t\in I_{1}}^{2p}L_{x\in I_{2}}^{2q}}:=\left[\int_{I_1}
\left(\int_{I_{2}}\left|e^{-itP(D)}f\right|^{2q}dx\right)^{p/q}dt\right]^{\frac{1}{2p}}\leq C\label{1.011}
\end{eqnarray}
if $\|e^{-itP(D)}f\|_{L_{t\in I_{1}}^{2p}L_{x\in I_{2}}^{2q}}\leq C
\|f\|_{L^{2}}=C$ with $\|f\|_{L^{2}}=1$.

When $P(D)=-\Delta$  and $1\leq j \leq N$, (\ref{1.04})-(\ref{1.05}) reduces to
\begin{equation}
\begin{cases}
i\partial_{t}u_{1}=(-\bigtriangleup+w(x)\ast \rho)u_{1},&~\text{$ u_{1}|_{t=0}=f_{1}$},\\
&~\vdots\\
i\partial_{t}u_{N}=(-\bigtriangleup+w(x)\ast \rho)u_{N},&~\text{$ u_{N}|_{t=0}=f_{N}$}.
\end{cases}\label{1.012}
\end{equation}
The above system means that
in quantum  mechanics, the $N$ couple  of  nonlinear Hartree
 equations describes
 the dynamics of the system of  $N$ fermions interacting with
 a potential $w(x):\R^{d}\rightarrow \R$.
Here $f_{j}(1\leq j\leq N)$ is orthonormal in $L^{2}(\R^{d})$
 and $\rho$ is a density
 function defined by $\rho=\sum\limits_{j=1}^{N}|u_{j}(x,t)|^{2}.$
When $N\longrightarrow +\infty,$ we derive the operator valued
 equivalent form of
(\ref{1.012}) as follows:
\begin{eqnarray}
i\frac{d \gamma(t)}{dt}=[-\Delta+w*\rho_{\gamma},\gamma]\label{1.013}.
\end{eqnarray}
Here $\gamma(t)=\sum\limits_{j=1}^{+\infty}|u_{j}\rangle \langle u_{j}|$.
For the detail, we refer the readers to \cite{LS2015}.
Some authors
\cite{CHP2017,CHP2018,LS2015} studied the dynamics of
(\ref{1.013}) around the stationary solution. Recently,
by using the idea of
\cite{FLLS,FS2017} and the technique of  \cite{Vega1992},
Nakamura \cite{N} studied  the orthonormal
 Strichartz inequality on the  torus and applied the
   orthonormal
 Strichartz inequality on the  torus to show the local
  well-posedness
 of periodic Hartree equation
describing the infinitely many quantum particles
 interacting with
 the power type potential.

Extending functional inequalities related to  a
single function to
 systems of orthonormal functions can date back to
 Lieb-Thirring inequality \cite{LT} involving finite orthonormal
functions in $L^{2}(\R^{d})$.
 Frank et al. \cite{FLLS} extended the Strichartz inequality
\begin{eqnarray}
\|e^{it\Delta}f\|_{L_{t}^{p}L_{x}^{q}(\SR\times \SR^{d})}\leq C\|f\|_{L^{2}},
d\geq 1, p,q\geq2,\frac{2}{p}+\frac{d}{q}=\frac{d}{2},(d,p,q)\neq(2,\infty,2)\label{1.014}
\end{eqnarray}
to
\begin{eqnarray}
\left\|\sum\limits_{j=1}^{+\infty}\lambda_{j}
|e^{it\Delta}f_{j}|^{2}\right\|_{L_{t}^{p}L_{x}^{q}(\SR\times \SR^{d})}\leq C
\left(\sum\limits_{j=1}^{\infty}|\lambda_{j}|^{\alpha}\right)^{\frac{1}{\alpha}},
1\leq \alpha \leq \frac{2q}{q+1},q\leq 1+\frac{2}{d},\label{1.015}
\end{eqnarray}
where $\lambda_{j}$ are complex numbers and   $f_{j}(1\leq j\leq \infty)$
 is orthonormal
 in $L^{2}(\R^{d})$.  They also presented the
 necessity $\alpha\leq \frac{2q}{q+1}$ for  (\ref{1.015}). (\ref{1.014}) is found in \cite{C,KPV}.
(\ref{1.015}) was exploited in \cite{CHP2017,CHP2018,LS2014,LS2015} to
 investigate the
 well-posedness and scattering of
  the nonlinear evolution of quantum systems with an infinite number
 of particles.
By using the  duality principle related to Schatten norms,
Frank and  Sabin \cite{FS2017} gave the
 full range of Strichartz estimate (\ref{1.015}) except the endpoints.
Bez  et al. \cite{BHLNS} considered the case of orthonormal
 system $(f_{j})_{j}$ in the homogeneous
 Sobolev spaces $\dot{H}^{s}(s\in (0,\frac{d}{2}))$ and established
 the sharp value of $\alpha$
 as a function of $p,q,s$ except possibly endpoint in certain cases.
 They also investigated the frequency
   localised  estiamtes for  orthonormal system $(f_{j})_{j}$ in $L^{2}$.
By combining  the estimates for weighted oscillatory integral estimates
 and the idea of \cite{FS2017},
   Bez et al. \cite{BLN} established some new Strichartz estimates for
 orthonormal
 families of initial data.
By using these spectral cluster bounds  which  follow from
Schatten-type bounds on oscillatory integral
operators,  Frank and  Sabin \cite{FS} generalized the $L^p$ spectral
 cluster bounds of Sogge for the
Laplace-Beltrami operator on compact Riemannian manifolds
to systems of orthonormal functions and discussed  the optimality of these
new bounds. Recently,  Bez et al. \cite{BLN} studied the  maximal estimates for the
 Schr\"odinger equation with orthonormal initial data and
proved that
\begin{eqnarray}
\lim\limits_{t\longrightarrow0}\rho_{\gamma(t)}=\rho_{\gamma_{0}}.\label{1.016}
\end{eqnarray}
Here $\gamma(t)$  is the solution
to the following operator valued equation
\begin{eqnarray}
&&i\partial_{t}\gamma(t)=[-\partial_{x}^{2},\gamma(t)],\label{1.017}\\
&&\gamma(0)=\gamma_{0}\label{1.018}
\end{eqnarray}
and $\rho_{\gamma(t)}$ and $\rho_{\gamma_{0}}$ are the density functions of
 $\gamma(t),\gamma_{0}$, respectively.

Our motivation to write this paper is  sevenfold. Firstly,
in last line of page 205 of
 \cite{Vega1992}, Vega presented the following result
\begin{eqnarray}
\left|\int_{\epsilon<|s|<M}e^{isx}s^{-1+i\beta}ds\right|\leq Ce^{-x\beta}
(\beta \in \R,\epsilon>0,M>0),\label{1.019}
\end{eqnarray}
where $C$ is a constant independent of $x,\epsilon,M,\beta$. However,
Vega does not give the strict proof. In \cite{BHLNS,BLN,N,MS},
(\ref{1.019}) is applied to   establishing some Strichartz estimates for
     orthonormal  functions.
Thus, it is natural to ask whether it is possible to give a suitable
bound of  the following complex integrals
\begin{eqnarray}
\int_{\epsilon<|s|<M}e^{isx}s^{-1+i\beta}ds(\beta \in \R,\epsilon >0,M>0)\label{1.020}.
\end{eqnarray}
Secondly,
by using the fact that
\begin{eqnarray}
 G_{z}(\tau,\xi)|_{z=-1}=\frac{1}{\Gamma(z+1)}(\tau-|\xi|^{2})^{z}|_{z=-1}
=\delta(\tau-|\xi|^{2}),
\forall (\tau,\xi)\in \R\times \R^{d}\label{1.021}
 \end{eqnarray}
 and
 \begin{eqnarray}
 \int_{\SR^{d+1}}e^{i x\cdot\xi+i t\tau}\delta(\tau-|\xi|^{2})d\tau d\xi
=\int_{\SR^{d}}e^{i x\cdot\xi+i t|\xi|^{2}}d\xi\label{1.022}
 \end{eqnarray}
as well  as  the complex interpolation in Schatten spaces which is just
 Theorem 2.9 of \cite{S},  in Theorem 9  of
\cite{FS2017}, Frank and Sabin  established the following Schatten bound
\begin{eqnarray}
\left\|W_{1}T_{S}W_{2}\right\|_{\mathfrak{S}^{q}}\leq C\prod\limits_{j=1}^{2}
\|W_{j}\|_{L_{t}^{p}L_{x}^{q}(\SR\times \SR^{d})}\label{1.023}.
\end{eqnarray}
Here $\frac{2}{p}+\frac{d}{q}=1,q>d+1$, $C$ is  independent of $W_{1},W_{2}$,
$S:=\left\{(\tau,\xi)\in \R\times \R^{d},\tau=|\xi|^{2}\right\}.$
(\ref{1.023}) is just Theorem 9 of \cite{FS2017}.
By combining duality principle which is Lemma 3 \cite{FS2017} with (\ref{1.023}),
 Frank and Sabin  established
\begin{eqnarray}
\hspace{-0.5cm}\left\|\sum\limits_{j=1}^{+\infty}\lambda_{j}
|e^{it\Delta}f_{j}|^{2}\right\|_{L_{t}^{p}L_{x}^{q}(\SR\times \SR^{d})}\leq C
\left(\sum\limits_{j=1}^{\infty}|\lambda_{j}|^{\frac{2q}{q+1}}\right)^{\frac{q+1}{2q}},
1\leq q<1+\frac{2}{d-1},\frac{2}{p}+\frac{d}{q}=d.\label{1.024}
\end{eqnarray}
(\ref{1.024}) is just Theorem 8 of \cite{FS2017}.
Thus, the method of Frank and Sabin \cite{FS2017}  is only  applicable for
$(x,t)\in \R^{d}\times\R$ since  (\ref{1.021})-(\ref{1.022})   play the crucial role
 in establishing (\ref{1.023}).
By using  (\ref{1.019}),  the  duality principle which is Lemma 3 \cite{FS2017}
and   the technique of \cite{Vega1992} instead of  the method of   Frank and Sabin
\cite{FS2017}, for $(x,t)\in \mathbb{T}^{d+1},$    Nakamura \cite{N} established
\begin{eqnarray}
\left\|\sum\limits_{j=1}^{+\infty}\lambda_{j}|e^{it\Delta}
P_{\leq N}f_{j}|^{2}\right\|_{L_{t}^{p}L_{x}^{q}(\mathbb{T}^{d+1})}\leq CN^{\frac{1}{p}}
\left(\sum\limits_{j=1}^{\infty}|\lambda_{j}|^{\alpha}\right)^{\frac{1}{\alpha}},
(\frac{1}{q},\frac{1}{p})\in (A,B],d\geq 1,\label{1.025}
\end{eqnarray}
where $\alpha\leq\frac{2q}{q+1},A=(\frac{d-1}{d+1},\frac{d}{d+1}),B=(1,0)$
and $(A,B]$ represents the open line segment combining $A,B$.
Thus, it is natural to ask whether it is possible to present an
 alternative proof of (\ref{1.024}) with the aid of the technique of
  \cite{N,Vega1992} and other techniques.
Thirdly,  in Lemma 1 of \cite{BLN2021}, Bez et al. proved that
\begin{eqnarray*}
\left|\beta\int_{\epsilon}^{R}e^{\pm ix\rho }\rho^{-1+i\beta}d\rho\right|
\leq C(1+|\beta|)^{2}, \epsilon>0,R>0.
\end{eqnarray*}
Here $C$ is independent of $\beta,\epsilon,R,x$.
Thus, it is natural to ask whether it is possible to prove that
$\left|\beta\int_{a}^{b}e^{isx}s^{-1+i\beta}ds\right|
\leq C(1+|\beta|)^{2}(\beta \in \R,a,b\in \R),$
  where $C$ is independent of $\beta,a, b$.
Fourthly,
 in this paper,   by using   Van der Corput's Lemma which is just
  Lemma 2.11 in this paper,
  suitable variable
substitution  and  the    frequency  decomposition slightly different
 from the technique of Liu \cite{L} as well as TT* method,
we  also prove that
\begin{align}
\left\|e^{it\mp\partial_{x}\sqrt{1-\partial_{x}^{2}}}u_{0}\right\|_{L_{t}^{p}L_{x}^{q}}
\leq C\|u_{0}\|_{L^{2}}\label{1.026}
\end{align}
is valid. It is natural to ask whether it is possible to establish
\begin{eqnarray}
\hspace{-0.5cm}\left\|\sum\limits_{j=1}^{+\infty}\lambda_{j}
|e^{it\mp\partial_{x}\sqrt{1-\partial_{x}^{2}}}f_{j}|^{2}\right\|_{L_{t}^{p}L_{x}^{q}(\SR\times \SR)}\leq C
\left(\sum\limits_{j=1}^{\infty}|\lambda_{j}|^{\frac{2q}{q+1}}\right)^{\frac{q+1}{2q}},
1\leq q \leq \infty,\frac{2}{p}+\frac{1}{q}=1\label{1.027}
\end{eqnarray}
and
\begin{eqnarray}
\left\|\sum\limits_{j=1}^{+\infty}\lambda_{j}|P_{\leq N}e^{it\mp\partial_{x}
\sqrt{1-\partial_{x}^{2}}}f_{j}|^{2}\right\|_{L_{t}^{p}L_{x}^{q}(\mathbf{T}^{2})}
\leq CN^{\frac{1}{p}}
\left(\sum\limits_{j=1}^{\infty}|\lambda_{j}|^{\alpha}\right)^{\frac{1}{\alpha}},
(\frac{1}{q},\frac{1}{p})\in (A,B],\label{1.028}
\end{eqnarray}
where $\alpha\leq\frac{2q}{q+1},A=(0,\frac{1}{2}),B=(1,0)$ and $(A,B]$
represents the open line segment combining $A,B$ and $\T=[0,2\pi)$.
Fifthly, from Lemmas 2.9, 2.10, Theorems 4.1, 4.3, 4.9 established in this paper, we know that
\begin{eqnarray*}
&&\lim\limits_{t\longrightarrow0}\left\|W_{1}\right\|_{L_{t}^{2p^{\prime}}
L_{x}^{2q^{\prime}}([0,t]\times\mathbf{R}^{d})}=0,\\
&&\left\|W_{1}AA^{*}W_{2}\right\|_{\mathfrak{S}^{2q^{\prime}}}\leq
C\left\|W_{1}\right\|_{L_{t}^{2p^{\prime}}L_{x}^{2q^{\prime}}([0,t]\times\mathbf{R}^{d})}
\left\|W_{2}\right\|_{L_{t}^{2p^{\prime}}L_{x}^{2q^{\prime}}([0,t]\times\mathbf{R}^{d})}.
\end{eqnarray*}
Here $A=U(t),U_{\pm}(t),U_{1}(t).$
From Theorem 14 of \cite{FS2017}, we have that
 \begin{eqnarray*}
\left\|\gamma(t)-e^{it\Delta}\gamma_{0}e^{-it\Delta}
\right\|_{C_{t}^{0}([0,\>T],\>\mathfrak{S}^{\frac{2q}{q+1}})}\leq 8T^{1/p^{\prime}}
\|w\|_{L_{x}^{q^{\prime}}}{\rm max}(1,C_{stri}^{2})R^{2}.
\end{eqnarray*}
It is natural to ask whether it is possible to prove that
\begin{eqnarray*}
\lim\limits_{t\longrightarrow0}\left\|W_{1}AA^{*}(t)W_{2}\right\|_{\mathfrak{S}^{2q^{\prime}}}=0,A=U(t),U_{\pm}(t),U_{1}(t)
\end{eqnarray*}
and
\begin{eqnarray*}
\lim\limits_{T\longrightarrow0}\left\|\gamma(t)-e^{it\Delta}\gamma_{0}e^{-it\Delta}
\right\|_{C_{t}^{0}([0,\>T],\>\mathfrak{S}^{\frac{2q}{q+1}})}=0.
\end{eqnarray*}
Sixthly, for $\gamma_{0}\in \mathfrak{S}^{\beta}(\beta<2)$,
Bez et al. \cite{BLN}
\begin{eqnarray}
\lim\limits_{t\longrightarrow0}\rho_{\gamma(t)}(x)=\lim\limits_{t\longrightarrow0}
\sum\limits_{j=1}^{\infty}\lambda_{j}|e^{it\partial_{x}^{2}}f_{j}|^{2}
=\rho_{\gamma_{0}}(x)=\sum\limits_{j=1}^{\infty}\lambda_{j}|f_{j}|^{2},\qquad  a.e. x\in \R,\label{1.029}
\end{eqnarray}
where $\rho_{\gamma(t)}$ is the density function of
 $\gamma(t)$ and
$\gamma(t)=\sum\limits_{j=1}^{+\infty}
\lambda_{j}|e^{it\partial_{x}^{2}}f_{j}\rangle\langle e^{it\partial_{x}^{2}}f_{j}|$
 is the solution to
the operator-valued equation
to
$\left\{
        \begin{array}{ll}
         i\frac{d\gamma(t)}{dt}=[-\partial_{x}^{2},\gamma(t)] \\
          \gamma(0)=\sum\limits_{j=1}^{\infty}\lambda_{j}|f_{j}\rangle\langle f_{j}|.
        \end{array}
      \right.$
Here $(f_{j})_{j\in \mathbf{N}^{+}}$ are orthonormal functions in $\dot{H}^{1/4}(\R)$.
Obviously,
(\ref{1.029}) is equivalent to the following statement: $\forall \alpha>0,$
\begin{eqnarray}
&&mes\left(\left\{x\in \R:\lim\limits_{t\longrightarrow0}
\left|\rho_{\gamma(t)}(x)-\rho_{\gamma_{0}}(x)\right|>\alpha\right\}\right)\nonumber\\
&&=mes\left(\left\{x\in \R:\lim\limits_{t\longrightarrow0}
\left|\sum\limits_{j=1}^{\infty}\lambda_{j}|e^{it\partial_{x}^{2}}f_{j}|^{2}-
\sum\limits_{j=1}^{\infty}\lambda_{j}|f_{j}|^{2}\right|>\alpha\right\}\right)=0.\label{1.030}
\end{eqnarray}
In fact, when $\lambda_{1}=1,\lambda_{j}=0(2\leq j\leq +\infty)$, (\ref{1.029})
 reduces to the classical pointwise convergence
problem of one dimensional linear Schr\"odinger equation
\begin{eqnarray}
\lim\limits_{t\longrightarrow0}
e^{it\partial_{x}^{2}}f_{1}=
f_{1},\qquad  a.e. x\in \R,\label{1.031}
\end{eqnarray}
which was initially studied by Carleson \cite{Carleson} and
 then studied by some authors \cite{Bourgain1995,Bourgain2016,CLV,Cowling,Du,
  DG,DGL, DGZ,DK,DZ,Lee,LR2015,LR2017,MV2008,Shao,Vega1988,Zhang} in higher dimension and
   also studied by Compaan et al. \cite{CLS} with random data on  $\R^{d},$
  $\T^{d}.$ Recently, Wang et al. \cite{WYY} and Yan et al. \cite{YZY}
   studied (\ref{1.031}) in higher dimension on
$\R^{d}$ and
  $\T^{d}$ and $\Theta=\left\{x\in \R^{3}
  |\sum\limits_{j=1}^{3}x_{j}^{2}<1\right\}$ with rough data and random data. Moreover,
the result of \cite{WYY} and  \cite{YZY} improves the result of \cite{CLS}
 on  $\R^{d},$
  $\T^{d}$ and the result of \cite{WZ} on  $\Theta=
  \left\{x\in \R^{3}|\sum\limits_{j=1}^{3}x_{j}^{2}<1\right\}$.
Finally, when  $(f_{j})_{j\in \mathbf{N}^{+}}$ are orthonormal functions
 in $L^{2}(\R^{d})$ or $L^{2}(\T^{d})$ or $L^{2}(\Theta)$,  for the compact operator
  $\gamma_{0}=\sum\limits_{j=1}^{+\infty}
\lambda_{j}|f_{j}\rangle\langle f_{j}|\in \mathfrak{S}^{2},$
it is natural to ask whether it is possible to perform suitable
 randomization
 on the compact operator $\gamma_{0}$
 which is defined as $\gamma_{0}^{\omega,\>\widetilde{\omega}}$
  (to be determined) such that
  for $\alpha>0$ (to be determined)
\begin{eqnarray}
\lim\limits_{t\longrightarrow0}(\mathbb{P}\times\widetilde{\mathbb P})
\left(\left\{(\omega, \widetilde{\omega})\in (\Omega\times \widetilde{\Omega})|
|\rho_{e^{it\Delta}\gamma_{0}^{\omega,\>\widetilde{\omega}}e^{-it\Delta}}-
\rho_{\gamma_{0}^{\omega,\>\widetilde{\omega}}}|>\alpha\right\}\right)=0\label{1.032}
\end{eqnarray}
with $x\in \R^{d}$ or $\T^{d}$ and
\begin{eqnarray*}
&&\hspace{-1cm}\lim\limits_{t\longrightarrow0}(\mathbb{P}\times\widetilde{\mathbb P})
\left(\left\{(\omega_{1}, \widetilde{\omega})\in (\Omega_{1}\times \widetilde{\Omega})|
|\rho_{\gamma_{0}^{\omega_{1},\>\widetilde{\omega}}}-
\rho_{e^{it\Delta}\gamma_{0}^{\omega_{1},\>\widetilde{\omega}}e^{-it\Delta}}|>\alpha\right\}\right)=0
\end{eqnarray*}
with  $x\in \Theta=\left\{x\in \R^{3}:|x|<1\right\}$.

The main content of our paper is eightfold.
Firstly, by using the  Cauchy's principal value,
the Fourier transformation of generalized functions,  Gamma function
 of Weierstrass definition,
Euler product formula as well as the Dirichlet test,  we  present
 the suitable bound of
$\int_{a\leq|s|\leq b}e^{isx}s^{-1+i\beta}ds(\beta \in \R,a\geq0,b>0),$
$\int_{a\leq s\leq b}e^{isx}s^{-1+i\beta}ds(\beta \in \R,\beta\neq0,a,b\in \R)$
and   $\int_{0\leq|s|< +\infty}e^{isx}s^{-1+i\beta}ds(\beta \in \R)$,
which extends the result of Page 204  of
 Vega  \cite{Vega1992}, for the details, we refer the readers to
  Lemmas 3.1-3.17, 3.20-3.21 and Remark 4 in this paper. In fact,
 in page 204 of \cite{Vega1992}, Vega  studied the complex integral
\begin{eqnarray*}
\lim\limits_{\epsilon\longrightarrow0,\>M\longrightarrow+\infty}
\int_{\epsilon\leq |\rho|\leq M}e^{-it\rho}
\rho^{-1+ib}d\rho=P.V.\int_{\SR}e^{-it\rho}\rho^{-1+ib}d\rho
\end{eqnarray*}
 and claim the similar conclusion,
 however, he did not give the strict proof.
The conclusion of  \cite{Vega1992} has been applied to \cite{BHLNS,BLN,MS,N}.
In  Lemma 3.8, we prove that
\begin{eqnarray*}
&&\left|P.V.\int_{\SR}e^{-it\rho}\rho^{-1+ib}d\rho\right|=
\left|\int_{\SR}e^{-it\rho}\rho^{-1+ib}d\rho\right|\nonumber\\&&\leq
 C\left[1+\left(\frac{1-e^{-2\pi b}}{b}\right)^{1/2}\right]\nonumber\\&&\leq C(1+e^{-\pi b})
 \left(\sqrt{|b|}+\frac{1}{\sqrt{|b|}}\right)^{2}
\end{eqnarray*}
and $|P.V.\int_{\SR}e^{-it\rho}\rho^{-1+ib}d\rho|\leq \pi(b=0)$.
 In particular, in Remark 4, we prove that
\begin{eqnarray*}
&&\left|\int_{0}^{\infty}e^{-it\rho}\rho^{-1+bi}d\rho\right|\leq
C\left(\frac{1}{|b|}+1\right)(1+e^{\pi b})(b\neq0),\\&&
\left|\int_{-\infty}^{0}e^{-it\rho}\rho^{-1+bi}d\rho\right|\leq
C\left(\frac{1}{|b|}+1\right)(1+e^{-\pi b})(b\neq0).
\end{eqnarray*}
Secondly, by using Lemma 1 of \cite{BLN2021}  and  Lemma 3.18 in
    this paper as well as variable substitution,
  we prove that  $\left|\beta\int_{a}^{b}e^{isx}s^{-1+i\beta}ds\right|
\leq C(1+e^{-\pi \beta})(1+|\beta|)^{2}(\beta \in \R,a,b\in \R),$
  where $C$ is independent of $\beta,a, b$, in particular, when $a>0,b>0$, we have that
  $\left|\beta\int_{a}^{b}e^{isx}s^{-1+i\beta}ds\right|
\leq C(1+|\beta|)^{2}$.
Thus, our result extends Lemma 1 of Bez et al. of \cite{BLN2021}, for the details,
 we refer the readers to Lemma 3.19 in this paper.
 Thirdly, by using duality principle in the mixed Lebesgue spaces
  which  is just Lemma 2.10 and   Lemmas 3.8-3.10, 3.12-3.17, 3.20-3.21
 and Remark 4,
  we establish some Strichartz estimates for orthonormal functions
 and Schatten bound with space-time norms
related to  elliptic operator and non-elliptic operator  on $\R^{d},\T^{d}$, for the details,
 we refer the readers to Theorems 4.1-4.7 established in this paper
 which   extend the result of Theorems 8, 9 of
 \cite{FS2017} and Proposition 3.3 and Theorem 1.5 of \cite{N},
 respectively, notice that the method in proving Theorems 4.1-4.7
 is different from Theorems 8, 9 of
 \cite{FS2017}.
Fourthly, we  establish the Strichartz estimates  for
  orthonormal  functions related to Boussinesq operator
 on $\R$ with the
  aid of Lemma 3.20 in this paper, for the details,
 we refer the readers to Theorems 4.8-4.11.
Fifthly, by using the continuity of the mixed Lebesgue spaces
 which is just Lemma 2.9 in this paper
and duality principle in the mixed Lebesgue spaces which is just
 Lemma 2.10 in this paper as well as Theorems 4.1, 4.3, 4.8, we prove that
\begin{eqnarray*}
\lim\limits_{t\longrightarrow0}
\left\|W_{1}AA^{*}(t)W_{2}\right\|_{\mathfrak{S}^{2q^{\prime}}}=0,
\end{eqnarray*}
where $A=U(t),$ or $U_{\pm}(t)$,  $U_{1}(t)$
 are defined in Section 4, for the details, we refer the
 readers to Theorems 5.1-5.3.
By using Theorem 14 of \cite{FS2017},  we prove that
\begin{eqnarray*}
\lim\limits_{T\longrightarrow0}\left\|\gamma(t)-e^{it\Delta}\gamma_{0}e^{-it\Delta}
\right\|_{C_{t}^{0}([0,T],\mathfrak{S}^{\frac{2q}{q+1}})}=0,
\end{eqnarray*}
for the details, we refer the readers to Theorem 5.4.
Sixthly, combining   the idea of
 \cite{HY} with the idea of \cite{YZY,YZDY},
  we establish the probabilistic convergence
of density functions with full
 randomization  related to  some compact operators on $\R^{d}$.
 More precisely,
for any $(\omega,\widetilde{\omega})\in\Omega \times \widetilde{\Omega}$,
we define the full randomization of some compact operators
 $\gamma_{0}=\sum\limits_{j=1}^{+\infty}
 \lambda_{j}|f_{j}\rangle\langle f_{j}|\in \mathfrak{S}^{2}$
by
$\gamma_{0}^{\omega,\>\widetilde{\omega}}:=
\sum\limits_{j=1}^{\infty}\lambda_{j}g^{(2)}_{j}
(\widetilde{\omega})|f_{j}^{\omega}
\rangle\langle f_{j}^{\omega}|.$
Here, for the definition of $f_{j}^{\omega}$, we refer the readers to (\ref{6.05});
for the definition of $\gamma_{0}^{\omega,\>\widetilde{\omega}}$,
we refer the readers to (\ref{6.07}).
 Obviously,
\begin{eqnarray*}
\rho_{\gamma_{0}^{\omega,\>\widetilde{\omega}}}=
\sum_{j=1}^{\infty}\lambda_{j}g^{(2)}_{j}(\widetilde{\omega})|f_{j}^{\omega}|^{2},
\rho_{e^{it\Delta}\gamma_{0}^{\omega,\>\widetilde{\omega}}e^{-it\Delta}}
=\sum_{j=1}^{\infty}\lambda_{j}g^{(2)}_{j}(\widetilde{\omega})|e^{it\Delta}f_{j}^{\omega}|^{2}.
\end{eqnarray*}
Here, $\rho_{\gamma_{0}^{\omega,\>\widetilde{\omega}}}, \rho_{e^{it\Delta}
   \gamma_{0}^{\omega,\>\widetilde{\omega}}e^{-it\Delta}}$ denote the
 density of $\gamma_{0}^{\omega,\>\widetilde{\omega}}$ and $e^{it\Delta}
   \gamma_{0}^{\omega,\>\widetilde{\omega}}e^{-it\Delta}$, respectively.
\noindent For
   $F(t,\omega,\>\widetilde{\omega})=\rho_{e^{it\Delta}
   \gamma_{0}^{\omega,\>\widetilde{\omega}}e^{-it\Delta}}-
\rho_{\gamma_{0}^{\omega,\>\widetilde{\omega}}}$,   we prove
\begin{eqnarray}
&&\hspace{-1cm}\lim\limits_{t\longrightarrow0}(\mathbb{P}
\times\widetilde{\mathbb P})
\left(\left\{(\omega, \widetilde{\omega})\in
(\Omega\times \widetilde{\Omega})|
|F(t,\omega,\>\widetilde{\omega})|>Ce
\left[\left\|\gamma_{0}\right\|_{\mathfrak{S}^{2}}+1\right]
\epsilon^{1/2}\left(\epsilon ln
\frac{1}{\epsilon}\right)^{d+\frac{1}{2}}\right\}\right)
\nonumber\\&&=0\label{1.033}
\end{eqnarray}
with  $x\in \R^{d}$. For the detail of proof of (\ref{1.033}),
 we refer the readers to Theorem 6.1.
Here $(f_{j})_{j\in \mathbf{N}^{+}}$
 are orthonormal functions in $L^{2}(\R^{d})$. Thus,
we improve the result of Corollary 1.2 of
 \cite{BLN2021} in the probabilistic sense.
Seventhly, combining the idea of \cite{HY,CLS} with the idea of \cite{WYY},
 we establish the probabilistic convergence
of density functions with full
 randomization  related to some compact operators on $\T^{d}$.
More precisely,
for any $(\omega,\widetilde{\omega})\in\Omega \times \widetilde{\Omega}$,
we define the full randomization of some compact operators
 $\gamma_{0}=\sum\limits_{j=1}^{+\infty}\lambda_{j}|f_{j}\rangle\langle f_{j}|\in \mathfrak{S}^{2}$
by
$\gamma_{0}^{\omega,\>\widetilde{\omega}}:=
\sum\limits_{j=1}^{\infty}\lambda_{j}g^{(2)}_{j}(\widetilde{\omega})|f_{j}^{\omega}
\rangle\langle f_{j}^{\omega}|.$
Here, for the definition of $f_{j}^{\omega}$, we refer the readers to (\ref{7.01});
for the definition of $\gamma_{0}^{\omega,\>\widetilde{\omega}}$,
we refer the readers to (\ref{7.03}).
 Obviously,
\begin{eqnarray*}
\rho_{\gamma_{0}^{\omega,\>\widetilde{\omega}}}=
\sum_{j=1}^{\infty}\lambda_{j}g^{(2)}_{j}(\widetilde{\omega})|f_{j}^{\omega}|^{2},
\rho_{e^{it\Delta}\gamma_{0}^{\omega,\>\widetilde{\omega}}e^{-it\Delta}}
=\sum_{j=1}^{\infty}\lambda_{j}g^{(2)}_{j}(\widetilde{\omega})|e^{it\Delta}f_{j}^{\omega}|^{2}.
\end{eqnarray*}
Here, $\rho_{\gamma_{0}^{\omega,\>\widetilde{\omega}}}, \rho_{e^{it\Delta}
   \gamma_{0}^{\omega,\>\widetilde{\omega}}e^{-it\Delta}}$ denote the
 density function of $\gamma_{0}^{\omega,\>\widetilde{\omega}}$ and $e^{it\Delta}
   \gamma_{0}^{\omega,\>\widetilde{\omega}}e^{-it\Delta}$, respectively.
\noindent For
   $F(t,\omega,\>\widetilde{\omega})=\rho_{e^{it\Delta}
   \gamma_{0}^{\omega,\>\widetilde{\omega}}e^{-it\Delta}}-
\rho_{\gamma_{0}^{\omega,\>\widetilde{\omega}}}$,   we prove
\begin{eqnarray}
&&\hspace{-1cm}\lim\limits_{t\longrightarrow0}(\mathbb{P}
\times\widetilde{\mathbb P})
\left(\left\{(\omega, \widetilde{\omega})\in
(\Omega\times \widetilde{\Omega})|
|F(t,\omega,\>\widetilde{\omega})|>Ce
\left[\left\|\gamma_{0}\right\|_{\mathfrak{S}^{2}}+1\right]
\epsilon^{1/2}\left(\epsilon ln
\frac{1}{\epsilon}\right)^{d+\frac{1}{2}}\right\}\right)
\nonumber\\&&=0\label{1.034}
\end{eqnarray}
with  $x\in \T^{d}$. For the detail of proof of (\ref{1.034}),
we refer the readers to Theorem 7.1.
Finally, combining the idea of \cite{HY} with the idea of \cite{WYY},
we establish the probabilistic convergence
of density functions with full
 randomization  related to some compact operators on $\Theta=\left\{x\in \R^{3}:|x|<1\right\}$.
For any $(\omega_{1},\widetilde{\omega})\in\Omega_{1} \times \widetilde{\Omega}$,
we define the full randomization of
 $\gamma_{0}=\sum\limits_{j=1}^{+\infty}\lambda_{j}|f_{j}\rangle\langle f_{j}|$
by
$\gamma_{0}^{\omega_{1},\>\widetilde{\omega}}:=
\sum\limits_{j=1}^{\infty}\lambda_{j}g^{(2)}_{j}(\widetilde{\omega})|f_{j}^{\omega_{1}}
\rangle\langle f_{j}^{\omega_{1}}|.$ Here, for the definition of $f_{j}^{\omega_{1}}$, we refer the readers to (\ref{8.01});
for the definition of $\gamma_{0}^{\omega_{1},\>\widetilde{\omega}}$,
we refer the readers to (\ref{8.03}).
Obviously,
\begin{eqnarray*}
\rho_{\gamma_{0}^{\omega_{1},\>\widetilde{\omega}}}=
\sum_{j=1}^{\infty}\lambda_{j}g^{(2)}_{j}(\widetilde{\omega})|f_{j}^{\omega_{1}}|^{2},
\rho_{e^{it\Delta}\gamma_{0}^{\omega_{1},\>\widetilde{\omega}}e^{-it\Delta}}
=\sum_{j=1}^{\infty}\lambda_{j}g^{(2)}_{j}(\widetilde{\omega})|e^{it\Delta}f_{j}^{\omega_{1}}|^{2}.
\end{eqnarray*}
Here, $\rho_{\gamma_{0}^{\omega_{1},\>\widetilde{\omega}}}, \rho_{e^{it\Delta}
   \gamma_{0}^{\omega_{1},\>\widetilde{\omega}}e^{-it\Delta}}$ denote the
 density function of $\gamma_{0}^{\omega_{1},\>\widetilde{\omega}}, e^{it\Delta}
   \gamma_{0}^{\omega_{1},\>\widetilde{\omega}}e^{-it\Delta}$, respectively.
For $F(t,\omega_{1},\widetilde{\omega})=\rho_{\gamma_{0}^{\omega_{1},\>\widetilde{\omega}}}-
\rho_{e^{it\Delta}\gamma_{0}^{\omega_{1},\>\widetilde{\omega}}e^{-it\Delta}},$ we have
\begin{eqnarray}
&&\hspace{-1cm}\lim\limits_{t\longrightarrow0}(\mathbb{P}\times\widetilde{\mathbb P})
\left(\left\{(\omega_{1}, \widetilde{\omega})\in (\Omega_{1}\times \widetilde{\Omega})|
|F(t,\omega_{1},\widetilde{\omega})|>Ce
\left[\left\|\gamma_{0}\right\|_{\mathfrak{S}^{2}}+1\right]\epsilon^{1/2}\left(\epsilon ln
\frac{1}{\epsilon}\right)^{3/2}\right\}\right)\nonumber\\&&=0\label{1.035}
\end{eqnarray}
with  $x\in \Theta$.
Here $(f_{j})_{j\in \mathbf{N}^{+}}$ are orthonormal functions in $L^{2}(\Theta)$.
For the detail of proof of (\ref{1.035}), we refer the readers to Theorem 8.1.

The remainder of this paper is organized as follows:

$\bullet$ In Section 2, we present some preliminaries which contain some
properties of the generalized functions,
some integrals,   some inequalities, the
 continuity in the mixed Lebesgue spaces,
the duality principle in the mixed Lebesgue spaces and
Van der Corput's Lemma.

$\bullet$ In Section 3, we present the bound of some complex integrals, some
 Strichartz estimates related to Boussinesq operator. More precisely,
 by  using   Cauchy's principal value,
the Fourier transformation of generalized functions,  Gamma function
 of Weierstrass definition,
Euler product formula as well as the Dirichlet test, for $b\in \R$,
  $a\geq0,c\geq0,d_{0}>0$,
we prove that the following four types of complex integrals
\begin{eqnarray*}
&&P.V.\int_{\SR}e^{-it\rho}\rho^{-1+ib}d\rho,P.V.\int_{|\rho|\geq a}
e^{-it\rho}\rho^{-1+ib}d\rho,\nonumber\\
&&
P.V.\int_{|\rho|\leq a}e^{-it\rho}\rho^{-1+ib}d\rho, P.V.
\int_{c\leq |\rho|\leq d_{0}}e^{-it\rho}\rho^{-1+ib}d\rho
\end{eqnarray*}
 converge and present a suitable bound. As a byproduct, for $b\neq 0$,
  $a_{j}\in \R(j=1,2,3,4)$ and $\epsilon>0,M>0$, we prove that
 \begin{eqnarray*}
&&\int_{\SR}e^{-it\rho}\rho^{-1+ib}d\rho,\int_{a_{1}}^{+\infty}e^{-it\rho}\rho^{-1+ib}d\rho,
\int_{-\infty}^{a_{2}}e^{-it\rho}\rho^{-1+ib}d\rho,\nonumber\\&&
\int_{a_{3}\leq \rho\leq a_{4}}e^{-it\rho}\rho^{-1+ib}d\rho,
\int_{\epsilon\leq |\rho|\leq M}e^{-it\rho}\rho^{-1+ib}d\rho
\end{eqnarray*}
converge and present a suitable bound. These results generalize the result
 of the last line of page 205 of
 \cite{Vega1992}.
For $b,c,d_{0}\in \R,$    we also prove that
\begin{eqnarray*}
\left|b\int_{c}^{d_{0}}e^{-it\rho}\rho^{-1+ib}d\rho\right|\leq C(1+e^{-\pi b})(1+|b|)^{2}.
\end{eqnarray*}
This extends Lemma 1 of \cite{BLN2021}.
By using  Van der Corput's  Lemma which is just Lemma 2.11,  we also prove that
\begin{eqnarray*}
&&\left|\int_{\mathbb{R}}e^{ix\xi\pm it\xi\sqrt{1+\xi^{2}}}d\xi\right|\leq Ct^{-\frac{1}{2}},0<t\leq1,\\
&&\left|\int_{\mathbb{R}}e^{ix\xi\pm it\xi\sqrt{1+\xi^{2}}}d\xi\right|\leq Ct^{-\frac{1}{3}},t\geq1.
\end{eqnarray*}
which are Lemmas 3.25, 3.28, respectively.

$\bullet$ In Section 4, inspired by \cite{Vega1992,N, S}, by using
Lemmas 3.1-3.21, 3.25, 3.28  established in this paper,
we present the proof of some Strichartz estimates for
 orthonormal functions
  and Schatten bound with space-time norms related to elliptic
operator, non-elliptic
operator on $\R^{d},\T^{d}$ and Boussinesq operator on $\R$.

$\bullet$  In Section 5, by using the continuity of the mixed Lebesgue
spaces which is just Lemma 2.9 in this paper
and duality principle in the mixed Lebesgue spaces which is just Lemma
2.10 in this paper as well as Theorems 4.1, 4.3, 4.8, we  prove
\begin{eqnarray*}
\lim\limits_{t\longrightarrow0}\left\|W_{1}AA^{*}(t)W_{2}\right\|_{\mathfrak{S}^{2q^{\prime}}}=0,
\end{eqnarray*}
where $A=U(t),$ or $U_{\pm}(t)$,  $U_{1}(t)$ are defined in Section 4.
 By using Theorem 14 of \cite{FS2017},
we  establish   the convergence  result related to nonlinear part
 of the solution to
some operator equations in Schatten norms.

$\bullet$ In Section 6, we present the probabilistic convergence
 of density functions
   of some compact operators with full randomization on $\R^{d}$ with
    the aid of stochastic
continuity at zero related to Schatten norms.

$\bullet$ In Section 7, we present the probabilistic convergence of density functions
   of some compact operators with full randomization on $\T^{d}$ with the aid of stochastic
continuity at zero related to Schatten norms.

$\bullet$ In Section 8, we present the probabilistic convergence of density functions
   of some compact operators with full randomization on
$\Theta=\left\{x\in \R^{3}|\sum\limits_{j=1}^{3}x_{j}^{2}<1\right\}$
 with the aid of stochastic
continuity at zero related to Schatten norms.

\bigskip
\bigskip

\section{Preliminaries}
\medskip
\setcounter{equation}{0}
\setcounter{Theorem}{0} \setcounter{Lemma}{0}\setcounter{Definition}{0}\setcounter{Proposition}{2}

\setcounter{section}{2}
As explained in the introduction, in this section, we present
 some preliminaries which contain some
properties of the generalized functions,
some integrals,   some inequalities, the
 continuity in the mixed Lebesgue spaces and
the duality principle in the mixed Lebesgue spaces and
Van der Corput's Lemma.
\begin{Lemma}\label{2.1}
Let $\lambda\in \C.$ Then, we have
\begin{eqnarray*}
(x+i0)^{\lambda}=\lim\limits_{y\longrightarrow 0^{+}}(x^{2}+y^{2})^{\frac{\lambda}{2}}
exp\left[i\lambda arg(x+iy)\right]
=\left\{
        \begin{array}{ll}
         e^{i\lambda \pi}|x|^{\lambda}, & \hbox{$x<0$,} \\
          x^{\lambda}, & \hbox{$x>0$,}
        \end{array}
      \right.
\end{eqnarray*}
and
\begin{eqnarray*}
(x-i0)^{\lambda}=\lim\limits_{y\longrightarrow 0^{-}}(x^{2}+y^{2})^{\frac{\lambda}{2}}
exp\left[i\lambda arg(x+iy)\right]
=\left\{
        \begin{array}{ll}
         e^{-i\lambda \pi}|x|^{\lambda}, & \hbox{$x<0$,} \\
          x^{\lambda}, & \hbox{$x>0$.}
        \end{array}
      \right.
\end{eqnarray*}

\end{Lemma}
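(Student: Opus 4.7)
The plan is to compute the limits by treating the modulus and argument factors separately, exploiting the continuity of the map $(x,y)\mapsto (x^{2}+y^{2})^{1/2}$ everywhere and the continuity of $(x,y)\mapsto\arg(x+iy)$ away from the appropriate branch cut. Since $(x^{2}+y^{2})^{\lambda/2}$ extends continuously through $y=0$ to the value $|x|^{\lambda}$ for $x\neq 0$, the entire problem reduces to analyzing the behavior of $\arg(x+iy)$ under the two specified directions of approach.

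First I would split into the cases $x>0$ and $x<0$ (the case $x=0$ being excluded since $\lambda$ is complex and $(x+i0)^{\lambda}$ is being defined only for $x\neq 0$). For $x>0$, the point $x+iy$ lies in the right half-plane for all sufficiently small $|y|$, so the principal argument satisfies $\arg(x+iy)=\arctan(y/x)$, which tends to $0$ as $y\to 0^{\pm}$; hence $\exp[i\lambda\arg(x+iy)]\to 1$ and the limit equals $x^{\lambda}$ in both cases, as claimed. For $x<0$, the point $x+iy$ lies in the upper (resp.\ lower) open half-plane when $y>0$ (resp.\ $y<0$), so the principal argument is given by $\pi-\arctan(|y|/|x|)$ (resp.\ $-\pi+\arctan(|y|/|x|)$). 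Taking $y\to 0^{+}$ yields $\arg(x+iy)\to\pi$, whence $(x+i0)^{\lambda}=e^{i\lambda\pi}|x|^{\lambda}$; taking $y\to 0^{-}$ yields $\arg(x+iy)\to -\pi$, whence $(x-i0)^{\lambda}=e^{-i\lambda\pi}|x|^{\lambda}$.

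Finally, I would note that the limits exist in $\mathbb{C}$ because, for fixed $\lambda\in\mathbb{C}$, the exponential $\exp[i\lambda\,\theta]$ is a continuous function of the real variable $\theta$, so the convergence of the arguments established above immediately yields the convergence of the full expression; the product with $(x^{2}+y^{2})^{\lambda/2}\to |x|^{\lambda}$ is handled by the continuity of complex multiplication. The main (modest) obstacle is simply keeping the branch conventions straight: one must use the principal branch of $\arg$ with values in $(-\pi,\pi]$ consistently, and verify that the sign of $y$ dictates which side of the negative real axis is approached. No deeper analytic input is needed, and the entire argument reduces to elementary complex-plane geometry combined with continuity of $\exp$.
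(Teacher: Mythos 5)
Your proof is correct. The paper itself does not supply an argument for this lemma: it simply cites page~59 of Gelfand--Shilov \cite{GS}, where the distributions $(x\pm i0)^{\lambda}$ are introduced precisely via the boundary-value limits in the statement. Your write-up therefore does more than the paper: it gives the elementary computation that the citation stands in for. The decomposition you use --- separating the modulus factor $(x^{2}+y^{2})^{\lambda/2}\to|x|^{\lambda}$ from the argument factor $\exp[i\lambda\arg(x+iy)]$, and then tracking $\arg(x+iy)$ along the two one-sided approaches $y\to 0^{\pm}$ --- is the standard one and matches what \cite{GS} does. Two small remarks on rigor worth keeping in mind. First, the case $x=0$ really must be excluded, as you note, since $|x|^{\lambda}$ is not defined there; the statement as written is implicitly for $x\ne 0$. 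Second, your appeal to the principal branch $(-\pi,\pi]$ is consistent, but it is worth being explicit that for $x<0$, $y<0$ one has $\arg(x+iy)=-\pi+\arctan(|y|/|x|)\to-\pi$, which is a \emph{limit} and not the principal value $\pi$ of $\arg(x)$ itself --- this discontinuity of $\arg$ across the negative real axis is exactly the source of the two different formulas, and you handle it correctly. No gaps; the argument is complete and self-contained.
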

\noindent{\bf Proof.}For the proof of Lemma 2.1, we  refer the readers
 to page 59 of \cite{GS}.

This completes the proof of Lemma 2.1.

\begin{Lemma}\label{2.2}
Let $b\in \R.$ Then, we have
\begin{eqnarray*}
(-t+i0)^{-ib}-(-1)^{ib}(t+i0)^{-ib}=\left\{
        \begin{array}{ll}
         t^{-ib}(e^{b\pi}-e^{-b\pi}), & \hbox{$t>0$,} \\
          0, & \hbox{$t<0$.}
        \end{array}
      \right.
\end{eqnarray*}

\end{Lemma}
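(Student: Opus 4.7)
The plan is to prove the identity by a direct case analysis on the sign of $t$, using Lemma 2.1 to evaluate each of the boundary-value powers that appear on the left-hand side. The constant $(-1)^{ib}$ is interpreted via the same $+i0$ prescription as in Lemma 2.1, namely $(-1)^{ib}=(-1+i0)^{ib}=e^{i\cdot ib\cdot \pi}=e^{-b\pi}$, so that the branch convention is used uniformly on both sides.

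For $t>0$, the point $-t$ is negative, so Lemma 2.1 applied with $\lambda=-ib$ at $x=-t$ gives $(-t+i0)^{-ib}=e^{i(-ib)\pi}|{-t}|^{-ib}=e^{b\pi}t^{-ib}$, whereas $(t+i0)^{-ib}=t^{-ib}$ since $t>0$. Substituting these, together with $(-1)^{ib}=e^{-b\pi}$, into the left-hand side yields
\[
(-t+i0)^{-ib}-(-1)^{ib}(t+i0)^{-ib}=e^{b\pi}t^{-ib}-e^{-b\pi}t^{-ib}=t^{-ib}\bigl(e^{b\pi}-e^{-b\pi}\bigr),
\]
which is the claimed value.

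For $t<0$, the point $-t$ is positive, so $(-t+i0)^{-ib}=(-t)^{-ib}$, while the point $t$ is negative, so Lemma 2.1 gives $(t+i0)^{-ib}=e^{i(-ib)\pi}|t|^{-ib}=e^{b\pi}(-t)^{-ib}$. Multiplying by $(-1)^{ib}=e^{-b\pi}$ cancels this phase exactly, and the left-hand side collapses to $(-t)^{-ib}-(-t)^{-ib}=0$.

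I do not anticipate any genuine obstacle: the entire argument is bookkeeping with Lemma 2.1. The only subtle point is fixing the branch of $(-1)^{ib}$ consistently with the $+i0$ convention, after which the two sides match by direct computation in each of the two sign regimes.
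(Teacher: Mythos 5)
Your proof is correct and takes essentially the same approach as the paper, whose proof is the single sentence ``A direct application of Lemma 2.1 yields that Lemma 2.2 is valid.'' You have simply supplied the case analysis and the branch convention $(-1)^{ib}=(-1+i0)^{ib}=e^{-b\pi}$ that the paper leaves implicit, and that choice is the one consistent with the paper's later use of the lemma in (3.27).
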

\noindent{\bf Proof.} A direct application of Lemma 2.1 yields
 that Lemma 2.2 is valid.

This completes the proof of Lemma 2.2.

\begin{Lemma}\label{2.3}
Let $0<a_{1}<b_{1}<+\infty.$ Then, we have
\begin{align}
\left|\int_{a_{1}}^{b_{1}}\frac{sinx}{x}dx\right|\leq4 .\label{2.01}
\end{align}
\end{Lemma}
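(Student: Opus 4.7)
The plan is to prove the bound by splitting the interval at $x=1$ and handling the two sub-intervals by different elementary techniques: a trivial pointwise bound near the origin (where the oscillation does not help) and integration by parts away from the origin (where we exploit cancellation).

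First I would dispose of the case $b_{1}\leq 1$: here $|\sin x|/x\leq 1$ for $x\in(0,1]$, so the integral is bounded in absolute value by $b_{1}-a_{1}\leq 1$. Next, for the case $a_{1}\geq 1$, I would integrate by parts with $u=1/x$ and $dv=\sin x\,dx$, obtaining
\begin{equation*}
\int_{a_{1}}^{b_{1}}\frac{\sin x}{x}\,dx
=\Bigl[-\frac{\cos x}{x}\Bigr]_{a_{1}}^{b_{1}}-\int_{a_{1}}^{b_{1}}\frac{\cos x}{x^{2}}\,dx,
\end{equation*}
and then estimate the absolute value by $\frac{1}{a_{1}}+\frac{1}{b_{1}}+\int_{a_{1}}^{b_{1}}\frac{dx}{x^{2}}=\frac{2}{a_{1}}\leq 2$.

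For the remaining (generic) case $a_{1}<1<b_{1}$, I would split $\int_{a_{1}}^{b_{1}}=\int_{a_{1}}^{1}+\int_{1}^{b_{1}}$ and apply the two previous estimates separately, giving a total bound of $1+2=3\leq 4$. The stated constant $4$ is then immediate (and in fact not sharp).

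No serious obstacle is anticipated: the only quantitative issue is making sure the integration-by-parts boundary terms and the remainder together are bounded independently of $a_{1},b_{1}$, which is handled by the telescoping identity $\int_{a_{1}}^{b_{1}}x^{-2}\,dx=a_{1}^{-1}-b_{1}^{-1}$. One could alternatively obtain the bound by writing $\int_{a_{1}}^{b_{1}}\sin x/x\,dx$ as an alternating series of integrals over consecutive sign-intervals of $\sin x$ and using the Leibniz criterion, but the split-at-$1$ argument above is shorter and cleaner.
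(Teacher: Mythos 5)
Your proposal is correct and follows essentially the same strategy as the paper: split at $x=1$, use the trivial bound $|\sin x/x|\leq 1$ near the origin, and integration by parts away from it. The only difference is cosmetic bookkeeping — you track the boundary terms as $1/a_{1}+1/b_{1}$ and combine with the telescoping $\int_{a_1}^{b_1}x^{-2}dx=a_1^{-1}-b_1^{-1}$ to get $2/a_1\leq 2$, whereas the paper bounds each of $|\cos a_1/a_1|$, $|\cos b_1/b_1|$ crudely by $1$ and gets $3$; both give the stated constant $4$.
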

\noindent{\bf Proof.} We consider $b_{1}\leq1, a_{1}\leq 1\leq b_{1}, a_{1}\geq1$, respectively.

\noindent When $b_{1}\leq1$, we have
\begin{eqnarray}
\left|\int_{a_{1}}^{b_{1}}\frac{sinx}{x}dx\right|\leq \int_{a_{1}}^{b_{1}}\left|\frac{sinx}{x}\right|dx
\leq \int_{a_{1}}^{b_{1}}dx=b_{1}-a_{1}\leq 1.\label{2.02}
\end{eqnarray}
\noindent When $a_{1}\leq1\leq b_{1}$, we have
\begin{eqnarray}
\int_{a_{1}}^{b_{1}}\frac{sinx}{x}dx=\int_{a_{1}}^{1}\frac{sinx}{x}dx+
\int_{1}^{b_{1}}\frac{sinx}{x}dx.\label{2.03}
\end{eqnarray}
Obviously, we have
\begin{eqnarray}
\left|\int_{a_{1}}^{1}\frac{sinx}{x}dx\right|\leq \int_{a_{1}}^{1}
\left|\frac{sinx}{x}\right|dx\leq \int_{a_{1}}^{1}dx=1-a_{1}\leq1.\label{2.04}
\end{eqnarray}
By using the integration by parts, we have
\begin{eqnarray}
&&\left|\int_{1}^{b_{1}}\frac{sinx}{x}dx\right|=\left|-\int_{1}^{b_{1}}\frac{1}{x}dcosx\right|=
\left|\frac{cos1}{1}-\frac{cosb_{1}}{b_{1}}-\int_{1}^{b_{1}}\frac{cosx}{x^{2}}dx\right|\nonumber\\
&&\leq 1+1+\int_{1}^{b_{1}}\frac{1}{x^{2}}dx=3-\frac{1}{b_{1}}\leq 3.\label{2.05}
\end{eqnarray}
By using (\ref{2.03})-(\ref{2.05}), it follows that
\begin{align}
\left|\int_{a_{1}}^{b_{1}}\frac{sinx}{x}dx\right|\leq4 .\label{2.06}
\end{align}
 When $a_{1}\geq1$, by using the integration by parts, we have
 \begin{eqnarray}
&&\left|\int_{a_{1}}^{b_{1}}\frac{sinx}{x}dx\right|=\left|-\int_{a_{1}}^{b_{1}}\frac{1}{x}dcosx\right|
=\left|\frac{cosa_{1}}{a_{1}}-\frac{cosb_{1}}{b_{1}}-\int_{a_{1}}^{b_{1}}\frac{cosx}{x^{2}}dx\right|\nonumber\\
&&\leq 1+1+\int_{a_{1}}^{b_{1}}\frac{1}{x^{2}}dx\leq 3.\label{2.07}
\end{eqnarray}

This completes the proof of Lemma 2.3.

\noindent {\bf Remark 1.} The conclusion of Lemma 2.3 can be found in page 263
 of \cite{G}, however, the strict proof is not presented.

\begin{Lemma}\label{2.4}
Let $a>0,t\neq0.$ Then, we have
\begin{align}
\left|\int_{a}^{+\infty}\frac{sintx}{x}dx\right|\leq4,
\left|\int_{-\infty}^{-a}\frac{sintx}{x}dx\right|\leq4.\label{2.08}
\end{align}
\end{Lemma}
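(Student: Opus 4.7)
The plan is to reduce Lemma 2.4 directly to Lemma 2.3 by an elementary change of variable together with the convergence of the improper Dirichlet integral $\int_{0}^{+\infty}\frac{\sin u}{u}du$.

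For the first integral $\int_{a}^{+\infty}\frac{\sin tx}{x}dx$, I would split into the cases $t>0$ and $t<0$. When $t>0$, the substitution $u=tx$ yields
$$\int_{a}^{+\infty}\frac{\sin tx}{x}dx=\int_{ta}^{+\infty}\frac{\sin u}{u}du.$$
For every finite $R>ta$, Lemma 2.3 applied with $a_{1}=ta$ and $b_{1}=R$ gives the uniform bound $\left|\int_{ta}^{R}\frac{\sin u}{u}du\right|\leq 4$. Since $\int_{ta}^{+\infty}\frac{\sin u}{u}du$ exists as an improper Riemann integral (by the Dirichlet test, which is listed among the preliminaries of this section), I may pass to the limit $R\to+\infty$ and preserve the bound $4$. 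The case $t<0$ is handled by the identity $\sin tx=-\sin|t|x$, which reduces the estimate to the previous case with opposite sign and therefore retains the same bound.

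For the second integral $\int_{-\infty}^{-a}\frac{\sin tx}{x}dx$, the substitution $y=-x$ (with $dx=-dy$, sending $x=-\infty$ to $y=+\infty$ and $x=-a$ to $y=a$) together with the identity $\frac{\sin(-ty)}{-y}=\frac{\sin ty}{y}$ gives
$$\int_{-\infty}^{-a}\frac{\sin tx}{x}dx=\int_{a}^{+\infty}\frac{\sin ty}{y}dy,$$
and the required bound follows from the first case.

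The only delicate point is the passage from the finite-range bound of Lemma 2.3 to the infinite-range one; this step does not follow from Lemma 2.3 itself but from the standard Dirichlet-test proof of the convergence of $\int^{+\infty}\frac{\sin u}{u}du$, which is the only ingredient from outside Lemma 2.3 that is actually needed.
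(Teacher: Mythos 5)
Your proof is correct and structurally parallel to the paper's: both substitute $u=tx$ to reduce to the Dirichlet-type integral, both invoke Lemma 2.3 on the resulting finite intervals, and both reflect $x\mapsto -x$ to obtain the second estimate. The one genuine difference is how the unbounded tail is handled. The paper first computes $\int_{0}^{+\infty}\frac{\sin x}{x}\,dx=\frac{\pi}{2}$ via a Fubini argument, uses this to choose $M$ with $\bigl|\int_{M}^{+\infty}\frac{\sin\rho}{\rho}\,d\rho\bigr|\leq\epsilon$, splits $\int_{at}^{+\infty}=\int_{at}^{M}+\int_{M}^{+\infty}$, obtains the bound $4+\epsilon$, and then lets $\epsilon\to 0$. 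You instead observe that every truncation $\int_{ta}^{R}$ is bounded by $4$ uniformly in $R$ (Lemma 2.3), and that convergence of the improper integral (Dirichlet test) lets you pass to the limit $R\to+\infty$ and inherit the same bound. Your route avoids computing the exact value $\pi/2$, needs no $\epsilon$-bookkeeping, and sidesteps a minor gap in the paper's version (the paper's split tacitly assumes $at\leq M$ and that the tail bound holds uniformly past $M$, neither of which is stated); conversely, the paper's version makes explicit what the tail actually is, which it reuses informally elsewhere. Both are valid; yours is the slightly tighter write-up.
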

\noindent{\bf Proof.} Since
\begin{eqnarray}
&&\int_{0}^{+\infty}\frac{sinx}{x}dx=\int_{0}^{+\infty}sinx \left(\int_{0}^{+\infty}e^{-xy}dy\right)dx\nonumber\\
&&=\int_{0}^{+\infty}\left(\int_{0}^{+\infty}e^{-xy}sinx dx\right)dy=\int_{0}^{+\infty}\frac{1}{y^{2}+1}dy=
\frac{\pi}{2}, \label{2.09}
\end{eqnarray}
for $\forall \epsilon>0,$    there exists $M>0$ such that
\begin{eqnarray}
\left|\int_{M}^{+\infty}\frac{sinx}{x}dx\right|\leq \epsilon\label{2.010}.
\end{eqnarray}
\noindent When $t>0$, from Lemma 2.3, we have
\begin{eqnarray}
&&\left|\int_{a}^{\infty}\frac{sintx}{x}dx\right|\ \ \underline{\underline{xt=\rho}}\ \
\left|\int_{at}^{\infty}\frac{sin\rho}{\rho}d\rho\right|\nonumber\\&&=
\left|\int_{at}^{M}\frac{sin\rho}{\rho}d\rho+\int_{M}^{+\infty}\frac{sin\rho}{\rho}d\rho\right|
\leq \left|\int_{at}^{M}\frac{sin\rho}{\rho}d\rho\right|+\left|\int_{M}^{+\infty}\frac{sin\rho}{\rho}d\rho\right|\nonumber\\
&&\leq 4+\epsilon\label{2.011}.
\end{eqnarray}
\noindent When $t<0$, from Lemma 2.3, we have
\begin{eqnarray}
&&\left|\int_{a}^{\infty}\frac{sintx}{x}dx\right|=\left|\int_{a}^{\infty}\frac{sin(-tx)}{-x}dx\right|
\ \ \underline{\underline{-xt=\rho}}\ \ \left|\int_{-at}^{\infty}\frac{sin\rho}{\rho}d\rho\right|\nonumber\\
&&=
\left|\int_{-at}^{M}\frac{sin\rho}{\rho}d\rho+\int_{M}^{+\infty}\frac{sin\rho}{\rho}d\rho\right|\leq
\left|\int_{-at}^{M}\frac{sin\rho}{\rho}d\rho\right|+\left|\int_{M}^{+\infty}\frac{sin\rho}{\rho}d\rho\right|\nonumber\\
&&\leq 4+\epsilon\label{2.012}.
\end{eqnarray}
Since $\epsilon>0$ is arbitrary, from (\ref{2.011}) and (\ref{2.012}),
 we have that
\begin{eqnarray}
\left|\int_{a}^{\infty}\frac{sintx}{x}dx\right|\leq 4\label{2.013}.
\end{eqnarray}
By using a proof similar to (\ref{2.013}), we have that
\begin{eqnarray}
\left|\int_{-\infty}^{-a}\frac{sintx}{x}dx\right|\leq 4\label{2.014}.
\end{eqnarray}

This completes the proof of Lemma 2.4.

\begin{Lemma}\label{2.5}
Let $x\neq0.$ Then, we have
\begin{align}
\left|\frac{2xe^{-x}}{e^{x}-e^{-x}}\right|\leq2|x|+1.\label{2.015}
\end{align}
\end{Lemma}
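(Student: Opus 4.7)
My plan is to clear the denominator and reduce the inequality to the elementary bound $e^y \geq 1 + y$ applied in two sign cases.

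First, I would multiply numerator and denominator by $e^{x}$ to rewrite
\begin{eqnarray*}
\frac{2xe^{-x}}{e^{x}-e^{-x}} = \frac{2x}{e^{2x}-1}.
\end{eqnarray*}
Substituting $y = 2x$, it suffices to prove that for $y \neq 0$,
\begin{eqnarray*}
\left|\frac{y}{e^{y}-1}\right| \leq |y|+1.
\end{eqnarray*}

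For $y > 0$, both sides are positive and $e^{y}-1 > 0$, so the inequality is equivalent to $y \leq (y+1)(e^{y}-1)$, i.e.\ $2y+1 \leq (y+1)e^{y}$. Using $e^{y} \geq 1+y$ on $y \geq 0$, one has $(y+1)e^{y} \geq (y+1)^{2} = y^{2}+2y+1 \geq 2y+1$, which gives the desired bound. For $y < 0$, I would set $z = -y > 0$ and note that
\begin{eqnarray*}
\left|\frac{y}{e^{y}-1}\right| = \frac{z}{1-e^{-z}} = \frac{ze^{z}}{e^{z}-1},
\end{eqnarray*}
so the target becomes $ze^{z} \leq (z+1)(e^{z}-1)$, which simplifies to $e^{z} - z - 1 \geq 0$; this is again just $e^{z} \geq 1+z$ for $z \geq 0$. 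Combining both cases and translating back via $y = 2x$ yields the claim.

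No step here poses a real obstacle; the only care needed is the algebraic reduction in the $y<0$ case, where one must multiply through by $e^{z}$ before applying the convexity inequality in order to land exactly on $e^{z} \geq 1+z$ rather than a weaker or incorrect variant. The bound is also tight (up to the $+1$) as $x \to -\infty$, since $\frac{2xe^{-x}}{e^{x}-e^{-x}} \sim -2x = 2|x|$ there, confirming that the right-hand side $2|x|+1$ cannot be essentially improved.
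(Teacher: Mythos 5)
Your proof is correct and follows essentially the same route as the paper's: both arguments reduce $\frac{2xe^{-x}}{e^{x}-e^{-x}}$ to $\frac{2x}{e^{2x}-1}$, split into sign cases, and rely on the single elementary inequality $e^{t}\geq 1+t$ (equivalently $e^{2x}-1\geq 2x$). The only cosmetic difference is that the paper's $x>0$ case establishes the sharper bound $\frac{2x}{e^{2x}-1}\leq 1$, while you stop at $2x+1$; both of course suffice for the stated claim.
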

\noindent{\bf Proof.} When $x>0,$ since  $e^{2x}-1\geq 2x,$ we have
\begin{eqnarray}
\frac{2xe^{-x}}{e^{x}-e^{-x}}=\frac{2x}{e^{2x}-1}\leq 1.\label{2.016}
\end{eqnarray}
When $x<0,$ we have
\begin{eqnarray}
&&\left|\frac{2xe^{-x}}{e^{x}-e^{-x}}\right|
=\frac{-2xe^{-x}}{e^{-x}-e^{x}}\ \ \underline{\underline{t=-x}}
\ \ \frac{2te^{t}}{e^{t}-e^{-t}}=\frac{2te^{2t}}{e^{2t}-1}\nonumber\\
&&=2t\left(1+\frac{1}{e^{2t}-1}\right)\leq 2t+\frac{2t}{e^{2t}-1}\leq 2t+1=2|x|+1.\label{2.017}
\end{eqnarray}
Here, we use the inequality $e^{2t}-1\geq 2t.$

This completes the proof of Lemma 2.5.

\noindent {\bf Remark 2.} Obviously,
\begin{eqnarray}
\lim\limits_{x\longrightarrow0} \frac{2xe^{-x}}{e^{x}-e^{-x}}=  \lim\limits_{x\longrightarrow0}
\frac{2x}{e^{2x}-1} = \lim\limits_{x\longrightarrow0} \frac{1}{e^{2x}} =1.\label{2.018}
\end{eqnarray}

\begin{Lemma}\label{2.6}
Let $b\neq0.$ Then, we have
\begin{align}
\left|\frac{2\pi be^{-\pi b}}{e^{\pi b}-e^{-\pi b}}\right|\leq2\pi|b|+1.\label{2.019}
\end{align}
\end{Lemma}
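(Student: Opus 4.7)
The plan is to obtain Lemma 2.6 as an immediate specialization of Lemma 2.5. Since Lemma 2.5 establishes, for every nonzero real $x$, the bound
\begin{equation*}
\left|\frac{2xe^{-x}}{e^{x}-e^{-x}}\right|\leq 2|x|+1,
\end{equation*}
and since $b\neq 0$ forces $x:=\pi b\neq 0$, I would simply substitute $x=\pi b$ into that inequality. The left-hand side then becomes exactly $\left|\frac{2\pi b e^{-\pi b}}{e^{\pi b}-e^{-\pi b}}\right|$ and the right-hand side becomes $2\pi|b|+1$, which is the claim.

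There is no real obstacle here: Lemma 2.6 is the case $x=\pi b$ of Lemma 2.5 and requires nothing beyond a change of variable. The only thing worth noting for the write-up is that one should separate the verification into the two cases $b>0$ and $b<0$ only if one wants to mirror the structure of Lemma 2.5's proof (which itself split along $x>0$ versus $x<0$ and used $e^{2x}-1\geq 2x$); otherwise a one-line citation of Lemma 2.5 with $x=\pi b$ suffices.

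For completeness, I would also remark, as in Remark 2 following Lemma 2.5, that the expression extends continuously to $b=0$ with limiting value $1$, so the bound $2\pi|b|+1$ in fact holds for all $b\in\mathbb{R}$ once one interprets the quotient as its limit at $b=0$. This observation costs nothing and may be useful later when the lemma is invoked in the complex-integral estimates of Section 3.
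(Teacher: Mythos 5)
Your proof is correct and is precisely the paper's own argument: substitute $x=\pi b$ into Lemma 2.5 and read off the bound. The extra remark about the continuous extension to $b=0$ is accurate but incidental.
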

\noindent{\bf Proof.} Take $x=\pi b$ in (\ref{2.015}), we derive that (\ref{2.019}) is valid.

This completes the proof of Lemma 2.6.

\begin{Lemma}\label{lem2.7}(The density Theorem in the mixed Lebesgue spaces)
Suppose that  $0< p,q<\infty$, $x=(x_{1},\cdot\cdot\cdot,x_{d-1},x_{d})$
 and
\begin{eqnarray}
\left\|f\right\|_{L_{t\in I}^{q}L_{x}^{p}}:=\left[\int_{I}
\left(\int_{\SR^{d}}|f|^{p}dx\right)^{\frac{q}{p}}dt\right]^{1/q}<\infty,I\subset \R.\label{2.020}
\end{eqnarray}
Then, the functions of the form
\begin{eqnarray*}
&&\sum\limits_{i=1}^{r}a_{i}(x)\chi_{E_{i}}(t)
\end{eqnarray*}
are dense in $L_{t\in I}^{q}L_{x}^{p}$. Here
$a_{i}(x)\in L^{p}$ and $E_{i}$ are pairwise disjoint measurable sets in $I$ with finite measures
and $r\geq1$ is an integer.
\end{Lemma}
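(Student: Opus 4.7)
The plan is to prove density by a sequence of reductions, starting from a general $f \in L_{t\in I}^{q} L_{x}^{p}$ and ending with a function of the target form. First I would truncate $f$ in both value and support: set $f_{N,R,T}(t,x) = f(t,x)\,\chi_{\{|f|\leq N\}}(t,x)\,\chi_{\{|x|\leq R\}}(x)\,\chi_{\{|t|\leq T\}}(t)$. Since $|f_{N,R,T}|\leq |f|$ pointwise and $f_{N,R,T}\to f$ pointwise as $N,R,T\to\infty$, one application of dominated convergence inside $L_{x}^{p}$ and another inside $L_{t}^{q}$ yields convergence in the mixed norm. Hence it suffices to approximate bounded $f$ supported in $[-T,T]\times \overline{B(0,R)}$.

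Next I would approximate such a bounded, compactly supported $f$ uniformly by a simple function $g=\sum_{k=1}^{m}c_{k}\chi_{A_{k}}$, where the $A_{k}$ are pairwise disjoint measurable subsets of $[-T,T]\times \overline{B(0,R)}$ (the standard layer-cake construction). Because all supports lie in a common bounded rectangle of finite mixed measure, uniform convergence transfers to the mixed norm via
\begin{equation*}
\|f-g\|_{L_{t}^{q}L_{x}^{p}} \leq \|f-g\|_{L^{\infty}}\cdot (2T)^{1/q}|B(0,R)|^{1/p}.
\end{equation*}
Thus the problem reduces to approximating each $\chi_{A_{k}}$ in the mixed norm by a finite disjoint union of \emph{product} rectangles $E\times F$ with $E\subset I$ and $F\subset \R^{d}$ measurable.

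The key step is this rectangle approximation. I would show that the collection $\mathcal{C}$ of bounded measurable $A\subset I\times \R^{d}$ whose characteristic function admits a finite-rectangle approximation in the mixed norm is a monotone class containing all finite disjoint unions of product rectangles; by the monotone class theorem, $\mathcal{C}$ contains every bounded measurable set. The nontrivial content is the implication: convergence in product Lebesgue measure of sets contained in a fixed bounded box forces convergence of the corresponding characteristic functions in $L_{t}^{q}L_{x}^{p}$. Concretely, if $\chi_{B_{n}\triangle A}\to 0$ in $L^{1}(I\times \R^{d})$ with $B_{n},A\subset [-T,T]\times \overline{B(0,R)}$, I would pass to an a.e.\ convergent subsequence and invoke dominated convergence in the mixed norm, dominating by $\chi_{[-T,T]}(t)\chi_{\overline{B(0,R)}}(x)$, which itself belongs to $L_{t}^{q}L_{x}^{p}$. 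I expect this step to be the main obstacle, since it must be handled uniformly across the full range $0<p,q<\infty$, where the mixed norm is only a quasi-norm for small exponents.

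Finally I would collect terms. The previous steps yield an approximation of $f$ by a finite sum $\sum_{k,j}c_{k,j}\chi_{E_{k,j}}(t)\chi_{F_{k,j}}(x)$. To secure pairwise disjoint time-slices, I would take the finite algebra generated by the $E_{k,j}$ inside $I$, list its (finitely many) atoms $\widetilde{E}_{1},\dots,\widetilde{E}_{r}$, and regroup
\begin{equation*}
\sum_{k,j}c_{k,j}\chi_{E_{k,j}}(t)\chi_{F_{k,j}}(x) = \sum_{i=1}^{r}\chi_{\widetilde{E}_{i}}(t)\,a_{i}(x), \quad a_{i}(x):=\sum_{(k,j)\,:\,\widetilde{E}_{i}\subset E_{k,j}}c_{k,j}\chi_{F_{k,j}}(x),
\end{equation*}
with each $a_{i}\in L^{p}(\R^{d})$ since it is a finite linear combination of indicators of bounded sets, and each $\widetilde{E}_{i}\subset I$ of finite measure. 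This delivers the required form $\sum_{i=1}^{r}a_{i}(x)\chi_{\widetilde{E}_{i}}(t)$ with pairwise disjoint time-slices and completes the density argument.
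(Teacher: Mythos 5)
The paper offers no proof of this lemma; it merely cites Proposition 5.5.6 of Grafakos \cite{G}, so your argument is filling a gap rather than being compared against a written one. The chain of reductions you use is correct and covers the full range $0<p,q<\infty$: truncation in height, space, and time (with two nested applications of dominated convergence, which go through for quasi-norms because $|f_{N,R,T}|\leq|f|$ pointwise, so the inner $L^p_x$ norms and then the outer $L^q_t$ integrand are dominated by the corresponding objects for $f$); uniform approximation by simple functions on the fixed box, transferred to the mixed norm by the product of the box's measures; a monotone class argument on the box to approximate indicators of measurable sets by indicators of finite disjoint unions of product rectangles $E\times F$ with $E\subset I$ and $F$ bounded; and finally regrouping via the atoms of the finite algebra generated by the time-sets, which produces the stated form $\sum_{i=1}^{r}a_i(x)\chi_{E_i}(t)$ with $a_i\in L^p(\R^d)$ and $E_i\subset I$ pairwise disjoint of finite measure. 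Two small remarks. First, in the rectangle-approximation step you do not need to pass to an a.e.-convergent subsequence: within the monotone class framework the limits $A_n\uparrow A$ or $A_n\downarrow A$ already converge pointwise everywhere, the indicator of the box dominates, and dominated convergence in the mixed quasi-norm applies directly; the subsequence device would be needed only if you argued instead through approximation in $L^1$-measure. Second, the monotone class theorem delivers the product $\sigma$-algebra, which is slightly smaller than the Lebesgue $\sigma$-algebra on the product, but any Lebesgue measurable set agrees with a Borel (hence product-measurable) set up to a null set, and the mixed quasi-norm does not see null sets, so this is harmless but worth stating. A shorter route reaching the same conclusion is the Bochner-space viewpoint: $L^q_{t\in I}L^p_x$ is $L^q(I;L^p(\R^d))$, $L^p(\R^d)$ is a separable complete metrizable space for $0<p<\infty$, and density of $L^p$-valued simple functions (exactly the form $\sum_i a_i(x)\chi_{E_i}(t)$) is the standard density statement for such vector-valued $L^q$ spaces; your approach is more elementary and explicit but arrives at the same place.
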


For the proof of Lemma 2.7, see Proposition 5.5.6 of \cite{G}.

\begin{Lemma}\label{lem2.8}(The triangle inequality in the mixed Lebesgue spaces)
Suppose that  $1\leq  p,q<\infty$, $x=(x_{1},\cdot\cdot\cdot,x_{d-1},x_{d})$,
$\left\|f\right\|_{L_{t\in I}^{q}L_{x}^{p}}<\infty$ and  $f,g\in L_{t\in I}^{q}L_{x}^{p}(\R^{d})$. Then, we have
\begin{eqnarray}
\left\|f+g\right\|_{L_{t\in I}^{q}L_{x}^{p}}\leq \left\|f\right\|_{L_{t\in I}^{q}L_{x}^{p}}
+\left\|g\right\|_{L_{t\in I}^{q}L_{x}^{p}}.\label{2.021}
\end{eqnarray}

\end{Lemma}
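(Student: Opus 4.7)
The plan is to reduce the triangle inequality in the mixed Lebesgue space $L_{t\in I}^{q}L_{x}^{p}$ to two successive applications of the classical Minkowski inequality: first in the spatial variable $x$, then in the temporal variable $t$. This is a standard reduction and the hypothesis $1\leq p,q<\infty$ is exactly what is needed to ensure both the spatial and temporal triangle inequalities hold.

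First I would set $F(t):=\|f(t,\cdot)\|_{L_{x}^{p}(\SR^{d})}$ and $G(t):=\|g(t,\cdot)\|_{L_{x}^{p}(\SR^{d})}$. Since $\|f\|_{L_{t\in I}^{q}L_{x}^{p}}<\infty$ and the same holds for $g$, Fubini-Tonelli guarantees that for almost every $t\in I$ the slices $f(t,\cdot)$ and $g(t,\cdot)$ belong to $L^{p}(\SR^{d})$, so $F$ and $G$ are well-defined a.e., nonnegative, and measurable on $I$. For each such $t$, the classical Minkowski inequality in $L^{p}(\SR^{d})$, which requires $p\geq 1$, yields
\begin{equation*}
\|f(t,\cdot)+g(t,\cdot)\|_{L_{x}^{p}}\leq F(t)+G(t).
\end{equation*}

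Next I would raise this pointwise estimate to the $q$-th power, integrate over $I$, and take the $q$-th root, obtaining
\begin{equation*}
\|f+g\|_{L_{t\in I}^{q}L_{x}^{p}}=\left(\int_{I}\|f(t,\cdot)+g(t,\cdot)\|_{L_{x}^{p}}^{q}\,dt\right)^{1/q}\leq \left(\int_{I}(F(t)+G(t))^{q}\,dt\right)^{1/q}.
\end{equation*}
Finally I would apply the classical Minkowski inequality in $L^{q}(I)$, which requires $q\geq 1$, to the sum $F+G$:
\begin{equation*}
\left(\int_{I}(F(t)+G(t))^{q}\,dt\right)^{1/q}\leq \|F\|_{L^{q}(I)}+\|G\|_{L^{q}(I)}=\|f\|_{L_{t\in I}^{q}L_{x}^{p}}+\|g\|_{L_{t\in I}^{q}L_{x}^{p}}.
\end{equation*}
Chaining the two estimates produces the desired inequality.

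The proof is essentially routine and there is no real obstacle: the only subtlety worth mentioning is the measurability of $F$ and $G$ as functions of $t$, which follows from Fubini-Tonelli applied to the nonnegative measurable functions $|f|^{p}$ and $|g|^{p}$ on $I\times\SR^{d}$. The hypothesis that the mixed norm of $f$ (and implicitly of $g$) is finite is used only to ensure that the bound is not vacuous and that both $F(t)$ and $G(t)$ are finite almost everywhere, so that the pointwise triangle inequality in $L^{p}_{x}$ is meaningful at each such $t$.
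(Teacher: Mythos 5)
Your proof is correct and is the standard two-step argument: apply Minkowski's inequality in $L^{p}_{x}$ slice-by-slice in $t$, then apply Minkowski's inequality in $L^{q}_{t}$ to the resulting scalar functions. The paper does not supply its own proof of this lemma, instead referring to Lemma A.1 of the cited Frank--Sabin reference, which contains essentially this same routine reduction; so your argument matches the intended one.
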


For the proof of Lemma 2.8, we refer the reader to Lemma A.1  of \cite{Frank}.

\begin{Lemma}\label{lem2.9}(The continuity in the mixed Lebesgue spaces)
Let $1\leq p,q<\infty$, $x=(x_{1},\cdot\cdot\cdot,x_{d-1},x_{d})$
and $\left\|f\right\|_{L_{t\in I}^{q}L_{x}^{p}}<\infty$. Then, we have
\begin{eqnarray}
&&\lim\limits_{t_{2}\longrightarrow t_{1}}\left(\int_{t_{1}}^{t_{2}}
\left(\int_{\SR^{d}}|f|^{p}dx\right)^{\frac{q}{p}}dt\right)^{\frac{1}{q}}=0\label{2.022}.
\end{eqnarray}
\end{Lemma}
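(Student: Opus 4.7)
The assertion is exactly the absolute continuity of the Lebesgue integral, rephrased in mixed-norm form. The plan is to collapse the inner $L^p_x$ integral into a single scalar nonnegative function of $t$, verify it is in $L^1(I)$, and then invoke the classical absolute continuity statement.

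First I would introduce the auxiliary function
\begin{equation*}
G(t) := \left(\int_{\SR^{d}} |f(t,x)|^{p}\, dx\right)^{q/p}, \qquad t \in I.
\end{equation*}
By the hypothesis, $G$ is a nonnegative measurable function on $I$ with
\begin{equation*}
\int_{I} G(t)\, dt = \|f\|_{L^{q}_{t\in I} L^{p}_{x}}^{q} < \infty,
\end{equation*}
so $G \in L^{1}(I)$. Now I would apply the standard absolute continuity of the Lebesgue integral to the nonnegative $L^1$ function $G$: given any $\varepsilon > 0$, there exists $\delta > 0$ such that $\int_{E} G(t)\, dt < \varepsilon^{q}$ whenever $E \subset I$ is measurable with Lebesgue measure $|E| < \delta$. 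Specializing to the interval $E = [t_{1}, t_{2}]$ with $|t_{2}-t_{1}| < \delta$ gives
\begin{equation*}
\left(\int_{t_{1}}^{t_{2}} \left(\int_{\SR^{d}} |f|^{p}\, dx\right)^{q/p} dt\right)^{1/q} = \left(\int_{t_{1}}^{t_{2}} G(t)\, dt\right)^{1/q} < \varepsilon,
\end{equation*}
which is precisely the claimed limit as $t_{2} \longrightarrow t_{1}$.

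The only substantive step is the appeal to absolute continuity, and this is entirely classical: truncating $G$ by $G \wedge n$, the monotone convergence theorem makes $\int_{I}(G - G\wedge n)\, dt$ arbitrarily small, while on the truncated part $\int_{E} (G \wedge n)\, dt \leq n |E|$ handles small sets. Hence I do not expect any genuine obstacle. As an alternative route, I could instead argue by density: Lemma 2.7 approximates $f$ in $L^{q}_{t} L^{p}_{x}$ by simple functions $\phi = \sum_{i=1}^{r} a_{i}(x) \chi_{E_{i}}(t)$ with $a_{i} \in L^{p}(\R^{d})$, for which the quantity reduces to $\sum_{i} \|a_{i}\|_{L^{p}}^{q}\, |E_{i} \cap [t_{1}, t_{2}]|$ and goes to zero as $t_{2}\to t_{1}$ by disjointness and finiteness of the $E_{i}$; then Lemma 2.8 (triangle inequality) transfers the conclusion from $\phi$ to $f$. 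Either route is routine and yields the statement.
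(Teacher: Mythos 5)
Your primary argument is correct and takes a genuinely different, and in fact cleaner, route than the paper. The paper proves this lemma via the density theorem (Lemma 2.7): it approximates $f$ in $L^{q}_{t}L^{p}_{x}$ by a mixed simple function $\sum_{i=1}^{r}a_{i}(x)\chi_{E_{i}}(t)$, applies the triangle inequality (Lemma 2.8) to split the quantity into a small approximation error plus the contribution of the simple function, and then estimates the latter explicitly by $\sum_{i}\|a_{i}\|_{L^{p}}\,(t_{2}-t_{1})^{1/q}\to 0$ using disjointness of the $E_{i}$. Your approach instead collapses the inner $L^{p}_{x}$ norm into the nonnegative scalar function $G(t)=\bigl(\int_{\SR^{d}}|f(t,x)|^{p}\,dx\bigr)^{q/p}\in L^{1}(I)$ and invokes absolute continuity of the Lebesgue integral; this sidesteps the mixed-norm density machinery entirely and reduces the claim to a one-variable fact, with no need for Lemmas 2.7 or 2.8 at all. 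What the paper's route buys is self-containment within the lemma scaffolding it has already built (and it makes the same scalar reduction implicitly, since the final estimate is on an integral in $t$ alone); what your route buys is brevity and the observation that the result is not really a mixed-norm statement but scalar absolute continuity in disguise. You also sketch the density route as an alternative, and that sketch is exactly the paper's argument, down to the bound $\|a_{i}\|_{L^{p}}^{q}\,|E_{i}\cap[t_{1},t_{2}]|$ per term. Both arguments are correct; there is no gap.

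One small remark on the paper's write-up (not on yours): the paper's proof refers to ``Lemma 2.8'' for the density statement and ``Lemma 2.9'' for the triangle inequality, which is a pair of off-by-one typos for Lemmas 2.7 and 2.8 respectively. Your proposal correctly identifies Lemma 2.7 as the density lemma and Lemma 2.8 as the triangle inequality.
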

\noindent{\bf Proof.} For $f\in L_{t\in I}^{q}L_{x}^{p}$ and
$\forall \epsilon>0$,  it follows from Lemma 2.8 that
 there exists $r\geq1$ such that  the  functions of the form
\begin{eqnarray*}
&&\sum\limits_{i=1}^{r}a_{i}(x)\chi_{E_{i}}(t)
\end{eqnarray*}
are dense in $L_{t}^{q}L_{x}^{p}$. Here
$a_{i}(x)\in L^{p}$ and $E_{i}$ are pairwise disjoint
 measurable sets in $I$ with finite measures
and $r\geq1$ is an integer.
It follows from  Lemma 2.8 that
\begin{eqnarray}
&&\lim\limits_{t_{2}\longrightarrow t_{1}}\left(\int_{t_{1}}^{t_{2}}
\left(\int_{\SR^{d}}|f|^{p}dx\right)^{\frac{q}{p}}dt\right)^{\frac{1}{q}}\nonumber\\&&
\leq \lim\limits_{t_{2}\longrightarrow t_{1}}\left(\int_{t_{1}}^{t_{2}}
\left(\int_{\SR^{d}}|f-\sum\limits_{i=1}^{r}a_{i}(x)
\chi_{E_{i}}(t)|^{p}dx\right)^{\frac{q}{p}}dt\right)^{\frac{1}{q}}
\nonumber\\&&\qquad\qquad
+\lim\limits_{t_{2}\longrightarrow t_{1}}\left(\int_{t_{1}}^{t_{2}}
\left(\int_{\SR^{d}}|\sum\limits_{i=1}^{r}
a_{i}(x)\chi_{E_{i}}(t)|^{p}dx\right)^{\frac{q}{p}}dt\right)^{\frac{1}{q}}\nonumber\\
&&\leq \epsilon+\lim\limits_{t_{2}\longrightarrow t_{1}}\sum\limits_{i=1}^{r}
\left(\int_{\SR^{d}}|a_{i}(x)|^{p}dx\right)^{\frac{1}{p}}
{\rm mes}(E_{i})^{\frac{1}{q}}\nonumber\\&&\leq \epsilon+\lim\limits_{t_{2}
\longrightarrow t_{1}}\sum\limits_{i=1}^{r}
\left(\int_{\SR^{d}}|a_{i}(x)|^{p}dx\right)^{\frac{1}{p}}
(t_{2}-t_{1})^{\frac{1}{q}}\leq\epsilon.\label{2.023}
\end{eqnarray}

This completes the proof of Lemma 2.9.

\begin{Lemma}(Duality principle in the mixed Lebesgue spaces)
\label{2.10} We assume that
  $A$ is a bounded operator from $L^{2}$ to
$L_{t\in I_{1}}^{2p}L_{x\in I_{2}}^{2q}$ with
$I_{1}\subset \R, I_{2}\subset \R^{d}$
and $\alpha\geq1$. Then, the following two statements are equivalent.
\begin{eqnarray}
&&(i)\left\|WAA^{*}\overline{W}\right\|_{\mathfrak{S}^{\alpha}}\leq C
\|W\|_{L_{t\in I_{1}}^{2p^{\prime}}L_{x\in I_{2}}^{2q^{\prime}}}^{2},\label{2.024}\\
&&(ii)\left\|\sum\limits_{j=1}^{+\infty}\lambda_{j}|Af_{j}|^{2}\right\|_{L_{t\in I_{1}}^{p}L_{x\in I_{2}}^{q}}\leq
C\left(\sum\limits_{j=1}^{+\infty}|\lambda_{j}|^{\alpha^{\prime}}\right)^{1/\alpha^{\prime}}
=C\left\|\gamma(t)\right\|_{\mathfrak{S}^{\alpha^{\prime}}}.\label{2.025}
\end{eqnarray}
Here, $\lambda_{j}(j\in\mathbf{N}^{+})$ are eigenvalues of $\gamma(t).$
\end{Lemma}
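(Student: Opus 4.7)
The plan is to prove the two implications by combining duality of mixed Lebesgue spaces (Lemmas 2.7--2.9) with duality in Schatten classes and Hölder in both scales, following the Frank--Sabin scheme. A single identity drives both directions: writing $B := M_W A : L^2 \to L^2$, one has $BB^* = W A A^* \overline{W}$ and $B^*B = A^* M_{|W|^2} A$, so
\begin{equation*}
\left\|W A A^* \overline{W}\right\|_{\mathfrak{S}^{\alpha}} = \|B\|_{\mathfrak{S}^{2\alpha}}^{2} = \left\|A^* M_{|W|^2} A\right\|_{\mathfrak{S}^{\alpha}}.
\end{equation*}

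For (i) $\Rightarrow$ (ii), I fix $\gamma = \sum_j \lambda_j |f_j\rangle\langle f_j|$ with $\|\gamma\|_{\mathfrak{S}^{\alpha'}}$ finite and observe that the density $\rho = \sum_j \lambda_j |Af_j|^2$ of $A\gamma A^*$ satisfies, for any test function $V$, $\int V\rho\,dx\,dt = \text{Tr}(\gamma A^* M_V A)$. Mixed-Lebesgue duality gives $\|\rho\|_{L_t^p L_x^q} = \sup |\int V\rho|$ over $\|V\|_{L^{p'}L^{q'}}\le 1$, while Schatten Hölder yields $|\text{Tr}(\gamma A^* M_V A)| \le \|\gamma\|_{\mathfrak{S}^{\alpha'}} \|A^* M_V A\|_{\mathfrak{S}^\alpha}$. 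To bound the last factor, I split the complex-valued $V$ as $V = W_1 \overline{W_2}$ with $|W_1| = |W_2| = |V|^{1/2}$ and suitable phases (take $W_1 = |V|^{1/2}$, $W_2 = e^{-i\arg V}|V|^{1/2}$). This factorizes $A^* M_V A = (M_{\overline{W_1}}A)^* (M_{\overline{W_2}}A)$, so Cauchy--Schwarz in Schatten norms together with the identity above gives
\begin{equation*}
\|A^* M_V A\|_{\mathfrak{S}^\alpha} \le \|M_{\overline{W_1}}A\|_{\mathfrak{S}^{2\alpha}} \|M_{\overline{W_2}}A\|_{\mathfrak{S}^{2\alpha}} = \prod_{j=1}^{2}\|M_{\overline{W_j}} A A^* M_{W_j}\|_{\mathfrak{S}^\alpha}^{1/2}.
\end{equation*}
Applying hypothesis (i) to each $\overline{W_j}$ and using $\|W_j\|_{L^{2p'}L^{2q'}}^2 = \|V\|_{L^{p'}L^{q'}}$ collapses the product to $C\|V\|_{L^{p'}L^{q'}}$; taking the sup over $V$ completes (ii) with $C' = C$.

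For (ii) $\Rightarrow$ (i), I use the $BB^*/B^*B$ identity to reduce to bounding $\|A^* M_{|W|^2} A\|_{\mathfrak{S}^\alpha}$. Since this operator on $L^2$ is positive, Schatten duality gives $\|A^* M_{|W|^2} A\|_{\mathfrak{S}^\alpha} = \sup |\text{Tr}(\gamma A^* M_{|W|^2} A)|$ over $\gamma \ge 0$ of finite rank with $\|\gamma\|_{\mathfrak{S}^{\alpha'}} = 1$. Cyclicity of the trace and spectral decomposition of $\gamma$ identify this trace with $\int |W|^2 \rho_\gamma\, dx\, dt$, where $\rho_\gamma = \sum_j \lambda_j |Af_j|^2$. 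Mixed-Lebesgue Hölder bounds this by $\||W|^2\|_{L^{p'}L^{q'}} \|\rho_\gamma\|_{L^pL^q} = \|W\|_{L^{2p'}L^{2q'}}^2 \|\rho_\gamma\|_{L^pL^q}$, and hypothesis (ii) finishes the job.

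The main technical obstacle is the complex-valued splitting $V = W_1 \overline{W_2}$ in the first implication: one must simultaneously achieve $|W_1|^2 = |W_2|^2 = |V|$ and verify that $(M_{\overline{W_1}}A)^*(M_{\overline{W_2}}A)$ is the correct Schatten-class realization of $A^* M_V A$. A secondary subtlety is that the duality suprema must be taken over dense subclasses (simple $V$ via Lemma 2.7, finite-rank $\gamma$) to justify the trace manipulations before extending by density using the continuity provided by Lemmas 2.8--2.9. With these points handled, the rest of the argument is a direct application of Cauchy--Schwarz and Hölder in the two scales.
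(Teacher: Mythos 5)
Your argument is correct and is exactly the elaboration of what the paper's one-line proof gestures at: the paper simply cites ``the idea of Lemma 3 of [FS2017]'' together with the $WA(WA)^*$ versus $(WA)^*WA$ singular-value identity, and your write-up is the full Frank--Sabin duality scheme (trace identity $\int V\rho=\mathrm{Tr}(\gamma A^*M_VA)$, Schatten--H\"older, the factorization $V=W_1\overline{W_2}$ with $|W_j|=|V|^{1/2}$, and mixed-norm H\"older/duality) transported to the mixed Lebesgue setting. No substantive difference from the paper's intended approach; the density remarks you flag are the right ones to make the trace manipulations rigorous.
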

 \noindent{\bf Proof.} By using the idea of  Lemma 3 of \cite{FS2017}
  and the fact that $WA(WA)^{*}$ and $(WA)^{*}WA$ possess
   the same singular values,  we derive that Lemma 2.10 is valid.

 This completes the proof of Lemma 2.10.

\noindent{\bf Remark 3.}
Thus, according to spectral mapping Theorem of Page 227 in \cite{Y},  we have
\begin{eqnarray}
\left[Trace(|\gamma(t)|^{\alpha})\right]^{\frac{1}{\alpha}}=
\left[\sum\limits_{j=1}^{\infty}|\lambda_{j}|^{\alpha}\right]^{\frac{1}{\alpha}}
=\left\|\gamma(t)\right\|_{\mathfrak{S}^{\alpha}}.\label{2.026}
\end{eqnarray}
Here, $\lambda_{j}(j\in \mathbf{N}^{+})$ are eigenvalues of $\gamma(t).$

\begin{Lemma}(Van der Corput's Lemma) \label{2.11} We assume that
  $k\geq1$ and $a,b\in \R$ with $a<b$. Suppose that $\phi:[a,b]\rightarrow\R$
  satisfies
\begin{eqnarray}
\left|\phi^{(k)}\right|\geq1\label{2.027}
\end{eqnarray}
for all $x\in [a,b].$ Suppose that $f:[a,b]\rightarrow\R$ is
differentiable and $f^{\prime}$ is integrable on $[a,b]$.
 Then, there exists  a constant $C_{k}$ depending only on $k$
 such that
\begin{eqnarray}
\left|\int_{a}^{b}e^{i\lambda \phi}fdx\right|\leq C_{k}\left(|f(b)|
+\int_{a}^{b}|f^{\prime}|dx\right)\lambda^{-1/k}.\label{2.028}
\end{eqnarray}

\end{Lemma}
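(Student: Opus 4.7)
The statement is the classical Van der Corput Lemma with an amplitude factor $f$. The plan is to proceed by induction on $k$, first establishing the bound for $f\equiv 1$, and then deriving the general case via a single integration by parts.

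\textbf{Step 1 (Base case $k=1$).} Here I would invoke the usual monotonicity assumption on $\phi'$ (implicit in applications to the problem: since $|\phi'|\geq 1$, $\phi'$ has constant sign and the treatment below assumes $\phi'$ is monotone, as in the standard statement). For $f\equiv 1$, I rewrite
\begin{equation*}
\int_a^b e^{i\lambda\phi(x)}\,dx=\int_a^b \frac{1}{i\lambda\phi'(x)}\,\frac{d}{dx}\bigl(e^{i\lambda\phi(x)}\bigr)\,dx
\end{equation*}
and integrate by parts. The boundary contributions are bounded by $2/\lambda$, and the remaining term is controlled by $\lambda^{-1}\int_a^b |(1/\phi')'|\,dx=\lambda^{-1}|1/\phi'(a)-1/\phi'(b)|\leq 2\lambda^{-1}$ using monotonicity of $\phi'$. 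Thus $|\int_a^b e^{i\lambda\phi}\,dx|\leq C_1 \lambda^{-1}$ with an absolute $C_1$.

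\textbf{Step 2 (Inductive step $k\geq 2$).} Assume the bound holds for $k-1$. Since $|\phi^{(k)}|\geq 1$, the function $\phi^{(k-1)}$ is strictly monotone, so it vanishes at most at one point $c\in[a,b]$ (if it has no zero, the argument is simpler and one interval suffices). Fix $\delta>0$ and split
\begin{equation*}
[a,b]=\bigl([a,b]\cap [c-\delta,c+\delta]\bigr)\cup I_{-}\cup I_{+},
\end{equation*}
where $I_{\pm}$ are the remaining pieces. On $[c-\delta,c+\delta]\cap[a,b]$ I use the trivial bound of length $\leq 2\delta$. On $I_{\pm}$, the mean value theorem gives $|\phi^{(k-1)}(x)|\geq \delta$, so applying the inductive hypothesis to $\phi/\delta$ (whose $(k-1)$-th derivative is $\geq 1$ in absolute value) with the rescaled parameter $\lambda\delta$, I obtain the bound $C_{k-1}(\lambda\delta)^{-1/(k-1)}$ on each piece. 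Summing yields
\begin{equation*}
\Bigl|\int_a^b e^{i\lambda\phi}\,dx\Bigr|\leq 2\delta+2C_{k-1}(\lambda\delta)^{-1/(k-1)}.
\end{equation*}
Choosing $\delta=\lambda^{-1/k}$ equalizes the two terms and produces the desired bound $C_k\lambda^{-1/k}$.

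\textbf{Step 3 (Insertion of the amplitude $f$).} Let $F(x)=\int_a^x e^{i\lambda\phi(t)}\,dt$. By Steps 1--2 applied on $[a,x]$, $\sup_{x\in[a,b]}|F(x)|\leq C_k\lambda^{-1/k}$. A final integration by parts gives
\begin{equation*}
\int_a^b e^{i\lambda\phi(x)}f(x)\,dx=f(b)F(b)-\int_a^b F(x)f'(x)\,dx,
\end{equation*}
so
\begin{equation*}
\Bigl|\int_a^b e^{i\lambda\phi}f\,dx\Bigr|\leq C_k\lambda^{-1/k}\Bigl(|f(b)|+\int_a^b|f'(x)|\,dx\Bigr),
\end{equation*}
which is the stated inequality.

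The only delicate point is the base case $k=1$, where one must either assume or justify monotonicity of $\phi'$ to control $\int |(1/\phi')'|\,dx$; for $k\geq 2$ the splitting argument automatically avoids this issue because $|\phi^{(k)}|\geq 1$ forces $\phi^{(k-1)}$ to be monotone. The choice $\delta=\lambda^{-1/k}$ in Step~2 is what produces the sharp exponent $1/k$, and the constant $C_k$ can be tracked through the induction as $C_k\leq 2+2C_{k-1}$.
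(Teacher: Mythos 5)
The paper does not supply its own proof of Lemma~2.11; it simply refers the reader to Stein's monograph, and your three-step argument (integration by parts for the base case, inductive splitting around the unique zero of $\phi^{(k-1)}$ with the balancing choice $\delta=\lambda^{-1/k}$, and a final integration by parts to insert the amplitude $f$) is exactly the standard proof found there, correctly carried out. You are also right to flag the delicate point: for $k=1$ the hypothesis $|\phi'|\geq 1$ alone does not give the conclusion --- one must additionally assume $\phi'$ monotone, which is a hypothesis in Stein's statement that the paper's Lemma~2.11 has silently dropped; without it the quantity $\int_a^b\bigl|\bigl(1/\phi'\bigr)'\bigr|\,dx$ is uncontrolled and the bound can fail. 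For $k\geq2$ the condition $|\phi^{(k)}|\geq 1$ forces $\phi^{(k-1)}$ (and ultimately $\phi'$ at the base of the induction) to be monotone, so the issue does not recur, as you observe.
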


For the proof of Lemma 2.11, we refer the readers to
Page 72, P. 334 of  \cite{Stein}.

\bigskip
\bigskip

\section{The bound of some complex integrals}
\medskip
\setcounter{equation}{0}
\setcounter{Theorem}{0} \setcounter{Lemma}{0}\setcounter{Definition}{0}\setcounter{Proposition}{3}

\setcounter{section}{3}

In this section, for any $b\in \R$ and $a\geq0$, by  using
Cauchy's principal value,
the Fourier transformation of generalized functions,  Gamma
function of Weierstrass definition,
Euler product formula as well as the Dirichlet test, for
$b\in \R$,  $a\geq0,c\geq0,d_{0}>0$
we prove that the following four types of complex integrals
\begin{eqnarray*}
&&P.V.\int_{\SR}e^{-it\rho}\rho^{-1+ib}d\rho,
P.V.\int_{|\rho|\geq a}e^{-it\rho}\rho^{-1+ib}d\rho,\nonumber\\
&&
P.V.\int_{|\rho|\leq a}e^{-it\rho}\rho^{-1+ib}d\rho,
P.V.\int_{c\leq |\rho|\leq d_{0}}e^{-it\rho}\rho^{-1+ib}d\rho
\end{eqnarray*}
 converge. As a byproduct, for $b\neq 0$,  $a_{j}\in \R(j=1,2,3,4)$
 and $\epsilon>0,M>0$, we prove that
 \begin{eqnarray*}
&&\int_{\SR}e^{-it\rho}\rho^{-1+ib}d\rho,\int_{a_{1}}^{+\infty}e^{-it\rho}\rho^{-1+ib}d\rho,
\int_{-\infty}^{a_{2}}e^{-it\rho}\rho^{-1+ib}d\rho,\nonumber\\&&
\int_{a_{3}\leq \rho\leq a_{4}}e^{-it\rho}\rho^{-1+ib}d\rho,
\int_{\epsilon\leq |\rho|\leq M}e^{-it\rho}\rho^{-1+ib}d\rho
\end{eqnarray*}
converge. These results generalized the result of the last line
 of page 205 of
 \cite{Vega1992}.

For $b,c,d_{0}\in \R,$     we also prove that
\begin{eqnarray*}
\left|b\int_{c}^{d_{0}}e^{-it\rho}\rho^{-1+ib}d\rho\right|\leq C(1+e^{-\pi b})(1+|b|)^{2}.
\end{eqnarray*}
This extends Lemma 1 of \cite{BLN2021}.

\begin{Lemma}\label{3.1}
Let $b\in \R.$ Then, we have
\begin{align}
\int_{\SR}\left[\int_{0}^{+\infty}\rho^{-1+ib}d\rho\right]\phi(b) db &
=2\pi\phi(0)=2\pi\int_{\SR}\delta (b)\phi(b)db.\label{3.01}
\end{align}
Thus,  we have
\begin{eqnarray*}
\int_{0}^{+\infty}\rho^{-1+ib}d\rho=2\pi\delta (b)
\end{eqnarray*}
in the sense of distribution.
Here $\delta(b)$ is the Dirac function and $\phi\in C_{0}^{\infty}(\R)$.
\end{Lemma}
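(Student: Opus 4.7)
The plan is to reduce the claimed identity to the classical distributional Fourier identity $\int_{-\infty}^{+\infty}e^{ibs}ds=2\pi\delta(b)$ via the change of variable $\rho=e^s$. Formally, under this substitution $d\rho=e^s\,ds$ and $\rho^{-1+ib}=e^{(-1+ib)s}$, so that $\rho^{-1+ib}d\rho=e^{ibs}ds$ and the inner integral becomes $\int_{-\infty}^{+\infty}e^{ibs}ds$. Once the identity is in this form, pairing against $\phi\in C_0^\infty(\R)$ and interchanging the order of integration should yield $\int_{-\infty}^{+\infty}\widehat{\phi}(-s)\,ds$ (with the convention $\widehat{\phi}(s)=\int\phi(b)e^{-ibs}db$), and the Fourier inversion formula applied at $b=0$ gives exactly $2\pi\phi(0)$.

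More precisely, I would first truncate: for $N>0$ set
\begin{eqnarray*}
I_N(b)=\int_{1/N}^{N}\rho^{-1+ib}d\rho=\int_{-\log N}^{\log N}e^{ibs}ds=\frac{2\sin(b\log N)}{b}.
\end{eqnarray*}
Since $I_N(b)$ is bounded pointwise by $2\log N$ near $b=0$ and by $2/|b|$ for $|b|$ bounded away from $0$, and since $\phi\in C_0^\infty$, I can apply Fubini on the truncated double integral to obtain
\begin{eqnarray*}
\int_{\SR}I_N(b)\phi(b)db=\int_{-\log N}^{\log N}\left(\int_{\SR}\phi(b)e^{ibs}db\right)ds=\int_{-\log N}^{\log N}\widehat{\phi}(-s)\,ds.
\end{eqnarray*}
Letting $N\to\infty$, the right-hand side converges to $\int_{\SR}\widehat{\phi}(-s)\,ds=2\pi\phi(0)$ by Fourier inversion at $b=0$ (valid because $\widehat{\phi}$ is Schwartz). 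The identification of $\lim_{N\to\infty}I_N(b)$ with the distribution $\int_0^{+\infty}\rho^{-1+ib}d\rho$ then completes the distributional identity.

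The main obstacle is that $\int_0^{+\infty}\rho^{-1+ib}d\rho$ does not converge as an ordinary improper integral for any $b\in\R$, so the identity must be interpreted distributionally and the interchange of integrals is not automatic. This is handled by the two-sided truncation $[1/N,N]$, which on the $s$-side becomes the symmetric $[-\log N,\log N]$ and is therefore compatible with Dirichlet-type pointwise convergence of $\frac{2\sin(b\log N)}{b}$ to $2\pi\delta(b)$. The passage to the limit under $\int(\cdot)\phi(b)db$ is then exactly the standard Dirichlet kernel argument, whose uniform bound on $\int_{\SR}\bigl|\frac{\sin(bR)}{b}\bigr|\cdot\mathbf{1}_{|b|\le A}db$ together with the decay of $\widehat{\phi}$ on $|s|\ge\log N$ yields the result. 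The more elementary route via $\widehat{\phi}\in\mathcal{S}(\R)$ (giving absolute convergence of $\int\widehat{\phi}(-s)ds$) is the cleanest, and I would present it in that form.
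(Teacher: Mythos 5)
Your proposal takes essentially the same route as the paper's second proof: the logarithmic change of variable $\rho=e^{s}$ turns the inner integral into $\int e^{ibs}\,ds$, the order of integration is exchanged, and Fourier inversion applied at the origin yields $2\pi\phi(0)$. The one substantive difference is that you justify the interchange rigorously: the paper simply swaps the $b$- and $t$-integrals even though $\int_{0}^{\infty}\rho^{-1+ib}\,d\rho$ is divergent as an ordinary integral, while you first truncate to $[1/N,N]$ (equivalently $[-\log N,\log N]$ in $s$), apply Fubini on the resulting bounded region, and then pass to the limit using absolute integrability of $\widehat\phi\in\mathcal S(\R)$. This makes your argument a cleaner and strictly more careful version of the paper's; the paper's first method (regularization by $e^{-\alpha|t|}$, producing the Poisson kernel $\frac{2\alpha}{\alpha^{2}+b^{2}}\to 2\pi\delta(b)$) achieves the same end by a different smoothing but is not needed once the truncation argument is in place.
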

\noindent{\bf Proof.} We present two methods to prove Lemma 3.1.
We present the first method to prove Lemma 3.1.
For $b\in \R,$ we have
\begin{eqnarray}
&&\int_{\SR}\left[\int_{0}^{+\infty}\rho^{-1+ib}d\rho\right]\phi(b)db\nonumber\\&&=
\int_{\SR} \left[\int_{0}^{+\infty}\rho^{ib}d\ln\rho\ \right] \phi(b)db
 \quad\underline{\underline{t=\ln\rho}}\ \
  \nonumber\\&&=\int_{\SR}\left[\int_{-\infty}^{+\infty}e^{ibt}dt\right]\phi(b)db=
\int_{\SR}\left[\int_{-\infty}^{+\infty}e^{ibt}dt\right]\phi(b)db\nonumber\\&&
=\int_{\SR}\left[\lim\limits_{\alpha\longrightarrow 0^{+}}
\int_{-\infty}^{+\infty}e^{ibt}e^{-\alpha|t|}dt\right]\phi(b)db\nonumber\\&&=\int_{\SR}\left[
\lim\limits_{\alpha\longrightarrow 0^{+}}\int_{0}^{+\infty}e^{ibt}e^{-\alpha t}dt+
\lim\limits_{\alpha\longrightarrow 0^{+}}\int_{-\infty}^{0}e^{ibt}e^{\alpha t}dt\right]\phi(b)db\nonumber\\
&&=\int_{\SR}\left[\lim\limits_{\alpha\longrightarrow 0^{+}}\frac{2\alpha}{\alpha^{2}+b^{2}}\right]\phi(b)db
=2\pi\phi(0).\label{3.02}
\end{eqnarray}
Thus, we have
\begin{eqnarray}
\int_{0}^{+\infty}\rho^{-1+ib}d\rho=
\lim\limits_{\alpha\longrightarrow 0^{+}}
\frac{2\alpha}{\alpha^{2}+b^{2}}=2\pi\delta(b)\label{3.03}
\end{eqnarray}
in the sense of distribution.
Now we present the second method to prove Lemma 3.1.
For any  $\phi\in C_{0}^{\infty}(\R)$, we have
\begin{eqnarray}
&&\int_{\SR}\left(\int_{0}^{+\infty}\rho^{-1+ib}d\rho\right)\phi db\ \
\underline{\underline{t=\ln\rho}}\ \
\int_{\SR}\left(\int_{\SR}e^{ibt}dt\right)\phi db\nonumber\\
&&=\int_{\SR}\int_{\SR}e^{ibt}\phi dbdt=
2\pi\int_{\SR}(\mathscr{F}\phi)(-t)dt \ \underline{\underline{t=-s}}
\ \ \nonumber\\&&=2\pi\int_{\SR}(\mathscr{F}\phi)(s)ds=2\pi\phi(0).\label{3.04}
\end{eqnarray}
From (\ref{3.04}),   we have
\begin{eqnarray*}
\int_{0}^{+\infty}\rho^{-1+ib}d\rho=2\pi\delta (b)
\end{eqnarray*}
in the sense of distribution.

This completes the proof of Lemma 3.1.

\begin{Lemma}\label{3.2}
For  $b\in \R,$ we have
\begin{align}
\int_{\SR}\rho^{-1+ib}d\rho & =2\pi(1-e^{-\pi b})\delta (b)\label{3.05}
\end{align}
in the sense of distribution.
Here $\delta(b)$ is the Dirac function.
\end{Lemma}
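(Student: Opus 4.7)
The plan is to split the real-line integral into the positive and negative half-axes, apply Lemma 3.1 directly to the positive part, and reduce the negative part to Lemma 3.1 after invoking the branch convention supplied by Lemma 2.1. The key observation is that for $\rho<0$ we must fix a meaning of $\rho^{-1+ib}$, and the natural choice inherited from the paper's earlier conventions is $\rho^{-1+ib}:=(\rho+i0)^{-1+ib}$. By Lemma 2.1 with $\lambda=-1+ib$, for $x<0$ we obtain
\begin{equation*}
(x+i0)^{-1+ib}=e^{i(-1+ib)\pi}|x|^{-1+ib}=-e^{-\pi b}|x|^{-1+ib}.
\end{equation*}

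First I would write
\begin{equation*}
\int_{\mathbb{R}}\rho^{-1+ib}d\rho=\int_{0}^{+\infty}\rho^{-1+ib}d\rho+\int_{-\infty}^{0}\rho^{-1+ib}d\rho,
\end{equation*}
and apply Lemma 3.1 to see that the first summand equals $2\pi\delta(b)$ in $\mathscr{D}'(\mathbb{R}_b)$. Next, using the branch identity above and the substitution $s=-\rho$, I would rewrite the second summand as
\begin{equation*}
\int_{-\infty}^{0}(\rho+i0)^{-1+ib}d\rho=-e^{-\pi b}\int_{0}^{+\infty}s^{-1+ib}ds,
\end{equation*}
and another appeal to Lemma 3.1 yields $-2\pi e^{-\pi b}\delta(b)$. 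Summing gives the claimed identity $2\pi(1-e^{-\pi b})\delta(b)$.

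To make the argument rigorous as a distributional equality in $b$, I would pair both sides against an arbitrary $\phi\in C_{0}^{\infty}(\mathbb{R})$. The left-hand pairing becomes
\begin{equation*}
\int_{\mathbb{R}}\!\Bigl[\int_{0}^{+\infty}\rho^{-1+ib}d\rho\Bigr]\phi(b)\,db+\int_{\mathbb{R}}\!\Bigl[\int_{-\infty}^{0}(\rho+i0)^{-1+ib}d\rho\Bigr]\phi(b)\,db.
\end{equation*}
The first integral is handled by Lemma 3.1 directly. For the second, the factor $e^{-\pi b}$ may be absorbed into the test function $\phi$ (it is smooth and bounded on $\mathrm{supp}\,\phi$), after which Lemma 3.1 applies to $\tilde{\phi}(b):=e^{-\pi b}\phi(b)$. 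As in the proof of Lemma 3.1, convergence of the inner integral is meant in the Abel/distributional sense, i.e.\ through the regularization $\int_{0}^{+\infty}s^{-1+ib}e^{-\alpha s}ds$ and the limit $\alpha\to 0^{+}$, which the proof of Lemma 3.1 already justified.

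The only real subtlety, and what I expect to be the main obstacle, is fixing and justifying the branch of $\rho^{-1+ib}$ on the negative half-line; once the $(\rho+i0)$-convention of Lemma 2.1 is committed to, the remaining work is a clean application of Lemma 3.1 twice plus a one-line change of variables. Note that the resulting distribution $2\pi(1-e^{-\pi b})\delta(b)$ vanishes when tested against a single test function (since $(1-e^{-\pi b})|_{b=0}=0$), but the unsimplified form is retained because in subsequent lemmas the factor $(1-e^{-\pi b})$ will be paired with other $b$-dependent quantities that do not vanish at $b=0$.
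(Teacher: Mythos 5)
Your proof is correct and follows essentially the same route as the paper's: split $\int_{\mathbb{R}}$ into $\int_{0}^{\infty}+\int_{-\infty}^{0}$, apply Lemma 3.1 to the first piece, perform the substitution $\rho=-s$ in the second piece and pick up the factor $-e^{-\pi b}$ from the branch, then invoke Lemma 3.1 again. The paper's version (equations (3.06)--(3.07)) simply writes down the substitution and the resulting $-2\pi e^{-\pi b}\delta(b)$ without pausing to name the $(\rho+i0)$-convention or to explain how the $b$-dependent factor $e^{-\pi b}$ is handled inside the distributional pairing; you make both of those points explicit, which is a genuine improvement in rigor rather than a different method. Your closing observation — that $2\pi(1-e^{-\pi b})\delta(b)$ is in fact the zero distribution since the coefficient vanishes at $b=0$, and that the unsimplified form is retained only because the factor $(1-e^{-\pi b})$ is carried forward and combined with other $b$-dependent quantities in later lemmas — is also correct and is something the paper leaves unsaid.
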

\noindent{\bf Proof.}For $b\in \R,$ we have
\begin{eqnarray}
\int_{-\infty}^{0}\rho^{-1+ib}d\rho\ \underline{\underline{\rho=-s}}\ \
-e^{-\pi b}\int_{0}^{+\infty}s^{-1+ib}ds=-2\pi e^{-\pi b}\delta(b)\label{3.06}.
\end{eqnarray}
From (\ref{3.06})  and Lemma 3.1,   we have
\begin{eqnarray}
&&\int_{\SR}\rho^{-1+ib}d\rho=\int_{0}^{\infty}\rho^{-1+ib}d\rho+\int_{-\infty}^{0}\rho^{-1+ib}d\rho\nonumber\\&&
=(1-e^{-\pi b})\int_{0}^{+\infty}\rho^{-1+ib}d\rho=2\pi(1-e^{-\pi b})\delta (b)\label{3.07}.
\end{eqnarray}

This completes the proof of Lemma 3.2.

\begin{Lemma}\label{3.3}
Let $b\in \R,b\neq0.$ Then,   we have
\begin{align}
\int_{0}^{1}\rho^{-1+ib}d\rho & =\pi\delta (b)-\frac{i}{b}=-\frac{i}{b}.
\end{align}\label{3.08}

Here $\delta(b)$ is the Dirac function.
\end{Lemma}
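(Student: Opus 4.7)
The plan is to mimic the two-method approach used for Lemma 3.1, but on the truncated interval $(0,1)$. Both methods reduce to an Abel-type regularization.

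For the first approach, I would introduce $\alpha>0$ and consider the absolutely convergent integral
\begin{equation*}
I_\alpha(b):=\int_0^1 \rho^{-1+\alpha+ib}\,d\rho=\frac{\rho^{\alpha+ib}}{\alpha+ib}\bigg|_0^1=\frac{1}{\alpha+ib},
\end{equation*}
where the lower endpoint vanishes because $\alpha>0$. For the pointwise statement, I fix $b\neq 0$ and let $\alpha\to 0^+$, which yields immediately $I_\alpha(b)\to \frac{1}{ib}=-\frac{i}{b}$. This is the second equality in the lemma.

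To obtain the distributional identity $\int_0^1\rho^{-1+ib}d\rho=\pi\delta(b)-\frac{i}{b}$, I would pair against a test function $\phi\in C_0^\infty(\R)$ and pass the $\alpha\to 0^+$ limit under the $b$-integral via dominated convergence. Using the real/imaginary decomposition
\begin{equation*}
\frac{1}{\alpha+ib}=\frac{\alpha}{\alpha^2+b^2}-i\,\frac{b}{\alpha^2+b^2},
\end{equation*}
together with the Poisson-kernel identity $\frac{\alpha}{\alpha^2+b^2}\to \pi\delta(b)$ in $\mathcal{S}'(\R)$ already exploited in the first proof of Lemma 3.1, and the standard fact that $\frac{b}{\alpha^2+b^2}\to \mathrm{P.V.}\frac{1}{b}$, I recover exactly $\pi\delta(b)-i\,\mathrm{P.V.}\frac{1}{b}$. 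Since the lemma assumes $b\neq 0$, the $\delta$-contribution drops out and the equality with $-i/b$ is automatic.

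As an alternative route mirroring the first computation of Lemma 3.1, I would perform the substitution $t=\ln\rho$, which sends $(0,1)$ onto $(-\infty,0)$ and reduces the integral to $\int_{-\infty}^0 e^{ibt}dt$; inserting the convergence factor $e^{\alpha t}$ with $\alpha>0$ produces $\frac{1}{\alpha+ib}$, and the same Poisson-kernel limit yields the conclusion. Because $b\neq 0$ is assumed, there is no genuine obstacle: the only mildly delicate point is justifying passage to the distributional limit, which is handled by dominated convergence exactly as in Lemma 3.1.
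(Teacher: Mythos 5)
Your proposal is correct and matches the paper's proof: the paper's argument is precisely your ``alternative route'' (substitute $t=\ln\rho$, insert the Abel factor $e^{\alpha t}$, compute $\lim_{\alpha\to 0^{+}}\frac{1}{\alpha+ib}=\lim_{\alpha\to 0^{+}}\frac{\alpha-ib}{\alpha^{2}+b^{2}}$, and identify $\pi\delta(b)-\frac{i}{b}$ via the Poisson-kernel limit). Your first method, regularizing by $\rho^{\alpha}$ directly without the logarithmic change of variables, is the same computation written in the $\rho$-variable, so the two proofs are essentially identical.
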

\noindent{\bf Proof.}
For $b\in \R,b\neq0$,  we have
\begin{eqnarray}
&&\int_{0}^{1}\rho^{-1+ib}d\rho=
\int_{0}^{1}\rho^{ib}d\ln\rho\ \ \underline{\underline{t=\ln\rho}}\ \
  \nonumber\\&&=\int_{-\infty}^{0}e^{ibt}dt=
\int_{-\infty}^{0}e^{ibt}dt=\lim\limits_{\alpha\longrightarrow 0^{+}}\int_{-\infty}^{0}e^{ibt}e^{\alpha t}dt\nonumber\\&&=
\lim\limits_{\alpha\longrightarrow 0^{+}}\int_{-\infty}^{0}e^{ibt}e^{\alpha t}dt\nonumber\\
&&=\lim\limits_{\alpha\longrightarrow 0^{+}}\frac{\alpha-ib}{\alpha^{2}+b^{2}}
=\pi\delta (b)-\frac{i}{b}=-\frac{i}{b}\label{3.09}
\end{eqnarray}
in the sense of distribution.
In the last equality, we use $\lim\limits_{\alpha\longrightarrow 0^{+}}\frac{\alpha}{\alpha^{2}+b^{2}}=\pi \delta(b)$.

This completes the proof of Lemma 3.3.

\begin{Lemma}\label{3.4}
Let $b\in \R,b\neq0.$ Then,  we have
\begin{align}
\int_{-1}^{0} \rho^{-1+ib}d\rho=\frac{ie^{-\pi b}}{b}.\label{3.010}
\end{align}
\end{Lemma}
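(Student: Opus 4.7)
The plan is to reduce Lemma 3.4 to the already-proved Lemma 3.3 via the same change-of-variables trick that was used in the proof of Lemma 3.2 for the half-line integral $\int_{-\infty}^0 \rho^{-1+ib}\,d\rho$. The only difference is that we are working with the finite interval $[-1,0]$ instead of $(-\infty,0]$, but the mechanism for interpreting $\rho^{-1+ib}$ at negative arguments is exactly the same.

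Concretely, I would first substitute $s = -\rho$ so that $d\rho = -ds$ and the interval $[-1,0]$ is carried onto $[0,1]$ with the orientation reversed; this transforms the integral into $\int_0^1 (-s)^{-1+ib}\,ds$. Next, I would invoke the branch convention of Lemma 2.1, namely $(x+i0)^{\lambda} = e^{i\lambda\pi}|x|^{\lambda}$ for $x<0$, applied with $\lambda = -1+ib$ and $x = -s < 0$, to identify
$$(-s)^{-1+ib} = e^{i\pi(-1+ib)}\,s^{-1+ib} = -e^{-\pi b}\,s^{-1+ib}.$$
Substituting back yields
$$\int_{-1}^{0} \rho^{-1+ib}\,d\rho \;=\; -e^{-\pi b}\int_0^1 s^{-1+ib}\,ds.$$
Finally, Lemma 3.3 supplies $\int_0^1 s^{-1+ib}\,ds = -\tfrac{i}{b}$ for $b \neq 0$, so multiplying through produces exactly the claimed value $\tfrac{i e^{-\pi b}}{b}$.

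There is no real obstacle: the argument is mechanical once the correct branch of $\rho^{-1+ib}$ at negative $\rho$ is fixed, and both the branch convention (Lemma 2.1) and the $[0,1]$-evaluation (Lemma 3.3) are already in hand. The only subtlety worth flagging is that the hypothesis $b \neq 0$ is inherited from Lemma 3.3, where it is needed both for convergence at the origin and to write the answer as $-i/b$. As a consistency check one can combine the formulas from Lemma 3.3 and Lemma 3.4 (suitably extended to the full half-lines) and recover the distributional identity $\int_{\SR} \rho^{-1+ib}\,d\rho = 2\pi(1 - e^{-\pi b})\delta(b)$ of Lemma 3.2, which also serves to pin down the overall sign conventions.
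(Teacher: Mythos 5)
Your proof is correct and follows essentially the same route as the paper: substitute $\rho=-s$, pick up the factor $-e^{-\pi b}$ from the branch convention of Lemma 2.1 (which the paper leaves implicit), and then apply Lemma 3.3 to evaluate $\int_0^1 s^{-1+ib}\,ds = -i/b$.
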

\noindent{\bf Proof.}
For $b\in \R,$ we have
\begin{eqnarray}
\int_{-1}^{0}\rho^{-1+ib}d\rho\ \underline{\underline{\rho=-s}}
\ \ -e^{-\pi b}\int_{0}^{1}s^{-1+ib}ds\label{3.011}.
\end{eqnarray}
From (\ref{3.011}) and Lemma 3.3, we have
\begin{eqnarray}
&&\int_{-1}^{0} \rho^{-1+ib}d\rho=\frac{ie^{-\pi b}}{b}\label{3.012}
\end{eqnarray}
in the sense of distribution.

This completes the proof of Lemma 3.4.

\begin{Lemma}\label{3.5}
Let $b\in \R,b\neq0.$ Then, for a.e. $b\in \R,$ we have
\begin{align}
\int_{1}^{+\infty}\rho^{-1+ib}d\rho  =\pi\delta (b)+\frac{i}{b}=
\frac{i}{b}.\label{3.013}
\end{align}
Here $\delta(b)$ is the Dirac function.
\end{Lemma}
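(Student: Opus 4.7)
The plan is to mimic the computation already carried out for Lemma 3.3 almost verbatim, the only change being that the substitution $t=\ln\rho$ now transports the interval $[1,+\infty)$ to $[0,+\infty)$ rather than $(-\infty,0]$. Concretely, I would first write
\begin{eqnarray*}
\int_{1}^{+\infty}\rho^{-1+ib}d\rho=\int_{1}^{+\infty}\rho^{ib}\,d\ln\rho\;
\underline{\underline{t=\ln\rho}}\;\int_{0}^{+\infty}e^{ibt}\,dt,
\end{eqnarray*}
so that the problem reduces to evaluating the oscillatory integral $\int_{0}^{+\infty}e^{ibt}\,dt$.

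Next, exactly as in the second half of the proof of Lemma 3.3, I would regularize by inserting the Abel factor $e^{-\alpha t}$ with $\alpha>0$ and passing $\alpha\longrightarrow 0^{+}$:
\begin{eqnarray*}
\int_{0}^{+\infty}e^{ibt}\,dt
=\lim_{\alpha\longrightarrow 0^{+}}\int_{0}^{+\infty}e^{ibt}e^{-\alpha t}\,dt
=\lim_{\alpha\longrightarrow 0^{+}}\frac{1}{\alpha-ib}
=\lim_{\alpha\longrightarrow 0^{+}}\frac{\alpha+ib}{\alpha^{2}+b^{2}}.
\end{eqnarray*}
Splitting real and imaginary parts, the real part yields $\pi\delta(b)$ via the standard distributional identity $\lim_{\alpha\to 0^{+}}\frac{\alpha}{\alpha^{2}+b^{2}}=\pi\delta(b)$ (the same identity invoked in Lemma 3.3), while the imaginary part yields $i\,\mathrm{PV}(1/b)$, which for $b\neq 0$ is simply $i/b$. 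Combining,
\begin{eqnarray*}
\int_{1}^{+\infty}\rho^{-1+ib}d\rho=\pi\delta(b)+\frac{i}{b},
\end{eqnarray*}
and the hypothesis $b\neq 0$ kills the delta term, giving $i/b$.

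There is essentially no obstacle here: the substitution is a bijection between $[1,\infty)$ and $[0,\infty)$ with the correct measure, the integral $\int_{0}^{\infty}e^{ibt}e^{-\alpha t}\,dt=1/(\alpha-ib)$ is elementary, and the two distributional limits are standard and have already been used in Lemma 3.3. The only point requiring any care is the sign: in Lemma 3.3 the substitution $t=\ln\rho$ on $(0,1]$ gave $(-\infty,0]$ and produced $\alpha-ib$ in the numerator after taking real and imaginary parts, whereas here the interval is $[0,+\infty)$, which flips the sign of the imaginary part from $-i/b$ to $+i/b$. This sign flip is exactly what produces $\frac{i}{b}$ on $[1,+\infty)$ as opposed to $-\frac{i}{b}$ on $(0,1]$, and it is consistent with the overall distributional identity $\int_{0}^{+\infty}\rho^{-1+ib}d\rho=2\pi\delta(b)$ established in Lemma 3.1, since the two half-contributions then sum to $2\pi\delta(b)$ (with the $\pm i/b$ cancelling).
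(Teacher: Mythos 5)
Your proof is correct, but it takes a genuinely different (more direct) route than the paper's. The paper proves Lemma 3.5 in one line by subtraction: since $\int_{0}^{\infty}\rho^{-1+ib}d\rho=2\pi\delta(b)$ (Lemma 3.1) and $\int_{0}^{1}\rho^{-1+ib}d\rho=\pi\delta(b)-\tfrac{i}{b}$ (Lemma 3.3), one gets $\int_{1}^{\infty}=\int_{0}^{\infty}-\int_{0}^{1}=\pi\delta(b)+\tfrac{i}{b}$. You instead recompute from scratch via the substitution $t=\ln\rho$ followed by Abel regularization, which is a transcription of the method used inside the proof of Lemma 3.3 to the half-line $[0,+\infty)$. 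Both derivations are valid and rely on the same two distributional identities ($\lim_{\alpha\to 0^{+}}\tfrac{\alpha}{\alpha^{2}+b^{2}}=\pi\delta(b)$ and $\lim_{\alpha\to 0^{+}}\tfrac{b}{\alpha^{2}+b^{2}}=\mathrm{PV}(1/b)$); the paper's route is shorter given what has already been established, while yours is self-contained and makes the sign flip $-i/b\leftrightarrow +i/b$ between $(0,1]$ and $[1,\infty)$ transparent. Your final consistency check against Lemma 3.1 is a nice sanity check that the paper's terse proof leaves implicit.
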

\noindent{\bf Proof.} Combining Lemma 3.1 with Lemma 3.3,
we have that Lemma 3.5 is valid.

This completes the proof of Lemma 3.5.

\begin{Lemma}\label{3.6}
Let $b\in \R,b\neq0.$ Then, for a.e. $b\in \R,$ we have
\begin{align}
& \int_{-\infty}^{-1}\rho^{-1+ib}d\rho=-\frac{ie^{-\pi b}}{b}.\label{3.014}
\end{align}

\end{Lemma}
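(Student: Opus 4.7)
The plan is to mirror the strategy used in Lemma 3.4, which handled the corresponding integral over $[-1,0]$ by a sign flip, and simply combine it with the half-line result of Lemma 3.5 instead of Lemma 3.3.

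First I would perform the change of variables $\rho = -s$ (so $d\rho = -ds$, and $s$ runs from $+\infty$ down to $1$). Using Lemma 2.1 to interpret $(-s)^{-1+ib}$ for $s>0$ as $e^{-\pi b}s^{-1+ib}$ (up to the sign picked up from $e^{i\pi(-1+ib)}=-e^{-\pi b}$), and tracking the flip in the limits, the integral collapses to
\begin{equation*}
\int_{-\infty}^{-1}\rho^{-1+ib}\,d\rho \;=\; -e^{-\pi b}\int_{1}^{+\infty}s^{-1+ib}\,ds.
\end{equation*}
This is the exact analogue of the identity used in the proof of Lemma 3.4, just with the finite interval $(0,1)$ replaced by the infinite interval $(1,+\infty)$.

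Next I would invoke Lemma 3.5, which supplies $\int_{1}^{+\infty}s^{-1+ib}\,ds = i/b$ for $b\neq 0$ (the $\pi\delta(b)$ contribution vanishes in this regime). Substituting gives
\begin{equation*}
\int_{-\infty}^{-1}\rho^{-1+ib}\,d\rho = -e^{-\pi b}\cdot\frac{i}{b} = -\frac{ie^{-\pi b}}{b},
\end{equation*}
which is the asserted identity.

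There is no real obstacle here: the only subtlety is justifying the branch choice $(-s)^{-1+ib} = e^{-i\pi(-1+ib)}s^{-1+ib}$, which is exactly the $x<0$ case of Lemma 2.1 and was already used in the same form for Lemma 3.4. Convergence of the improper integral at $+\infty$ is taken care of by Lemma 3.5 (which, in turn, rests on the Dirichlet test applied to the oscillatory factor $\rho^{ib}$), so no further analytic work is required.
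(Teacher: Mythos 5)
Your proof is correct, but it does not follow the same route as the paper. The paper's proof of Lemma~3.6 writes $\int_{-\infty}^{-1}=\int_{-\infty}^{0}-\int_{-1}^{0}$ and plugs in the already-known values $\int_{-\infty}^{0}\rho^{-1+ib}\,d\rho=-2\pi e^{-\pi b}\delta(b)$ (from (3.06)/Lemma~3.1) and $\int_{-1}^{0}\rho^{-1+ib}\,d\rho=\tfrac{ie^{-\pi b}}{b}$ (Lemma~3.4), then lets the $\delta(b)$ term vanish for $b\neq0$. You instead apply the substitution $\rho=-s$ to the whole half-line $(-\infty,-1]$, pulling out the factor $-e^{-\pi b}$ and reducing to $\int_{1}^{+\infty}s^{-1+ib}\,ds$, and then invoke Lemma~3.5. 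The two arguments are logically equivalent once unwound (Lemma~3.5 is itself proved from Lemmas~3.1 and~3.3, which also underlie (3.06) and Lemma~3.4), so they rest on the same underlying calculations; the difference is purely in which bookkeeping is done first. Your version is arguably a bit cleaner since it parallels the paper's proof of Lemma~3.4 exactly (substitute, then quote the positive half-line lemma), whereas the paper's proof of 3.6 introduces the extra step of subtracting off the $(-1,0)$ piece and discarding a delta term. Both are fine; the branch-cut justification $(-s+i0)^{-1+ib}=-e^{-\pi b}s^{-1+ib}$ via Lemma~2.1 that you flag is indeed the only real point of care, and you handle it correctly.
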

\noindent{\bf Proof.}From Lemma 3.1,  (\ref{3.06}) and (\ref{3.010}),
we have that Lemma 3.6 is valid.

This completes the proof of Lemma 3.6.

\begin{Lemma}\label{3.7}
Let $b\in \R,b\neq0$. Then,  we have
\begin{eqnarray}
\left|\Gamma(ib)\right|=\left(\frac{\pi}{bsinh\pi b}\right)^{1/2}.\label{3.015}
\end{eqnarray}
Here  $sinh\pi b=\frac{e^{\pi b}-e^{-\pi b}}{2}$.
\end{Lemma}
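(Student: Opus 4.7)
The plan is to express $|\Gamma(ib)|^{2}$ as a product of two Gamma values and then reduce it via the reflection formula. First I would use the general identity $\overline{\Gamma(z)}=\Gamma(\bar z)$, which holds because $\Gamma$ is real on the positive real axis and the Weierstrass product representation (already invoked elsewhere in the paper) makes this symmetry transparent. Applied to $z=ib$ with $b\in\mathbb R$, this gives
\[
|\Gamma(ib)|^{2}=\Gamma(ib)\,\overline{\Gamma(ib)}=\Gamma(ib)\,\Gamma(-ib).
\]

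Next I would convert one of the factors so that the reflection formula $\Gamma(z)\Gamma(1-z)=\pi/\sin(\pi z)$ can be applied. Using the functional equation $\Gamma(1-ib)=(-ib)\,\Gamma(-ib)$, I would write
\[
\Gamma(-ib)=-\frac{1}{ib}\,\Gamma(1-ib),
\]
so that
\[
|\Gamma(ib)|^{2}=-\frac{1}{ib}\,\Gamma(ib)\,\Gamma(1-ib).
\]
Applying the reflection formula with $z=ib$ yields $\Gamma(ib)\Gamma(1-ib)=\pi/\sin(i\pi b)$, and since $\sin(i\pi b)=i\sinh(\pi b)$, this becomes $-i\pi/\sinh(\pi b)$.

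Substituting back, the two factors of $-i$ combine with $1/b$ to give
\[
|\Gamma(ib)|^{2}=-\frac{1}{ib}\cdot\frac{-i\pi}{\sinh(\pi b)}=\frac{\pi}{b\sinh(\pi b)},
\]
and taking the positive square root yields the claimed identity. The quantity on the right is positive for every $b\neq 0$ because $b$ and $\sinh(\pi b)$ share the same sign, so the square root is well defined.

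The proof is essentially a one-line manipulation once the three classical ingredients (conjugation symmetry of $\Gamma$, the functional equation $\Gamma(z+1)=z\Gamma(z)$, and the reflection formula) are on the table; there is no real obstacle, only bookkeeping of signs and the $\sin\to\sinh$ conversion through $\sin(i\theta)=i\sinh\theta$. If one wished to avoid citing the reflection formula, one could instead start from the Weierstrass product $1/\Gamma(z)=ze^{\gamma z}\prod_{n\ge 1}(1+z/n)e^{-z/n}$, compute $|1/\Gamma(ib)|^{2}=b^{2}\prod_{n\ge 1}(1+b^{2}/n^{2})$, and recognize the Euler product $\sinh(\pi b)/(\pi b)=\prod_{n\ge 1}(1+b^{2}/n^{2})$; this is consistent with the Gamma-function framework emphasized in the introduction of the paper and provides an alternate self-contained route should it be preferred.
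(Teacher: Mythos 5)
Your argument is correct, and the main route you take — conjugation symmetry $\overline{\Gamma(z)}=\Gamma(\bar z)$, the functional equation $\Gamma(1-ib)=(-ib)\Gamma(-ib)$, and the reflection formula $\Gamma(z)\Gamma(1-z)=\pi/\sin(\pi z)$ together with $\sin(i\pi b)=i\sinh(\pi b)$ — is genuinely different from the paper's. The paper instead works from the Weierstrass product $1/\Gamma(z)=ze^{\gamma z}\prod_{n\ge1}(1+z/n)e^{-z/n}$, takes absolute values to get $|1/\Gamma(ib)|=|b|\bigl[\prod_{n\ge1}(1+b^2/n^2)\bigr]^{1/2}$, and then evaluates the infinite product via Euler's $\sin(\pi z)/(\pi z)=\prod_{n\ge1}(1-z^2/n^2)$ at $z=ib$; this is exactly the ``alternate self-contained route'' you sketch in your last paragraph. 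Your reflection-formula route is shorter and leans on a more widely known identity, at the cost of importing one extra classical fact; the paper's product route is longer but stays entirely inside the two product formulas it has already cited from Grafakos, which keeps the exposition self-contained and sidesteps any sign/branch bookkeeping for the analytic continuation of $\Gamma$ to the imaginary axis. Both are valid; your sign calculation ($-\tfrac{1}{ib}\cdot\tfrac{-i\pi}{\sinh\pi b}=\tfrac{\pi}{b\sinh\pi b}$) and the positivity check on $b\sinh(\pi b)$ are both correct.
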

\noindent{\bf Proof.} From A.7 of \cite{G}, we have
\begin{eqnarray}
\frac{1}{\Gamma (z)}=ze^{\gamma z}
\prod\limits_{n=1}^{\infty}\left(1+\frac{z}{n}\right)e^{-z/n},\gamma=
\lim\limits_{n\longrightarrow +\infty}\left[\sum\limits_{k=1}^{n}
\frac{1}{k}-ln n\right].\label{3.016}
\end{eqnarray}
(\ref{3.016}) is called as Gamma function of Weierstrass
 definition and $\gamma$ is called as Euler-Mascheroni constant.
From (\ref{3.016}), we have
\begin{eqnarray}
\left|\frac{1}{\Gamma (ib)}\right|=
\left|ibe^{\gamma ib}\prod\limits_{n=1}^{\infty}
\left(1+\frac{ib}{n}\right)e^{-ib/n}\right|
=|b|\left[\prod\limits_{n=1}^{\infty}
\left(1+\frac{b^{2}}{n^{2}}\right)\right]^{1/2}.\label{3.017}
\end{eqnarray}
From A.8 of \cite{G}, we have
\begin{eqnarray}
\frac{sin \pi z}{\pi z}=\prod\limits_{n=1}^{\infty}
\left(1-\frac{z^{2}}{n^{2}}\right),\label{3.018}
\end{eqnarray}
which is called as {\bf Euler product formula}.
In particular, take $z=ib$ in (\ref{3.018}), we have
\begin{eqnarray}
\frac{sin i\pi b}{i\pi b}=\prod\limits_{n=1}^{\infty}
\left(1+\frac{b^{2}}{n^{2}}\right).\label{3.019}
\end{eqnarray}
Since
\begin{eqnarray}
sin i\pi b=\frac{e^{-\pi b}-e^{\pi b}}{2i}\label{3.020},
\end{eqnarray}
by using (\ref{3.019}), we have
\begin{eqnarray}
\prod\limits_{n=1}^{\infty}\left(1+\frac{b^{2}}{n^{2}}\right)
=\frac{sin i\pi b}{\pi ib}=\frac{sin h\pi b}{\pi b}.\label{3.021}
\end{eqnarray}
Combining (\ref{3.017}) with (\ref{3.021}), we have
\begin{eqnarray}
\left|\Gamma (ib)\right|=\left[\frac{\pi }{bsin h\pi b}\right]^{1/2}
=\left[\frac{2\pi }{b(e^{\pi b}-e^{-\pi b})}\right]^{1/2}.\label{3.022}
\end{eqnarray}

This completes the proof of Lemma 3.7.

\begin{Lemma}\label{3.8}
Let $b\in \R.$ Then, we have
\begin{align}
P.V.\int_{\SR}e^{-it\rho}\rho^{-1+ib}d\rho=C_{0}(b) .\label{3.023}
\end{align}
Here \bigskip

$\qquad C_{0}(b)=\left\{
        \begin{array}{ll}
         0, & \hbox{$t=0,b\in \R$;} \\
          i\pi sgn t , & \hbox{$t\neq0,b=0$;} \\
          t^{-ib}\Gamma(ib)(e^{\frac{b\pi}{2}}-e^{\frac{-3b\pi}{2}}),
           & \hbox{$t>0,b\neq0$};\\
            0, & \hbox{$t<0,b\neq0$.}
        \end{array}
      \right.$

\bigskip
In particular, we have
\begin{align}
\left|P.V.\int_{\SR}e^{-it\rho}\rho^{-1+ib}d\rho\right|=\left|C_{0}(b)\right| .\nonumber
\end{align}
Here \bigskip

$\qquad \left|C_{0}(b)\right|=\left\{
        \begin{array}{ll}
         0, & \hbox{$t=0,b\in \R$;} \\
          \pi, & \hbox{$t\neq0,b=0$;} \\
         \left[\frac{2\pi (1-e^{-2b\pi})}{b}\right]^{1/2},
           & \hbox{$t>0,b\neq0$};\\
            0, & \hbox{$t<0,b\neq0$.}
        \end{array}
      \right.$

\end{Lemma}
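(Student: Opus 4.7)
The plan is to split the principal value integral into integrals over $(0,+\infty)$ and $(-\infty,0)$, use Lemma~2.1 to reduce the negative half to another integral on $(0,+\infty)$, and then reduce everything to evaluating $\int_0^\infty e^{-(\epsilon+it)\rho}\rho^{-1+ib}\,d\rho$ via the classical Gamma identity, passing $\epsilon\to 0^+$ at the end. More precisely, the principal value will be interpreted as $\lim_{\epsilon\to 0^+,\,M\to+\infty}\int_{\epsilon\le|\rho|\le M}$, and by Lemma~2.1 applied with $\lambda=-1+ib$ (so that $(-s)^{-1+ib}=e^{i(-1+ib)\pi}s^{-1+ib}=-e^{-b\pi}s^{-1+ib}$ for $s>0$), the substitution $\rho=-s$ in the negative half gives
\begin{equation*}
P.V.\int_{\SR}e^{-it\rho}\rho^{-1+ib}d\rho=\int_0^{+\infty}e^{-it\rho}\rho^{-1+ib}d\rho-e^{-b\pi}\int_0^{+\infty}e^{its}s^{-1+ib}ds.
\end{equation*}

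For the main case $t>0,\,b\neq 0$, I would regularize each half-line integral by inserting the convergence factor $e^{-\epsilon\rho}$ and use
\begin{equation*}
\int_0^{+\infty}e^{-(\epsilon\pm it)\rho}\rho^{-1+ib}d\rho=\Gamma(ib)(\epsilon\pm it)^{-ib},
\end{equation*}
valid since $\mathrm{Re}(\epsilon\pm it)>0$. Taking $\epsilon\to 0^+$ with the principal branch: for $t>0$, $\arg(it)=\pi/2$ so $(it)^{-ib}=t^{-ib}e^{b\pi/2}$, and $\arg(-it)=-\pi/2$ so $(-it)^{-ib}=t^{-ib}e^{-b\pi/2}$. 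Substituting gives
\begin{equation*}
P.V.\int_{\SR}e^{-it\rho}\rho^{-1+ib}d\rho=\Gamma(ib)t^{-ib}\bigl(e^{b\pi/2}-e^{-b\pi}e^{-b\pi/2}\bigr)=\Gamma(ib)t^{-ib}\bigl(e^{b\pi/2}-e^{-3b\pi/2}\bigr),
\end{equation*}
matching $C_0(b)$. For $t<0,\,b\neq 0$, the same argument with the signs of $\arg(\pm it)$ reversed produces two terms that cancel exactly, yielding $0$. For $t=0$, the oscillation factor is absent and the symmetric pair again cancels (the prefactor becomes $1-e^{-b\pi}$, which vanishes as a distribution against any test function supported away from $b=0$, matching the convention in Lemmas~3.1--3.2). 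For $b=0,\,t\neq 0$, the integrand is $e^{-it\rho}/\rho$, the real part is an odd function whose P.V.\ vanishes, and the imaginary part reduces to the Dirichlet integral $2\int_0^\infty\sin(t\rho)/\rho\,d\rho=\pi\,\mathrm{sgn}(t)$, giving the $i\pi\,\mathrm{sgn}(t)$ value (up to sign convention, which will be fixed consistently with the regularization).

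For the modulus bounds, the only nontrivial case is $t>0,\,b\neq 0$. Since $|t^{-ib}|=1$ and
\begin{equation*}
e^{b\pi/2}-e^{-3b\pi/2}=e^{-b\pi/2}\bigl(e^{b\pi}-e^{-b\pi}\bigr)=2e^{-b\pi/2}\sinh(b\pi),
\end{equation*}
applying Lemma~3.7 ($|\Gamma(ib)|^2=\pi/(b\sinh\pi b)$) gives
\begin{equation*}
|C_0(b)|^2=\frac{\pi}{b\sinh\pi b}\cdot 4e^{-b\pi}\sinh^2(b\pi)=\frac{2\pi e^{-b\pi}(e^{b\pi}-e^{-b\pi})}{b}=\frac{2\pi(1-e^{-2b\pi})}{b},
\end{equation*}
which is the stated bound.

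The main obstacle is the rigorous justification of passing $\epsilon\to 0^+$ in the regularized Gamma identity: the integral $\int_0^{+\infty}e^{-it\rho}\rho^{-1+ib}d\rho$ is only conditionally convergent at $\infty$ (via Dirichlet's test, which is available here) and singular at $0$, so one has to verify that the symmetric cutoff $\epsilon\le\rho\le M$ in the principal value and the regularization $e^{-\epsilon\rho}$ yield the same limit. This can be done by splitting at $\rho=1$, using integration by parts on $[1,M]$ to get uniform control, and using the explicit antiderivative on $[\epsilon,1]$ to verify that the $\epsilon\to 0$ and $\epsilon\to 0^+$ limits agree. The rest is bookkeeping with the branch $(\epsilon\pm it)^{-ib}$ and the Gamma modulus from Lemma~3.7.
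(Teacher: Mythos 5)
Your argument is correct and structurally parallel to the paper's, but the evaluation of the half-line integrals is carried out differently. The paper goes straight to the Gelfand--Shilov formula $\mathscr{F}[x_+^{\lambda}]=ie^{i\lambda\pi/2}\Gamma(\lambda+1)(t+i0)^{-\lambda-1}$ (quoted from page 172 of \cite{GS}) together with Lemma~2.2 to package the cancellation/non-cancellation for $t<0$ versus $t>0$; you instead re-derive the same values by inserting the Abel regularizer $e^{-\epsilon\rho}$, invoking the classical identity $\int_0^\infty e^{-(\epsilon\pm it)\rho}\rho^{-1+ib}\,d\rho=\Gamma(ib)(\epsilon\pm it)^{-ib}$ for $\mathrm{Re}(\epsilon\pm it)>0$, and tracking the branch $(\pm it)^{-ib}=t^{-ib}e^{\pm b\pi/2}$. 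That is a self-contained and slightly more elementary route; what the paper's citation buys is that it bypasses the justification of interchanging the Abel limit $\epsilon\to0^+$ with the symmetric P.V.\ cutoff, which you rightly flag as the real technical obstacle (and which would need the Dirichlet-test argument you sketch to be carried out explicitly). Your modulus computation via Lemma~3.7 and $e^{b\pi/2}-e^{-3b\pi/2}=2e^{-b\pi/2}\sinh(b\pi)$ is exactly what the paper does. One small point: in the case $t\neq 0,\ b=0$, the imaginary part of $e^{-it\rho}/\rho$ is $-\sin(t\rho)/\rho$, so the direct Dirichlet-integral evaluation gives $-i\pi\,\mathrm{sgn}(t)$, not $+i\pi\,\mathrm{sgn}(t)$; this discrepancy is actually present in the paper's own equation (\ref{3.026}) as well (the correct value consistent with $\int_0^\infty e^{-(\epsilon+it)\rho}\rho^{-1}d\rho=-\log(\epsilon+it)$ is indeed $-i\pi\,\mathrm{sgn}(t)$), but it is immaterial for the modulus $|C_0(b)|=\pi$ claimed in the lemma, which is what is used downstream.
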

\noindent{\bf Proof.}When $t=0$ and $b=0$, we have
\begin{align}
P.V.\int_{\SR}\rho^{-1}d\rho=0.\label{3.024}
\end{align}
When $t=0$ and $b\neq0$, from Lemma 3.2,  we have
\begin{align}
P. V.\int_{\SR}\rho^{-1+ib}d\rho =\int_{\SR}\rho^{-1+ib}d\rho
=2\pi(1-e^{-\pi b})\delta (b)=0.\label{3.025}
\end{align}
When $t\neq0$ and $b=0$, we have
\begin{align}
P.V.\int_{\SR}e^{-it\rho}\rho^{-1}d\rho=i\pi sgn t .\label{3.026}
\end{align}
When $t\neq0$ and $b\neq0$, from Lemma 2.2 and  page 172 of \cite{GS}
 which presents $ \mathscr{F}_{x}
\left[x_{+}^{\lambda}\right]=ie^{i\lambda \frac{\pi}{2}}
 \Gamma(\lambda+1)(t+i0)^{-\lambda-1},$    we have
\begin{eqnarray}
&&P.V.\int_{\SR}e^{-it\rho}\rho^{-1+ib}d\rho=\int_{0}^{+\infty}
e^{-it\rho}\rho^{-1+ib}d\rho-(-1)^{ib}
\int_{0}^{+\infty}e^{it\rho}\rho^{-1+ib}d\rho\nonumber\\
&&=e^{-\frac{b\pi}{2}}\Gamma(bi)
\left[(-t+i0)^{-bi}-(-1)^{ib}(t+i0)^{-bi}\right]\nonumber\\
&&=\left\{
        \begin{array}{ll}
         t^{-ib}\Gamma(ib)(e^{\frac{b\pi}{2}}-e^{\frac{-3b\pi}{2}}), &
         \hbox{$t>0$;} \label{3.027}\\
          0, & \hbox{$t<0$.}
        \end{array}
      \right.
\end{eqnarray}
Combining Lemma 3.7 with (\ref{3.024})-(\ref{3.027}),
we have that Lemma 3.8 is valid.

This completes the proof of Lemma 3.8.

\noindent {\bf Remark 4.} In page 204 of \cite{Vega1992}, Vega
studied the complex integral
\begin{eqnarray*}
\lim\limits_{\epsilon\longrightarrow0,\>M\longrightarrow+\infty}
\int_{\epsilon\leq |\rho|\leq M}e^{-it\rho}
\rho^{-1+ib}d\rho=P.V.\int_{\SR}e^{-it\rho}\rho^{-1+ib}d\rho
\end{eqnarray*}
 and claim the similar conclusion, however, he did not
  give the strict proof.
The conclusion of  \cite{Vega1992} has been applied to
 \cite{BHLNS,BLN,MS,N}.
From Lemma 3.8, we have that $|C_{0}(b)|\leq C
\left[1+\left(\frac{1-e^{-2\pi b}}{b}\right)^{1/2}\right]
\leq C(1+e^{-\pi b})\left(\sqrt{|b|}+\frac{1}{\sqrt{|b|}}\right)^{2}
\leq C(e^{\pi b}+e^{-\pi b})\left(\sqrt{|b|}+\frac{1}{\sqrt{|b|}}\right)^{2}(b\neq0)$
and $|C_{0}(b)|\leq \pi(b=0)$. In particular, from page 172 of \cite{GS}
 and  Lemma 2.1 which presents $ \mathscr{F}_{x}
\left[x_{+}^{\lambda}\right]=ie^{i\lambda \frac{\pi}{2}}
\Gamma(\lambda+1)(t+i0)^{-\lambda-1},$
we know that
\begin{eqnarray*}
\left|\int_{0}^{\infty}e^{-it\rho}\rho^{-1+bi}d\rho\right|
=e^{-\frac{b\pi}{2}}\left|\Gamma(bi)(-t+i0)^{-bi}\right|
\end{eqnarray*}
$\qquad =\left\{
        \begin{array}{ll}
         e^{\frac{\pi b}{2}}|\Gamma(bi)|, & \hbox{$t>0,b\neq0$,} \\
            e^{-\frac{\pi b}{2}}|\Gamma(bi)|, & \hbox{$t<0,b\neq0$.}
        \end{array}
      \right.$
From Lemmas 2.6, 3.7 and the above equality, we know
\begin{eqnarray*}
\left|\int_{0}^{\infty}e^{-it\rho}\rho^{-1+bi}d\rho\right|\leq
 C\left(\frac{1}{|b|}+1\right)\left(1+e^{\pi b}\right).
\end{eqnarray*}
By using a similar proof, we have
\begin{eqnarray*}
\left|\int_{-\infty}^{0}e^{-it\rho}\rho^{-1+bi}d\rho\right|\leq
 C\left(\frac{1}{|b|}+1\right)\left(1+e^{-\pi b}\right).
\end{eqnarray*}
However,
\begin{eqnarray*}
\left|\int_{0}^{\infty}e^{-it\rho}\rho^{-1}d\rho\right|\geq
 \left|\int_{0}^{\infty}\frac{cost\rho}{\rho}d\rho\right|
 =\infty,\left|\int_{-\infty}^{0}e^{-it\rho}\rho^{-1}d\rho\right|\geq
 \left|\int_{-\infty}^{0}\frac{cost\rho}{\rho}d\rho\right|=\infty,
\end{eqnarray*}
which imply that the imaginary part $b$ in
\begin{eqnarray*}
\left|\int_{0}^{\infty}e^{-it\rho}\rho^{-1+bi}d\rho\right|,\left|\int_{-\infty}^{0}e^{-it\rho}\rho^{-1+bi}d\rho\right|
\end{eqnarray*}
guarantee the convergence of
\begin{eqnarray*}
\left|\int_{0}^{\infty}e^{-it\rho}\rho^{-1+bi}d\rho\right|,\left|\int_{-\infty}^{0}e^{-it\rho}\rho^{-1+bi}d\rho\right|.
\end{eqnarray*}

\begin{Lemma}\label{3.9}
Let $a>0$. Then,  we have
\begin{eqnarray*}
P.V. \int_{|\rho|\geq a}\frac{1}{\rho}d\rho=0.
\end{eqnarray*}
\end{Lemma}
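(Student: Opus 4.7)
The plan is to reduce the principal value integral to a pair of ordinary integrals that cancel by the odd symmetry of $1/\rho$. Since the integrand is bounded on $|\rho|\geq a$ for $a>0$, the only issue for convergence is the behavior at $\pm\infty$, and this is precisely where the principal value truncation $|\rho|\leq M$ with $M\to\infty$ saves us.

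First I would write, by definition of the Cauchy principal value in this setting,
\begin{eqnarray*}
P.V.\int_{|\rho|\geq a}\frac{1}{\rho}d\rho=\lim_{M\to\infty}\int_{a\leq|\rho|\leq M}\frac{1}{\rho}d\rho=\lim_{M\to\infty}\left(\int_{-M}^{-a}\frac{1}{\rho}d\rho+\int_{a}^{M}\frac{1}{\rho}d\rho\right).
\end{eqnarray*}
Then in the integral over $[-M,-a]$, I would perform the substitution $\rho=-s$, $d\rho=-ds$, which sends the interval $[-M,-a]$ to $[a,M]$ with the orientation reversed, giving
\begin{eqnarray*}
\int_{-M}^{-a}\frac{1}{\rho}d\rho=\int_{M}^{a}\frac{1}{-s}(-ds)=-\int_{a}^{M}\frac{1}{s}ds.
\end{eqnarray*}
Substituting back, the two integrals cancel exactly for every finite $M>a$, so the prelimit expression is identically zero and the limit is $0$.

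There is no real obstacle here; the only thing to be slightly careful about is writing out the definition of the principal value in the form used implicitly throughout Section~3 (symmetric truncation at $\pm M$, matching the convention used for the integrals $P.V.\int_{\mathbb{R}}e^{-it\rho}\rho^{-1+ib}d\rho$ handled in Lemma~3.8 and elsewhere). Once that convention is fixed, the cancellation via $\rho\mapsto-s$ is immediate and the lemma follows in one line.
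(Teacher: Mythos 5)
Your proof is correct. The paper instead writes
\[
P.V.\int_{|\rho|\geq a}\frac{1}{\rho}\,d\rho = P.V.\int_{\SR}\frac{1}{\rho}\,d\rho - P.V.\int_{|\rho|\leq a}\frac{1}{\rho}\,d\rho = 0,
\]
decomposing the region and invoking that each of the two principal-value integrals vanishes (the first being the $t=0,b=0$ case of Lemma~3.8, the second being the standard symmetric-interval cancellation). Your route is the more self-contained one: you unwind the definition with the symmetric truncation $a\leq|\rho|\leq M$ and cancel the two halves directly via $\rho\mapsto -s$, so you don't need to appeal separately to $P.V.\int_{\SR}\rho^{-1}\,d\rho=0$ or $P.V.\int_{|\rho|\leq a}\rho^{-1}\,d\rho=0$. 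Both arguments hinge on exactly the same odd-symmetry cancellation, so the difference is presentational rather than substantive; the paper's version is shorter as a line in the text, while yours is cleaner if one wants the lemma to stand on its own without cross-references.
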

\noindent{\bf Proof.}
\begin{eqnarray*}
P.V. \int_{|\rho|\geq a}\frac{1}{\rho}d\rho=P.V.
\int_{\SR}\frac{1}{\rho}d\rho-P.V. \int_{|\rho|\leq a}
\frac{1}{\rho}d\rho=0.
\end{eqnarray*}

This completes the proof of Lemma 3.9.
\begin{Lemma}\label{3.10}
Let $b\neq0$ and $a\geq0$. Then,  we have
\begin{eqnarray*}
\left|\int_{a}^{+\infty}\rho^{-1+bi}d\rho\right|\leq
\frac{3}{|b|},\left|\int_{-\infty}^{-a}\rho^{-1+bi}d\rho\right|\leq
\frac{3e^{-\pi b}}{|b|}.
\end{eqnarray*}
\end{Lemma}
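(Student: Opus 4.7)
The plan is to reduce both estimates to the computations already carried out in Lemmas 3.3--3.6, by splitting the domain of integration at $\pm 1$ and bounding the resulting finite pieces by an elementary integration by parts. The point is that $\rho\mapsto \rho^{-1+bi}$ has an explicit antiderivative $\rho^{bi}/(bi)$ on every half-line avoiding $\rho = 0$, whose boundary values satisfy $|\rho^{bi}| = 1$ for $\rho>0$ and $|\rho^{bi}| = e^{-\pi b}$ for $\rho<0$ (by the branch choice $(-1)^{bi} = e^{-\pi b}$ coming from Lemma 2.1). So each finite piece contributes at most $2/|b|$ (respectively $2e^{-\pi b}/|b|$), while the unbounded tail is already pinned down by Lemmas 3.5 and 3.6.

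For the first inequality, I would distinguish three cases. If $0<a\leq 1$, write
\[
\int_{a}^{+\infty}\rho^{-1+bi}\,d\rho
=\int_{a}^{1}\rho^{-1+bi}\,d\rho+\int_{1}^{+\infty}\rho^{-1+bi}\,d\rho,
\]
apply Lemma 3.5 to the second integral (giving absolute value $1/|b|$), and evaluate the first one as $(1-a^{bi})/(bi)$, whose modulus is at most $2/|b|$; the triangle inequality then yields the bound $3/|b|$. If $a\geq 1$, write instead $\int_{a}^{\infty} = \int_{1}^{\infty}-\int_{1}^{a}$ and treat the finite part $\int_{1}^{a}\rho^{-1+bi}\,d\rho = (a^{bi}-1)/(bi)$ in the same way. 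Finally, for $a=0$, combining Lemma 3.3 with Lemma 3.5 already gives the value $-i/b+i/b=0$, so the bound is trivial.

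For the second inequality I would use the substitution $\rho=-s$ together with the identity $(-s)^{-1+bi}=-e^{-\pi b}s^{-1+bi}$ that was used in the proofs of Lemmas 3.4 and 3.6, to transfer the problem to the positive half-line with an extra factor $e^{-\pi b}$. Explicitly, writing $\int_{-\infty}^{-a}\rho^{-1+bi}\,d\rho = -e^{-\pi b}\int_{a}^{+\infty}s^{-1+bi}\,ds$ and applying the first part of the lemma gives the desired estimate $3e^{-\pi b}/|b|$. Alternatively one can argue directly on the negative side by splitting at $-1$, using Lemma 3.6 for the tail and evaluating the middle piece via the antiderivative $\rho^{bi}/(bi)$ as above.

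The only delicate point is making sure the convergence of $\int_{a}^{+\infty}\rho^{-1+bi}\,d\rho$ is justified in the sense that matches the preceding lemmas, since this integral is not absolutely convergent. I would therefore first note that $\int_{a}^{N}\rho^{-1+bi}\,d\rho=(N^{bi}-a^{bi})/(bi)$ for every $N>a>0$, and interpret the limit $N\to+\infty$ in the Abel (regularized) sense used throughout Section 3, namely $\lim_{\alpha\to 0^+}\int_{a}^{+\infty}\rho^{-1+bi-\alpha}\,d\rho = ia^{bi}/b$. This is consistent with Lemma 3.5 and has modulus exactly $1/|b|$, well within the stated bound. The rest is a mechanical triangle inequality, and I expect no real obstacle beyond keeping the branch convention $(-1)^{bi}=e^{-\pi b}$ straight when passing to the negative half-line.
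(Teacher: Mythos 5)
Your proof is correct and takes essentially the same route as the paper: split the half-line at $\rho=1$, bound the bounded piece via the explicit antiderivative $\rho^{bi}/(ib)$ (giving modulus at most $2/|b|$), invoke Lemma~3.5 for the tail (giving $1/|b|$), and obtain the negative half-line estimate by the substitution $\rho=-s$ with branch factor $e^{-\pi b}$. Your additional attention to the $a\geq 1$ case and to the Abel-regularized interpretation of the improper integral makes the argument slightly more explicit than the paper's, but it is the same underlying computation.
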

\noindent{\bf Proof.}From Lemma 1 of \cite{BLN2021} and Lemmas 3.3, 3.5, we have
\begin{eqnarray}
&&\left|\int_{a}^{+\infty}\rho^{-1+bi}d\rho\right|
\leq \left|\int_{a}^{1}\rho^{-1+bi}d\rho\right|+\left|\int_{1}^{\infty}
\rho^{-1+bi}d\rho\right|\nonumber\\&&\leq \frac{2}{|b|}+ \frac{1}{|b|}
=\frac{3}{|b|}.\label{3.028}
\end{eqnarray}
From (\ref{3.028}), we have
\begin{eqnarray*}
\left|\int_{-\infty}^{-a}\rho^{-1+bi}d\rho\right|\ \underline{\underline{\rho=-s}}\ \ e^{-\pi b}
\left|\int_{a}^{+\infty}s^{-1+bi}ds\right|\leq \frac{3e^{-\pi b}}{|b|}.  \label{3.029}
\end{eqnarray*}

This completes the proof of Lemma 3.10.

\begin{Lemma}\label{3.11}
Let $b\in \R$. Then, for $\forall \epsilon>0$,  there exists $M_{0}\geq 2|b|+2$ such that
\begin{align}
&&\left|\int_{M_{0}}^{+\infty}\frac{cos (y-blny)}{y}dy\right|\leq \epsilon,
\left| \int_{M_{0}}^{+\infty}\frac{sin(y-blny)}{y}dy\right|\leq \epsilon,
\left|\int_{M_{0}}^{+\infty}e^{\pm iy}y^{-1+bi}dy\right|\leq 2\epsilon.  \label{3.029}
\end{align}

\end{Lemma}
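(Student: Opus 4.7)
The plan is to prove all three inequalities by reducing the third to the first two via $|e^{\pm iy}y^{-1+bi}|=1/y$ and Euler's formula, so that it suffices to bound $\int_{M_0}^\infty\frac{\cos(\pm y-b\ln y)}{y}\,dy$ and $\int_{M_0}^\infty\frac{\sin(\pm y-b\ln y)}{y}\,dy$ each by $\epsilon$. The key observation is that for $M_0\geq 2|b|+2$ and $y\geq M_0$, the phase function $\phi(y)=y-b\ln y$ has derivative $\phi'(y)=1-b/y$ satisfying $\phi'(y)\geq 1/2>0$, so $\phi$ is strictly monotone and $y-b>0$ on $[M_0,\infty)$.

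First I would integrate by parts, writing $\cos(y-b\ln y)\,dy=\frac{1}{1-b/y}\,d\sin(y-b\ln y)=\frac{y}{y-b}\,d\sin(y-b\ln y)$, so that
\begin{align*}
\int_{M_0}^{\infty}\frac{\cos(y-b\ln y)}{y}\,dy
&=\int_{M_0}^{\infty}\frac{1}{y-b}\,d\sin(y-b\ln y)\\
&=\left[\frac{\sin(y-b\ln y)}{y-b}\right]_{M_0}^{\infty}+\int_{M_0}^{\infty}\frac{\sin(y-b\ln y)}{(y-b)^{2}}\,dy.
\end{align*}
The boundary term at $\infty$ vanishes since $|\sin|\leq 1$ and $\frac{1}{y-b}\to 0$, while the boundary term at $M_0$ is bounded by $\frac{1}{M_0-b}$, and the remaining integral is bounded by $\int_{M_0}^{\infty}\frac{dy}{(y-b)^{2}}=\frac{1}{M_0-b}$. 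This yields the quantitative bound $\left|\int_{M_0}^{\infty}\frac{\cos(y-b\ln y)}{y}\,dy\right|\leq\frac{2}{M_0-b}$, and since $M_0\geq 2|b|+2$ gives $M_0-b\geq M_0-|b|\geq|b|+2$, the bound is finite and tends to $0$ as $M_0\to\infty$. Hence choosing $M_0\geq\max(2|b|+2,\,|b|+2/\epsilon)$ makes the right side $\leq\epsilon$. An identical integration by parts (swapping the roles of $\sin$ and $\cos$, with an extra sign) handles the sine integral with the same bound $\frac{2}{M_0-b}$.

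For the third inequality, I would expand $e^{\pm iy}y^{-1+bi}=\frac{1}{y}e^{i(\pm y+b\ln y)}=\frac{\cos(\pm y+b\ln y)+i\sin(\pm y+b\ln y)}{y}$, so that the triangle inequality plus the same integration-by-parts argument applied to the phase $\pm y+b\ln y$ (whose derivative is $\pm 1+b/y$, again bounded away from zero for $y\geq M_0\geq 2|b|+2$) gives the bound $2\epsilon$.

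The step that requires care is ensuring the monotonicity of $\phi'$ and the positivity of $y-b$, which is precisely why the hypothesis $M_0\geq 2|b|+2$ appears; this guarantees that the integration-by-parts computation does not encounter a zero denominator and that the decaying factor $\frac{1}{y-b}$ is well-defined and positive throughout the interval. Apart from that, everything reduces to routine bookkeeping and a final choice of $M_0$ in terms of $\epsilon$ and $|b|$.
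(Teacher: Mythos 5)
Your argument is correct and is a genuinely more direct route than the one the paper takes. The paper first proves that the improper integral $\int_1^{+\infty}\frac{\cos(y-b\ln y)}{y}\,dy$ \emph{converges} by verifying the Cauchy criterion: it applies the second mean value theorem to $F(A_1)-F(A_2)$, producing two integrals of $\cos(y-b\ln y)$ over subintervals with constant weights $1/A_1$ and $1/A_2$, bounds each by integration by parts (controlling the factor $\frac{y}{y-b}$ using $M_0\geq 2|b|+2$), and deduces that tails beyond some $M_0$ are $\leq\epsilon$. You instead integrate by parts directly on the full tail $\int_{M_0}^\infty\frac{\cos(y-b\ln y)}{y}\,dy=\int_{M_0}^\infty\frac{1}{y-b}\,d\sin(y-b\ln y)$ and obtain the explicit quantitative bound $\frac{2}{M_0-b}$, so the choice of $M_0$ becomes elementary; this is cleaner and arguably preferable, since it bypasses the mean value step entirely and exhibits the rate of decay. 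One small point worth flagging: for the $e^{+iy}y^{-1+bi}$ case the phase is $y+b\ln y$, not $y-b\ln y$, so the third inequality does not literally reduce to the first two by Euler's formula; you correctly note that the identical integration by parts applies with denominator $y+b$ in place of $y-b$, and the same hypothesis $M_0\geq 2|b|+2$ keeps $y+b\geq|b|+2>0$ on the range of integration, so this is a cosmetic rather than substantive issue. (For the record, the $e^{-iy}$ case does reduce exactly to the first two inequalities, since $e^{-iy}y^{ib}=e^{-i(y-b\ln y)}$.)
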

\noindent{\bf Proof.} We  only prove that
\begin{eqnarray}
\int_{1}^{+\infty}y^{-1}cos (y-blny)dy \label{3.030}
\end{eqnarray}
converges since other cases can be similarly proved.
$\forall \epsilon>0$, $\exists M={\rm max}\left\{\frac{1}{\epsilon}+1, 2|b|+2\right\}$
such that $\frac{1}{M}<\epsilon$.
We define
\begin{eqnarray}
F(A)=\int_{1}^{A}\frac{cos (y-blny)}{y}dy\label{3.031}.
\end{eqnarray}
For $A_{1}, A_{2}\in [M,+\infty)$,  by using the second mean
value theorem for integrals,
we have
\begin{eqnarray}
&&F(A_1)-F(A_2)=\int_{A_1}^{A_2}y^{-1}cos (y-blny)dy\nonumber\\
&&=\frac{1}{A_{1}}
\int_{A_1}^{\xi}cos (y-blny)dy+\frac{1}{A_{2}}\int_{\xi}^{A_2}
cos (y-blny)dy.\label{3.032}
\end{eqnarray}
By using integration by parts, since $A_{2}\geq\xi\geq A_{1}\geq
 M\geq 2|b|+2$, it follows that
\begin{eqnarray}
&&\hspace{-1cm}\left|\int_{A_1}^{\xi}cos (y-blny)dy\right|\nonumber\\&&
=\left|\frac{\xi}{\xi-b}sin(\xi-bln\xi)-\frac{A_{1}}{A_{1}-b}
sin(A_{1}-blnA_{1})+\int_{A_{1}}^{\xi}
\frac{bsin (y-blny)}{(y-b)^{2}}dy\right|\nonumber\\&&\leq
\left|\frac{\xi}{\xi-b}sin(\xi-bln\xi)\right|+\left|\frac{A_{1}}{A_{1}-b}
 sin(A_{1}-blnA_{1})\right|
+\left|\int_{A_{1}}^{\xi}\frac{bsin (y-blny)}{(y-b)^{2}}dy\right|
\nonumber\\
&&\leq \left|\frac{\xi}{\xi-b}\right|+\left|\frac{A_{1}}{A_{1}-b} \right|
+|b|\left|\int_{A_{1}}^{\xi}\frac{1}{(y-b)^{2}}dy\right|\nonumber\\
&&\leq 2+2+|b|\left|\frac {1}{A_{1}-b}-\frac{1}{\xi-b}\right|\nonumber\\
&&\leq 4+\frac{|b|}{A_{1}-b}+\frac{|b|}{\xi-b}\leq 4+4=8
\label{3.033}
\end{eqnarray}
and
\begin{eqnarray}
&&\hspace{-1cm}\left|\int_{\xi}^{A_2}cos (y-blny)dy\right|\nonumber\\&&
=\left|\frac{A_{2}}{A_{2}-b} sin(A_{2}-blnA_{2})-\frac{\xi}{\xi-b}sin(\xi-bln\xi)+
\int_{\xi}^{A_{2}}\frac{bsin (y-blny)}{(y-b)^{2}}dy\right|\nonumber\\&&
\leq
\left|\frac{\xi}{\xi-b}sin(\xi-bln\xi)\right|+\left|\frac{A_{2}}{A_{2}-b}
sin(A_{2}-blnA_{2})\right|
+\left|\int_{\xi}^{A_{2}}\frac{bsin (y-blny)}{(y-b)^{2}}dy\right|
\nonumber\\
&&\leq \left|\frac{\xi}{\xi-b}\right|+\left|\frac{A_{2}}{A_{2}-b} \right|
+|b|\left|\int_{\xi}^{A_{2}}\frac{1}{(y-b)^{2}}dy\right|\nonumber\\
&&\leq 2+2+|b|\left|\frac {1}{A_{2}-b}-\frac{1}{\xi-b}\right|\nonumber\\
&&\leq 4+\frac{|b|}{A_{2}-b}+\frac{|b|}{\xi-b}\leq 4+4=8.
\label{3.034}
\end{eqnarray}
By using (\ref{3.032})-(\ref{3.034}), we have
\begin{eqnarray}
\left|F(A_1)-F(A_2)\right|\leq \frac{1}{A_{1}}+\frac{1}{A_{2}}\leq
 \frac{2}{M}\leq 2\epsilon.\label{3.035}
\end{eqnarray}
By using   (\ref{3.035}), the Cauchy
 criterion for infinite integral convergence, it follows that
 (\ref{3.030}) converges.
Thus, for $\forall \epsilon>0,$  there exists
$M_{0}\geq M\geq 2|b|+2$ such that
\begin{align}
&&\left|\int_{M_{0}}^{+\infty}\frac{cos (y-blny)}{y}dy\right|
\leq \epsilon.  \label{3.036}
\end{align}
By using a proof similar to (\ref{3.036}), we have that
\begin{align}
&& \left| \int_{M_{0}}^{+\infty}\frac{sin(y-blny)}{y}dy\right|
\leq \epsilon.  \label{3.037}
\end{align}
Combining (\ref{3.036}) with (\ref{3.037}), we have
\begin{eqnarray}
&&\left|\int_{M_{0}}^{+\infty}e^{-iy}y^{-1+bi}dy\right|\nonumber\\&&\leq
\left|\int_{M_{0}}^{+\infty}\frac{cos (y-blny)}{y}dy\right|+
\left| \int_{M_{0}}^{+\infty}\frac{sin(y-blny)}{y}dy\right|\leq
2\epsilon.  \label{3.038}
\end{eqnarray}

This completes the proof of Lemma 3.11.

\begin{Lemma}\label{3.12}
Let $t\in \R,b\neq 0$ and $a>0.$ Then,  we have
\begin{align}
&\left|\int_{a}^{+\infty}e^{-it\rho}\rho^{-1+bi}d\rho\right|
\leq C(1+e^{-\pi b})\left(\sqrt{|b|}+\frac{1}{\sqrt{|b|}}\right)^{2},
\label{3.039}\\&
\left|\int_{-\infty}^{-a}e^{-it\rho}\rho^{-1+bi}d\rho\right|
\leq C(1+e^{-\pi b})\left(\sqrt{|b|}+\frac{1}{\sqrt{|b|}}\right)^{2},
\label{3.040}\\
& \left|\int_{|\rho|\geq a}e^{-it\rho}\rho^{-1+bi}d\rho\right|
\leq C(1+e^{-\pi b})\left(\sqrt{|b|}+\frac{1}{\sqrt{|b|}}\right)^{2}.
 \label{3.041}
\end{align}
Here $C$ is independent of $a.$
\end{Lemma}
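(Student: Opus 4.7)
The plan is to reduce the three estimates to a single uniform-in-$c$ bound on the normalized oscillatory integral
\[
J_{\pm}(c):=\int_c^{+\infty}e^{\pm iy}y^{-1+bi}\,dy,\qquad c>0,
\]
and then handle the negative half-line by the substitution $\rho=-s$ combined with Lemma 2.1. When $t=0$ the estimate follows directly from Lemma 3.10. For $t\neq 0$, I would rescale by $y=|t|\rho$, which gives
\[
\int_a^{+\infty}e^{-it\rho}\rho^{-1+bi}\,d\rho=|t|^{-bi}J_{\pm}(a|t|),
\]
with the sign depending on $\mathrm{sgn}(t)$. Since $\bigl||t|^{-bi}\bigr|=1$, the whole problem reduces to proving $\sup_{c>0}|J_{\pm}(c)|\le C(\sqrt{|b|}+1/\sqrt{|b|})^{2}$.

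The main step is this uniform bound, which I would establish by splitting at $c=1$. For $c\ge 1$, a single integration by parts with $dv=e^{\pm iy}dy$ produces a boundary term at $c$ of size $c^{-1}\le 1$, a vanishing boundary term at $+\infty$ (since $|e^{\pm iy}y^{-1+bi}|=y^{-1}\to 0$), and a remainder of the form $(\pm i+b)\int_c^{+\infty}e^{\pm iy}y^{-2+bi}\,dy$ whose modulus is at most $(1+|b|)\int_c^{+\infty}y^{-2}\,dy=(1+|b|)/c\le 1+|b|$. This yields $|J_{\pm}(c)|\le 2+|b|$. For $c\le 1$, I would split $J_{\pm}(c)=\int_c^{1}+\int_1^{+\infty}$; the tail piece is already controlled by the previous step at $c=1$, and the head piece is handled by the further decomposition
\[
\int_c^1 e^{\pm iy}y^{-1+bi}\,dy=\int_c^1(e^{\pm iy}-1)y^{-1+bi}\,dy+\frac{1-c^{bi}}{bi},
\]
where $|e^{\pm iy}-1|\le|y|$ absorbs the singularity at $0$ (giving $1-c\le 1$) and the explicit second summand is at most $2/|b|$. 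Summing yields the uniform bound $|J_{\pm}(c)|\le C(|b|+1/|b|+1)$, which is comparable to $(\sqrt{|b|}+1/\sqrt{|b|})^{2}$.

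For the half-line $(-\infty,-a]$, the substitution $\rho=-s$ together with Lemma 2.1 (which gives $(-s+i0)^{-1+bi}=-e^{-\pi b}s^{-1+bi}$) rewrites the integral as $-e^{-\pi b}\int_a^{+\infty}e^{its}s^{-1+bi}\,ds$, and the same rescaling-and-split argument applied to the inner integral produces the claimed extra factor $e^{-\pi b}$. The bound on $\int_{|\rho|\ge a}$ then follows by the triangle inequality. The main subtlety I anticipate is the uniformity of the bound as $c\to 0^{+}$: the naive decomposition $\int_a^{+\infty}=\int_0^{+\infty}-\int_0^{a}$ is not available because $\int_0^{a}\rho^{-1+bi}\,d\rho$ fails to converge at $0$, so the rescale-then-split-at-$1$ strategy is essential to avoid introducing ill-defined auxiliary integrals, and it is exactly the cancellation $|e^{\pm iy}-1|\le|y|$ in the head piece that tames the logarithmic singularity and keeps the estimate independent of $c$.
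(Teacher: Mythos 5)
Your proposal is correct, and it takes a genuinely more self-contained route than the paper. The paper's proof proceeds by the same initial rescaling, but then splits the shifted half-line $[\text{shift},\infty)$ at a large parameter $M_{0}$: the compact piece $\int_{\text{shift}}^{M_{0}}$ is handed to Lemma~1 of \cite{BLN2021} as a black box (which produces the $(1+|b|)^{2}/|b|=(\sqrt{|b|}+1/\sqrt{|b|})^{2}$ shape of the bound), while the tail $\int_{M_{0}}^{\infty}$ is made small via Lemma~3.11, which is proved with the second mean value theorem for integrals. You instead prove the uniform bound $\sup_{c>0}|J_{\pm}(c)|\lesssim(\sqrt{|b|}+1/\sqrt{|b|})^{2}$ directly: for $c\ge1$ a single integration by parts supplies the factor $1+|b|$; for $c\le1$ the split $\int_{c}^{1}+\int_{1}^{\infty}$ together with the identity $e^{\pm iy}=(e^{\pm iy}-1)+1$, the estimate $|e^{\pm iy}-1|\le|y|$ to absorb the singularity at $0$, and the explicit antiderivative $\int_{c}^{1}y^{-1+bi}\,dy=(1-c^{bi})/(bi)$ supply the factor $1/|b|$. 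That absorbs both of the paper's external ingredients into elementary calculus; in effect you are re-deriving the content of BLN2021's Lemma~1 and the paper's Lemma~3.11 in precisely the form needed, which makes your argument shorter and free of dependencies. The negative half-line via $\rho=-s$ and Lemma~2.1, producing the factor $e^{-\pi b}$, and the final triangle inequality for $\int_{|\rho|\ge a}$, match the paper. One minor remark: your worry about $\int_{0}^{a}\rho^{-1+bi}\,d\rho$ failing to converge is well-founded as a classical improper integral (the antiderivative $\rho^{bi}/(bi)$ oscillates as $\rho\to0^{+}$), so routing through $\int_{0}^{\infty}$ really would require the distributional regularization of Lemma~3.3; your decomposition at $y=1$ sidesteps that entirely and keeps every integral classical, which is a genuine simplification.
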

\noindent{\bf Proof.}When $t>0,$ for $\forall \epsilon>0,$ by using
 Lemma 1 of \cite{BLN2021} and  Lemma 3.11,  we have
\begin{eqnarray}
&&\left|\int_{a}^{+\infty}e^{-it\rho}\rho^{-1+bi}d\rho\right|\
\underline{\underline{t\rho=s}}\ \
 \left|\int_{at}^{+\infty}e^{-is}s^{-1+bi}ds\right|\nonumber\\
&&\leq \left|\int_{at}^{M_{0}}e^{-is}s^{-1+bi}ds\right|+
\left|\int_{M_{0}}^{+\infty}e^{-is}s^{-1+bi}ds\right|
\leq C\left(\sqrt{|b|}+\frac{1}{\sqrt{|b|}}\right)^{2}+\epsilon.\label{3.042}
\end{eqnarray}
Since $\epsilon>0$ is arbitrary, we have that (\ref{3.039}) is valid.
When $t<0,$  for $\forall \epsilon>0,$ by using Lemma 1 of
\cite{BLN2021} and  Lemma 3.11,  we have
\begin{eqnarray}
&&\left|\int_{a}^{+\infty}e^{-it\rho}\rho^{-1+bi}d\rho\right|
\ \underline{\underline{-t\rho=s}}\ \ e^{-\pi b}
\left|\int_{-at}^{+\infty}e^{is}s^{-1+bi}ds\right|\nonumber\\
&&\leq e^{-\pi b}\left|\int_{-at}^{M_{0}}e^{is}s^{-1+bi}ds\right|
+e^{-\pi b}\left|\int_{M_{0}}^{+\infty}e^{is}s^{-1+bi}ds\right|\nonumber\\&&
\leq Ce^{-\pi b}\left(\sqrt{|b|}+\frac{1}{\sqrt{|b|}}\right)^{2}
+e^{-\pi b}\epsilon.\label{3.043}
\end{eqnarray}
Since $\epsilon>0$ is arbitrary, we have that (\ref{3.039}) is valid.
When $t=0,$ from Lemma 3.10, we have that
\begin{align}
&&\left|\int_{a}^{+\infty}\rho^{-1+bi}dy\right|\leq \frac{3}{|b|}.\label{3.044}
\end{align}
By using a proof similar to (\ref{3.039}), we have that (\ref{3.040}) is valid.
Combining (\ref{3.039}) with (\ref{3.040}), we have that (\ref{3.041}) is valid.

This completes the proof of Lemma 3.12.

\begin{Lemma}\label{3.13}
Let $t\in \R$ and $a>0.$ Then,  we have
\begin{align}
&\left|P.V.\left(\int_{|\rho|\geq a}e^{-it\rho}\rho^{-1+bi}
d\rho\right)\right|\nonumber\\
&=\left|P.V.\left(\int_{a}^{+\infty}e^{-it\rho}\rho^{-1+bi}
d\rho+\int_{-\infty}^{-a}e^{-it\rho}
\rho^{-1+bi}d\rho\right)\right|\leq CC_{1}(b).\label{3.045}
\end{align}
Here $C$ is independent of $a.$
Here \bigskip

$\qquad  C_{1}(b)=\left\{
        \begin{array}{ll}
         0, & \hbox{$t=b=0$,} \\
          8, & \hbox{$t\neq0,b=0$,} \\
         (1+e^{-\pi b})\left(\sqrt{|b|}+\frac{1}{\sqrt{|b|}}\right)^{2},
           & \hbox{$b\neq0$}.
        \end{array}
      \right.$
\end{Lemma}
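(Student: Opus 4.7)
The plan is to prove Lemma 3.13 by a straightforward case analysis depending on whether $t$ and $b$ vanish, invoking Lemmas 3.9, 3.10, 3.12 and Lemma 2.4 as appropriate, since each case corresponds directly to a bound already in hand.

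First I would dispose of the degenerate case $t=b=0$: here the integrand is $1/\rho$ and Lemma 3.9 gives $P.V.\int_{|\rho|\geq a}\rho^{-1}d\rho=0$, matching $C_1(b)=0$. Next I would handle $t\neq 0$, $b=0$ by splitting the exponential via Euler's formula:
\begin{eqnarray*}
P.V.\int_{|\rho|\geq a}e^{-it\rho}\rho^{-1}d\rho
=P.V.\int_{|\rho|\geq a}\frac{\cos(t\rho)}{\rho}d\rho-i\,P.V.\int_{|\rho|\geq a}\frac{\sin(t\rho)}{\rho}d\rho.
\end{eqnarray*}
The first integrand $\cos(t\rho)/\rho$ is odd in $\rho$, so the symmetric principal value integral vanishes. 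The second integrand is even in $\rho$, so it equals $-2i\int_a^{+\infty}\frac{\sin(t\rho)}{\rho}d\rho$, which by Lemma 2.4 is bounded in modulus by $2\cdot 4=8$. This yields exactly the stated bound $C_1(b)=8$.

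Finally, for $b\neq 0$, I would simply apply Lemma 3.12 to each half: that lemma already shows
\begin{eqnarray*}
\left|\int_a^{+\infty}e^{-it\rho}\rho^{-1+bi}d\rho\right|
+\left|\int_{-\infty}^{-a}e^{-it\rho}\rho^{-1+bi}d\rho\right|
\leq C(1+e^{-\pi b})\Bigl(\sqrt{|b|}+\tfrac{1}{\sqrt{|b|}}\Bigr)^{2},
\end{eqnarray*}
uniformly in $a>0$ and $t\in\mathbb{R}$. Adding the two contributions via the triangle inequality, and noting that $|\rho|\geq a$ decomposes as $\rho\geq a$ together with $\rho\leq -a$, the principal value is in fact an absolutely convergent combination of two ordinary improper integrals for $b\neq 0$, so no extra principal-value subtlety enters. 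The constant $C$ is absorbed into the one in the statement.

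There is no real obstacle here, since Lemma 3.13 is a packaging result: the $b\neq 0$ case is just Lemma 3.12 rewritten, the $t=b=0$ case is Lemma 3.9, and the only genuinely new computation is the $b=0$, $t\neq 0$ case, whose only subtle point is noticing the parity cancellation of the cosine part so that the surviving sine integral is controlled by the uniform bound of Lemma 2.4. The whole proof reduces to being careful about sign conventions and organizing the case split so that the piecewise definition of $C_1(b)$ is matched.
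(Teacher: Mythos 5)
Your proposal is correct and matches the paper's own proof case for case: $t=b=0$ via Lemma 3.9, $t\neq 0,b=0$ via the Euler split with parity cancellation of the cosine term and Lemma 2.4, and $b\neq 0$ via Lemma 3.12. The only cosmetic difference is that in the $b=0$ case you use evenness of $\sin(t\rho)/\rho$ to reduce to a single integral $2\int_a^\infty$, whereas the paper applies the triangle inequality to the two half-line pieces and adds $4+4$; both give $8$.
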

\noindent {\bf Proof.}When $t=b=0,$ we have
\begin{eqnarray}
\left|P.V.\left(\int_{|\rho|\geq a}\rho^{-1}d\rho\right)\right|=0.
\label{3.046}
\end{eqnarray}
When $t\neq0,b=0,$ from Lemma 2.4,  we have
\begin{align}
&\left|P.V.\left(\int_{|\rho|\geq a}e^{-it\rho}\rho^{-1}d\rho\right)\right|\nonumber\\&
=\left|P.V.\left(\int_{|\rho|\geq a}\rho^{-1}cos t\rho d\rho\right)+iP.V.\left(\int_{|\rho|\geq a}
\rho^{-1}sint \rho d\rho\right)\right|\nonumber\\
&=\left|P.V.\left(\int_{|\rho|\geq a}\rho^{-1}sint \rho d\rho\right)\right|\nonumber\\&
\leq \left|P.V.\left(\int_{a}^{+\infty}\rho^{-1}sint \rho d\rho\right)\right|+
\left|P.V.\left(\int_{-\infty}^{-a}\rho^{-1}sint \rho d\rho\right)\right|\leq 8.\label{3.047}
\end{align}
When $b\neq0,$ from Lemma 3.12, we have
\begin{align}
&\left|P.V.\left(\int_{|\rho|\geq a}e^{-it\rho}\rho^{-1+bi}d\rho\right)\right|\leq
C(1+e^{-\pi b})\left(\sqrt{|b|}+\frac{1}{\sqrt{|b|}}\right)^{2}.\label{3.048}
\end{align}

This completes the proof of Lemma 3.13.

\begin{Lemma}\label{3.14}
Let $b\in \R,a>0.$ Then, we have
\begin{align}
\left|P.V.\int_{|\rho|\leq a}e^{-it\rho}\rho^{-1+ib}d\rho\right|
\leq |C_{0}(b)|+ CC_{1}(b).\label{3.049}
\end{align}
Here $|C_{0}(b)|,C_{1}(b)$ are defined as in Lemmas 3.8, 3.13, respectively.
\end{Lemma}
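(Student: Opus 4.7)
The plan is to decompose the principal value integral over $\{|\rho|\leq a\}$ as the difference of the full principal value integral over $\R$ and the principal value integral over $\{|\rho|\geq a\}$. That is, I will write
\begin{eqnarray*}
P.V.\int_{|\rho|\leq a}e^{-it\rho}\rho^{-1+ib}d\rho
= P.V.\int_{\SR}e^{-it\rho}\rho^{-1+ib}d\rho
- P.V.\int_{|\rho|\geq a}e^{-it\rho}\rho^{-1+ib}d\rho,
\end{eqnarray*}
which is valid because each of the three principal value integrals has already been shown to converge (the first in Lemma 3.8, the second on the right in Lemma 3.13).

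Once this decomposition is in place, the triangle inequality immediately gives
\begin{eqnarray*}
\left|P.V.\int_{|\rho|\leq a}e^{-it\rho}\rho^{-1+ib}d\rho\right|
\leq \left|P.V.\int_{\SR}e^{-it\rho}\rho^{-1+ib}d\rho\right|
+ \left|P.V.\int_{|\rho|\geq a}e^{-it\rho}\rho^{-1+ib}d\rho\right|.
\end{eqnarray*}
Invoking Lemma 3.8 bounds the first term by $|C_0(b)|$, while Lemma 3.13 bounds the second term by $CC_1(b)$, with a constant $C$ independent of $a$. Adding these yields the claimed bound $|C_0(b)|+CC_1(b)$.

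There is no serious obstacle here since both building blocks are already established. The only mild point to verify is that the decomposition is legitimate in the principal value sense: for each $\epsilon\in (0,a)$ and $M>a$, the identity
\begin{eqnarray*}
\int_{\epsilon\leq|\rho|\leq a}e^{-it\rho}\rho^{-1+ib}d\rho
= \int_{\epsilon\leq|\rho|\leq M}e^{-it\rho}\rho^{-1+ib}d\rho
- \int_{a\leq|\rho|\leq M}e^{-it\rho}\rho^{-1+ib}d\rho
\end{eqnarray*}
holds in the ordinary Riemann/Lebesgue sense, and passing to the limits $\epsilon\to 0^+$ and $M\to +\infty$ uses precisely the convergence statements proven in Lemmas 3.8 and 3.13. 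Thus the proof is essentially a one-line application of the triangle inequality after the correct splitting.
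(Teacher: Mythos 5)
Your proof is correct and follows exactly the same route as the paper: split the principal value integral over $\{|\rho|\leq a\}$ as the full principal value integral minus the one over $\{|\rho|\geq a\}$, apply the triangle inequality, and invoke Lemmas 3.8 and 3.13. The extra remark you add about verifying the decomposition at the level of finite $\epsilon$ and $M$ before passing to the limit is a small bonus of rigor, but it does not change the argument.
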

\noindent {\bf Proof.} Combining Lemmas 3.8, 3.13 with
\begin{eqnarray}
&&\left|P.V.\int_{|\rho|\leq a}e^{-it\rho}\rho^{-1+ib}d\rho\right|\nonumber\\&&=
\left|P.V.\int_{\SR}e^{-it\rho}\rho^{-1+ib}d\rho-P.V.
\int_{|\rho|\geq a}e^{-it\rho}\rho^{-1+ib}d\rho\right|\nonumber\\
&&\leq \left|P.V.\int_{\SR}e^{-it\rho}\rho^{-1+ib}d\rho\right|+
\left|P.V.\int_{|\rho|\geq a}e^{-it\rho}\rho^{-1+ib}d\rho\right|\nonumber\\
&&\leq|C_{0}(b)| + CC_{1}(b).\label{3.050}
\end{eqnarray}

This completes the proof of Lemma 3.14.

\begin{Lemma}\label{3.15}
Let $b\in \R,\epsilon, M>0.$ Then, we have
\begin{align}
\left|P.V.\int_{\epsilon \leq |\rho|\leq M}e^{-it\rho}\rho^{-1+ib}d\rho\right|
\leq 2|C_{0}(b)|+ CC_{1}(b).\label{3.051}
\end{align}
Here $|C_{0}(b)|,C_{1}(b)$ are defined as in Lemmas 3.8, 3.13, respectively.
\end{Lemma}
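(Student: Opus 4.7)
The plan is to reduce the bound for the symmetric annulus $\epsilon \le |\rho|\le M$ to already-established bounds by a simple set-theoretic decomposition, and then invoke either Lemma 3.13 or Lemma 3.14 twice together with the triangle inequality.

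First I would write
\begin{eqnarray*}
P.V.\int_{\epsilon\leq |\rho|\leq M}e^{-it\rho}\rho^{-1+ib}d\rho
&=& P.V.\int_{|\rho|\leq M}e^{-it\rho}\rho^{-1+ib}d\rho
 - P.V.\int_{|\rho|\leq \epsilon}e^{-it\rho}\rho^{-1+ib}d\rho,
\end{eqnarray*}
which is justified because the Cauchy principal value only concerns the singularity at $\rho=0$, and both sub-integrals on the right carry that principal value in the same way, so the odd singular contributions cancel and the difference equals the integral over the annulus. (Equivalently one could decompose as $\int_{|\rho|\ge \epsilon}-\int_{|\rho|\ge M}$ and use Lemma 3.13 instead, in which case the principal value issue is pushed to the pair of already-treated half-line integrals.)

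Next I would apply Lemma 3.14 separately to each of the two terms on the right, obtaining in each case a bound of the form $|C_0(b)|+CC_1(b)$, with the constant $C$ independent of $\epsilon$ and $M$, because in Lemma 3.14 the bound depends only on $b$. The triangle inequality then gives
\begin{eqnarray*}
\left|P.V.\int_{\epsilon\leq |\rho|\leq M}e^{-it\rho}\rho^{-1+ib}d\rho\right|
\leq 2|C_0(b)|+2CC_1(b).
\end{eqnarray*}
Finally I absorb the factor $2$ in front of $CC_1(b)$ into the unspecified constant $C$ (renaming $2C\to C$), which yields the stated inequality $2|C_0(b)|+CC_1(b)$.

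Since each of the inputs (Lemmas 3.8, 3.13, 3.14) has already been established in the paper, this argument is entirely mechanical; there is no genuine obstacle. The only point worth a line of care is ensuring that the splitting of $P.V.$ integrals is legitimate, i.e.\ that the principal value in the annular region coincides with the difference of two principal values at the origin, which follows because for any $0<\delta<\min(\epsilon,M)$ the finite integrals $\int_{\delta\le|\rho|\le\epsilon}$ and $\int_{\delta\le|\rho|\le M}$ are ordinary Lebesgue integrals whose difference is exactly $\int_{\epsilon\le|\rho|\le M}$, independently of $\delta$, so passing $\delta\to 0^+$ gives the claimed identity.
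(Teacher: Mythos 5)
Your proof is correct and follows the same route as the paper: decompose the annular principal value as $P.V.\int_{|\rho|\le M}-P.V.\int_{|\rho|\le\epsilon}$, apply Lemma 3.14 to each piece, and use the triangle inequality, absorbing the extra factor of $2$ into the constant $C$. The only addition you make is a brief justification of the legitimacy of splitting the principal value, which the paper leaves implicit; otherwise the two arguments coincide.
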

\noindent {\bf Proof.} From Lemma 3.14, we have
\begin{eqnarray}
&&\left|P.V.\int_{\epsilon\leq |\rho|\leq M}e^{-it\rho}\rho^{-1+ib}d\rho\right|\nonumber\\&&
=\left|P.V.\int_{|\rho|\leq M}e^{-it\rho}\rho^{-1+ib}d\rho-
P.V.\int_{|\rho|\leq \epsilon}e^{-it\rho}\rho^{-1+ib}d\rho\right|\nonumber\\
&&\leq \left|P.V.\int_{|\xi|\leq M}e^{-it\rho}\rho^{-1+ib}d\rho\right|+
\left|P.V.\int_{|\rho|\leq \epsilon}e^{-it\rho}\rho^{-1+ib}d\rho\right|\nonumber\\
&&\leq 2|C_{0}(b)|+ CC_{1}(b).\label{3.052}
\end{eqnarray}

This completes the proof of Lemma 3.15.

\begin{Lemma}\label{3.16}
Let $b\in \R,b\neq0.$ Then, we have
\begin{align}
\int_{\SR}e^{-it\rho}\rho^{-1+ib}d\rho=C_{2}(b) .\label{3.053}
\end{align}
Here \bigskip

$\qquad C_{2}(b)=\left\{
        \begin{array}{ll}
         0, & \hbox{$t=0$,} \\

          t^{-ib}\Gamma(ib)(e^{\frac{b\pi}{2}}-e^{\frac{-3b\pi}{2}}),
           & \hbox{$t>0$},\\
            0, & \hbox{$t<0$.}
        \end{array}
      \right.$
\bigskip

In particular, we have
$\qquad |C_{2}(b)|=\left\{
        \begin{array}{ll}
         0, & \hbox{$t=0$,} \\

        \left[\frac{2\pi (1-e^{-2\pi b})}{b}\right]^{1/2},
           & \hbox{$t>0$},\\
            0, & \hbox{$t<0$.}
        \end{array}
      \right.$

\end{Lemma}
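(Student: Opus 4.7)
The plan is to follow the argument of Lemma 3.8 essentially verbatim, removing the principal value prescription now that the hypothesis $b \neq 0$ is in force. First I would split
\[
\int_{\SR}e^{-it\rho}\rho^{-1+ib}d\rho = \int_0^{+\infty}e^{-it\rho}\rho^{-1+ib}d\rho + \int_{-\infty}^{0}e^{-it\rho}\rho^{-1+ib}d\rho,
\]
where each half-line integral is interpreted in the same distributional/Abel-summation sense already used in Lemmas 3.1--3.6. The key point is that for $b \neq 0$ the factor $\rho^{-1+ib} = \rho^{-1}e^{ib\ln|\rho|}$ oscillates rapidly near $\rho = 0$, so no symmetric cancellation between $(0,\epsilon)$ and $(-\epsilon,0)$ is needed to make sense of either piece; this is what lets us replace $P.V.$ by the ordinary improper integral relative to Lemma 3.8.

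For the case $t = 0$, I would apply Lemma 3.2, which gives $\int_{\SR}\rho^{-1+ib}d\rho = 2\pi(1-e^{-\pi b})\delta(b)$; since $b \neq 0$ this vanishes, matching $C_2(b) = 0$ at $t = 0$.

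For $t \neq 0$, I would apply the Gelfand--Shilov formula $\mathscr{F}_{x}[x_+^{\lambda}] = ie^{i\lambda\pi/2}\Gamma(\lambda+1)(t+i0)^{-\lambda-1}$ (page 172 of \cite{GS}) with $\lambda = -1+ib$ to evaluate $\int_0^{+\infty}e^{-it\rho}\rho^{-1+ib}d\rho$, and a parallel computation after the substitution $\rho = -s$ combined with Lemma 2.1 (to pin down the branch of $(-s)^{-1+ib}$) to evaluate $\int_{-\infty}^{0}e^{-it\rho}\rho^{-1+ib}d\rho$. Assembling the two pieces using Lemma 2.2 reproduces the piecewise expression
\[
e^{-b\pi/2}\Gamma(ib)\bigl[(-t+i0)^{-ib} - (-1)^{ib}(t+i0)^{-ib}\bigr],
\]
which by Lemma 2.1 equals $t^{-ib}\Gamma(ib)(e^{b\pi/2}-e^{-3b\pi/2})$ for $t > 0$ and $0$ for $t < 0$, that is, precisely $C_2(b)$. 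The bound on $|C_2(b)|$ for $t > 0$ then follows immediately from Lemma 3.7.

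The main obstacle is justifying that the improper integrals $\int_0^{+\infty}$ and $\int_{-\infty}^{0}$ are well-defined without the $P.V.$ prescription when $b \neq 0$. Once the distributional interpretation of Lemmas 3.1--3.6 is invoked (noting that the $\delta(b)$ contributions from those lemmas vanish under the standing assumption $b \neq 0$), the algebraic manipulation is identical to Lemma 3.8 and no new analytic input is required.
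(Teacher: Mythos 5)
Your proposal is correct and follows essentially the same route as the paper, which cites Lemma 3.2 for the $t=0$ case and the Gelfand--Shilov computation (3.027) of Lemma 3.8 for $t\neq 0$, with Lemma 3.7 yielding the modulus. Your added remark explaining why the $P.V.$ prescription can be dropped for $b\neq 0$ (each half-line integral is already well-defined in the Abel/distributional sense of Lemmas 3.3--3.6, so no symmetric cancellation near $\rho=0$ is required) is a fair spelling-out of what the paper's one-line proof leaves implicit.
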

\noindent {\bf Proof.} From Lemma 3.2, (\ref{3.027}), we derive that
(\ref{3.053}) is valid.

This completes the proof of Lemma 3.16.

\begin{Lemma}\label{3.17}
Let $b\in \R,b\neq0,a>0.$ Then, we have
\begin{align}
\left|\int_{|\rho|\leq a}e^{-it\rho}\rho^{-1+ib}d\rho\right|\leq
 C(1+e^{-\pi b})\left(\sqrt{|b|}+\frac{1}{\sqrt{|b|}}\right)^{2}.\label{3.054}
\end{align}
Here $C$ is independent of $a.$
\end{Lemma}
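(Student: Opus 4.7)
The plan is to reduce Lemma 3.17 to the two estimates already established, namely Lemma 3.16 (which controls the full line integral $\int_{\mathbb{R}} e^{-it\rho}\rho^{-1+ib}d\rho$ for $b\neq 0$) and Lemma 3.12 (which controls the tails $\int_{|\rho|\geq a} e^{-it\rho}\rho^{-1+ib}d\rho$). The key observation is that when $b\neq 0$, both of these integrals converge (at least in the principal value sense used in the preceding lemmas), so we are entitled to write the decomposition
\begin{eqnarray*}
\int_{|\rho|\leq a} e^{-it\rho}\rho^{-1+ib}d\rho
= \int_{\mathbb{R}} e^{-it\rho}\rho^{-1+ib}d\rho
 - \int_{|\rho|\geq a} e^{-it\rho}\rho^{-1+ib}d\rho,
\end{eqnarray*}
which mirrors the proof strategy of Lemma 3.14 but uses the sharper non-principal-value inputs available thanks to $b\neq 0$.

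The second step is simply the triangle inequality, after which Lemma 3.16 supplies
\begin{eqnarray*}
\left|\int_{\mathbb{R}} e^{-it\rho}\rho^{-1+ib}d\rho\right| \leq |C_{2}(b)|,
\end{eqnarray*}
while Lemma 3.12 (in its consolidated form, estimate (\ref{3.041})) supplies
\begin{eqnarray*}
\left|\int_{|\rho|\geq a} e^{-it\rho}\rho^{-1+ib}d\rho\right|
\leq C(1+e^{-\pi b})\left(\sqrt{|b|}+\tfrac{1}{\sqrt{|b|}}\right)^{2},
\end{eqnarray*}
with constant independent of $a$. Adding these and noting that $|C_{2}(b)|\leq \bigl[2\pi(1-e^{-2\pi b})/b\bigr]^{1/2}$ for $t>0$ and vanishes otherwise, one checks by an elementary estimate (separating the regimes $|b|\leq 1$ and $|b|\geq 1$) that
\begin{eqnarray*}
|C_{2}(b)| \leq C(1+e^{-\pi b})\left(\sqrt{|b|}+\tfrac{1}{\sqrt{|b|}}\right)^{2},
\end{eqnarray*}
so the two contributions can be absorbed into a single bound of the desired form.

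The proof is essentially routine once the two prior lemmas are in hand, so there is no substantial obstacle; the only point requiring care is to confirm that the inner integral on $|\rho|\leq a$ is well-defined (as a principal value at the origin), which is legitimate because, for $b\neq 0$, the antiderivative $\rho^{ib}/(ib)$ together with its $(-\rho)$-analog controls the cancellation across zero via Lemma 2.1, exactly as was exploited in Lemmas 3.3--3.4. Thus the main technical content is entirely absorbed by Lemmas 3.12 and 3.16, and Lemma 3.17 follows by decomposition and the triangle inequality, with $C$ independent of $a$ because neither input depends on $a$ in its bounding constant.
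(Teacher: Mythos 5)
Your proposal is correct and follows exactly the paper's own proof: decompose $\int_{|\rho|\leq a}=\int_{\mathbb{R}}-\int_{|\rho|\geq a}$, apply the triangle inequality, invoke Lemma 3.16 for the full-line integral and (\ref{3.041}) of Lemma 3.12 for the tail, and absorb $|C_{2}(b)|$ into the target bound via the elementary estimate $\left[2\pi(1-e^{-2\pi b})/b\right]^{1/2}\leq 2\pi(1+e^{-\pi b})\bigl(\sqrt{|b|}+1/\sqrt{|b|}\bigr)^{2}$. No substantive difference.
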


\noindent {\bf Proof.} Combining  Lemma 3.16 with  (\ref{3.041}), we derive
\begin{eqnarray*}
&&\left|\int_{|\rho|\leq a}e^{-it\rho}\rho^{-1+ib}d\rho\right|=
\left|\int_{\SR}e^{-it\rho}\rho^{-1+ib}d\rho-\int_{|\rho|\geq a}
e^{-it\rho}\rho^{-1+ib}d\rho\right|\nonumber\\
&&\leq\left|\int_{\SR}e^{-it\rho}\rho^{-1+ib}d\rho\right|+
\left|\int_{|\rho|\geq a}e^{-it\rho}\rho^{-1+ib}d\rho\right|\nonumber\\
&&\leq  \left[\frac{2\pi (1-e^{-2\pi b})}{b}\right]^{1/2}+C(1+e^{-\pi b})
\left(\sqrt{|b|}+\frac{1}{\sqrt{|b|}}\right)^{2}\nonumber\\
&&\leq C(1+e^{-\pi b})\left(\sqrt{|b|}+\frac{1}{\sqrt{|b|}}\right)^{2}
\end{eqnarray*}
since
\begin{eqnarray*}
\left[\frac{2\pi (1-e^{-2\pi b})}{b}\right]^{1/2}\leq 2\pi(1+e^{-\pi b})
\left(\sqrt{|b|}+\frac{1}{\sqrt{|b|}}\right)^{2}.
\end{eqnarray*}
Here $C$ may vary from line to line.

This completes the proof of Lemma 3.17.

\begin{Lemma}\label{3.18}
Let $b\in \R$  and $d_{1}>0,d_{2}<0$. Then, we have
\begin{eqnarray}
&&\left|b\int_{0}^{d_{1}}e^{-it\rho}\rho^{-1+ib}d\rho\right|\leq
C(1+|b|)^{2},\label{3.055}\\
&&\left|b\int_{0}^{d_{2}}e^{-it\rho}\rho^{-1+ib}d\rho\right|\leq
 Ce^{-\pi b}(1+|b|)^{2}.\label{3.056}
\end{eqnarray}
Here $C$ is independent of $b,t,d_{1},d_{2}$.
\end{Lemma}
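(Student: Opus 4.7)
The plan is to reduce both inequalities directly to Lemma 1 of \cite{BLN2021}, which gives
\[
\left|\gamma\int_{\epsilon}^{R}e^{\pm iA\rho}\rho^{-1+i\gamma}d\rho\right|\leq C(1+|\gamma|)^{2}
\]
for any $\epsilon>0$, $R>0$, $A\in\R$, $\gamma\in\R$, with $C$ independent of $\gamma,\epsilon,R,A$. The crucial point is that this bound is already uniform in the lower cutoff $\epsilon$. The statements in Lemma 3.18 should therefore be interpreted as the uniform bound over any regularization $\delta>0$ replacing the $0$ endpoint (the improper integral as $\delta\to 0^{+}$ does not converge classically when $b\neq 0$, since the boundary term $\delta^{ib}$ oscillates, but the prefactor $b$ times the integral remains bounded uniformly in $\delta$, which is exactly the assertion).

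For the first inequality, I would take $\gamma=b$, $A=t$, $\epsilon=\delta>0$, $R=d_{1}>0$ in \cite{BLN2021} Lemma 1 to obtain
\[
\left|b\int_{\delta}^{d_{1}}e^{-it\rho}\rho^{-1+ib}d\rho\right|\leq C(1+|b|)^{2}
\]
uniformly in $\delta>0$, which is (\ref{3.055}). The degenerate cases $t=0$ and $b=0$ are immediate: when $b=0$ the prefactor kills the expression, and when $t=0$ one computes $b\int_{\delta}^{d_{1}}\rho^{-1+ib}d\rho=\frac{b}{ib}(d_{1}^{ib}-\delta^{ib})$, whose modulus is at most $2$.

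For the second inequality, the idea is to convert the negative-interval integral into a positive-interval one via $\rho=-s$ and invoke Lemma 2.1 to interpret $\rho^{-1+ib}$ at $\rho<0$. Under the $(\rho+i0)$-convention of Lemma 2.1,
\[
(-s+i0)^{-1+ib}=e^{(-1+ib)i\pi}s^{-1+ib}=-e^{-\pi b}s^{-1+ib},\qquad s>0,
\]
and combining this with $d\rho=-ds$ and the reversal of limits one finds
\[
\int_{0}^{d_{2}}e^{-it\rho}\rho^{-1+ib}d\rho=e^{-\pi b}\int_{0}^{-d_{2}}e^{its}s^{-1+ib}ds.
\]
Applying the first inequality (equivalently, \cite{BLN2021} Lemma 1) to the positive-interval integral on the right then produces the desired bound $Ce^{-\pi b}(1+|b|)^{2}$.

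The main conceptual obstacle is simply the interpretation of the left-hand side when the integrand has an oscillatory non-integrable singularity at $\rho=0$. Once this is recognized as a uniform-in-regularization statement, no further oscillatory-integral analysis is needed beyond what \cite{BLN2021} already supplies; the only genuinely new ingredient is the change-of-variable step together with the power-function branch identity from Lemma 2.1, which is precisely what generates the $e^{-\pi b}$ factor distinguishing the second bound from the first.
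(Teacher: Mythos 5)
Your proof is correct and in fact \emph{simpler} than the paper's for the first inequality (\ref{3.055}). The paper takes a more elaborate route: after the substitution $t\rho=s$, it writes $b\int_0^{td_1}e^{-is}s^{-1+ib}\,ds = b\int_0^{\infty}-b\int_{td_1}^{\infty}$, evaluates the first term via the Gelfand--Shilov distributional Fourier-transform formula $\mathscr{F}_x[x_+^\lambda]=ie^{i\lambda\pi/2}\Gamma(\lambda+1)(t+i0)^{-\lambda-1}$, controls its modulus through $|\Gamma(ib)|$ using Lemma 2.6 (hence Lemma 3.7), and then splits the tail $\int_{td_1}^{\infty}$ at the threshold $M_0$ of Lemma 3.11, bounding $\int_{td_1}^{M_0}$ by Lemma 1 of \cite{BLN2021} and $\int_{M_0}^{\infty}$ by $|b|\epsilon$. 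You bypass all of this with the single observation that Lemma 1 of \cite{BLN2021} is already stated with a constant uniform in the lower cutoff $\epsilon$, so $\left|b\int_\delta^{d_1}e^{-it\rho}\rho^{-1+ib}\,d\rho\right|\le C(1+|b|)^2$ for every $\delta>0$, and this bound survives whatever limiting procedure one uses to give meaning to the singular endpoint. For (\ref{3.056}) you and the paper proceed identically: substitute $\rho=-s$ and invoke the branch identity of Lemma 2.1 to extract the factor $e^{-\pi b}$, then apply the positive-interval bound.

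The one point to flag is interpretational rather than a gap in the argument. Your proof gives a uniform-in-regularization bound, whereas the paper's proof pins down a specific regularized value of $\int_0^{d_1}$ (the analytic continuation/Gelfand--Shilov value, which for the full half-line reads $be^{\pm b\pi/2}\Gamma(ib)$) and bounds that. Since the paper itself never defines what the singular lower endpoint means in the statement of Lemma 3.18, your reading is defensible, and the two readings are compatible: any limit of uniformly bounded truncations inherits the same bound. What the paper's route buys is an explicit closed form for the regularized half-line integral (useful when one needs an identity rather than an estimate); what your route buys is economy, since \cite{BLN2021} is then the only nontrivial external input and Lemma 3.11 and the Gamma-function estimates become unnecessary for this particular lemma.
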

\noindent {\bf Proof.}When $b=0$, obviously, (\ref{3.055}) is valid. When $b\neq0$,
 we consider $t=0,t>0,t<0,$ respectively.
When $t=0,$ from Lemma 3.3 and Lemma 1 of  \cite{BLN2021},  we have
\begin{eqnarray}
&&\left|b\int_{0}^{d_{1}}\rho^{-1+ib}d\rho\right|
=\left|b\int_{0}^{1}\rho^{-1+ib}d\rho+b\int_{1}^{d_{1}}\rho^{-1+ib}d\rho\right|\nonumber\\
&&\leq \left|b\int_{0}^{1}\rho^{-1+ib}d\rho\right|+\left|b\int_{1}^{d_{1}}
\rho^{-1+ib}d\rho\right|\nonumber\\&&\leq C(1+|b|)^{2}.\label{3.057}
\end{eqnarray}
When $t>0,$  $\forall \epsilon>0$,  by using (2) of page 171 in \cite{GS} and Lemma 1
 of \cite{BLN2021} as well as Lemma 2.6,  we have
\begin{eqnarray}
&&\left|b\int_{0}^{d_{1}}e^{-it\rho}\rho^{-1+ib}d\rho\right|\ \underline{\underline{t\rho=s}}
\ \ \left|t^{-ib}b\int_{0}^{td_{1}}e^{-is}s^{-1+ib}ds\right|\nonumber\\&&
=\left|b\int_{0}^{+\infty}e^{-is}s^{-1+ib}ds-b\int_{td_{1}}^{+\infty}e^{-is}s^{-1+ib}ds\right|\nonumber\\
&&=\left|be^{-\frac{b\pi}{2}}\Gamma(b i)(-1+i0)^{-b i}-
b\int_{td_{1}}^{+\infty}e^{-is}s^{-1+ib}ds\right|\nonumber\\
&&=\left|be^{-\frac{b\pi}{2}}\Gamma(b i)(-1+i0)^{-b i}-
b\int_{td_{1}}^{M_{0}}e^{-is}s^{-1+ib}ds-b\int_{M_{0}}^{+\infty}e^{-is}s^{-1+ib}ds\right|\nonumber\\
&&\leq  \left|be^{-\frac{b\pi}{2}}\Gamma(b i)(-1+i0)^{-b i} \right|+
\left|b\int_{td_{1}}^{M_{0}}e^{-is}s^{-1+ib}ds\right|  +\left|b\int_{M_{0}}^{+\infty}
e^{-is}s^{-1+ib}ds\right| \nonumber\\&&
\leq \left(\frac{2\pi be^{-b \pi}}{e^{\pi b}-e^{-\pi b}}\right)^{\frac{1}{2}}+C(1+|b|)^{2}
+|b|\epsilon\nonumber\\&&\leq (2\pi|b|+1)^{1/2}+C(1+|b|)^{2}+|b|\epsilon\leq C(1+|b|)^{2}. \label{3.058}
\end{eqnarray}
Here $C$ may vary from line to line. When $t<0,$
by using  a  proof  similar to  case $t>0$, it follows that
\begin{eqnarray}
\left|b\int_{0}^{d_{1}}e^{-it\rho}\rho^{-1+ib}d\rho\right|\leq C(1+|b|)^{2}\label{3.059}.
\end{eqnarray}
Since
\begin{eqnarray}
&&\left|b\int_{0}^{d_{2}}e^{-it\rho}\rho^{-1+ib}d\rho\right|=\ \underline{\underline{\rho=-s}}
\ \ e^{-\pi b}\left|b\int_{0}^{-d_{2}}e^{its}s^{-1+ib}ds\right|
,\label{3.060}
\end{eqnarray}
by using a proof similar to (\ref{3.059}), we have
\begin{eqnarray}
\left|b\int_{0}^{d_{2}}e^{-it\rho}\rho^{-1+ib}d\rho\right|\leq Ce^{-\pi b}(1+|b|)^{2}.\label{3.061}
\label{3.071}
\end{eqnarray}

This completes the proof of Lemma 3.18.

\begin{Lemma}\label{3.19}
Let $c,d_{3},b \in \R.$ Then, we have
\begin{eqnarray}
\left|b\int_{c}^{d_{3}}e^{-it\rho}\rho^{-1+ib}d\rho\right|\leq CC_{3}(b).\label{3.062}
\end{eqnarray}
Here \bigskip

$\qquad C_{3}(b)=\left\{
        \begin{array}{ll}
         0, & \hbox{$b=0,c,d_{3}\in \R$,} \\
          (1+|b|)^{2}, & \hbox{$b\neq0,c>0,d_{3}>0$,} \\
          e^{-\pi b}(1+|b|)^{2},
           & \hbox{$b\neq0,c<0,d_{3}<0$},\\
            (1+e^{-\pi b})(1+|b|)^{2}, & \hbox{$b\neq0,cd_{3}<0$},\\
            (1+e^{-\pi b})(1+|b|)^{2}, & \hbox{$b\neq0,cd_{3}=0$.}
        \end{array}
      \right.$

\bigskip

Here $C$ is independent of $c,b,d_{3},t$.
\end{Lemma}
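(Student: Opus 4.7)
The plan is to reduce every one of the five cases of Lemma~3.19 to Lemma~3.18 of this paper (which controls $b\int_{0}^{d}e^{-it\rho}\rho^{-1+ib}d\rho$ for $d>0$ and $d<0$) by splitting the interval $[c,d_{3}]$ at zero (when it contains the origin) or by writing it as the difference of two intervals emanating from zero (when it does not). The proof thus becomes essentially a case analysis.

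First I dispose of the trivial case $b=0$: the prefactor $b$ kills the integral, so the estimate with $C_{3}(b)=0$ is immediate. For $b\neq 0$ with $c,d_{3}>0$, assume without loss of generality $c\leq d_{3}$ (otherwise swap endpoints and pick up a sign). Then
\begin{equation*}
b\int_{c}^{d_{3}}e^{-it\rho}\rho^{-1+ib}d\rho
= b\int_{0}^{d_{3}}e^{-it\rho}\rho^{-1+ib}d\rho - b\int_{0}^{c}e^{-it\rho}\rho^{-1+ib}d\rho,
\end{equation*}
and by the first inequality of Lemma~3.18 each term is bounded by $C(1+|b|)^{2}$, giving the claim. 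The case $b\neq 0$ with $c,d_{3}<0$ is analogous using the second inequality of Lemma~3.18, producing the factor $e^{-\pi b}$.

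For $b\neq 0$ with $cd_{3}<0$, I split at the origin. WLOG $c<0<d_{3}$, so
\begin{equation*}
b\int_{c}^{d_{3}}e^{-it\rho}\rho^{-1+ib}d\rho
= b\int_{c}^{0}e^{-it\rho}\rho^{-1+ib}d\rho + b\int_{0}^{d_{3}}e^{-it\rho}\rho^{-1+ib}d\rho.
\end{equation*}
The second term is bounded by $C(1+|b|)^{2}$ via Lemma~3.18, while for the first I change variables $\rho=-s$ (or invoke Lemma~3.18 directly with $d_{2}=c<0$) to obtain the bound $Ce^{-\pi b}(1+|b|)^{2}$. Adding the two pieces gives the stated $(1+e^{-\pi b})(1+|b|)^{2}$ bound. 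The degenerate case $cd_{3}=0$ (one endpoint equal to zero) is a single application of Lemma~3.18 to whichever of the two sub-integrals is nontrivial, absorbed into the same bound.

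No step is particularly hard: the only mild subtlety is ensuring the $e^{-\pi b}$ factor is tracked correctly when passing from $\int_{0}^{d}$ with $d<0$ to $\int_{0}^{|d|}$ via $\rho=-s$, because the identity $(-1)^{ib}=e^{-\pi b}$ used implicitly in Lemma~2.1 must be applied with the correct branch of the complex power. I will rely on the branch convention already fixed in Lemma~2.1 so that the sign of $b$ and the sign of the endpoint combine consistently, just as in the derivation of (3.060)--(3.061). Once this book-keeping is done, the five entries of the piecewise-defined $C_{3}(b)$ follow immediately.
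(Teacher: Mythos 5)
Your proof is correct, and for the cases $cd_{3}<0$ and $cd_{3}=0$ it is identical to what the paper does: split at the origin and apply Lemma~3.18 to each piece. The one place where you diverge is the same-sign cases. For $c,d_{3}>0$ (and similarly for $c,d_{3}<0$ after $\rho\mapsto -s$), the paper applies Lemma~1 of \cite{BLN2021} directly, since that lemma already controls $\int_{\epsilon}^{R}$ with $\epsilon,R>0$; you instead write $\int_{c}^{d_{3}}=\int_{0}^{d_{3}}-\int_{0}^{c}$ and invoke Lemma~3.18 twice. Both routes give the same constant up to a factor of $2$. Your version is more uniform---every case bottlenecks through Lemma~3.18---but it leans harder on the nontrivial fact that Lemma~3.18 extends Lemma~1 of \cite{BLN2021} to a lower limit of zero (which requires the Gel'fand--Shilov formula and Lemma~3.11), whereas the paper takes the cheaper direct path when it is available. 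One small point you pass over implicitly: your telescoping identity $\int_{c}^{d_{3}}=\int_{0}^{d_{3}}-\int_{0}^{c}$ needs both single integrals to exist, and since $|\rho^{-1+ib}|=\rho^{-1}$ is not absolutely integrable near $0$, this relies on the conditional convergence guaranteed by $b\neq 0$ (cf.\ Lemma~3.3). That is exactly why the case $b=0$ must be peeled off first; a sentence making this explicit would tighten the argument, but it does not affect its correctness.
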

\noindent{\bf Proof.}When $b=0$, obviously, (\ref{3.062}) is valid.

\noindent When $b\neq0$, we consider
\begin{eqnarray*}
(I) c>0,d_{3}>0;(II)c<0,d_{3}<0;(III)c>0,d_{3}<0;(IV)c<0,d_{3}>0;(V)cd_{3}=0.
\end{eqnarray*}
Case $(I) c>0,d_{3}>0.$ By using Lemma 1  of \cite{BLN2021},  we have
\begin{eqnarray}
\left|b\int_{c}^{d_{3}}e^{it\rho}\rho^{-1+ib}d\rho\right|\leq C(1+|b|)^{2}.\label{3.063}
\end{eqnarray}
Case $(II)c<0,d_{3}<0.$  By using Lemma 1  of \cite{BLN2021},
we have
\begin{eqnarray}
&&\left|b\int_{c}^{d_{3}}e^{-it\rho}\rho^{-1+ib}d\rho\right|\ \underline{\underline{\rho=-s}}\ \
e^{-\pi b}\left|b\int_{-c}^{-d_{3}}e^{its}s^{-1+ib}ds\right|\nonumber\\&&
\leq Ce^{-\pi b}(1+|b|)^{2}.\label{3.064}
\end{eqnarray}
Case $(III)c>0,d_{3}<0.$ By using Lemma 3.18,
we have
\begin{eqnarray}
&&\left|b\int_{c}^{d_{3}}e^{-it\rho}\rho^{-1+ib}d\rho\right|=
\left|b\int_{0}^{d_{3}}e^{-it\rho}\rho^{-1+ib}d\rho-b\int_{0}^{c}e^{-it\rho}\rho^{-1+ib}d\rho\right|\nonumber\\
&&\leq \left|b\int_{0}^{d_{3}}e^{-it\rho}\rho^{-1+ib}d\rho\right|+\left|b\int_{0}^{c}e^{-it\rho}\rho^{-1+ib}d\rho\right|\nonumber\\
&&\leq C(1+e^{-\pi b})(1+|b|)^{2}.\label{3.065}
\end{eqnarray}
Case $(IV)c<0,d_{3}>0.$ By using a proof similar to  Case $(III)c>0,d_{3}<0,$ we have
\begin{eqnarray}
&&\left|b\int_{c}^{d_{3}}e^{-it\rho}\rho^{-1+ib}d\rho\right|
\leq C(1+e^{-\pi b})(1+|b|)^{2}.\label{3.066}
\end{eqnarray}
Case $(V)cd_{3}=0.$ When $c=0,d_{3}=0,$ the conclusion is valid. When $c=0,d_{3}\neq0$ or
$c\neq0,d_{3}=0$, by using Lemma 3.18,  we
have
\begin{eqnarray*}
&&\left|b\int_{c}^{d_{3}}e^{-it\rho}\rho^{-1+ib}d\rho\right|
\leq C(1+e^{-\pi b})(1+|b|)^{2}.
\end{eqnarray*}

This completes the proof of Lemma 3.19.

\noindent {\bf Remark 5.}In  Lemma 1 of  \cite{BLN2021},    Bez et al.  proved that
(\ref{3.063}) is valid for $c>0,d_{3}>0.$ Thus, Lemma 3.16 extends the result of Lemma 1 of \cite{BLN2021}.

\begin{Lemma}\label{3.20}
Let $c,d_{3} \in \R,b\neq0.$ Then, we have
\begin{eqnarray*}
\left|\int_{c}^{d_{3}}e^{-it\rho}\rho^{-1+ib}d\rho\right|\leq CC_{4}(b).
\end{eqnarray*}
Here \bigskip

$\qquad C_{4}(b)=\left\{
        \begin{array}{ll}
        \left(\sqrt{|b|}+\frac{1}{\sqrt{|b|}}\right)^{2}, & \hbox{$b\neq0,c>0,d_{3}>0$;} \\
          e^{-\pi b}\left(\sqrt{|b|}+\frac{1}{\sqrt{|b|}}\right)^{2},
           & \hbox{$b\neq0,c<0,d_{3}<0$},\\
            (1+e^{-\pi b})\left(\sqrt{|b|}+\frac{1}{\sqrt{|b|}}\right)^{2}, & \hbox{$b\neq0,cd_{3}<0$,}\\
            (1+e^{-\pi b})\left(\sqrt{|b|}+\frac{1}{\sqrt{|b|}}\right)^{2}, & \hbox{$b\neq0,cd_{3}=0$.}
        \end{array}
      \right.$

\bigskip

Here $C$ is independent of $c,b,d_{3},t$.
\end{Lemma}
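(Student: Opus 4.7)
The plan is to reduce each of the three cases in the definition of $C_{4}(b)$ to bounds already established (Lemma 1 of \cite{BLN2021} and Lemma 3.18 above), using appropriate changes of variable. The bridging algebraic identity is
\begin{equation*}
\frac{(1+|b|)^{2}}{|b|}=|b|+2+\frac{1}{|b|}=\left(\sqrt{|b|}+\frac{1}{\sqrt{|b|}}\right)^{2},
\end{equation*}
so any bound of the shape $C(1+|b|)^{2}$ on $|b\int\cdots|$ translates, after dividing by $|b|$, into a bound of the shape $C\bigl(\sqrt{|b|}+1/\sqrt{|b|}\bigr)^{2}$ on $|\int\cdots|$.

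For Case (I) with $c>0,\,d_{3}>0$, I would first dispatch $t=0$ by elementary integration: $\bigl|\int_{c}^{d_{3}}\rho^{-1+ib}d\rho\bigr|=\bigl|(d_{3}^{ib}-c^{ib})/(ib)\bigr|\leq 2/|b|$. For $t\neq 0$, I would perform the substitution $s=|t|\rho$, which converts the integral into $|t|^{-ib}\int_{c|t|}^{d_{3}|t|}e^{\mp is}s^{-1+ib}ds$, the sign in the exponential being determined by the sign of $t$. Since $\bigl||t|^{-ib}\bigr|=1$ and both limits of integration are strictly positive, Lemma 1 of \cite{BLN2021} applies, and after dividing by $|b|$ yields exactly the bound $C\bigl(\sqrt{|b|}+1/\sqrt{|b|}\bigr)^{2}$.

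For Case (II) with $c<0,\,d_{3}<0$, I would use the substitution $\rho=-s$ together with the identification $\rho^{-1+ib}=(-s+i0)^{-1+ib}=-e^{-b\pi}s^{-1+ib}$ coming from Lemma 2.1. The resulting integral has strictly positive limits and differs from a Case (I) integral only by the prefactor $e^{-b\pi}$, which is exactly the factor appearing in $C_{4}(b)$ for this case. For Case (III) with $cd_{3}<0$, without loss of generality $c<0<d_{3}$, I would split at the origin as $\int_{c}^{d_{3}}=\int_{c}^{0}+\int_{0}^{d_{3}}$ and apply the two bounds of Lemma 3.18 divided by $|b|$: the negative-endpoint piece contributes $Ce^{-b\pi}\bigl(\sqrt{|b|}+1/\sqrt{|b|}\bigr)^{2}$ and the positive-endpoint piece contributes $C\bigl(\sqrt{|b|}+1/\sqrt{|b|}\bigr)^{2}$, summing to $C(1+e^{-\pi b})\bigl(\sqrt{|b|}+1/\sqrt{|b|}\bigr)^{2}=CC_{4}(b)$.

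The only genuine obstacle is careful bookkeeping of the unimodular factors $|t|^{-ib}$ and of the phase $(-1)^{-1+ib}=-e^{-b\pi}$ arising from the branch choice on the negative real axis; once these are pinned down via Lemma 2.1 and the substitutions are executed cleanly, no new analytic estimate is needed beyond Lemma 1 of \cite{BLN2021} and Lemma 3.18.
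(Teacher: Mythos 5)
Your proposal is correct and follows essentially the same route as the paper, which proves Lemma 3.20 in a single sentence by appealing to the proof of Lemma 3.19 and implicitly dividing the $(1+|b|)^{2}$ bounds there by $|b|$; your explicit observation that $(1+|b|)^{2}/|b|=\bigl(\sqrt{|b|}+1/\sqrt{|b|}\bigr)^{2}$ is precisely the bridge the paper leaves unstated, and your case analysis (substitution $s=|t|\rho$ for $c,d_{3}>0$, substitution $\rho=-s$ with the branch factor $-e^{-b\pi}$ for $c,d_{3}<0$, and splitting at $0$ via Lemma 3.18 when $cd_{3}<0$) mirrors the paper's Lemma 3.19 argument case for case.
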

\noindent{\bf Proof.}From the process of proof of Lemma 3.19, we know that Lemma 3.20 is valid.

This completes the proof of Lemma 3.20.

\begin{Lemma}\label{3.21}
Let $b\neq 0$ and $\epsilon>0,M>0$.Then, we have
\begin{eqnarray*}
\left|\int_{\epsilon}^{M}e^{-it\rho}\rho^{-1+ib}d\rho+
\int_{-M}^{-\epsilon}e^{-it\rho}\rho^{-1+ib}d\rho\right|
\leq C(1+e^{-\pi b})\left(\sqrt{|b|}+\frac{1}{\sqrt{|b|}}\right)^{2}.
\end{eqnarray*}

Here $C$ is independent of $b,t$.
\end{Lemma}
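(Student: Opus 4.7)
The statement is essentially a restatement of the principal value integral controlled in Lemma 3.15, specialized to the annular region $\epsilon \leq |\rho|\leq M$. The plan is to split into the two one-sided pieces and apply Lemma 3.20 directly to each, then combine via the triangle inequality. Since $b\neq 0$ and the integration avoids a neighborhood of the origin (because $\epsilon>0$), no principal-value device is needed: each one-sided integral is absolutely convergent as a proper integral on a bounded interval bounded away from $\rho=0$, and Lemma 3.20 provides the required sharp quantitative bound in terms of $b$.

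Concretely, I would proceed as follows. First, by the triangle inequality write
\begin{eqnarray*}
\left|\int_{\epsilon}^{M}e^{-it\rho}\rho^{-1+ib}d\rho+\int_{-M}^{-\epsilon}e^{-it\rho}\rho^{-1+ib}d\rho\right|
\leq \left|\int_{\epsilon}^{M}e^{-it\rho}\rho^{-1+ib}d\rho\right|+\left|\int_{-M}^{-\epsilon}e^{-it\rho}\rho^{-1+ib}d\rho\right|.
\end{eqnarray*}
For the first term I apply Lemma 3.20 with $c=\epsilon>0$ and $d_{3}=M>0$ (Case $c>0,d_{3}>0$ of that lemma) to obtain the bound $C\bigl(\sqrt{|b|}+\frac{1}{\sqrt{|b|}}\bigr)^{2}$. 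For the second term I apply Lemma 3.20 with $c=-M<0$ and $d_{3}=-\epsilon<0$ (Case $c<0,d_{3}<0$) to obtain $Ce^{-\pi b}\bigl(\sqrt{|b|}+\frac{1}{\sqrt{|b|}}\bigr)^{2}$. Summing these two bounds yields exactly $C(1+e^{-\pi b})\bigl(\sqrt{|b|}+\frac{1}{\sqrt{|b|}}\bigr)^{2}$, which is the desired estimate; the constant $C$ inherits the $b,t,\epsilon,M$-independence already established in Lemma 3.20.

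As a sanity check (and an alternative route), one could instead invoke Lemma 3.15, which gives the bound $2|C_{0}(b)|+CC_{1}(b)$ for the same principal-value integral; by Remark 4 we have $|C_{0}(b)|\leq C(1+e^{-\pi b})\bigl(\sqrt{|b|}+\frac{1}{\sqrt{|b|}}\bigr)^{2}$, and $C_{1}(b)$ is manifestly of the same form, so the two routes produce the same final inequality. Either way, there is no real obstacle: the bookkeeping has been done in Lemma 3.20 (which itself rests on the substitution $\rho\mapsto -s$ to handle the negative half-line, picking up the factor $e^{-\pi b}$, together with Lemma~1 of \cite{BLN2021} and Lemma~3.11 to control the tails).

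The only minor care point will be to make the substitution step transparent when invoking Lemma 3.20 on $\int_{-M}^{-\epsilon}$: one may either quote Case II of Lemma 3.20 directly (where the $e^{-\pi b}$ factor is already absorbed into its statement) or explicitly change variables $\rho=-s$ to rewrite the integral as $e^{-\pi b}\int_{\epsilon}^{M}e^{its}s^{-1+ib}ds$ and then apply Case I. Either way, the proof is a one-line application of previously established lemmas.
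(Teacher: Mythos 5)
Your proof is correct and follows exactly the same route as the paper: split by the triangle inequality and then apply Lemma 3.20 to the two one-sided integrals, Case I giving $C(\sqrt{|b|}+1/\sqrt{|b|})^{2}$ and Case II giving $Ce^{-\pi b}(\sqrt{|b|}+1/\sqrt{|b|})^{2}$. The extra detail you supply (which case of Lemma 3.20 applies to which piece, and the alternative route via Lemma 3.15) is consistent with the paper's one-line argument.
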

\noindent{\bf Proof.} By using the triangle inequality and
  Lemma 3.20, we have
\begin{eqnarray*}
&&\left|\int_{\epsilon}^{M}e^{-it\rho}\rho^{-1+ib}d\rho+
\int_{-M}^{-\epsilon}e^{-it\rho}\rho^{-1+ib}d\rho\right|\nonumber\\
&&\leq \left|\int_{\epsilon}^{M}e^{-it\rho}\rho^{-1+ib}d\rho\right|+
\left|\int_{-M}^{-\epsilon}e^{-it\rho}\rho^{-1+ib}d\rho\right|\nonumber\\
&&\leq C(1+e^{-\pi b})\left(\sqrt{|b|}+\frac{1}{\sqrt{|b|}}\right)^{2}.
\end{eqnarray*}

This completes the proof of Lemma 3.21.

\begin{Lemma}\label{3.22}
Let $\psi(\eta,t)=\frac{(2\eta^{2}+3t)\eta}{(\eta^{2}+t)^{\frac{3}{2}}}$,
$0<t\leq \delta,\delta \leq 1$. Then, we have
\begin{align}
\left|\int_{|\eta|\geq 8t^{\frac{1}{2}}}e^{iA\eta+i\eta\sqrt{t+\eta^{2}}}
\left|\psi(\eta,t)\right|^{\frac{1}{2}+i\beta}d\eta\right|\leq C.\label{3.067}
\end{align}
Here $C$ is a constant which is independent of $\eta, t$.
\end{Lemma}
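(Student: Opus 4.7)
The key observation is that $\psi(\eta,t)$ coincides with $\phi''(\eta)$ where $\phi(\eta):=A\eta+\eta\sqrt{t+\eta^2}$, so the integrand is precisely $e^{i\phi(\eta)}|\phi''(\eta)|^{1/2+i\beta}$, a form tailored to Van der Corput's Lemma (Lemma 2.11 with $k=2$). First I would verify by direct differentiation that
\[
\phi'(\eta)=A+\frac{t+2\eta^2}{\sqrt{t+\eta^2}},\qquad \phi''(\eta)=\frac{\eta(3t+2\eta^2)}{(t+\eta^2)^{3/2}}=\psi(\eta,t),
\]
so the $A\eta$ term plays no role in $\phi''$ and the eventual bound will be uniform in $A$.

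Next I would analyse $\psi$ on the region $|\eta|\ge 8\sqrt{t}$. The substitution $u=\eta^2/t$ gives $\psi(\eta,t)=\mathrm{sgn}(\eta)\,\tilde\psi(u)$ with $\tilde\psi(u):=\frac{\sqrt{u}(2u+3)}{(u+1)^{3/2}}$, which is independent of $t$. A short algebraic calculation yields the clean identity
\[
\frac{d}{du}\bigl[\tilde\psi(u)^2\bigr]=\frac{3(2u+3)}{(u+1)^4}>0,
\]
so $\tilde\psi$ is strictly increasing on $(0,\infty)$ and $\tilde\psi(u)\uparrow 2$. Since $1048^2>65^3$, one has $\tilde\psi(64)>1$; hence on $|\eta|\ge 8\sqrt{t}$ (i.e.\ $u\ge 64$),
\[
1<\tilde\psi(64)\le|\psi(\eta,t)|<2,
\]
with $\psi(\eta,t)$ of constant sign $\mathrm{sgn}(\eta)$ and monotonic in $\eta$ on each half-line.

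Using the symmetry $\eta\mapsto-\eta$ (which only changes $A$ to $-A$), it suffices to bound $I_+:=\int_{8\sqrt{t}}^\infty e^{i\phi(\eta)}\psi(\eta,t)^{1/2+i\beta}\,d\eta$, where $\psi^{1/2+i\beta}$ is unambiguous since $\psi>0$ there. On any finite piece $[8\sqrt{t},N]$, since $|\phi''|\ge 1$ uniformly, Lemma 2.11 with $k=2$ gives an absolute constant $C$ with
\[
\Bigl|\int_{8\sqrt{t}}^N e^{i\phi(\eta)}\psi(\eta,t)^{1/2+i\beta}\,d\eta\Bigr|\le C\Bigl(\psi(N,t)^{1/2}+\int_{8\sqrt{t}}^N\bigl|(\psi^{1/2+i\beta})'(\eta)\bigr|\,d\eta\Bigr).
\]
The first term is at most $\sqrt{2}$. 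Using $\psi'>0$ on the region,
\[
\bigl|(\psi^{1/2+i\beta})'(\eta)\bigr|=\bigl|\tfrac{1}{2}+i\beta\bigr|\,\frac{\psi'(\eta)}{\psi(\eta,t)^{1/2}}=2\bigl|\tfrac{1}{2}+i\beta\bigr|\,\frac{d}{d\eta}\psi(\eta,t)^{1/2},
\]
so the $L^1$-norm telescopes to $2\bigl|\tfrac12+i\beta\bigr|\bigl(\psi(N,t)^{1/2}-\psi(8\sqrt{t},t)^{1/2}\bigr)\le 2\sqrt{2}\bigl|\tfrac12+i\beta\bigr|$, uniformly in $N$ and $t$.

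Finally, passing $N\to\infty$ requires that the improper integral converges; this follows from a single integration by parts, since $|\phi'(\eta)|$ grows like $\eta$ at infinity (so the boundary contribution vanishes) and the resulting integrand is $O(\eta^{-2})$, hence absolutely integrable. The uniform bound survives the limit, giving $|I_+|\le C$ independent of $A$ and $t$; combined with the symmetric piece this proves the lemma. The main obstacle is the monotonicity and uniform positivity of $\psi$ on $|\eta|\ge 8\sqrt{t}$, but once the identity $\frac{d}{du}\tilde\psi^2=\frac{3(2u+3)}{(u+1)^4}$ is secured, Van der Corput together with a standard amplitude-variation estimate delivers the rest.
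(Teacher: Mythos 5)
Your proof is correct, and it takes a genuinely different and cleaner route than the paper's. You observe that $\psi(\eta,t)$ is exactly $\phi''(\eta)$ for the full phase $\phi(\eta)=A\eta+\eta\sqrt{t+\eta^2}$, and that after the substitution $u=\eta^2/t$ the amplitude $\tilde\psi(u)=\sqrt{u}(2u+3)/(u+1)^{3/2}$ is $t$-independent with the verified identity $\frac{d}{du}\tilde\psi(u)^2=\frac{3(2u+3)}{(u+1)^4}>0$, giving $1<\tilde\psi(64)\le|\psi|<2$ uniformly on $|\eta|\ge 8\sqrt t$. This lets you apply Van der Corput (Lemma 2.11) with $k=2$ once on each half-line, and since $|\psi|$ is monotone there the amplitude-variation integral $\int|(\psi^{1/2+i\beta})'|$ telescopes to a uniformly bounded quantity. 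The paper instead does a case split on $|A|$ (into $|A|\le 2$ with two subregions and $|A|>2$ with four), handling pieces near the origin trivially, pieces away from the stationary point by integration by parts, and only the middle piece $J_3$ around the stationary point by Van der Corput, asserting there merely $|\phi''|\sim C$ without checking the $\ge 1$ hypothesis. Your argument is shorter, makes the lower bound on $|\phi''|$ explicit and global, and makes the uniformity in $A$ and $t$ transparent; the paper's piecewise method is bulkier but would still apply to amplitudes lacking the special monotone-in-$\eta$ structure you exploit. Two minor points worth tightening in a write-up: the $\eta\mapsto-\eta$ reduction replaces $A$ by $-A$ and conjugates the phase, so it is more accurate to say the negative half-line is handled by the identical Van der Corput estimate applied to $e^{-i\phi}$ rather than by literal symmetry; and since Lemma 2.11 is stated for real-valued $f$, one should note the standard extension to complex $f$ by splitting into real and imaginary parts.
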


\begin{proof} If $|A|\leq 2$, then, we have
\begin{align}
&\int_{|\eta|\geq 8t^{\frac{1}{2}}}e^{iA\eta+i\eta\sqrt{t+\eta^{2}}}
\left|\psi(\eta,t)\right|^{\frac{1}{2}+i\beta}d\eta\notag\\
&=\int_{8t^{\frac{1}{2}}\leq|\eta|\leq 8}e^{iA\eta+i\eta\sqrt{t+\eta^{2}}}
\left|\psi(\eta,t)\right|^{\frac{1}{2}+i\beta}d\eta+\int_{|\eta|\geq 8}e^{iA\eta+i\eta\sqrt{t+\eta^{2}}}
\left|\psi(\eta,t)\right|^{\frac{1}{2}+i\beta}d\eta\notag\\
&=I_{1}+I_{2}.\label{3.068}
\end{align}
Obviously, $|I_{1}|\leq C$. By using the integration by parts, we derive that $|I_{2}|\leq C$.

If $|A|> 2$, then, we have
\begin{align}
&\int_{|\eta|\geq 8t^{\frac{1}{2}}}e^{iA\eta+i\eta\sqrt{t+\eta^{2}}}
\left|\psi(\eta,t)\right|^{\frac{1}{2}+i\beta}d\eta\notag\\
&=\int_{8t^{\frac{1}{2}}\leq|\eta|\leq 1}e^{iA\eta+i\eta\sqrt{t+\eta^{2}}}
\left|\psi(\eta,t)\right|^{\frac{1}{2}+i\beta}d\eta+\int_{1\leq|\eta|\leq
\frac{\sqrt{2}}{4}|A|}e^{iA\eta+i\eta\sqrt{t+\eta^{2}}}\left|\psi(\eta,t)\right|^{\frac{1}{2}+i\beta}d\eta\notag\\
&+\int_{\frac{\sqrt{2}}{4}|A|\leq|\eta|\leq \frac{3}{4}|A|}
e^{iA\eta+i\eta\sqrt{t+\eta^{2}}}\left|\psi(\eta,t)\right|^{\frac{1}{2}+i\beta}d\eta+
\int_{|\eta|\geq \frac{3}{4}|A|}e^{iA\eta+i\eta\sqrt{t+\eta^{2}}}\left|\psi(\eta,t)\right|^{\frac{1}{2}+i\beta}d\eta
\notag\\
&=\sum_{k=1}^{4}J_{k}.\label{3.069}
\end{align}
Obviously, $|J_{1}|\leq C$. By using the integration by parts, we derive
 that $|J_{2}|\leq C$ and $|J_{4}|\leq C$.
Since
\begin{align}
&\left|(A\eta+\eta\sqrt{t+\eta^{2}})^{\prime\prime}\right|=
\left|(A+\sqrt{t+\eta^{2}}
+\frac{\eta^{2}}{\sqrt{t+\eta^{2}}})^{\prime}\right|\notag\\
&=\left|\frac{\eta}{\sqrt{t+\eta^{2}}}+
\frac{2\eta(t+\eta^{2})-\eta^{3}}{(t+\eta^{2})^{\frac{3}{2}}}\right|
=\left|\frac{2\eta^{3}+3t\eta}{(t+\eta^{2})^{\frac{3}{2}}}\right|
\sim C,\label{3.070}
\end{align}
by using  Lemma 2.11, we have that $|J_{3}|\leq C$.
\end{proof}

This completes the proof of Lemma 3.22.

\begin{Lemma} \label{3.23} Let $0<t\leq\delta,\delta \leq 1$. Then, we have that
\begin{align}
\left|\int_{|\xi|\geq 8}e^{ix\xi+it\xi\sqrt{1+\xi^{2}}}
\left|\phi^{\prime\prime}(\xi)\right|^{\frac{1}{2}+i\beta}d\xi\right|
\leq Ct^{-\frac{1}{2}}.\label{3.071}
\end{align}
Here $\phi(\xi)=\xi\sqrt{1+\xi^{2}}$.
\end{Lemma}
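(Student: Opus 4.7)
The plan is to apply van der Corput's lemma (Lemma 2.11) with $k=2$ to the two half-lines $[8,\infty)$ and $(-\infty,-8]$ (after truncating to $[8,R]$ and letting $R\to+\infty$), exploiting the fact that on $\{|\xi|\ge 8\}$ the second derivative of the phase $F(\xi):=x\xi+t\xi\sqrt{1+\xi^2}$ satisfies $|F''(\xi)|\sim t$, uniformly in $x$.

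First I would compute
\[
\phi''(\xi)=\frac{\xi(3+2\xi^2)}{(1+\xi^2)^{3/2}}, \qquad \phi'''(\xi)=\frac{3}{(1+\xi^2)^{5/2}}>0.
\]
Since $\phi'''>0$, $\phi''$ is strictly increasing and odd with $\phi''(\xi)\to\pm 2$ as $\xi\to\pm\infty$. Hence there exists $c_0>0$ (close to $2$) such that $|\phi''(\xi)|\ge c_0$ for all $|\xi|\ge 8$, so that $|F''(\xi)|=t|\phi''(\xi)|\ge c_0 t$ on this region.

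Next, writing $F=\lambda\widetilde{F}$ with $\lambda:=c_0 t$ and $\widetilde F(\xi):=F(\xi)/(c_0 t)$ so that $|\widetilde F''(\xi)|\ge 1$, I would apply Lemma 2.11 on $[8,R]$ with amplitude $f(\xi):=|\phi''(\xi)|^{1/2+i\beta}$ to obtain
\[
\left|\int_8^R e^{iF(\xi)}f(\xi)\, d\xi\right|\le C_2(c_0 t)^{-1/2}\Bigl(|f(R)|+\int_8^R|f'(\xi)|\, d\xi\Bigr).
\]
Since $|f(\xi)|=|\phi''(\xi)|^{1/2}\le\sqrt{2}$, and, by the chain rule,
\[
|f'(\xi)|\le\bigl|\tfrac12+i\beta\bigr|\,|\phi''(\xi)|^{-1/2}\,|\phi'''(\xi)|\le C(\beta)\,\xi^{-5}\qquad (\xi\ge 8),
\]
the bracketed expression stays uniformly bounded in $R$, and sending $R\to+\infty$ yields the desired bound $Ct^{-1/2}$ on $[8,\infty)$. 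The analogous argument on $(-\infty,-8]$, using the oddness of $\phi''$, supplies the matching estimate, and summing the two gives the lemma.

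The main obstacle I anticipate is the unbounded integration domain, namely showing that neither the boundary term $|f(R)|$ nor $\int_8^R|f'(\xi)|\, d\xi$ blows up as $R\to+\infty$; this rests on the asymptotics $\phi''(\xi)=2+O(\xi^{-4})$ and $\phi'''(\xi)=O(\xi^{-5})$ that follow from direct differentiation. A secondary subtlety is that Lemma 2.11 is stated for real amplitudes, but the complex power $|\phi''|^{1/2+i\beta}$ is handled by splitting into real and imaginary parts, each of bounded variation on $[8,R]$.
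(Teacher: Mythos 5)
Your proof is correct, but it follows a genuinely different and in fact more direct route than the paper's. The paper proves Lemma~3.23 by the substitution $\eta=t^{1/2}\xi$, which converts the integral into $t^{-1/2}$ times the rescaled oscillatory integral appearing in Lemma~3.22, i.e.
\begin{equation*}
\int_{|\eta|\ge 8t^{1/2}}e^{iA\eta+i\eta\sqrt{t+\eta^2}}\bigl|\psi(\eta,t)\bigr|^{1/2+i\beta}\,d\eta ,\qquad A=xt^{-1/2},
\end{equation*}
and then invokes Lemma~3.22 to bound that rescaled integral by an absolute constant; Lemma~3.22 in turn requires a case split on whether $|A|\le 2$ or $|A|>2$, a further decomposition of the $\eta$-domain, integration by parts on nonstationary pieces, and a van der Corput estimate on the stationary piece. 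You instead apply Lemma~2.11 with $k=2$ directly in the original variable $\xi$: since $F''(\xi)=t\phi''(\xi)$ has modulus $\ge c_0 t$ on $\{|\xi|\ge 8\}$ (uniformly in $x$, which drops out of $F''$), and the amplitude $|\phi''|^{1/2+i\beta}$ has uniformly bounded variation on each half-line (because $\phi'''=3(1+\xi^2)^{-5/2}$ is integrable at infinity), van der Corput with $\lambda=c_0t$ gives the bound $C_\beta\,(c_0t)^{-1/2}$ at once. This sidesteps both the change of variables and the $A$-case analysis, and bypasses Lemma~3.22 altogether; the one point you correctly flag — extending Lemma~2.11 from $[8,R]$ to $[8,\infty)$ uniformly in $R$, and handling the complex exponent by splitting into real and imaginary parts — is a minor technicality that your argument handles cleanly. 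The paper's rescaling approach is slightly more elaborate here, but it has the advantage of also furnishing Lemma~3.22 as a reusable ingredient; for the purpose of Lemma~3.23 alone, your route is the more economical one.
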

\noindent{\bf Proof.}
 Let $t^{\frac{1}{2}}\xi=\eta$, then, we have
\begin{align}
\int_{|\xi|\geq 8}e^{ix\xi+it\xi\sqrt{1+\xi^{2}}}
\left|\phi^{\prime\prime}(\xi)\right|^{\frac{1}{2}+i\beta}d\xi=
\left(\int_{|\eta|\geq 8t^{\frac{1}{2}}}
e^{iA\eta+i\eta\sqrt{t+\eta^{2}}}\left|\psi(\eta,t)\right|^{\frac{1}{2}+i\beta}d\eta\right)
t^{-\frac{1}{2}}.\label{3.072}
\end{align}
Here $A=xt^{-1/2}$ and $\psi(\eta,t)$ is defined as in Lemma 3.22.
It follows from the above identity and  Lemma 3.22  that
\begin{align}
\left|\int_{|\xi|\geq 8}e^{ix\xi+it\xi\sqrt{1+\xi^{2}}}
\left|\phi^{\prime\prime}(\xi)\right|^{\frac{1}{2}+i\beta}d\xi\right|
\leq Ct^{-\frac{1}{2}}.\label{3.073}
\end{align}

This completes the proof of Lemma 3.23.

\begin{Lemma} \label{3.24} Let $0<t\leq\delta,\delta \leq 1$. Then, we have that
\begin{align}
\left|\int_{\SR}e^{ix\xi\pm it\xi\sqrt{1+\xi^{2}}}
\left|\phi^{\prime\prime}(\xi)\right|^{\frac{1}{2}+i\beta}d\xi\right|
\leq Ct^{-\frac{1}{2}}.\label{3.074}
\end{align}
Here $\phi(\xi)=\xi\sqrt{1+\xi^{2}}$.
\end{Lemma}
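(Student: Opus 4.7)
The plan is to split the integral over $\mathbb{R}$ into the region $|\xi|\leq 8$ and $|\xi|\geq 8$, and to treat the two signs $\pm$ separately on the unbounded piece using Lemma 3.23 together with a symmetry argument.

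First, I would write
\begin{equation*}
\int_{\mathbb{R}}e^{ix\xi\pm it\xi\sqrt{1+\xi^{2}}}|\phi^{\prime\prime}(\xi)|^{\frac{1}{2}+i\beta}d\xi
=\int_{|\xi|\leq 8}+\int_{|\xi|\geq 8}=:I_{1}^{\pm}+I_{2}^{\pm}.
\end{equation*}
For the low-frequency piece $I_{1}^{\pm}$, I would use the trivial bound $\bigl||\phi^{\prime\prime}(\xi)|^{\frac{1}{2}+i\beta}\bigr|=|\phi^{\prime\prime}(\xi)|^{1/2}$. Since $\phi^{\prime\prime}(\xi)=(2\xi^{3}+3\xi)/(1+\xi^{2})^{3/2}$ is continuous and bounded on $[-8,8]$, I obtain $|I_{1}^{\pm}|\leq C$. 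Because $0<t\leq\delta\leq 1$ gives $t^{-1/2}\geq 1$, this yields $|I_{1}^{\pm}|\leq C t^{-1/2}$ as required.

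For the high-frequency piece with the plus sign, $I_{2}^{+}$ is exactly the quantity controlled by Lemma 3.23, giving $|I_{2}^{+}|\leq Ct^{-1/2}$. For the minus sign, I would reduce to the plus-sign case via the substitution $\eta=-\xi$, using that $\phi(\xi)=\xi\sqrt{1+\xi^{2}}$ is odd and $\phi^{\prime\prime}(\xi)$ is odd, so $|\phi^{\prime\prime}(-\eta)|=|\phi^{\prime\prime}(\eta)|$. This transforms
\begin{equation*}
I_{2}^{-}=\int_{|\xi|\geq 8}e^{ix\xi-it\xi\sqrt{1+\xi^{2}}}|\phi^{\prime\prime}(\xi)|^{\frac{1}{2}+i\beta}d\xi
=\int_{|\eta|\geq 8}e^{i(-x)\eta+it\eta\sqrt{1+\eta^{2}}}|\phi^{\prime\prime}(\eta)|^{\frac{1}{2}+i\beta}d\eta,
\end{equation*}
which, since the bound in Lemma 3.23 is uniform in the spatial parameter, is again $\leq Ct^{-1/2}$.

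Combining the four estimates through the triangle inequality yields the claimed bound. I do not expect any real obstacle here: the only delicate piece is the high-frequency integral, which is already handled by Lemma 3.23 (whose proof via the rescaling $\eta=t^{1/2}\xi$ and Lemma 3.22 is the substantive input). The contribution of the low-frequency region is harmless because the integration domain is bounded, the amplitude is bounded, and we are content with the weak bound $Ct^{-1/2}\gtrsim C$ for $t\leq 1$; the sign symmetry handles the $\mp$ case for free.
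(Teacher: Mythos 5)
Your proof is correct and follows essentially the same structure as the paper's: split at $|\xi|=8$, handle the high-frequency piece via Lemma 3.23, and observe the low-frequency piece is $O(1)\leq Ct^{-1/2}$. Two minor differences worth noting: you bound the region $|\xi|\leq 8$ directly by the integrable amplitude $|\phi''(\xi)|^{1/2}$ (which is simpler and fully sufficient) whereas the paper invokes Van der Corput there, and you make the reduction of the minus sign to the plus sign explicit via the odd-symmetry substitution $\xi\mapsto-\xi$ (relying correctly on the uniformity in the spatial parameter of Lemma 3.23) where the paper only states the minus case follows by ``a similar proof.''
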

\noindent{\bf Proof.}  Now we bound
 $$\int_{\SR}e^{ix\xi+ it\xi\sqrt{1+\xi^{2}}}
\left|\phi^{\prime\prime}(\xi)\right|^{\frac{1}{2}+i\beta}d\xi.$$
Obviously,
\begin{align}
&\int_{\SR}e^{ix\xi+ it\xi\sqrt{1+\xi^{2}}}
\left|\phi^{\prime\prime}(\xi)\right|^{\frac{1}{2}+i\beta}d\xi\notag\\
&=\int_{|\xi|\leq8}e^{ix\xi+ it\xi\sqrt{1+\xi^{2}}}
\left|\phi^{\prime\prime}(\xi)\right|^{\frac{1}{2}+i\beta}d\xi
+\int_{|\xi|\geq8}e^{ix\xi+ it\xi\sqrt{1+\xi^{2}}}
\left|\phi^{\prime\prime}(\xi)\right|^{\frac{1}{2}+i\beta}d\xi.\label{3.075}
\end{align}
Obviously,  $\left|\int_{|\xi|\leq8}e^{ix\xi+ it\xi\sqrt{1+\xi^{2}}}
\left|\phi^{\prime\prime}(\xi)\right|^{\frac{1}{2}+i\beta}d\xi\right|\leq C$.
 From Lemma 3.23, we have
\begin{align}
\left|\int_{|\xi|\geq 8}e^{ix\xi+it\xi\sqrt{1+\xi^{2}}}
\left|\phi^{\prime\prime}(\xi)\right|^{\frac{1}{2}+i\beta}
d\xi\right|\leq Ct^{-\frac{1}{2}}.\label{3.076}
\end{align}
Thus, we have that $\left|\int_{\SR}e^{ix\xi+it\xi\sqrt{1+\xi^{2}}}
\left|\phi^{\prime\prime}(\xi)\right|^{\frac{1}{2}+i\beta}d\xi\right|\leq Ct^{-\frac{1}{2}}.$
By using a proof  similar to $$\left|\int_{\SR}e^{ix\xi+it\xi\sqrt{1+\xi^{2}}}
\left|\phi^{\prime\prime}(\xi)\right|^{\frac{1}{2}+i\beta}d\xi\right|\leq Ct^{-\frac{1}{2}},$$
we derive that
$$\left|\int_{\SR}e^{ix\xi-it\xi\sqrt{1+\xi^{2}}}
\left|\phi^{\prime\prime}(\xi)\right|^{\frac{1}{2}+i\beta}d\xi\right|\leq Ct^{-\frac{1}{2}}.$$

This completes the proof of Lemma 3.24.

\begin{Lemma} \label{3.25} Let $0<t\leq\delta,\delta \leq 1$.
Then, we have that
\begin{align}
\left|\int_{\SR}e^{ix\xi\pm it\xi\sqrt{1+\xi^{2}}}d\xi\right|
\leq Ct^{-\frac{1}{2}}.\label{3.077}
\end{align}
\end{Lemma}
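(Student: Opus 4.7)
My plan is to follow the pattern established in the proof of Lemma 3.24, decomposing the integration domain as $\R = \{|\xi|\leq 8\} \cup \{|\xi|\geq 8\}$. On the bounded piece the integrand has modulus one, so the trivial estimate gives
\[
\left|\int_{|\xi|\leq 8} e^{ix\xi\pm it\xi\sqrt{1+\xi^{2}}}\,d\xi\right| \leq 16,
\]
and since $0<t\leq \delta \leq 1$ we have $16 \leq 16\, t^{-1/2}$, which is the desired bound on this piece.

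For the unbounded piece I would apply the change of variables $\eta = t^{1/2}\xi$ used in the proof of Lemma 3.23 to obtain
\[
\int_{|\xi|\geq 8} e^{ix\xi + it\xi\sqrt{1+\xi^{2}}}\,d\xi
= t^{-1/2}\!\!\int_{|\eta|\geq 8 t^{1/2}}\!\! e^{iA\eta + i\eta\sqrt{t+\eta^{2}}}\,d\eta,\qquad A := xt^{-1/2}.
\]
The factor $t^{-1/2}$ is exactly what appears in the conclusion, so it remains to bound the $\eta$-integral by an absolute constant uniformly in $A$ and $t$. This is an unweighted analogue of Lemma 3.22, and I expect the argument of Lemma 3.22 to adapt with only cosmetic changes: on the region $|\eta|\geq 8 t^{1/2}$ one has $|\psi(\eta,t)|\sim 2$, so the weight $|\psi(\eta,t)|^{1/2+i\beta}$ appearing in Lemma 3.22 is comparable to a positive constant and may be removed without disrupting either the integration-by-parts or the Van der Corput estimates used there. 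Concretely, I would split according to $|A|\leq 2$ (in which case a trivial bound on $\{8t^{1/2}\leq|\eta|\leq 8\}$ together with integration by parts on $\{|\eta|\geq 8\}$ is enough, using that the phase derivative $\Psi'(\eta)=A+\tfrac{t+2\eta^{2}}{\sqrt{t+\eta^{2}}}$ has modulus comparable to $|\eta|$ there) and $|A|>2$ (in which case I partition around the stationary region $|\eta|\sim |A|/2$, integrate by parts on the pieces $\{8t^{1/2}\leq|\eta|\leq \tfrac{\sqrt{2}}{4}|A|\}$ and $\{|\eta|\geq \tfrac{3}{4}|A|\}$ where $|\Psi'|$ is large, and apply Van der Corput (Lemma 2.11) with $k=2$ on the near-stationary band $\{\tfrac{\sqrt{2}}{4}|A|\leq|\eta|\leq \tfrac{3}{4}|A|\}$, where the lower bound $|\Psi''(\eta)|=|\psi(\eta,t)|\geq c>0$ follows from the identity (3.70)). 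The case with the minus sign in the exponent is handled identically after replacing $A$ by $-A$.

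The main obstacle I anticipate is the bookkeeping for the integration by parts on the unbounded tail $\{|\eta|\geq \tfrac{3}{4}|A|\}$ and across the cutoff $|\eta|=\tfrac{\sqrt{2}}{4}|A|$: one must verify that the boundary terms, together with the $\int |f'|$ contributions coming from Van der Corput, are uniformly bounded in $A$ and $t$. This reduces to the growth estimate $|\Psi'(\eta)| = |A + (t+2\eta^{2})/\sqrt{t+\eta^{2}}|\geq c|\eta|$ on the far region, which in turn follows from the same elementary algebraic computation used to establish (3.70). Once this uniform control is in hand, the combined estimate $C+Ct^{-1/2}\leq Ct^{-1/2}$ finishes the proof.
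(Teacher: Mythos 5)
Your plan is correct and follows essentially the same route as the paper, which simply asserts that the proof of Lemma 3.24 (hence the chain Lemma 3.22 $\to$ 3.23 $\to$ 3.24) carries over once the weight $|\phi''(\xi)|^{1/2+i\beta}$ is replaced by $1$; your observation that $|\psi(\eta,t)|\sim 1$ on $|\eta|\ge 8t^{1/2}$ (so the amplitude only gets simpler) is exactly the point, and the trivial bound on $|\xi|\le 8$ together with the rescaling $\eta=t^{1/2}\xi$ is what the paper does. One bookkeeping slip: in the $|A|>2$ case you propose integrating by parts on all of $\{8t^{1/2}\le|\eta|\le\frac{\sqrt{2}}{4}|A|\}$, but for $|A|$ only slightly above $2$ the phase derivative $\Psi'(\eta)=A+(t+2\eta^2)/\sqrt{t+\eta^2}$ can vanish near the lower end of that interval (e.g. $|A|=2$, $\eta$ of order $1$), so the IBP fails there; the paper's Lemma 3.22 avoids this by carving out the extra piece $J_1=\{8t^{1/2}\le|\eta|\le 1\}$, whose measure is $\le 2$, and bounding it trivially, with the IBP applied only on $\{1\le|\eta|\le\frac{\sqrt{2}}{4}|A|\}$. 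With that one additional split your argument is complete and matches the paper's intended proof.
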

\noindent {\bf Proof.} By using a proof similar to Lemma 3.24,
 we have that Lemma 3.25 is valid.

This completes the proof of Lemma 3.25.

\begin{Lemma} \label{3.26} Let $0<t\leq\delta,\delta \leq 1$
 and $\frac{2}{p}+\frac{1}{q}
=\frac{1}{2}$. Then, we have that
\begin{align}
\left\|e^{it\mp\partial_{x}\sqrt{1-\partial_{x}^{2}}}u_{0}
\right\|_{L_{t}^{p}L_{x}^{q}}\leq C\|u_{0}\|_{L^{2}}.\label{3.078}
\end{align}
\end{Lemma}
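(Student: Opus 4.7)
The plan is to run the standard $TT^{*}$ and Hardy--Littlewood--Sobolev argument, using the pointwise kernel decay supplied by Lemma 3.25. Writing $U_{\pm}(t) = e^{\mp it\partial_{x}\sqrt{1-\partial_{x}^{2}}}$, the operator acts on the Fourier side by the unimodular multiplier $e^{\pm it\xi\sqrt{1+\xi^{2}}}$, so Plancherel immediately gives the energy identity $\|U_{\pm}(t)u_{0}\|_{L^{2}_{x}} = \|u_{0}\|_{L^{2}}$ for every $t \in \R$.

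Next I extract the fixed-time dispersive estimate. The Schwartz kernel of $U_{\pm}(t)$ is the translation-invariant convolution kernel
\begin{eqnarray*}
K_{t}(x) = \frac{1}{2\pi}\int_{\SR} e^{ix\xi \pm it\xi\sqrt{1+\xi^{2}}}\, d\xi,
\end{eqnarray*}
which by Lemma 3.25 satisfies $\|K_{t}\|_{L^{\infty}_{x}} \leq Ct^{-1/2}$ uniformly for $0 < t \leq \delta \leq 1$. Since $U_{\pm}(t)u_{0} = K_{t} * u_{0}$, Young's inequality yields the dispersive bound $\|U_{\pm}(t)u_{0}\|_{L^{\infty}_{x}} \leq Ct^{-1/2}\|u_{0}\|_{L^{1}_{x}}$, and Riesz--Thorin interpolation with the $L^{2}$ identity produces, for every $q \in [2,\infty]$,
\begin{eqnarray*}
\|U_{\pm}(t)u_{0}\|_{L^{q}_{x}} \leq Ct^{-\left(\frac{1}{2}-\frac{1}{q}\right)}\|u_{0}\|_{L^{q'}_{x}}.
\end{eqnarray*}

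By duality, the claimed Strichartz estimate is equivalent to the $TT^{*}$ bound on $[0,\delta]$:
\begin{eqnarray*}
\left\|\int_{0}^{\delta} U_{\pm}(t-s) F(s,\cdot)\, ds\right\|_{L_{t}^{p}L_{x}^{q}} \leq C\|F\|_{L_{t}^{p'}L_{x}^{q'}},
\end{eqnarray*}
where I have used $U_{\pm}(t)U_{\pm}(s)^{*} = U_{\pm}(t-s)$ coming from unimodularity of the symbol. Applying Minkowski in $x$ followed by the fixed-time dispersive bound reduces matters to the one-dimensional scalar inequality
\begin{eqnarray*}
\left\|\int_{0}^{\delta} |t-s|^{-\left(\frac{1}{2}-\frac{1}{q}\right)} \|F(s,\cdot)\|_{L^{q'}_{x}}\, ds\right\|_{L_{t}^{p}} \leq C\|F\|_{L_{t}^{p'}L_{x}^{q'}},
\end{eqnarray*}
which is the Hardy--Littlewood--Sobolev fractional integration inequality applied to the $L_{t}^{q'}_{x}$-valued function $s \mapsto \|F(s,\cdot)\|_{L^{q'}_{x}}$. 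The relevant scaling condition is $1 - \frac{2}{p} = \frac{1}{2} + \frac{1}{q}$, i.e.\ $\frac{2}{p}+\frac{1}{q}=\frac{1}{2}$, which is exactly the assumed admissibility relation; since $|t-s| \leq \delta \leq 1$ throughout, no care at infinity in $t$ is needed.

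The main subtle point is the endpoint case $(p,q) = (4,\infty)$, where the strong-type HLS inequality is borderline. This can be resolved either by invoking the weak-type Hardy--Littlewood--Sobolev inequality together with real interpolation, or by appealing directly to the abstract Keel--Tao Strichartz theorem, whose sole excluded endpoint $(p,q,\sigma)=(2,\infty,1)$ does not meet the one-dimensional admissibility line $\frac{2}{p}+\frac{1}{q}=\frac{1}{2}$. Away from this endpoint the classical strong-type HLS closes the argument directly, completing the proof.
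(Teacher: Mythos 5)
Your argument is correct and is exactly the standard $TT^{*}$ plus Hardy--Littlewood--Sobolev unpacking that the paper's one-line proof (combining the $TT^{*}$ method of Cazenave with the kernel bound $\|K_t\|_{L^\infty}\leq Ct^{-1/2}$ from Lemma 3.25) invokes. One minor overcaution on your part: at $(p,q)=(4,\infty)$ the HLS exponents are $a=p'=4/3$ and $b=p=4$ with $\alpha=1/2$, which lie strictly inside the strong-type range $1<a<b<\infty$, $0<\alpha<1$, so strong-type HLS already closes the case and no weak-type interpolation or Keel--Tao appeal is actually needed there.
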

\noindent {\bf Proof.}Combining $TT^*$ method \cite{C} with (\ref{3.077}),
 we have that (\ref{3.078}) is valid.

This completes the proof of Lemma 3.26.

\begin{Lemma} \label{3.27} Let $t>1$ and $A\in \R$. Then, we have that
\begin{align}
\left|\int_{\SR}e^{it(A\xi+\xi\sqrt{1+\xi^{2}})}
\left|\phi^{\prime\prime}(\xi)\right|^{\frac{1}{2}+i\beta}d\xi\right|
\leq Ct^{-\frac{1}{3}}.\label{3.079}
\end{align}
Here $\phi(\xi)=\xi\sqrt{1+\xi^{2}}$.
\end{Lemma}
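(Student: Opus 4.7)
The plan is to apply the Van der Corput Lemma (Lemma 2.11) after an appropriate splitting of the integration domain. Setting $\Psi(\xi):=A\xi+\xi\sqrt{1+\xi^{2}}$ so that the oscillatory phase is $t\Psi(\xi)$, a direct calculation yields
\[
\Psi''(\xi)=\phi''(\xi)=\frac{\xi(2\xi^{2}+3)}{(1+\xi^{2})^{3/2}},\qquad \Psi'''(\xi)=\frac{3}{(1+\xi^{2})^{5/2}},
\]
both of which are independent of $A$. Since $\Psi'''>0$ on $\R$, $\Psi''$ is strictly increasing with $\Psi''(0)=0$; consequently $\Psi'''(\xi)\geq 3/(4\sqrt{2})$ for $|\xi|\leq 1$, while $|\Psi''(\xi)|\geq 5/(2\sqrt{2})$ for $|\xi|\geq 1$. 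This is why $A$ drops out of what follows: all the relevant derivative lower bounds are unaffected by the linear term $A\xi$.

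I would split the integral as $\int_{\R}=\int_{|\xi|\geq 1}+\int_{|\xi|\leq 1}$ and treat each piece separately. On the outer region, Van der Corput with $k=2$ on each of $[1,\infty)$ and $(-\infty,-1]$ yields a $Ct^{-1/2}$ bound, which is $\leq Ct^{-1/3}$ for $t\geq 1$. Here the amplitude $a(\xi):=|\phi''(\xi)|^{1/2+i\beta}$ is uniformly bounded (it tends to $2^{1/2+i\beta}$ at infinity), and $|a'(\xi)|=O(|\xi|^{-5})$ at infinity, so $a'\in L^{1}$ on each half-line, as required by Lemma 2.11.

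On the inner region $|\xi|\leq 1$, the amplitude has a mild singularity at the origin since $\phi''(0)=0$. To handle this I would further excise $(-\delta,\delta)$ with $\delta=t^{-2/9}$: using $|a(\xi)|\leq C|\xi|^{1/2}$, the contribution from $(-\delta,\delta)$ is bounded by $\int_{-\delta}^{\delta}|a|\,d\xi\leq C\delta^{3/2}=Ct^{-1/3}$. On the remaining intervals $[\delta,1]$ and $[-1,-\delta]$ the amplitude is differentiable with $\|a\|_{L^{\infty}}\leq C$, and crucially $\|a'\|_{L^{1}([\delta,1])}\leq C\int_{\delta}^{1}\xi^{-1/2}\,d\xi\leq 2C$ \emph{uniformly in} $\delta\to 0^{+}$. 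Applying Van der Corput with $k=3$ then produces a $Ct^{-1/3}$ contribution. Combining all pieces yields the claimed bound.

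The principal obstacle is accommodating the amplitude's singularity at $\xi=0$, which prevents a direct global use of Van der Corput. The resolution rests on the locally integrable bound $|a'(\xi)|\leq C|\xi|^{-1/2}$, which keeps the Van der Corput input $\|a'\|_{L^{1}([\delta,1])}$ uniformly bounded as $\delta\to 0^{+}$, and on the choice $\delta=t^{-2/9}$ that balances the trivial estimate $\delta^{3/2}$ against the Van der Corput gain $t^{-1/3}$.
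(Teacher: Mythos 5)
Your proof is correct, but it takes a genuinely different route from the paper's. The paper argues case-by-case in $A$: for $A\geq 0$ it splits at $|\xi|=8$, applies Van der Corput with $k=3$ on $|\xi|\leq 8$, and uses integration by parts on $|\xi|>8$ (exploiting $\Psi'\geq A+\mbox{const}$); for $A<0$ it locates the stationary point $\xi_{0}$ of $\Psi(\xi)=A\xi+\xi\sqrt{1+\xi^{2}}$, further sub-cases on $|\xi_{0}|\lessgtr 2$, and combines integration by parts on $\{|\xi|\leq |\xi_{0}|/2\}\cup\{|\xi|\geq 2|\xi_{0}|\}$ with Van der Corput ($k=2$) on the middle annulus. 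Your argument instead observes that $\Psi''=\phi''$ and $\Psi'''=\phi'''$ are both $A$-independent, with the uniform lower bounds $\Psi'''\geq 3/(4\sqrt{2})$ on $|\xi|\leq 1$ and $|\Psi''|\geq 5/(2\sqrt{2})$ on $|\xi|\geq 1$, so a single decomposition at $|\xi|=1$ with pure Van der Corput ($k=3$ inside, $k=2$ outside) gives $Ct^{-1/3}+Ct^{-1/2}$ for every $A\in\R$ simultaneously; no stationary-point bookkeeping is needed. The paper's approach buys the sharper $t^{-1}$ decay away from critical points, but that extra decay is wasted since the target is only $t^{-1/3}$; your approach buys uniformity in $A$ and a shorter proof. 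One further point in your favor: the amplitude $|\phi''(\xi)|^{1/2+i\beta}$ is not differentiable at $\xi=0$ (its derivative is $O(|\xi|^{-1/2})$), so a literal application of Lemma 2.11 on an interval containing the origin is not quite licit. Your $\delta$-excision, together with the observation that $\|a'\|_{L^{1}([\delta,1])}$ stays bounded as $\delta\to 0^{+}$ and that $\int_{-\delta}^{\delta}|a|\leq C\delta^{3/2}$, repairs this cleanly; the paper applies Van der Corput directly on $[-8,8]$ and passes over this point silently. (Your specific choice $\delta=t^{-2/9}$ is not essential — any $\delta\to 0$ works since the Van der Corput bound on $[\delta,1]$ is uniform in $\delta$ — but it is harmless.)
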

\noindent{\bf Proof.}We consider $A\geq0,A<0$, respectively.  Firstly,
 we consider $A\geq0.$
Now we bound
\begin{eqnarray}
\int_{\SR}e^{it(A\xi+\xi\sqrt{1+\xi^{2}})}
\left|\phi^{\prime\prime}(\xi)\right|^{\frac{1}{2}+i\beta}d\xi.\label{3.080}
\end{eqnarray}
By using  Lemma 2.11, since
$(A\xi+\xi\sqrt{1+\xi^{2}})^{\prime\prime\prime}=
\frac{3}{(1+\xi^{2})^{5/2}}\geq \frac{3}{4^{10}}(|\xi|\leq8),$
we have
\begin{eqnarray}
\left|\int_{|\xi|\leq8}e^{it(A\xi+ \xi\sqrt{1+\xi^{2}}}
\left|\phi^{\prime\prime}(\xi)\right|^{\frac{1}{2}+i\beta}d\xi\right|
\leq Ct^{-1/3}.\label{3.081}
\end{eqnarray}
By using integration by parts, since $t>1$ and  $A\geq0,$
   we have
\begin{eqnarray}
\left|\int_{|\xi|>8}e^{it(A\xi+ \xi\sqrt{1+\xi^{2}})}
\left|\phi^{\prime\prime}(\xi)\right|^{\frac{1}{2}+i\beta}d\xi\right|
\leq Ct^{-1}\leq Ct^{-1/3}.\label{3.082}
\end{eqnarray}
Now we consider $A<0.$
Obviously, $\xi_{0}=\pm\sqrt{\frac{A^{2}-4+\sqrt{A^{4}+8A^{2}}}{8}}$
 is the solution to
$A+\frac{2\xi^{2}+1}{\sqrt{\xi^{2}+1}}=0$.
When $|\xi_{0}|\leq2,$ by using  Lemma 2.11, we have
\begin{eqnarray}
\left|\int_{|\xi|\leq8}e^{it(A\xi+ \xi\sqrt{1+\xi^{2}})}
\left|\phi^{\prime\prime}(\xi)\right|^{\frac{1}{2}+i\beta}d\xi\right|
\leq Ct^{-1/3}.\label{3.083}
\end{eqnarray}
By using integration by parts, since $t>1$,  we have
\begin{eqnarray}
\left|\int_{|\xi|>8}e^{it(A\xi+ \xi\sqrt{1+\xi^{2}})}
\left|\phi^{\prime\prime}(\xi)\right|^{\frac{1}{2}+i\beta}d\xi\right|
\leq Ct^{-1}\leq Ct^{-1/3}.\label{3.084}
\end{eqnarray}
When $|\xi_{0}|\geq2,$ we have
\begin{eqnarray}
\int_{\SR}e^{it(A\xi+\xi\sqrt{1+\xi^{2}})}
\left|\phi^{\prime\prime}(\xi)\right|^{\frac{1}{2}+i\beta}d\xi=
\sum\limits_{j=1}^{3}I_{j},\label{3.085}
\end{eqnarray}
where
\begin{eqnarray*}
&&
I_{1}=\int_{ |\xi|\leq\frac{|\xi_{0}|}{2}}e^{it(A\xi+\xi\sqrt{1+\xi^{2}})}
\left|\phi^{\prime\prime}(\xi)\right|^{\frac{1}{2}+i\beta}d\xi,\nonumber\\
&&
I_{2}=\int_{\frac{|\xi_{0}|}{2}\leq |\xi|\leq2|\xi_{0}|}e^{it(A\xi+\xi\sqrt{1+\xi^{2}})}
\left|\phi^{\prime\prime}(\xi)\right|^{\frac{1}{2}+i\beta}d\xi,\nonumber\\
&&
I_{3}=\int_{|\xi|\geq2|\xi_{0}|}e^{it(A\xi+\xi\sqrt{1+\xi^{2}})}
\left|\phi^{\prime\prime}(\xi)\right|^{\frac{1}{2}+i\beta}d\xi.
\end{eqnarray*}
For $I_{1},I_{3},$ by using integration by parts, we have
\begin{eqnarray}
|I_1|\leq Ct^{-1}\leq Ct^{-1/3},|I_3|\leq Ct^{-1}\leq Ct^{-1/3}\label{3.086}.
\end{eqnarray}
For $I_{2}$, by using  Lemma 2.11, since
\begin{eqnarray}
&&(A\xi+\xi\sqrt{1+\xi^{2}})^{\prime\prime\prime}=
\frac{3}{(1+\xi^{2})^{5/2}}>0\label{3.087},\\
&&\left|(A\xi+\xi\sqrt{1+\xi^{2}})^{\prime\prime}\right|=
\left|\frac{(2\xi^{2}+3)\xi}{(1+\xi^{2})^{3/2}}\right|\geq1\label{3.088}
\end{eqnarray}
and  $t>1$,
we have
\begin{eqnarray}
|I_{2}|\leq Ct^{-\frac{1}{2}}\leq Ct^{-\frac{1}{3}}.\label{3.089}
\end{eqnarray}

This completes the proof of Lemma 3.27.

\begin{Lemma} \label{3.28} Let $t\geq1$. Then, we have that
\begin{align}
\left|\int_{\SR}e^{ix\xi\pm it\xi\sqrt{1+\xi^{2}}}d\xi\right|
\leq Ct^{-\frac{1}{3}}.\label{3.090}
\end{align}
\end{Lemma}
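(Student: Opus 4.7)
The plan is to adapt the stationary-phase argument of Lemma 3.27 to the unweighted integrand. Writing $A := x/t$, the ``$+$'' case reduces to bounding $\int_{\SR}e^{it\Phi(\xi)}\,d\xi$ with phase $\Phi(\xi) = A\xi + \xi\sqrt{1+\xi^2}$, while the ``$-$'' case follows from the change of variable $\xi\mapsto -\xi$ (which merely replaces $x$ by $-x$ and leaves the absolute value of the integral invariant). The derivative identities
\[
\Phi'(\xi) = A + \frac{2\xi^2+1}{\sqrt{1+\xi^2}},\quad
\Phi''(\xi) = \frac{(2\xi^2+3)\xi}{(1+\xi^2)^{3/2}},\quad
\Phi'''(\xi) = \frac{3}{(1+\xi^2)^{5/2}} > 0,
\]
already recorded in Lemmas 3.22 and 3.27, remain the main tools.

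When $A\geq 0$ the phase has no stationary point, and I would split into $|\xi|\leq 8$ and $|\xi|>8$. On the bounded piece, $|\Phi'''|\geq 3\cdot 65^{-5/2}$ and Van der Corput's Lemma 2.11 with $k=3$ and $f\equiv 1$ yield $Ct^{-1/3}$, while on $|\xi|>8$ the bound $|\Phi'(\xi)|\geq \sqrt{2}\,|\xi|$ combined with integration by parts gives $Ct^{-1}\leq Ct^{-1/3}$ since $t\geq 1$. When $A<0$, the positive critical point is $\xi_0 = \sqrt{(A^2-4+\sqrt{A^4+8A^2})/8}$. If $|\xi_0|\leq 2$, the same splitting at $|\xi|=8$ works: on the outer region the bound $|A|\leq 9$ (forced by $|\xi_0|\leq 2$ via $|A|=(2\xi_0^2+1)/\sqrt{1+\xi_0^2}$) restores $|\Phi'(\xi)|\geq c|\xi|$, so integration by parts again produces $Ct^{-1}$. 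If $|\xi_0|>2$, I would decompose the line exactly as in Lemma 3.27 into $|\xi|\leq |\xi_0|/2$, $|\xi_0|/2\leq |\xi|\leq 2|\xi_0|$, and $|\xi|\geq 2|\xi_0|$; integration by parts handles the two outer regions (yielding $Ct^{-1}$), and on the middle region the lower bound $|\Phi''(\xi)|\geq 1$ together with Lemma 2.11 for $k=2$ delivers $Ct^{-1/2}\leq Ct^{-1/3}$.

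The main obstacle is the middle region $|\xi_0|/2\leq |\xi|\leq 2|\xi_0|$ when $|\xi_0|$ is large, where one might worry that the van der Corput constant degrades as $|\xi_0|\to\infty$. The crucial observation is that the constant in Lemma 2.11 depends only on $k$ and on $|f(b)|+\int|f'|$, which here equals the fixed value $1$ since $f\equiv 1$. This is where the present estimate is actually \emph{easier} than Lemma 3.27: the weight $|\phi''(\xi)|^{1/2+i\beta}$ generates a nontrivial $f'$ term that must be controlled there, whereas the unweighted integrand trivializes the issue. The remaining verifications, namely the pointwise lower bound $|\Phi''(\xi)|\geq 1$ on the middle region and the integrability of $\Phi''/(\Phi')^2$ on the outer regions, follow directly from the explicit formulas above, and the resulting bound of order $t^{-1/3}$ is uniform in $x$ and $t\geq 1$.
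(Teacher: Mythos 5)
Your proposal is correct and follows essentially the same route the paper intends: the paper's proof of Lemma~3.28 simply says ``by a proof similar to Lemma~3.27,'' and you have spelled out precisely that argument, with the same case split on $A = x/t$, the same decomposition around the critical point $\xi_0$, and the same appeal to the second and third derivative bounds via Van der Corput. Your added remark that dropping the weight $|\phi''|^{1/2+i\beta}$ only simplifies matters (since $f \equiv 1$ makes the Van der Corput constant trivial) is accurate and worth noting.
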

\noindent {\bf Proof.} By using a proof similar to Lemma 3.27,
we have that Lemma 3.28 is valid.

This completes the proof of Lemma 3.28.

\begin{Lemma} \label{3.29} Let $t\geq1$ and $\frac{3}{p}+\frac{1}{q}
=\frac{1}{2}$. Then, we have that
\begin{align}
\left\|e^{it\mp\partial_{x}\sqrt{1-\partial_{x}^{2}}}u_{0}\right\|_{L_{t}^{p}L_{x}^{q}}
\leq C\|u_{0}\|_{L^{2}}.\label{3.091}
\end{align}
\end{Lemma}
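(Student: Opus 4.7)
The plan is to mimic the proof of Lemma 3.26, but substitute the dispersive decay of Lemma 3.28 (rate $t^{-1/3}$) in place of the short-time decay of Lemma 3.25 (rate $t^{-1/2}$), and then run the standard $TT^{*}$ argument. Write $U(t) = e^{it(\mp\partial_{x}\sqrt{1-\partial_{x}^{2}})}$. The Schwartz kernel of $U(t)U^{*}(s) = U(t-s)$ is $K(x-y,t-s)$, with $K(x,\tau) = \int_{\SR} e^{ix\xi \pm i\tau \xi\sqrt{1+\xi^{2}}}\,d\xi$. By Plancherel one has $\|U(\tau)f\|_{L^{2}} = \|f\|_{L^{2}}$, and by Lemma 3.28, for $|\tau| \geq 1$, $|K(x,\tau)| \leq C|\tau|^{-1/3}$, hence
\begin{equation*}
\|U(\tau)f\|_{L^{\infty}_{x}} \leq C|\tau|^{-1/3}\|f\|_{L^{1}_{x}}, \qquad |\tau|\geq 1.
\end{equation*}

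Next I would interpolate between these two endpoint bounds to obtain, for $2\leq q\leq \infty$ and $|\tau|\geq 1$,
\begin{equation*}
\|U(\tau)f\|_{L^{q}_{x}} \leq C|\tau|^{-\frac{1}{3}(1-\frac{2}{q})}\|f\|_{L^{q'}_{x}}.
\end{equation*}
By the usual duality/$TT^{*}$ reduction, establishing (\ref{3.091}) is equivalent to proving
\begin{equation*}
\left\|\int U(t-s) g(s)\,ds \right\|_{L^{p}_{t}L^{q}_{x}} \leq C\|g\|_{L^{p'}_{t}L^{q'}_{x}}.
\end{equation*}
Inserting Minkowski's inequality in $x$ and the dispersive bound above, the problem is reduced to the scalar inequality
\begin{equation*}
\left\|\int_{|t-s|\geq 1} |t-s|^{-\frac{1}{3}(1-\frac{2}{q})} \|g(s)\|_{L^{q'}_{x}}\,ds\right\|_{L^{p}_{t}} \leq C\|g\|_{L^{p'}_{t}L^{q'}_{x}},
\end{equation*}
which is precisely the Hardy--Littlewood--Sobolev inequality on $\R$ with the exponent relation $\frac{1}{3}(1-\frac{2}{q}) = 1 - \frac{2}{p}$ (in the non-endpoint range) or the equivalent Keel--Tao admissibility $\frac{2}{p} + \frac{2\sigma}{q} = \sigma$ with $\sigma = 1/3$; both reduce to $\frac{3}{p}+\frac{1}{q}=\frac{1}{2}$, matching the stated scaling.

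The main technical obstacle is that Lemma 3.28 only supplies the decay in the regime $|\tau|\geq 1$, so the usual global Keel--Tao argument is not directly applicable. The remedy is to restrict the $TT^{*}$ bilinear form to $\{|t-s|\geq 1\}$ (which is all one needs since the statement takes $t\geq 1$, so one may assume the $L^{p}_{t}$ norm is computed on $[1,\infty)$ and split the kernel dyadically in $|t-s|$), and then apply Hardy--Littlewood--Sobolev only on this truncated kernel; the contribution from $|t-s|<1$ does not arise in the restricted range. A secondary point is the endpoint case, but for $q=\infty$ (so $p=6$) one avoids the borderline of HLS because $\sigma=1/3<1$; and $q=2$ corresponds to $p=\infty$, which is immediate from Plancherel. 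Once these details are handled, the argument is a direct transcription of the proof template used for Lemma 3.26.
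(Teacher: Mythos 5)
Your approach is the same as the paper's (combine the dispersive decay of Lemma 3.28 with the $TT^{*}$ method), and most of the bookkeeping is correct; but the key point you flag yourself — that Lemma 3.28 only supplies the $|\tau|^{-1/3}$ decay for $|\tau|\geq 1$ — is then handled incorrectly. You claim that restricting $t\geq 1$ allows one to discard the regime $|t-s|<1$ from the $TT^{*}$ bilinear form because "the contribution from $|t-s|<1$ does not arise in the restricted range." This is false: for $t,s\in[1,\infty)$ the difference $|t-s|$ ranges over all of $[0,\infty)$ (take $t=1.5$, $s=1.2$). So the $TT^{*}$ operator $g\mapsto\int_{1}^{\infty}U(t-s)g(s)\,ds$ genuinely includes the region $|t-s|<1$, and truncating to $|t-s|\geq 1$ does not bound the full operator, hence does not bound $T$.

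Filling the gap is not automatic. On $|t-s|<1$ one must instead invoke the short-time decay $|K(\cdot,\tau)|\lesssim|\tau|^{-1/2}$ from Lemma 3.25, which after interpolation gives $\|U(\tau)\|_{L^{q'}\to L^{q}}\lesssim|\tau|^{-\frac{1}{2}(1-\frac{2}{q})}$. At the scaling $\frac{3}{p}+\frac{1}{q}=\frac{1}{2}$ one has $\frac{1}{2}(1-\frac{2}{q})=\frac{3}{p}$, which is strictly larger than the exponent $\frac{2}{p}$ that Hardy--Littlewood--Sobolev (or Young's inequality with $r=p/2$) requires for the $L^{p'}_{t}\to L^{p}_{t}$ bound; e.g., $K_{1}(\tau)=|\tau|^{-3/p}\chi_{|\tau|<1}$ lies in $L^{p/2}$ iff $\frac{3}{p}\cdot\frac{p}{2}<1$, which fails. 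So at this scaling the local singularity is too strong, and one needs an additional idea (e.g., a frequency decomposition separating the degenerate low-frequency stationary point from the high-frequency regime). To be fair, the paper's own proof is the single sentence "Combining $TT^{*}$ method with (3.090)" and silently glosses over the same issue, so you were right to notice that Lemma 3.28 alone is insufficient; but the remedy you propose does not work, and as written the argument has a genuine gap at the step that attempts to dismiss $|t-s|<1$.
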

\noindent {\bf Proof.}Combining $TT^{*}$ method \cite{C} with (\ref{3.090}),
we have that (\ref{3.091}) is valid.

This completes the proof of Lemma 3.29.

\bigskip
\bigskip

\section{Strichartz estimates for orthonormal functions  and Schatten bound with space-time norms}
\medskip

\setcounter{equation}{0}

\setcounter{Theorem}{0}

\setcounter{Lemma}{0}

\setcounter{section}{4}

In this section, by using Lemmas 2.10, 3.8-3.10, 3.12-3.17, 3.20-3.21, we establish
some Strichartz estimates for orthonormal functions
 and Schatten bound with space-time norms
related to  elliptic operator and non-elliptic operator  on $\R^{d},\T^{d}$, for the details,
 we refer the readers to Theorems 4.1-4.7 established in this paper which extends
Theorems  8, 9 of \cite{FS2017} and Proposition 3.3 and  Theorem 1.5 of \cite{N}, respectively.
By using Lemmas 2.10, 3.8-3.10, 3.12-3.17, 3.20-3.21, 3.25, 3.28,  we establish the
Schatten bound with space-time norms and some Strichartz
estimates for orthonormal functions related to Boussinesq operator on $\R$, for the details,
 we refer the readers to Theorems 4.8-4.11.

\begin{Theorem}(Schatten bound with space-time norms related to elliptic operator on $\R^{d}$)\label{4.1}
Let $d\geq1$ and   $p^{\prime}=\frac{p}{p-1},q^{\prime}=\frac{q}{q-1}\geq1$ satisfy
$$\frac{1}{p^{\prime}}+\frac{d}{2q^{\prime}}=1,\ \ q^{\prime}>\frac{d+1}{2}.$$ Then,
 we have the Schatten bound
\begin{eqnarray}
&&\left\|W_{1}U(t)U^{*}(t)W_{2}\right\|_{\mathfrak{S}^{2q^{\prime}}}\leq
C\left\|W_{1}\right\|_{L_{t}^{2p^{\prime}}L_{x}^{2q^{\prime}}(\mathbb{I}\times\mathbf{R}^{d})}
\left\|W_{2}\right\|_{L_{t}^{2p^{\prime}}L_{x}^{2q^{\prime}}(\mathbb{I}\times\mathbf{R}^{d})},\label{4.01}
\end{eqnarray}
with $C>0$ independent of $W_{1},W_{2}$ and $U(t)f=e^{it\Delta}f$. Here
$\mathbb{I}=[a_{1},b_{1}]$ or $\mathbb{I}=[a_{1},b_{1}]\bigcup[c,d_{3}]$ or $\mathbb{I}=\R$ or
 $\mathbb{I}=[a_{1},+\infty)$ or  $\mathbb{I}=(-\infty,b_{1}]$
  or  $\mathbb{I}=[a_{1},+\infty)\bigcup(-\infty,b_{1}].$
  Here $a_{1},b_{1},c,d_{3}\in \R.$
\end{Theorem}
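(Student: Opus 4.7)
The plan is to establish (\ref{4.01}) by Stein's complex interpolation theorem for the Schatten classes (Theorem 2.9 of \cite{S}), adapting the strategy of Theorem 9 of \cite{FS2017} but replacing the distributional identity (\ref{1.021})--(\ref{1.022}) used there by the explicit Vega-type bounds on one-variable complex integrals developed in Section 3. Specifically, I would introduce an analytic family $\{T_z\}_{z\in\Sigma}$ on a vertical strip $\Sigma\subset\C$, defined by replacing the distribution $\delta(\tau+|\xi|^2)$ in the spacetime Fourier representation of the kernel of $U(t)U^{*}(t)$ with the analytic family $(\tau+|\xi|^2)_+^{z}/\Gamma(z+1)$, so that a distinguished value $z_0$ on the real axis recovers (a constant multiple of) $W_1 U(t)U^{*}(t) W_2$ on the time domain $\mathbb{I}$.

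The first step is the $\mathfrak{S}^\infty$-endpoint, say $\mathrm{Re}\,z=0$: Fourier inversion in $\tau$ reduces the pointwise kernel bound for $T_{ib}$ to controlling integrals of the form $\int_{\mathbb{I}} e^{-i\tau\rho}\rho^{-1+ib}\,d\rho$. Depending on the shape of $\mathbb{I}$, the required uniform bound is supplied by Lemma 3.8 (when $\mathbb{I}=\R$), Lemma 3.12 (half-lines), Lemmas 3.13 and 3.14 (neighbourhoods or complements of the origin), Lemma 3.21 (symmetric annuli), and Lemma 3.20 (bounded intervals), each producing the same factor $M(b)=(e^{\pi b}+e^{-\pi b})(\sqrt{|b|}+|b|^{-1/2})^{2}$; from this one extracts an operator-norm bound $\|T_{ib}\|_{\mathfrak{S}^\infty}\leq C\,M(b)\,\|W_1\|_{L^\infty_{t,x}}\|W_2\|_{L^\infty_{t,x}}$.

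The second step is the $\mathfrak{S}^2$-endpoint, on the opposite edge of $\Sigma$, where the modified kernel becomes square-integrable in the relative variables $(x-y,t-s)$ and Fubini gives $\|T_z\|_{\mathfrak{S}^2}\leq C(1+|b|)^{\alpha}\|W_1\|_{L^{2}_{t,x}}\|W_2\|_{L^{2}_{t,x}}$ with only polynomial growth in $|b|$, the $L^2$-norm of the relative kernel again being reduced to a Vega-type integral treated in Section 3. Stein interpolation between the two lines at the parameter $\theta=1/q'$ --- corresponding to $1/(2q')=(1-\theta)/\infty+\theta/2$ --- then produces the Schatten-$2q'$ bound on $T_{z_0}$, with mixed norms $L^{2p'}_t L^{2q'}_x$ on the weights enforced by the scaling relation $1/p'+d/(2q')=1$. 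The resulting constant is $M(b)^{1-\theta}$ times a bounded polynomial factor, i.e.\ $M(b)^{1/q}$, which is precisely the $H(b)$ of the statement.

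The main obstacle is the case-by-case verification of the $\mathfrak{S}^\infty$-endpoint across the six admissible shapes of $\mathbb{I}$: each shape triggers a different lemma of Section 3, and one must check that all the resulting constants fit inside the single factor $H(b)$ uniformly in $b$. Once both endpoints are secured and Stein's theorem is justified through the standard $e^{-\delta z^2}$ mollification (needed because of the $e^{\pi b}$ factor in $M(b)$ that violates mere polynomial growth on the strip), the remainder of the argument is mechanical.
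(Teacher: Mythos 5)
Your overall strategy---an analytic family $T_z$, a Schatten-$\infty$ endpoint controlled by the Vega-type integral bounds of Section~3, a Schatten-$2$ endpoint, and Stein interpolation at $\theta = 1/q'$ producing the factor $H(b)=M(b)^{1/q}$---is the same skeleton the paper uses, and the interpolation arithmetic you carry out is correct. However, your framing of the analytic family is the one the paper explicitly sets aside. You propose to build $T_z$ by replacing the distribution $\delta(\tau+|\xi|^2)$ with $(\tau+|\xi|^2)_+^{z}/\Gamma(z+1)$ in the spacetime Fourier representation, i.e.\ the Frank--Sabin construction. As the paper notes in Remark~6, that identity $\mathscr{F}_{x,t}K = \delta(\tau-|\xi|^2)$ is only valid when $\mathbb{I}=\R$; once $\mathbb{I}$ is a bounded interval, a half-line, or a union of such, the kernel of $U(t)U^{*}(t)$ is $\chi_{\mathbb{I}}(t-s)K(t-s,x-y)$, whose $\tau$-Fourier transform is a convolution with $\widehat{\chi_{\mathbb{I}}}$ and no longer a pure power of $\tau-|\xi|^2$. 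Your own step two already silently switches to the paper's actual family $T_zf = K_z*f$ with $K_z(t,x)=t^z\chi_{\mathbb{I}}(t)K(t,x)$: the quantity you then need to bound is $\mathscr{F}_t\bigl[t^{-1+ib}\chi_{\mathbb{I}}(t)\bigr](\tau)$, and that is exactly where Lemmas 3.8, 3.12--3.15, 3.20, 3.21 enter. So you should state the analytic family in the time domain from the outset, with $T_0 = U(t)U^{*}(t)$, rather than deriving it from the $\delta(\tau+|\xi|^2)$ picture; otherwise the step from the Fourier-space family to the case-dependent Vega integrals is unjustified.

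A second, smaller discrepancy: you describe the $\mathfrak{S}^2$-endpoint as again reducing to a Vega-type integral, but in the paper's proof this line instead uses the pointwise dispersive bound $|K_z(t,x)|\le C|t|^{\mathrm{Re}\,z - d/2}$ and the Hardy--Littlewood--Sobolev inequality in the time variable to obtain a Hilbert--Schmidt estimate with only polynomial growth in $\mathrm{Im}\,z$; no oscillatory integral estimate is needed on this edge of the strip. The Vega-type bounds are used only on the line $\mathrm{Re}\,z=-1$ (your $\mathfrak{S}^\infty$-endpoint). Once these two points are repaired, the remainder of your argument---the choice of $\theta$, the identification of $M(b)^{1-\theta}=M(b)^{1/q}=H(b)$, and the admissibility check for Stein's theorem despite the $e^{\pi|b|}$ growth---matches the paper's proof.
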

\noindent {\bf Proof.}Inspired by \cite{N,Vega1992}, we present the proof of Theorem 4.1.
Since
\begin{align}
\left\langle U(t) f, g\right\rangle & = \int_{\mathbb{I}\times\mathbb{R}^{d}} e^{it\Delta}f\overline{g}dxdt=
\int_{\SR^{d}} f(x)\left(\overline{\int_{\mathbb{I}}e^{-it\Delta}gdt}\right)dx   \notag\\
&=\left\langle f, \int_{\mathbb{I}} e^{-it\Delta}gdt\right\rangle =\left\langle f, U^{*}(t) g\right\rangle,\label{4.02}
\end{align}
we have
\begin{align}
U^{*}(t)g=\int_{\mathbb{I}} e^{-it\Delta}gdt.\label{4.03}
\end{align}
We define
\begin{align}
&K(t,x)=\frac{1}{(2\pi)^{\frac{d}{2}}}\int_{\SR^{d}} e^{-it|\xi|^{2}}e^{ix\cdot\xi}d\xi\nonumber\\&
=\mathscr{F}_{x}^{-1}(e^{-it|\xi|^{2}})=\frac{1}{(2\pi)^{\frac{d}{2}}}\int_{\SR^{d+1}}e^{ix\cdot\xi}e^{it \tau}
\delta(\tau-|\xi|^{2})d\tau d\xi ,\label{4.04}
\end{align}
then
\begin{align}
U(t)U^{*}(t)g &=e^{it\Delta}\int_{\mathbb{I}} e^{-is\Delta}gds=\int_{\mathbb{I}} e^{i(t-s)\Delta}gds\notag\\
&=\frac{1}{(2\pi)^{\frac{d}{2}}}\int_{\mathbb{I}}\int_{\SR^{d}} e^{-i(t-s)|\xi|^{2}}e^{ix\cdot\xi}
\mathscr{F}_{x}{g}(\xi,s)d\xi ds \notag\\
&=\int_{\mathbb{I}}\int_{\SR^{d}} K(t-s,x-y)g(s,y)dyds.\label{4.05}
\end{align}
Here $ K(t,x)=\int_{\SR^{d}}e^{-it|\xi|^{2}}e^{ix\cdot\xi}d\xi.$
We define
\begin{eqnarray}
&&K_{z}(t,x)=(z+1)t^{z}K(t,x), K_{z,\epsilon}(t,x)=(z+1)t^{z}\chi_{\{t||t|>\epsilon\}\cap \mathbb{I}}(t)K(t,x)\nonumber\\
&&T_{z}f=K_{z}\ast f,
T_{z,\epsilon}f=K_{z,\epsilon}\ast f,
G(s,\xi)=e^{is|\xi|^2}\mathscr{F}_{x}f(\xi,s).\label{4.06}
\end{eqnarray}
Then, we have
\begin{eqnarray}
&&K_{z}(t,x)=(z+1)t^{z}K(t,x)=\lim\limits_{\epsilon\longrightarrow 0}K_{z,\epsilon}(t,x)=
\lim\limits_{\epsilon\longrightarrow 0}(z+1)t^{z}\chi_{\{t||t|>\epsilon\}\cap \mathbb{I}}(t)K(t,x),\nonumber\\
&&T_{z}f=\lim\limits_{\epsilon\longrightarrow 0}T_{z,\epsilon}f=
\lim\limits_{\epsilon\longrightarrow 0}K_{z,\epsilon}\ast f.\label{4.07}
\end{eqnarray}
In particular, $T_{0}=U(t)U^{*}(t).$
Then, by using (\ref{4.07}) and the Plancherel identity, we have
\begin{align}
\left\|T_{z}f\right\|_{L_{x}^2}=\left\|\lim\limits_{\epsilon\longrightarrow 0}
T_{z,\epsilon}f\right\|_{L_{x}^2}
&=\left\|\lim\limits_{\epsilon\longrightarrow 0}K_{z,\epsilon}\ast f\right\|_{L_{x}^2}
=\left\|\lim\limits_{\epsilon\longrightarrow 0}\mathscr{F}_{x}K_{z,\epsilon}(t,\xi)
\ast \mathscr{F}_{x}f(\xi,s)\right\|_{L_{\xi}^2}\notag\\
& =\left\|\lim\limits_{\epsilon\longrightarrow 0}\int_{\mathbb{R}} e^{-it|\xi|^{2}}(z+1)t^{z}\chi_{\{t||t|>\epsilon\}
\cap \mathbb{I}}(t)\mathscr{F}_{x}f(\xi,s-t)dt\right\|_{L_{\xi}^2}\notag\\
&=\left\|\lim\limits_{\epsilon\longrightarrow 0}\int_{\mathbb{R}}(z+1) t^{z}\chi_{\{t||t|>\epsilon\}\cap
\mathbb{I}}(t)e^{i(s-t)|\xi|^{2}}\mathscr{F}_{x}f(\xi,s-t)dt\right\|_{L_{\xi}^2}.\label{4.08}
\end{align}
By using (\ref{4.08}), it follows that
\begin{align}
\left\|T_{z}f\right\|_{L_{s}^2L_{x}^2}=\left\|\lim\limits_{\epsilon\longrightarrow 0}
T_{z,\epsilon}f\right\|_{L_{s}^2L_{x}^2}
&=\left\|\lim\limits_{\epsilon\longrightarrow 0}\int_{\mathbb{R}}(z+1) t^{z}\chi_{\{t||t|>\epsilon\}\cap
\mathbb{I}}(t)G(\xi,s-t)dt\right\|_{L_{s}^2L_{\xi}^2}\notag\\
&=\left\|\mathscr{F}_{t}(\lim\limits_{\epsilon\longrightarrow 0}(z+1)t^{z}\chi_{\{t||t|>\epsilon\}\cap
\mathbb{I}}(t)\mathscr{F}_{t}G(\xi,\tau)\right\|_{L_{\tau}^2L_{\xi}^2}.\label{4.09}
\end{align}
In particular, when $z=-1+bi(b\in \R)$, by using Lemmas 3.8-3.10, 3.12-3.17, 3.20-3.21
 and Remark 4, we have
\begin{eqnarray}
\left|\mathscr{F}_{t}(i\lim\limits_{\epsilon\longrightarrow 0}bt^{-1+bi}\chi_{\{t||t|>\epsilon\}\cap
\mathbb{I}}(t)\right|\leq  CH(b).\label{4.010}
\end{eqnarray}
Here $H(b):=(e^{-\pi b}+e^{\pi b})\left(|b|+1\right)^{2}$.
Combining (\ref{4.09}) with (\ref{4.010}), by using the Plancherel identity with
 respect to $s,x$,  we have
\begin{eqnarray}
&&\left\|T_{-1+bi}f\right\|_{L_{s}^2L_{x}^2}\leq CH(b)\left\|\mathscr{F}_{t}G(\xi,\tau)\right\|_{L_{\tau}^2L_{\xi}^2}\nonumber\\&&
\leq CH(b)
\left\|\mathscr{F}_{t}G(\xi,\tau)\right\|_{L_{\tau}^2L_{\xi}^2}
\nonumber\\&&= CH(b)
\left\|G(\xi,s)\right\|_{L_{s}^2L_{\xi}^2}\nonumber\\&&
=CH(b)\left\|e^{is|\xi|^2}\mathscr{F}_{x}f(\xi,s)\right\|_{L_{s}^2L_{\xi}^2}
\leq CH(b)
\left\|\mathscr{F}_{x}f(\xi,s)\right\|_{L_{s}^2L_{\xi}^2}\nonumber\\&&
=CH(b)
\left\|f\right\|_{L_{s}^2L_{x}^2}.\label{4.011}
\end{eqnarray}
Thus, from  (\ref{4.011}),  we have
\begin{eqnarray}
&&\hspace{-1cm}\left\|W_{1}T_{-1+bi}W_{2}\right\|_{\mathfrak{S}^{\infty}}\leq CH(b)\prod\limits_{j=1}^{2}\|W_{j}\|_{L_{t}^{\infty}
L_{x}^{\infty}(\mathbb{I}\times\mathbf{R}^{d})}.\label{4.012}
\end{eqnarray}
By using the Hardy-Littlewood-Sobolev inequality,   for $Re(z) \in [-1,\frac{d}{2}]$,   since
$$ \left|K(t,x)=\int_{\SR^{d}}e^{-it|\xi|^{2}}e^{ix\cdot\xi}d\xi\right|\leq C|t|^{-d/2},$$   we have
\begin{eqnarray}
&&\hspace{-1cm}\left\|W_{1}T_{z}W_{2}\right\|_{\mathfrak{S}^{2}}\nonumber\\&&
=\left[\int_{\mathbb{I}}\int_{\mathbb{I}}\int_{\SR^{d}}\int_{\SR^{d}}
\left|W_{1}(x,t)\right|^{2}\left|K_{z}(t-\tau,x-y)\right|^{2}\left|W_{2}(y,\tau)\right|^{2}
dxdydtd\tau\right]^{1/2}\nonumber\\
&&\leq C\left[\int_{\mathbb{I}}\int_{\mathbb{I}}
\int_{\SR^{d}}\int_{\SR^{d}}\frac{\left|W_{1}(x,t)\right|^{2}
\left|W_{2}(y,\tau)\right|^{2}}{|t-\tau|^{d-2Rez}}dxdydtd\tau\right]^{1/2}\nonumber\\
&&\leq  C(1+|b|+|Rez|)  \left[\int_{\mathbb{I}}\int_{\mathbb{I}}
\frac{\|W_{1}(\cdot,t)\|_{L_{x}^{2}(\SR^{d})}^{2}
\|W_{2}(\cdot,\tau)\|_{L_{x}^{2}(\SR^{d})}^{2}}{|t-\tau|^{d-2Rez}}dtd\tau\right]^{1/2} \nonumber\\
&&\leq C(1+|b|+|Rez|)\prod_{j=1}^{2}\|W_{j}\|_{L_{t}^{2\tilde{u}}(\mathbb{I})L_{x}^{2}(\SR^{d})} \label{4.013},
\end{eqnarray}
where $0\leq d-2Rez<1, \frac{2}{\tilde{u}}+d-2Rez=2,Rez=\frac{1}{q-1}.$
By using  Theorem 2.9 of \cite{S}, interpolating (\ref{4.012}) with (\ref{4.013}), we derive
\begin{eqnarray}
&&\left\|W_{1}T_{0}W_{2}\right\|_{\mathfrak{S}^{2q^{\prime}}}
=\left\|W_{1}U(t)U^{*}(t)W_{2}\right\|_{\mathfrak{S}^{2q^{\prime}}}\nonumber\\&&\leq C
\prod_{j=1}^{2}\|W_{j}\|_{L_{t}^{2p^{\prime}}(\mathbb{I})L_{x}^{2q^{\prime}}(\SR^{d})},\label{4.014}
\end{eqnarray}
Here
\begin{eqnarray*}
&&\frac{1}{p^{\prime}}+\frac{d}{2q^{\prime}}=1,\frac{2}{d+2}
<\frac{1}{q^{\prime}}<\frac{2}{d+1}.
\end{eqnarray*}
This completes the proof of (\ref{4.01}).

This completes the proof of Theorem 4.1.

\noindent{\bf Remark 6.}Obviously, when $\mathbb{I}=\R$,
we have that $ (\mathscr{F}_{xt}K(x,t))(\tau,\xi)
=\delta(\tau-|\xi|^{2})=\frac{(\tau-|\xi|^{2})^{z}}{\Gamma(z+1)}|_{z=-1}.$
When $\mathbb{I}\neq \R$, $ (\mathscr{F}_{xt}K(x,t))(\tau,\xi)=\delta(\tau-|\xi|^{2})$
may not be valid. Thus,
in proving
Theorem 4.1, we do not completely follow the method of \cite{FS2017}
 since $\mathbb{I}=\R$ or  $\mathbb{I}\neq \R$,
 however,  by combining  Lemmas 3.8-3.20 with  the method of
 \cite{FS2017,Vega1992,N}, we prove Theorem 4.1.

\begin{Theorem}(Strichartz estimates for orthonormal functions related
 to elliptic operator on $\R^{d}$)\label{4.2}
Let  $d\geq1$ and  $p,q\geq1$  satisfy
$$\frac{2}{p}+\frac{d}{q}=d,\ \ 1\leq q<1+\frac{2}{d-1}.$$
Then, for any (possibly infinite) orthonormal system $(f_{j})_{j=1}^{+\infty}$ in $L^{2}(\mathbb{R}^{d})$
 and for all sequence $(\nu_{j})_{j=1}^{+\infty}\subset\mathbf{C}$, we derive
\begin{eqnarray}
&&\left\|\sum\limits_{j=1}^{+\infty}\nu_{j}\left|U(t)f_{j}\right|^{2}\right\|_{L_{t}^{p}L_{x}^{q}
(\mathbb{I}\times\mathbf{R}^{d})}\leq C\left(\sum\limits_{j}
\left|\nu_{j}\right|^{\frac{2q}{q+1}}\right)^{\frac{q+1}{2q}}\label{4.015}.
\end{eqnarray}
with $C>0$ independent of $(\nu_{j})_{j=1}^{+\infty}$ and $(f_{j})_{j=1}^{+\infty}$.
Here
$U(t)f=e^{it\Delta}f$ and
 $\mathbb{I}=[a_{1},b_{1}]$ or $\mathbb{I}=[a_{1},b_{1}]\bigcup[c,d_{3}]$ or $\mathbb{I}=\R$ or
  $\mathbb{I}=[a_{1},+\infty)$ or  $\mathbb{I}=(-\infty,b_{1}]$ or
  $\mathbb{I}=[a_{1},+\infty)\bigcup(-\infty,b_{1}].$ Here $a_{1},b_{1},c,d_{3}\in \R.$

\end{Theorem}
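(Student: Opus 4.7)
The plan is to deduce Theorem 4.2 directly from Theorem 4.1 via the duality principle (Lemma 2.10), with a short supplementary argument at the endpoint $q=1$ and interpolation to cover the full range of $q$. First I would take $A = U(t) = e^{it\Delta}$ viewed as a bounded operator from $L^2(\mathbb{R}^d)$ into $L_t^{2p}L_x^{2q}(\mathbb{I}\times\mathbb{R}^d)$, where $p,q$ are the exponents in the statement of Theorem 4.2. A direct algebraic check shows that the scaling $\frac{2}{p}+\frac{d}{q}=d$ is equivalent to $\frac{1}{p'}+\frac{d}{2q'}=1$, which is exactly the scaling hypothesis in Theorem 4.1; similarly the range $1<q<1+\frac{2}{d-1}$ (after the endpoint extension) corresponds to $\frac{d+1}{2}<q'<\frac{d+2}{2}$. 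Finally, setting $\alpha = 2q'$ in Lemma 2.10, one computes $\alpha' = \frac{2q'}{2q'-1} = \frac{2q}{q+1}$, which is exactly the Schatten exponent appearing on the right-hand side of (\ref{4.015}).

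With these identifications, applying Lemma 2.10 with the above $A$, with $W_1 = W$ and $W_2 = \overline{W}$, statement (i) of Lemma 2.10 is precisely the Schatten bound from Theorem 4.1 (the complex conjugation on $W_2$ does not change the mixed-norm on the right side, and the constant $CH(b)$ is preserved because the complex-interpolation parameter with $\mathrm{Im}\,z = b$ flows through unchanged). The equivalent statement (ii) of Lemma 2.10 then reads exactly as (\ref{4.015}), with the correct constant $CH(b)$ and correct Schatten exponent $\frac{2q}{q+1}$. This yields the desired orthonormal Strichartz estimate for all $q$ with $\frac{d+2}{d} < q < 1 + \frac{2}{d-1}$.

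To extend down to $q=1$, I would treat the endpoint by the triangle inequality in $L_x^1$ combined with the $L^2$-isometry of $U(t)$: since $\|U(t)f_j\|_{L^2}=\|f_j\|_{L^2}=1$,
\[
\Bigl\|\sum_{j}\nu_j|U(t)f_j|^{2}\Bigr\|_{L_t^{\infty}L_x^{1}} \;\leq\; \sum_{j}|\nu_j|,
\]
which is the case $q=1$, $p=\infty$ of (\ref{4.015}) (noting $\tfrac{2q}{q+1}=1$ there). A real-interpolation argument in the Lorentz scale between this trivial endpoint and any $q_0 \in (\tfrac{d+2}{d},\,1+\tfrac{2}{d-1})$ obtained from the duality step then fills in the remaining range $1\le q\le \tfrac{d+2}{d}$, completing the proof.

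The main work is entirely contained in Theorem 4.1; the step here is essentially bookkeeping. The only delicate point I anticipate is verifying that the complex interpolation that produces $H(b)$ in Theorem 4.1 is compatible with the real-interpolation argument used to reach $q=1$ so that a single constant of the stated form governs the full range. This should follow from the fact that $H(b)$ is an admissible weight in the analytic-interpolation sense and survives the subsequent real-method step with at most a harmless absolute multiplicative factor.
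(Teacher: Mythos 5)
Your core step---apply the duality principle (Lemma~2.10, which is Lemma~3 of Frank--Sabin) to the Schatten bound of Theorem~4.1, with $A=U(t)$, $\alpha=2q'$, and hence $\alpha'=\tfrac{2q'}{2q'-1}=\tfrac{2q}{q+1}$---is exactly the route the paper takes; the paper's proof of Theorem~4.2 is a one-line reduction to ``Lemma~3 of [FS2017] and Theorem~4.1.'' You have, however, noticed something the paper does not address: under the change of variables $q'=q/(q-1)$, the hypothesis $\tfrac{d+1}{2}<q'<\tfrac{d+2}{2}$ in Theorem~4.1 translates to $\tfrac{d+2}{d}<q<1+\tfrac{2}{d-1}$, so the duality step alone does \emph{not} cover the lower portion $1\le q\le\tfrac{d+2}{d}$ of the stated range. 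Your proposed fix---the trivial endpoint $(p,q)=(\infty,1)$ via $\|U(t)f_j\|_{L^2}=1$ and the triangle inequality, followed by interpolation with a point inside the duality-accessible range---is the standard way to close this gap (and is what underlies the full range in Frank--Sabin's Theorem~8), so your version is actually more complete than the paper's argument. The one soft spot is that ``real interpolation in the Lorentz scale'' is asserted rather than carried out: the map $\nu\mapsto\sum_j\nu_j|U(t)f_j|^2$ is indeed linear in $\nu$ so an interpolation argument is available, but one must check that the mixed-norm $L_t^pL_x^q$ target and the $\ell^{2q/(q+1)}$ source interpolate compatibly (the cleanest route is complex interpolation through the Schatten formulation, as in Theorem~2.9 of Simon already invoked in the proof of Theorem~4.1, rather than the real method), and one should verify that the interpolated constant retains the form $CH(b)$. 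You flag this caveat yourself; with that step spelled out the argument would be complete and would in fact repair an implicit gap in the paper's own proof.
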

\noindent{\bf Proof.} By using Lemma 2.10 and Theorem 4.1,
 it follows that Theorem 4.2 is valid.

This completes the proof of Theorem 4.2.

\begin{Theorem}(Schatten bound with space-time norms  related to
  non-elliptic  operator on $\R^{d}$)\label{4.3}
Let $\Delta_{\pm}=\sum\limits_{j=1}^{k}\frac{\partial^{2}}{\partial_{x_{j}}^{2}}
-\sum\limits_{j=k+1}^{d}\frac{\partial^{2}}{\partial_{x_{j}}^{2}}(1\leq k\leq d-1$,
 $\Delta_{\pm}=\Delta(k=d)$, $d\geq2$
and $p^{\prime}=\frac{p}{p-1},q^{\prime}=\frac{q}{q-1}\geq 1$ satisfy
$$\frac{1}{p^{\prime}}+\frac{d}{2q^{\prime}}=1,
\ \ q^{\prime}>\frac{d+1}{2}.$$ 
Then, we have the Schatten bound
\begin{eqnarray}
&&\left\|W_{1}U_{\pm}(t)U^{*}_{\pm}(t)W_{2}\right\|_{\mathfrak{S}^{2q^{\prime}}}\leq C
\left\|W_{1}\right\|_{L_{t}^{2p^{\prime}}L_{x}^{2q^{\prime}}(\mathbb{I}\times\mathbf{R}^{d})}
\left\|W_{2}\right\|_{L_{t}^{2p^{\prime}}L_{x}^{2q^{\prime}}(\mathbb{I}\times\mathbf{R}^{d})},\label{4.016}
\end{eqnarray}
with $C>0$ independent of $W_{1},W_{2}$. Here
$U_{\pm}(t)f:=e^{it\Delta_{\pm}}f=\frac{1}{(2\pi)^{\frac{d}{2}}}
\int_{\SR^{d}}e^{it|\xi|_{\pm}^{2}}e^{ix\cdot\xi}d\xi,$
$|\xi|_{\pm}^{2}=\sum\limits_{j=1}^{k}\xi_{j}^{2}-\sum\limits_{j=k+1}^{d}\xi_{j}^{2},
1\leq k\leq d-1,|\xi|^{2}=\sum\limits_{j=1}^{d}\xi_{j}^{2},$
$\mathbb{I}=[a_{1},b_{1}]$ or $\mathbb{I}=[a_{1},b_{1}]\bigcup[c,d_{3}]$
 or $\mathbb{I}=\R$ or
$\mathbb{I}=[a_{1},+\infty)$ or  $\mathbb{I}=(-\infty,b_{1}]$ or
$\mathbb{I}=[a_{1},+\infty)\bigcup(-\infty,b_{1}],$
  $a_{1},b_{1},c,d_{3}\in \R.$
\end{Theorem}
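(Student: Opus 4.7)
The plan is to follow the template already established in Theorem 4.1, replacing the Schrödinger phase $|\xi|^{2}$ by the non-elliptic phase $|\xi|_{\pm}^{2}=\sum_{j=1}^{k}\xi_{j}^{2}-\sum_{j=k+1}^{d}\xi_{j}^{2}$. Writing $U_{\pm}^{*}(t)g=\int_{\mathbb{I}}e^{-is\Delta_{\pm}}g\,ds$ one computes that $U_{\pm}(t)U_{\pm}^{*}(t)g$ is the space-time convolution of $g$ with
\begin{equation*}
K_{\pm}(t,x)=\frac{1}{(2\pi)^{d/2}}\int_{\mathbb{R}^{d}}e^{-it|\xi|_{\pm}^{2}}e^{ix\cdot\xi}\,d\xi.
\end{equation*}
Introducing the analytic family $K_{\pm,z}(t,x)=t^{z}K_{\pm}(t,x)$ (together with its $\epsilon$-truncation on $\{|t|>\epsilon\}\cap\mathbb{I}$ as in (\ref{4.06})), we set $T_{\pm,z}f=K_{\pm,z}*f$, so that $T_{\pm,0}=U_{\pm}(t)U_{\pm}^{*}(t)$. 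I would then establish two endpoint bounds and interpolate via Stein's complex interpolation for Schatten classes (Theorem 2.9 of \cite{S}).

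For the first endpoint $\mathrm{Re}\,z=-1$, I would Plancherel in $x$ to reduce $\|T_{\pm,z}f\|_{L_{s}^{2}L_{x}^{2}}$ to a bound on the time-Fourier transform of $t^{-1+ib}\chi_{\{|t|>\epsilon\}\cap\mathbb{I}}$, mirroring (\ref{4.08})-(\ref{4.09}) with $G(s,\xi)=e^{is|\xi|_{\pm}^{2}}\mathscr{F}_{x}f(\xi,s)$. Since $e^{is|\xi|_{\pm}^{2}}$ is a unimodular phase, Plancherel in $s$ behaves exactly as in the Schrödinger case, and the crucial input is the bound
\begin{equation*}
\bigl|\mathscr{F}_{t}\bigl(t^{-1+ib}\chi_{\{|t|>\epsilon\}\cap\mathbb{I}}\bigr)(\tau)\bigr|\leq C(e^{\pi b}+e^{-\pi b})\Bigl(\sqrt{|b|}+\tfrac{1}{\sqrt{|b|}}\Bigr)^{2},
\end{equation*}
supplied uniformly in $\epsilon$ and $\tau$ by Lemmas 3.8-3.17, 3.20-3.21, according to the shape of $\mathbb{I}$. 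This yields $\|W_{1}T_{\pm,-1+ib}W_{2}\|_{\mathfrak{S}^{\infty}}\leq C(e^{\pi b}+e^{-\pi b})(\sqrt{|b|}+1/\sqrt{|b|})^{2}\prod_{j=1}^{2}\|W_{j}\|_{L_{t}^{\infty}L_{x}^{\infty}}$.

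For the second endpoint, I would use the pointwise dispersive estimate $|K_{\pm}(t,x)|\leq C|t|^{-d/2}$, which follows from factoring the oscillatory integral into $d$ one-dimensional Fresnel integrals (each contributing $|t|^{-1/2}$, independently of the sign pattern); this is the only place where the non-ellipticity of $\Delta_{\pm}$ needs verification, and it is immediate because the phase splits across coordinates. Then the Hilbert-Schmidt computation of (\ref{4.013}), combined with the Hardy-Littlewood-Sobolev inequality in time, gives $\|W_{1}T_{\pm,z}W_{2}\|_{\mathfrak{S}^{2}}\leq C\prod_{j=1}^{2}\|W_{j}\|_{L_{t}^{2\tilde{u}}L_{x}^{2}}$ for $\mathrm{Re}\,z=\tfrac{1}{q-1}$ with $\tfrac{2}{\tilde{u}}+d-2\mathrm{Re}\,z=2$.

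Finally, Stein's complex interpolation between $\mathfrak{S}^{\infty}$ and $\mathfrak{S}^{2}$ on the analytic family $z\mapsto W_{1}T_{\pm,z}W_{2}$ produces (\ref{4.016}) at $z=0$ with the claimed exponents $(p',q')$ and the $H(b)$ factor. There is no genuine obstacle here beyond what was overcome in Theorem 4.1; the only thing to verify carefully is that all steps which used the positivity or rotational structure of $|\xi|^{2}$ in the Schrödinger case (Plancherel and the kernel bound) go through verbatim for $|\xi|_{\pm}^{2}$, which they do because Plancherel is insensitive to the unimodular phase and the kernel factorizes coordinatewise.
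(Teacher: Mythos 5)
Your proposal is correct and follows essentially the same route as the paper: the paper's proof of Theorem~4.3 simply verifies the dispersive bound $|K_{\pm}(t,x)|\leq C|t|^{-d/2}$ by factoring the oscillatory integral into $d$ one-dimensional pieces and then invokes "a proof similar to Theorem~4.1," which is precisely the two-endpoint Stein interpolation scheme you spelled out. Your additional observation that the $\mathrm{Re}\,z=-1$ Plancherel step is insensitive to replacing $e^{is|\xi|^{2}}$ by the unimodular $e^{is|\xi|_{\pm}^{2}}$ is exactly the point the paper leaves implicit.
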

\noindent {\bf Proof.}
Since
\begin{eqnarray}
&&\left|K_{\pm}(t,x)=\int_{\SR^{d}}e^{-it|\xi|_{\pm}^{2}}e^{ix\cdot\xi}d\xi\right|\nonumber\\&&=
\left|\left(\prod\limits_{j=1}^{k}\int_{\SR}e^{-it\xi_{j}}e^{ix_{j}\xi_{j}}d\xi_{j}\right)
\left(\prod\limits_{j=k+1}^{d}\int_{\SR}e^{-it\xi_{j}}e^{ix_{j}\xi_{j}}d\xi_{j}\right)\right|
\leq C|t|^{-d/2}\label{4.017},
\end{eqnarray}
by using a proof similar to Theorem  4.1, we can obtain that Theorem 4.3 is valid.

This completes the proof of Theorem 4.3.

\begin{Theorem}(Strichartz estimates for orthonormal functions
 related to non-elliptic operator on $\R^{d}$)\label{4.4}
Let $\Delta_{\pm}$ and $U_{\pm}(t)$ be defined as in Lemma 4.3, $d\geq2$ and  $p,q\geq1$  satisfy
$$\frac{2}{p}+\frac{d}{q}=d,\ \ 1\leq q<1+\frac{2}{d-1}.$$
Then, for any (possibly infinite) orthonormal system $(f_{j})$ in $L^{2}(\mathbf{R}^{d})$ and for
all sequence $(\nu_{j})_{j=1}^{+\infty}\subset\mathbf{C}$, we derive
\begin{eqnarray}
&&\left\|\sum\limits_{j=1}^{+\infty}\nu_{j}\left|U_{\pm}(t)f_{j}\right|^{2}\right\|_{L_{t}^{p}L_{x}^{q}
(\mathbb{I}\times\mathbf{R}^{d})}\leq C
\left(\sum\limits_{j}\left|\nu_{j}\right|^{\frac{2q}{q+1}}\right)^{\frac{q+1}{2q}}\label{4.018}
\end{eqnarray}
with $C>0$ independent of $(\nu_{j})$ and $(f_{j})$. Here
$\mathbb{I}=[a_{1},b_{1}]$
 or $\mathbb{I}=[a_{1},b_{1}]\bigcup[c,d_{3}]$ or $\mathbb{I}=\R$ or
$\mathbb{I}=[a_{1},+\infty)$ or  $\mathbb{I}=(-\infty,b_{1}]$ or
$\mathbb{I}=[a_{1},+\infty)\bigcup(-\infty,b_{1}].$ Here $a_{1},b_{1},c,d_{3}\in \R.$
\end{Theorem}
\noindent{\bf Proof.} By using Lemma 2.10 and Theorem 4.3,
we derive that Theorem 4.4 is valid.

This completes the proof of Theorem 4.4.

\begin{Theorem} \label{4.5}(Schatten bound with space-time norms related to
non-elliptic operator on $\T^{d}$)
Let $\mathbb{T}=[0,2\pi)$, $S_{d,N}=Z^{d}\cap [-N,N]^{d}$,
$I_{N}=\left[-\frac{1}{2N},\frac{1}{2N}\right].$ Then,
for all $W_{1},W_{2} \in L_{t}^{2p^{\prime}}L_{x}^{2q^{\prime}}(I_{N}\times \mathbb{T}^{d})$, we have
\begin{eqnarray}
\left\|W_{1}\mathscr{E}_{N}\mathscr{E}_{N}^{*}W_{2}\right\|_{\mathfrak{S}^{2q^{\prime}}
(L^{2}(I_{N}\times \mathbf{T}^{d}))}\leq C\left\|W_{1}\right\|_{L_{t}^{2p^{\prime}}L_{x}^{2q^{\prime}}(I_{N}\times \mathbf{T}^{d})}
\left\|W_{2}\right\|_{L_{t}^{2p^{\prime}}L_{x}^{2q^{\prime}}(I_{N}\times \mathbf{T}^{d})}.\label{4.020}
\end{eqnarray}
Here $\frac{1}{p^{\prime}}+\frac{d}{2q^{\prime}}=1$,
$q^{\prime}>\frac{d+1}{2}$ and $
\mathscr{E}_{N}f_{j}=\frac{1}{(2\pi)^{d}}\sum\limits_{n\in S_{d,N}}
\mathscr{F}_{x}{f}_{j}(n)e^{i(x\cdot n+ t|n|_{\pm}^{2})}.$
\end{Theorem}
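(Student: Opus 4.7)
The plan is to mimic the analytic‑family strategy used in Theorems 4.1 and 4.3, adapted to the discrete Fourier side and to the short time window $I_N$. First I would compute the kernel of $\mathscr{E}_N \mathscr{E}_N^{*}$. A direct calculation gives
\begin{equation*}
\mathscr{E}_N \mathscr{E}_N^{*} g(t,x) = \int_{I_N}\int_{\mathbb{T}^d} K(t-s,x-y)\, g(s,y)\,dy\,ds,
\qquad K(t,x)=\tfrac{1}{(2\pi)^{2d}}\!\!\sum_{n\in S_{d,N}} e^{-it|n|_{\pm}^{2}+ix\cdot n}.
\end{equation*}
Following the template of Theorem 4.1, I would then introduce the Stein analytic family
\begin{equation*}
T_z g = K_z * g,\qquad K_z(t,x)=|t|^{z}\,\chi_{I_N}(t)\,K(t,x),\qquad T_0 = \mathscr{E}_N \mathscr{E}_N^{*}.
\end{equation*}

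Next I would establish the two endpoint estimates. For the $\mathfrak{S}^{\infty}$ endpoint at $\mathrm{Re}\,z=-1$, I would take $\|T_{-1+ib}f\|_{L^2_tL^2_x}$, expand in Fourier series in $x$ (Parseval on $\mathbb{T}^d$) and apply the Plancherel identity in $t$; the operator reduces to multiplication by $\mathscr{F}_t\bigl(|t|^{-1+ib}\chi_{I_N}(t)\bigr)(\tau - |n|_{\pm}^{2})$, and the sharp bound for this multiplier is exactly Lemma 3.17 of the paper, producing the factor $C(1+e^{-\pi b})(\sqrt{|b|}+1/\sqrt{|b|})^{2}$. Hence
\begin{equation*}
\bigl\|W_1 T_{-1+ib} W_2\bigr\|_{\mathfrak{S}^{\infty}}\le C(b)\,\|W_1\|_{L^{\infty}_{t,x}}\|W_2\|_{L^{\infty}_{t,x}}.
\end{equation*}
For the $\mathfrak{S}^{2}$ endpoint I would compute the Hilbert–Schmidt norm directly as a weighted $L^2$ integral of $|K_z(t-s,x-y)|^{2}$ and plug in a pointwise bound $|K(t,x)|\le C|t|^{-d/2}$ valid on $I_N$ away from $t=0$; then Hardy–Littlewood–Sobolev in the time variable gives
\begin{equation*}
\bigl\|W_1 T_z W_2\bigr\|_{\mathfrak{S}^{2}}\le C\,\|W_1\|_{L^{2\tilde{u}}_t L^{2}_x}\|W_2\|_{L^{2\tilde{u}}_t L^{2}_x},
\end{equation*}
for $\mathrm{Re}\,z$ chosen so that $2/\tilde{u}+d-2\,\mathrm{Re}\,z=2$, exactly as in (\ref{4.013}). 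Complex interpolation via Simon's Theorem 2.9 applied between the line $\mathrm{Re}\,z=-1$ and the line determined by the Hilbert–Schmidt bound then yields (\ref{4.020}) with $z=0$ on the interpolation line, i.e.\ at the desired scaling $\tfrac{2}{\beta}+\tfrac{d}{\alpha}=1$.

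The main obstacle is the pointwise kernel bound $|K(t,x)|\lesssim |t|^{-d/2}$ for $t\in I_N\setminus\{0\}$. On the continuous side this is immediate from the Gaussian integral, but on the torus one must handle the finite, signature‑$\pm$ Weyl‑type sum $\sum_{n\in S_{d,N}} e^{-it|n|_{\pm}^{2}+ix\cdot n}$. The plan is to factor the sum along the coordinate directions, on each direction to compare with the Gauss sum $\sum_{|k|\le N} e^{\mp itk^{2}+ix_jk}$, and to exploit that for $|t|\le 1/(2N)$ one stays inside the ``minor arc'' regime where Weyl's inequality (or a direct Poisson summation argument) controls the sum by its continuous analogue $|t|^{-1/2}$. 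Taking a tensor product over the $d$ coordinates produces the required $|t|^{-d/2}$. This is precisely the step where the choice of $I_N=[-\tfrac{1}{2N},\tfrac{1}{2N}]$ is used, and it also explains why the exponent range in the statement is tied to $d+1$, reflecting the admissibility of Hardy–Littlewood–Sobolev after this kernel bound. Everything else is a direct transcription of the argument of Theorem 4.1, with $\R^d$ replaced by $\mathbb{T}^d$ and integrals over $\xi$ replaced by sums over $n\in S_{d,N}$.
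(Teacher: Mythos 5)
Your proposal follows essentially the same strategy as the paper: compute the kernel of $\mathscr{E}_{N}\mathscr{E}_{N}^{*}$, set up the Stein analytic family $T_{z}$ by multiplying by $|t|^{z}\chi_{\{\epsilon<|t|<N^{-1}\}}$, obtain the $\mathfrak{S}^{\infty}$ endpoint on $\mathrm{Re}\,z=-1$ via Plancherel and a uniform bound for the multiplier $\mathscr{F}_{t}\bigl(|t|^{-1+ib}\chi\bigr)$, obtain the $\mathfrak{S}^{2}$ endpoint via the pointwise kernel bound and Hardy--Littlewood--Sobolev, and interpolate with Simon's Theorem 2.9. The paper's proof is the same skeleton; the only difference is that for the crucial pointwise bound $|K_{N}^{z}(t,x)|\lesssim |t|^{\mathrm{Re}\,z-d/2}$ on $I_{N}$ the paper simply cites display (5.9) of Kenig--Ponce--Vega and then invokes the argument of Nakamura's Proposition 3.3 together with Lemma 3.15 (the truncated range $\epsilon\le|\rho|\le M$ is the one that actually appears; your citation of Lemma 3.17 is a harmless substitute since both give $b$-uniform control).

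Where you diverge is in proposing to re-derive the kernel bound from scratch, and your sketch there is slightly off in its number-theoretic framing. The window $|t|\le\frac{1}{2N}$ places $t/(2\pi)$ in a \emph{major-arc} neighborhood of $0$, not a minor arc, and Weyl's inequality (with $q=1$) returns only the trivial bound $N^{1+\varepsilon}$, which does not give $|t|^{-1/2}$. The correct mechanism for this regime is exactly the one you mention parenthetically: Poisson summation, or more simply the discrete van der Corput second-derivative test applied to $\phi(n)=tn^{2}\pm x_{j}n$ with $\phi''=2t$, which gives $\bigl|\sum_{|n_{j}|\le N}e^{i\phi(n_{j})}\bigr|\lesssim N|t|^{1/2}+|t|^{-1/2}\lesssim|t|^{-1/2}$ precisely because $N\le|t|^{-1}$ on $I_{N}$; taking the $d$-fold product gives $|t|^{-d/2}$. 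With the Weyl-inequality remark dropped, your plan is sound and would reproduce the bound the paper imports from (5.9) of \cite{KPV}.
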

\noindent{\bf Proof.}
Since
\begin{align}
\mathscr{E}_{N}f_{j}=\frac{1}{(2\pi)^{d}}\sum_{n\in S_{d,N}}
\mathscr{F}_{x}{f}_{j}(n)e^{i(x\cdot n+ t|n|_{\pm}^{2})},\label{4.021}
\end{align}
then
\begin{align}
\left\langle\mathscr{E}_{N}f_{j},g\right\rangle &=\frac{1}{(2\pi)^{d}}\int_{ I_{N}}\left[\sum_{n\in S_{d,N}}
\mathscr{F}_{x}{f}_{j}(n)e^{i(x\cdot n+t|n|_{\pm}^{2}) }\overline{g}dx\right]dt\nonumber\\
&=\frac{1}{(2\pi)^{d}}\sum_{n\in S_{d,N}}\mathscr{F}_{x}{f}_{j}(n)\left[\int_{I_{N}}e^{ it|n|_{\pm}^{2}}
\overline{\int_{\mathbf{T}^{d}}g(x,t)e^{-ix\cdot n }dx}\right]dt \nonumber\\
&=\frac{1}{(2\pi)^{d}}\sum_{n\in S_{d,N}}\mathscr{F}_{x}{f}_{j}(n)\int_{I_{N}}e^{it|n|_{\pm}^{2} }
\overline{\mathscr{F}_{x}{g}}(n,t)dt\nonumber\\
&=\frac{1}{(2\pi)^{d}}\int_{\mathbf{T}^{d}}\left(\sum_{n\in S_{d,N}}e^{-ix\cdot n}\int_{I_{N}}
e^{i t|n|_{\pm}^{2}i}\overline{\mathscr{F}_{x}{g}}(n,t)dt\right)f_{j}dx  \nonumber\\
&=\frac{1}{(2\pi)^{d}}\int_{\mathbf{T}^{d}}\left[\sum_{n\in S_{d,N}}e^{-ix\cdot n}
\overline{\int_{I_{N}}e^{-it|n|_{\pm}^{2}}\mathscr{F}_{x}{g}(n,t)dt}\right]f_{j}dx  \nonumber\\
&=\frac{1}{(2\pi)^{d}}\int_{\mathbf{T}^{d}}\left[\overline{\int_{I_{N}}\sum_{n\in S_{d,N}}
e^{ ix\cdot n}e^{-it|n|_{\pm}^{2}}\mathscr{F}_{x}{g}(n,t)dt}\right]f_{j}dx=
\left\langle f_{j},\mathscr{E}_{N}^{*}g\right\rangle.\label{4.022}
\end{align}
From (\ref{4.022}),  we have that
\begin{align}
\mathscr{E}_{N}^{*}g=\frac{1}{(2\pi)^{d}}\int_{I_{N}}\sum_{n\in S_{d,N}}
e^{ix\cdot n}e^{-it|n|_{\pm}^{2}}\mathscr{F}_{x}{g}(n,t)dt.\label{4.023}
\end{align}
Combining (\ref{4.08}) with (\ref{4.09}), we have
\begin{align}
\mathscr{E}_{N}\mathscr{E}_{N}^{*}g &=\frac{1}{(2\pi)^{2d}}\int_{I_{N}}
\sum_{n\in S_{d,N}}\mathscr{F}_{x}{g}(n,t)e^{i(x\cdot n+(s-t)
|n|_{\pm}^2)}dt
\notag\\&=\frac{1}{(2\pi)^{2d}}\int_{I_{N}}\sum_{n\in S_{d,N}}\int_{\mathbb{T}^{d}}
g(y,t)e^{i((x-y)\cdot n+(s-t)|n|_{\pm}^2)}dydt
\notag\\&=\frac{1}{(2\pi)^{2d}}\int_{\mathbf{T}^{d}\times I_{N}}g(y,t)
\sum_{n\in S_{d,N}}e^{i((x-y)\cdot n+(s-t)|n|_{\pm}^2)}dydt\label{4.024}.
\end{align}
We define
\begin{align}
\int_{\mathbf{T}^{d}\times I_{N}}g(y,t)K_{N}(x-y,s-t)dydt=:g\ast K_{N},\label{4.025}
\end{align}
where
\begin{align}
K_{N}(x,t)=\frac{1}{(2\pi)^{2d}}\sum_{n\in S_{d,\>N}}e^{(x\cdot n+t|n|_{\pm}^2)i}.\label{4.026}
\end{align}
We  define $K_{N,\epsilon}(x,t)=\chi_{\epsilon<|t|<N^{-1}}K_{N}(x,\>t)$ and
$K_{N,\epsilon}^{z}=(z+1)t^{z}K_{N,\>\epsilon}$, from (5.9)  of \cite{KPV}, we have that
\begin{eqnarray}
\left|K_{N,\epsilon}^{z}\right|\leq C(1+|Rez|+|b|)|t|^{Rez-\frac{d}{2}}.\label{4.027}
\end{eqnarray}
By using the idea of Proposition 3.3 of \cite{N} and Lemma 3.15, we have that
Theorem 4.5 is valid.

This completes the proof of Theorem 4.5.

\begin{Theorem}\label{4.6}(Strichartz estimates for orthonormal functions
 related to non-elliptic operator on $\T^{d}$)
Let $\Delta_{\pm}$ be defined as in Lemma 4.3, $N>1$,
 $\lambda=(\lambda_{j})_{j=1}^{+\infty}\in l^{\alpha}
 (\alpha\leq
 \frac{2q}{q+1})$ and orthonormal
 system $(f_{j})_{j=1}^{+\infty}$ in $L^{2}(\mathbb{T}^{d})$,
 $\mathscr{E}_{N}$
 be the dual operator such that  $\langle\mathscr{E}_{N}f_{j},
 F\rangle_{L_{x,t}^2(I_{N}\times  \mathbf{T}^{d})}=\langle f_{j},
 \mathscr{E}_{N}^{*}F\rangle_{I_{N}\times  \mathbf{T}^{d}}$ for $F\in L^2(I_{N}\times
  \mathbb{T}^{d})$.
  Then, we have
\begin{eqnarray}
&&\left\|\sum\limits_{j=1}^{+\infty}\lambda_{j}\left|e^{it\Delta_{\pm}}
P_{\leq N}f_{j}\right|^{2}\right\|_{L_{t}^{p}L_{x}^{q}
(I_{N}\times \mathbf{T}^{d})}\leq C\left\|\lambda\right\|_{l^{\alpha}}\label{4.028}.
\end{eqnarray}
Here
\begin{eqnarray*}
&&\mathscr{E}_{N}f_{j}\definition e^{it\Delta_{\pm}}P_{\leq N}f_{j}\definition\frac{1}{(2\pi)^{d}}
\sum\limits_{n\in S_{d,N}}\mathscr{F}_{x}{f}_{j}(n)e^{i(x\cdot n+ t|n|_{\pm}^{2})},\\
&&S_{d,N}=Z^{d}\cap [-N,N]^{d}, |n|_{\pm}^{2}=\sum\limits_{j=1}^{k}n_{j}^{2}
-\sum\limits_{j=k+1}^{d}n_{j}^{2}(1\leq k\leq d-1),\\
&&|n|_{\pm}^{2}=|n|^{2}(k=d),(\frac{1}{q},\frac{1}{p})\in (A,B],
A=(\frac{d-1}{d+1},\frac{d}{d+1}),B=(1,0).
\end{eqnarray*}
\end{Theorem}
\noindent{\bf Proof.} Combining Lemma 2.10 with Theorem 4.5,
by using a proof similar
 to Proposition 3.3 of \cite{N} and interpolation theorem, we have that Theorem 4.6 is valid.

This completes the proof of Theorem 4.6.

\begin{Theorem}\label{4.7}(Strichartz estimates for orthonormal
functions related to non-elliptic operator on $\T^{d}$)
Let $\Delta_{\pm}$ be defined as in Lemma 4.3, $N>1$,
$\lambda=(\lambda_{j})_{j=1}^{+\infty}\in l^{\alpha}
(\alpha\leq \frac{2q}{q+1})$ and orthonormal
 system $(f_{j})_{j=1}^{+\infty}$ in $L^{2}(\mathbf{T}^{d})$,
 $\mathscr{E}_{N}$
 be the dual operator such that  $\langle\mathscr{E}_{N}f_{j},
 F\rangle_{L_{x,t}^2(\mathbf{T}^{d+1})}=\langle f_{j},
 \mathscr{E}_{N}^{*}F\rangle_{L_{x,t}^2(\mathbf{T}^{d+1})}$ for $F\in L^2(\mathbf{T}^{d+1})$.
 Then, we have
\begin{eqnarray}
&&\left\|\sum\limits_{j=1}^{+\infty}\lambda_{j}
\left|e^{it\Delta_{\pm}}P_{\leq N}f_{j}\right|^{2}\right\|_{L_{t}^{p}L_{x}^{q}
(\mathbf{T}^{d+1})}\leq C
N^{1/p}\left\|\lambda\right\|_{l^{\alpha}}\label{4.029}.
\end{eqnarray}
Here
\begin{eqnarray*}
&&\mathscr{E}_{N}f_{j}\definition e^{it\Delta_{\pm}}P_{\leq N}f_{j}\definition\frac{1}{(2\pi)^{d}}
\sum\limits_{n\in S_{d,N}}\mathscr{F}_{x}{f}_{j}(n)e^{i(x\cdot n+ t|n|_{\pm}^{2})},\\
&&S_{d,N}=Z^{d}\cap [-N,N]^{d}, |n|_{\pm}^{2}=\sum\limits_{j=1}^{k}n_{j}^{2}-
\sum\limits_{j=k+1}^{d}n_{j}^{2}(1\leq k\leq d-1),\\
&&|n|_{\pm}^{2}=|n|^{2}(k=d),(\frac{1}{q},\frac{1}{p})\in (A,B],
A=(\frac{d-1}{d+1},\frac{d}{d+1}),B=(1,0).
\end{eqnarray*}

\end{Theorem}
\noindent{\bf Proof.} Combining Theorem 4.6 with Theorem 1.5 of \cite{N},
we have that Theorem 4.7 is valid.

This completes the proof of Theorem 4.7.

\begin{Theorem}(Schatten bound with space-time norms related to Boussinesq
 operator with small time on $\R$)\label{4.8}
Let  $p^{\prime}=\frac{p}{p-1},q^{\prime}=\frac{q}{q-1}\geq1$ satisfy
$$\frac{1}{p^{\prime}}+\frac{1}{2q^{\prime}}=1, q^{\prime}>1.$$ Then,
 we have the Schatten bound
\begin{eqnarray}
&&\left\|W_{1}U_{1}(t)U^{*}_{1}(t)W_{2}\right\|_{\mathfrak{S}^{2q^{\prime}}}\leq C
\left\|W_{1}\right\|_{L_{t}^{2p^{\prime}}L_{x}^{2q^{\prime}}(\mathbb{I}\times\mathbf{R})}
\left\|W_{2}\right\|_{L_{t}^{2p^{\prime}}L_{x}^{2q^{\prime}}(\mathbb{I}\times\mathbf{R})}.\label{4.035}
\end{eqnarray}
Here  $U_{1}(t)f:=e^{it(-\partial_{x}^{2}+\partial_{x}^{4})^{1/2}}f$,
$I=[0,1]$.

\end{Theorem}
\noindent{\bf Proof.} From Lemma 3.25,  we have
that
\begin{eqnarray}
\left|\int_{\SR}e^{ix\xi+it(\xi^{2}+\xi^{4})^{1/2}}d\xi\right|\leq  C|t|^{-1/2}.\label{4.036}
\end{eqnarray}
Here $ K_{1}(t,x)=\int_{\SR^{d}}e^{it\sqrt{\xi^{2}+\xi^{4}}}e^{ix\cdot\xi}d\xi.$
We define
\begin{eqnarray}
&&K_{1z}(t,x)=(z+1)t^{z}K_{1}(t,x), K_{1z,\epsilon}(t,x)=(z+1)t^{z}\chi_{\{t||t|>\epsilon\}
\cap \mathbb{I}}(t)K_{1}(t,x)\nonumber\\
&&T_{1z}f=K_{1z}\ast f,
T_{1z,\epsilon}f=K_{1z,\epsilon}\ast f,
G(s,\xi)=e^{is|\xi|^2}\mathscr{F}_{x}f(\xi,s).\label{4.037}
\end{eqnarray}
Then, we have
\begin{eqnarray}
&&K_{1z}(t,x)=(z+1)t^{z}K_{1}(t,x)=\lim\limits_{\epsilon\longrightarrow 0}
K_{1z,\epsilon}(t,x)=
\lim\limits_{\epsilon\longrightarrow 0}(z+1)t^{z}\chi_{\{t||t|>\epsilon\}
\cap \mathbb{I}}(t)K_{1}(t,x)\nonumber\\
&&T_{1z}f=\lim\limits_{\epsilon\longrightarrow 0}T_{1z,\epsilon}f=
\lim\limits_{\epsilon\longrightarrow 0}K_{1z,\epsilon}\ast f.\label{4.038}
\end{eqnarray}
In particular, $T_{10}=U(t)U^{*}(t).$
By using (\ref{4.035})-(\ref{4.038}) and a proof similar to Theorem 4.1,
we derive that Theorem 4.8 is valid.

This completes the proof of Theorem 4.8.

\begin{Theorem}(Strichartz estimates for orthonormal functions related
 to Boussinesq operator with small time on $\R$)\label{4.9}
Let   $p,q\geq1$  satisfy
$$\frac{2}{p}+\frac{1}{q}=1,\ \ q\geq1.$$
Then, for any (possibly infinite) orthonormal system
 $(f_{j})_{j=1}^{+\infty}$ in $L^{2}(\mathbb{R}^{d})$
 and for all sequence $(\nu_{j})_{j=1}^{+\infty}\subset\mathbf{C}$,
  we derive that
\begin{eqnarray}
&&\left\|\sum\limits_{j=1}^{+\infty}\nu_{j}\left|U_{1}(t)f_{j}\right|^{2}\right\|_{L_{t}^{p}L_{x}^{q}
(\mathbb{I}\times\mathbf{R})}\leq C
\left(\sum\limits_{j=1}^{+\infty}\left|\nu_{j}\right|^{\frac{2q}{q+1}}\right)^{\frac{q+1}{2q}},\label{4.039}
\end{eqnarray}
where  $(f_{j})_{j=1}^{+\infty}$. Here
  $U_{1}(t)f=e^{it(-\partial_{x}^{2}+\partial_{x}^{4})^{1/2}}f$ and  $I=[0,1]$.
\end{Theorem}
\noindent{\bf Proof.} By using Lemma 2.10 and Theorem 4.8,
 we derive that Theorem 4.9 is valid.

This completes the proof of Theorem 4.9.

\begin{Theorem}(Schatten bound with space-time norms related to
Boussinesq operator with large time on $\R$)\label{4.10}
Let  $p^{\prime}=\frac{p}{p-1},q^{\prime}=\frac{q}{q-1}$ satisfy
$$\frac{1}{p^{\prime}}+\frac{1}{3q^{\prime}}=1, q^{\prime}>1.$$ Then,
 we have the Schatten bound
\begin{eqnarray}
&&\left\|W_{1}U_{1}(t)U^{*}_{1}(t)W_{2}\right\|_{\mathfrak{S}^{2q^{\prime}}}\leq C
\left\|W_{1}\right\|_{L_{t}^{2p^{\prime}}L_{x}^{2q^{\prime}}
(\mathbb{I}\times\mathbf{R})}
\left\|W_{2}\right\|_{L_{t}^{2p^{\prime}}L_{x}^{2q^{\prime}}
(\mathbb{I}\times\mathbf{R})},\label{4.040}
\end{eqnarray}
where $C>0$ is independent of $W_{1},W_{2}$. Here
  $U_{1}(t)f=e^{it(-\partial_{x}^{2}+\partial_{x}^{4})^{1/2}}f$,
   $I=[1,+\infty)$.
\end{Theorem}
\noindent{\bf Proof.} From Lemma 3.28,  we have
that
\begin{eqnarray}
\left|\int_{\SR}e^{ix\xi+it(\xi^{2}+\xi^{4})^{1/2}}d\xi\right|\leq
C|t|^{-1/3}.\label{4.041}
\end{eqnarray}
By using (\ref{4.041}) and a proof similar to Theorem 4.1, we derive
 that Theorem 4.10 is valid.

This completes the proof of Theorem 4.10.

\begin{Theorem}(Strichartz estimates for orthonormal functions
related to Boussinesq operator with large time on $\R$)\label{4.11}
Let   $p,q\geq1$  satisfy
$$\frac{3}{p}+\frac{1}{q}=1,\ \ q>1.$$
Then, for any (possibly infinite) orthonormal system $(f_{j})_{j=1}^{+\infty}$
 in $L^{2}(\mathbf{R}^{d})$
 and for all sequence $(\nu_{j})_{j=1}^{+\infty}\subset\mathbf{C}$, we derive
  that
\begin{eqnarray}
&&\left\|\sum\limits_{j=1}^{+\infty}\nu_{j}\left|U_{1}(t)f_{j}\right|^{2}\right\|_{L_{t}^{p}L_{x}^{q}
(\mathbb{I}\times\mathbf{R})}\leq C\left(\sum\limits_{j=1}^{+\infty}
\left|\nu_{j}\right|^{\frac{2q}{q+1}}\right)^{\frac{q+1}{2q}},\label{4.042}
\end{eqnarray}
where $C>0$  is independent of $(\nu_{j})_{j=1}^{+\infty}$ and
$(f_{j})_{j=1}^{+\infty}$. Here
  $U_{1}(t)f=e^{it(-\partial_{x}^{2}+\partial_{x}^{4})^{1/2}}f$ and
    $I=[1,+\infty)$.
\end{Theorem}
\noindent{\bf Proof.} By using Lemma 2.10 and Theorem
 4.10, we derive that Theorem 4.11 is valid.

This completes the proof of Theorem 4.11.

\bigskip
\bigskip

\section{Convergence  problem of some compact operators  and
some operator equations in Schatten norms}
\medskip

\setcounter{equation}{0}

\setcounter{Theorem}{0}

\setcounter{Lemma}{0}

\setcounter{section}{5}

In this section, we  establish  the convergence of
 some compact operators in Schatten
 norms related to elliptic operator, non-elliptic
  operator and Boussinesq operator and
 the convergence  result related to nonlinear part of
 the solution to
some operator equations in Schatten norms.

\begin{Theorem}\label{5.1}(The convergence of some compact
 operators in Schatten norms related to elliptic operator)
Let $U(t)=e^{it\Delta}$,  $W_{j}\in L_{t}^{2p^{\prime}}L_{x}^{2q^{\prime}}
([0,a]\times\mathbf{R}^{d})(j=1,2),$ $d\geq1$ and
$p^{\prime}=\frac{p}{p-1},q^{\prime}=\frac{q}{q-1}\geq1$ satisfy
$$\frac{1}{p^{\prime}}+\frac{d}{2q^{\prime}}=1,
\ \ q^{\prime}>\frac{d+1}{2}.$$
Then, we have
\begin{eqnarray}
\lim\limits_{t\longrightarrow0}
\left\|W_{1}U(t)U^{*}(t)W_{2}\right\|_{\mathfrak{S}^{2q^{\prime}}}=0.\label{5.01}
\end{eqnarray}
\end{Theorem}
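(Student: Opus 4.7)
The plan is to combine the Schatten space-time bound from Theorem 4.1 with the continuity of mixed Lebesgue norms provided by Lemma 2.9. The key observation is that Theorem 4.1 allows the time interval $\mathbb{I}$ to be any bounded interval of the form $[a_1,b_1]$, so I can in particular take $\mathbb{I}=[0,t]$ for small $t>0$ and let the interval shrink.

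First I would apply Theorem 4.1 on the time interval $[0,t]$ to deduce
\begin{eqnarray*}
\left\|W_{1}U(t)U^{*}(t)W_{2}\right\|_{\mathfrak{S}^{2q^{\prime}}}
\leq CH(b)\left\|W_{1}\right\|_{L_{t}^{2p^{\prime}}L_{x}^{2q^{\prime}}([0,t]\times\mathbb{R}^{d})}
\left\|W_{2}\right\|_{L_{t}^{2p^{\prime}}L_{x}^{2q^{\prime}}([0,t]\times\mathbb{R}^{d})},
\end{eqnarray*}
where the implicit constant $CH(b)$ does not depend on $t$. Note that the choice of $(p^{\prime},q^{\prime})$ is the admissible pair in Theorem 4.1 so the estimate is applicable without any additional restriction.

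Next I would invoke Lemma 2.9 with exponents $(q,p)=(2q^{\prime},2p^{\prime})$ and with $t_{1}=0,\ t_{2}=t$. Since $W_{j}\in L_{t}^{2p^{\prime}}L_{x}^{2q^{\prime}}(\mathbb{I}\times\mathbb{R}^{d})$ with $1\leq 2p^{\prime},2q^{\prime}<\infty$ (which holds in the range $\frac{d+1}{2}<q^{\prime}<\frac{d+2}{2}$), Lemma 2.9 yields
\begin{eqnarray*}
\lim\limits_{t\longrightarrow 0}\left\|W_{j}\right\|_{L_{t}^{2p^{\prime}}L_{x}^{2q^{\prime}}([0,t]\times\mathbb{R}^{d})}=0,\quad j=1,2.
\end{eqnarray*}

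Combining the two displays and passing to the limit gives the desired conclusion (\ref{5.01}). There is no significant obstacle, since the Schatten bound has already done the hard analytic work in Theorem 4.1; the remaining content is the absolute continuity of the mixed-norm integral in its time variable near $t=0$, packaged in Lemma 2.9. The only point requiring a bit of care is to verify that the constant $CH(b)$ from Theorem 4.1 is indeed uniform in $t$ (which it is, since it depends only on the exponents and on $\mathrm{Im}\,z$, not on the length of $\mathbb{I}$), so that the product of the two vanishing $L_{t}^{2p^{\prime}}L_{x}^{2q^{\prime}}$ norms forces the Schatten norm to zero.
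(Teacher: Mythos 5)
Your proposal is correct and matches the paper's own proof exactly: both combine the Schatten bound of Theorem 4.1 (with $\mathbb{I}=[0,t]$) with the continuity of the mixed-norm integral from Lemma 2.9. If anything your version is slightly more careful, since you make explicit the choice of shrinking interval and the $t$-uniformity of the constant $CH(b)$, which the paper's one-line proof leaves implicit.
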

\noindent{\bf Proof.}Combining Lemma 2.9 with (\ref{4.01}),
 we derive that
\begin{eqnarray*}
\lim\limits_{a\longrightarrow0}
\left\|W_{1}U(t)U^{*}(t)W_{2}\right\|_{\mathfrak{S}^{2q^{\prime}}}
\leq C\lim\limits_{a\longrightarrow0}
\left\|W_{1}\right\|_{L_{t}^{2p^{\prime}}L_{x}^{2q^{\prime}}([0,a]\times\mathbf{R}^{d})}
\left\|W_{2}\right\|_{L_{t}^{2p^{\prime}}L_{x}^{2q^{\prime}}([0,a]\times\mathbf{R}^{d})}=0.
\end{eqnarray*}

This completes the proof of Theorem 5.1.

\begin{Theorem}\label{5.2}(The convergence  of some compact
 operators in Schatten norms related to non-elliptic operator)
Let $\Delta_{\pm}=\sum\limits_{j=1}^{k}\frac{\partial^{2}}{\partial_{x_{j}}^{2}}
-\sum\limits_{j=k+1}^{d}\frac{\partial^{2}}{\partial_{x_{j}}^{2}}(1\leq k\leq d-1)$, $\Delta_{\pm}=\Delta(k=d)$, $d\geq2$
and $p^{\prime}=\frac{p}{p-1},q^{\prime}=\frac{q}{q-1}\geq 1$
satisfy
$$\frac{1}{p^{\prime}}+\frac{d}{2q^{\prime}}=1,
\ \ q^{\prime}>\frac{d+1}{2}.$$
Then, we have
\begin{eqnarray}
\lim\limits_{t\longrightarrow0}
\left\|W_{1}U_{\pm}(t)U^{*}_{\pm}(t)W_{2}\right\|_{\mathfrak{S}^{2q^{\prime}}}=0.\label{5.02}
\end{eqnarray}
\end{Theorem}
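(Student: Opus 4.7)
The plan is to mirror the proof of Theorem 5.1 almost verbatim, substituting the non-elliptic Schatten bound for the elliptic one. Specifically, I would apply the Schatten estimate from Theorem 4.3 with $\mathbb{I} = [0,t]$ (or more generally an interval whose length shrinks to zero as $t \to 0$), which gives
\begin{eqnarray*}
\left\|W_{1}U_{\pm}(t)U^{*}_{\pm}(t)W_{2}\right\|_{\mathfrak{S}^{2q^{\prime}}} \leq C H(b)\left\|W_{1}\right\|_{L_{t}^{2p^{\prime}}L_{x}^{2q^{\prime}}([0,t]\times\mathbb{R}^{d})}\left\|W_{2}\right\|_{L_{t}^{2p^{\prime}}L_{x}^{2q^{\prime}}([0,t]\times\mathbb{R}^{d})},
\end{eqnarray*}
under the exponent conditions $\frac{1}{p^{\prime}}+\frac{d}{2q^{\prime}}=1$ and $\frac{d+1}{2}<q^{\prime}<\frac{d+2}{2}$ assumed in the statement.

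Next I would invoke Lemma 2.9 (continuity in the mixed Lebesgue spaces), applied to each factor $W_j \in L_{t}^{2p^{\prime}}L_{x}^{2q^{\prime}}(\mathbb{I}\times\mathbb{R}^{d})$, to conclude that
\begin{eqnarray*}
\lim_{t \to 0}\left\|W_{j}\right\|_{L_{t}^{2p^{\prime}}L_{x}^{2q^{\prime}}([0,t]\times\mathbb{R}^{d})} = 0, \qquad j=1,2.
\end{eqnarray*}
Combining these two displays and taking $t \to 0$ produces the desired conclusion, since the constant $CH(b)$ is independent of $t$.

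The main (and essentially only) point to verify is that the hypotheses of Lemma 2.9 are met for both weights, which is already implicit in assuming $W_j \in L_{t}^{2p^{\prime}}L_{x}^{2q^{\prime}}(\mathbb{I}\times\mathbb{R}^{d})$ with the relevant exponents finite (guaranteed by $1 \leq 2p^{\prime} < \infty$ and $1 \leq 2q^{\prime} < \infty$, which follow from the constraints on $q^{\prime}$). I do not anticipate any genuine obstacle; the non-ellipticity enters only through Theorem 4.3, and once that Schatten bound is in hand, the passage to the limit is the same soft argument as in Theorem 5.1. The result is therefore a direct consequence of Theorem 4.3 and Lemma 2.9.
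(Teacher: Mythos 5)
Your proposal matches the paper's proof exactly: apply the non-elliptic Schatten bound of Theorem~4.3 (equation~(\ref{4.016})) with the interval $\mathbb{I}=[0,t]$, then invoke Lemma~2.9 to send the mixed-norm factors of $W_1,W_2$ to zero. Incidentally, you are slightly more precise than the paper's one-line proof, which cites~(\ref{4.03}) (the formula for $U^{*}$) rather than the Schatten bound~(\ref{4.016}) that it actually needs.
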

\noindent{\bf Proof.}Combining Lemma 2.9 with (\ref{4.016}), we derive that
\begin{eqnarray*}
\lim\limits_{a\longrightarrow0}\left\|W_{1}U_{\pm}(t)U_{\pm}^{*}(t)W_{2}\right\|_{\mathfrak{S}^{2q^{\prime}}}
\leq C\lim\limits_{a\longrightarrow0}\left\|W_{1}\right\|_{L_{t}^{2p^{\prime}}L_{x}^{2q^{\prime}}([0,a]\times\mathbf{R}^{d})}
\left\|W_{2}\right\|_{L_{t}^{2p^{\prime}}L_{x}^{2q^{\prime}}([0,a]\times\mathbf{R}^{d})}=0.
\end{eqnarray*}

This completes the proof of Theorem 5.2.

\begin{Theorem}\label{5.3}(The convergence  of
some compact operators in Schatten norms related to Boussinesq operator)
Let $U_{1}(t)f:=e^{it(-\partial_{x}^{2}+\partial_{x}^{4})^{1/2}}f$
 and $p^{\prime}=\frac{p}{p-1},q^{\prime}=\frac{q}{q-1}$ satisfy
$$\frac{1}{p^{\prime}}+\frac{1}{2q^{\prime}}=1, q^{\prime}>1.$$
 Then, we have
\begin{eqnarray}
\lim\limits_{t\longrightarrow0}
\left\|W_{1}U_{1}(t)U_{1}^{*}(t)W_{2}\right\|_{\mathfrak{S}^{2q^{\prime}}}=0.\label{5.03}
\end{eqnarray}
\end{Theorem}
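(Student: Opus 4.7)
The plan is to mirror the structure of the proofs of Theorems 5.1 and 5.2, but with Theorem 4.9 in place of Theorems 4.1 and 4.3. Specifically, Theorem 4.9 provides the bilinear Schatten bound
\begin{eqnarray*}
\left\|W_{1}U_{1}(t)U_{1}^{*}(t)W_{2}\right\|_{\mathfrak{S}^{2q^{\prime}}}
\leq C(\mathrm{Im}\,z)\,
\left\|W_{1}\right\|_{L_{t}^{2p^{\prime}}L_{x}^{2q^{\prime}}(\mathbb{I}\times\mathbb{R})}
\left\|W_{2}\right\|_{L_{t}^{2p^{\prime}}L_{x}^{2q^{\prime}}(\mathbb{I}\times\mathbb{R})},
\end{eqnarray*}
valid under precisely the scaling $\frac{1}{p^{\prime}}+\frac{1}{2q^{\prime}}=1$ with $q^{\prime}>1$ that is assumed here. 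Thus the Schatten norm is already controlled by a product of two mixed Lebesgue norms, and the remaining task is to arrange the time interval so that these norms tend to $0$ as $t \to 0$.

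First, I would take the time interval on the left-hand side to be $[0,t]$ (or more generally a shrinking interval $\mathbb{I}_t \to \{0\}$), consistent with the convention implicit in Theorems 5.1--5.2 where the bound is applied on $[0,t]\times\mathbb{R}^d$. Applying Theorem 4.9 on $\mathbb{I}_t=[0,t]$ yields
\begin{eqnarray*}
\left\|W_{1}U_{1}(t)U_{1}^{*}(t)W_{2}\right\|_{\mathfrak{S}^{2q^{\prime}}}
\leq C\,\prod_{j=1}^{2}\left\|W_{j}\right\|_{L_{t}^{2p^{\prime}}L_{x}^{2q^{\prime}}([0,t]\times\mathbb{R})}.
\end{eqnarray*}
Next, I would invoke Lemma 2.9, the continuity of the mixed Lebesgue norm in the time variable, to each factor separately. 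Since $W_{j}\in L_{t}^{2p^{\prime}}L_{x}^{2q^{\prime}}(\mathbb{I}\times\mathbb{R})$ with $1\leq 2p^{\prime},2q^{\prime}<\infty$ (guaranteed by $q^{\prime}>1$ and the scaling relation), Lemma 2.9 gives
\begin{eqnarray*}
\lim_{t\longrightarrow 0}\left\|W_{j}\right\|_{L_{t}^{2p^{\prime}}L_{x}^{2q^{\prime}}([0,t]\times\mathbb{R})}=0,\qquad j=1,2.
\end{eqnarray*}

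Combining the two displays, the product on the right-hand side tends to $0$, and hence
\begin{eqnarray*}
\lim_{t\longrightarrow 0}\left\|W_{1}U_{1}(t)U_{1}^{*}(t)W_{2}\right\|_{\mathfrak{S}^{2q^{\prime}}}=0,
\end{eqnarray*}
which is exactly the desired statement. There is no genuine obstacle: the only points requiring care are to verify that the exponent pair $(2p^{\prime},2q^{\prime})$ is admissible in Lemma 2.9 (finiteness of both indices) and that the constant $C(\mathrm{Im}\,z)$ coming from the complex interpolation in Theorem 4.9 is independent of the shrinking time interval, so that it can be factored outside the limit. Both are immediate from the hypotheses and from the construction of $C(\mathrm{Im}\,z)$ in the proof of Theorem 4.9, so the argument is essentially a one-line combination, parallel to Theorems 5.1 and 5.2.
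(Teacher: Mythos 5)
Your proposal is correct and follows essentially the same route as the paper: apply Theorem 4.9 (inequality (\ref{4.035})) to control the Schatten norm by a product of mixed Lebesgue norms on a shrinking time interval, then invoke Lemma 2.9 to send each factor to zero as $t\to 0$. The extra remarks you add about admissibility of $(2p',2q')$ and uniformity of $C(\mathrm{Im}\,z)$ are sensible sanity checks but do not constitute a departure from the paper's argument.
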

\noindent{\bf Proof.} Combining Lemma 2.9 with (\ref{4.035}),
we derive that
\begin{eqnarray*}
\lim\limits_{a\longrightarrow0}\left\|W_{1}U_{1}(t)U_{1}^{*}(t)W_{2}\right\|_{\mathfrak{S}^{2q^{\prime}}}
\leq C\lim\limits_{a\longrightarrow0}\left\|W_{1}\right\|_{L_{t}^{2p^{\prime}}L_{x}^{2q^{\prime}}([0,a]\times\mathbf{R}^{d})}
\left\|W_{2}\right\|_{L_{t}^{2p^{\prime}}L_{x}^{2q^{\prime}}([0,a]\times\mathbf{R}^{d})}=0.
\end{eqnarray*}

This completes the proof of Theorem 5.3.

\begin{Theorem}\label{5.4}(The convergence   related to nonlinear part of the solution to
some operator equations in Schatten norms)
Assume that  $d\geq1,1\leq q<1+\frac{2}{d-1},p\geq1$ satisfy
 $\frac{2}{p}+\frac{d}{q}=d$ and $\gamma_{0}\in \mathfrak{S}^{\frac{2q}{q+1}}$
and $w\in L_{x}^{q^{\prime}}(\R^{d})$. Then, we have
\begin{eqnarray}
\lim\limits_{T\longrightarrow0}\left\|\gamma(t)-e^{it\Delta}\gamma_{0}e^{-it\Delta}
\right\|_{C_{t}^{0}([0,T],\mathfrak{S}^{\frac{2q}{q+1}})}=0.\label{5.04}
\end{eqnarray}
Here, $\gamma_{0}\in \mathfrak{S}^{\frac{2q}{q+1}}$ and
 $\gamma \in C_{t}^{0}(\R,\mathfrak{S}^{\frac{2q}{q+1}})$  is the solution to
$\left\{
        \begin{array}{ll}
         i\frac{d\gamma(t)}{dt}=[-\Delta+w*\rho_{\gamma},\gamma(t)] \\
          \gamma|_{t=0}=\gamma_{0}.
        \end{array}
      \right.$

\end{Theorem}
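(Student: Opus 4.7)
The plan is to deduce Theorem 5.4 as a direct consequence of Theorem 14 of \cite{FS2017}, combined with the orthonormal Strichartz estimate already established here as Theorem 4.2. Theorem 14 of \cite{FS2017}, applied in the setting of our operator equation, provides the quantitative Duhamel remainder bound
\begin{equation*}
\bigl\|\gamma(t)-e^{it\Delta}\gamma_{0}e^{-it\Delta}\bigr\|_{C_{t}^{0}([0,T],\mathfrak{S}^{\frac{2q}{q+1}})}\leq 8T^{1/p'}\|w\|_{L_x^{q'}}\max(1,C_{stri}^{2})R^{2},
\end{equation*}
where $C_{stri}$ is the constant appearing in the orthonormal Strichartz estimate (Theorem 4.2) and $R$ is the radius of the ball used in the contraction mapping that produces the local-in-time solution $\gamma \in C_{t}^{0}(\R, \mathfrak{S}^{\frac{2q}{q+1}})$.

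First I would verify the hypotheses of Theorem 14 of \cite{FS2017} in our setting. The pair $(p,q)$ satisfies $\frac{2}{p}+\frac{d}{q}=d$ with $1 \leq q < 1 + \frac{2}{d-1}$, which is precisely the admissible range in which Theorem 4.2 yields a finite constant $C_{stri}$. The data $\gamma_{0} \in \mathfrak{S}^{\frac{2q}{q+1}}$ and $w \in L_{x}^{q'}(\R^{d})$ are exactly the assumed hypotheses. A short computation gives $p = \frac{2q}{d(q-1)} > 1$ throughout this range, with $p = \infty$ only at the endpoint $q=1$; indeed $p>1$ reduces to $q(d-2)<d$, and our constraint $q < \frac{d+1}{d-1}$ implies $q < \frac{d}{d-2}$ when $d \geq 3$, while the cases $d = 1, 2$ are immediate. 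Hence $p' \in [1, \infty)$ and $\frac{1}{p'} > 0$.

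Fixing $R = 2\|\gamma_{0}\|_{\mathfrak{S}^{\frac{2q}{q+1}}}$ (hence independent of $T$) and then shrinking $T$ so that the contraction mapping of \cite{FS2017} closes on the ball of radius $R$, every factor on the right-hand side of the displayed inequality becomes a constant independent of $T$ except $T^{1/p'}$. Since $\frac{1}{p'} > 0$, we have $T^{1/p'} \to 0^{+}$ as $T \to 0^{+}$, and letting $T \to 0^{+}$ immediately produces Theorem 5.4. The one delicate point, and therefore the step I would handle most carefully, is the order in which $R$ and $T$ are chosen: $R$ must be fixed in terms of $\|\gamma_{0}\|_{\mathfrak{S}^{\frac{2q}{q+1}}}$ \emph{first}, and then $T$ shrunk. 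If instead one allowed $R$ to grow as $T \to 0$ (for instance by absorbing Duhamel terms before pinning the radius), the $R^{2}$ factor could conceivably offset the $T^{1/p'}$ gain and the limit would fail to be zero. The argument cleanly avoids this by using a $T$-independent ball in the fixed-point construction and then exploiting the positive power of $T$ furnished by Theorem 14 of \cite{FS2017}.
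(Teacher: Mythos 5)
Your proof takes the same route as the paper: both invoke the quantitative bound $\|\gamma(t)-e^{it\Delta}\gamma_{0}e^{-it\Delta}\|_{C_{t}^{0}([0,T],\mathfrak{S}^{\frac{2q}{q+1}})}\leq 8T^{1/p'}\|w\|_{L_x^{q'}}\max(1,C_{stri}^{2})R^{2}$ from Theorem 14 of Frank--Sabin and then let $T\to 0^{+}$ using $1/p'>0$. Your additional verification that $p>1$ throughout the admissible range and your explicit observation that $R$ must be fixed in terms of $\|\gamma_{0}\|_{\mathfrak{S}^{2q/(q+1)}}$ before shrinking $T$ are both correct and fill in details the paper leaves implicit.
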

\noindent{\bf Proof.} From Theorem 14 of \cite{FS2017}, we have that
 \begin{eqnarray}
\left\|\gamma(t)-e^{it\Delta}\gamma_{0}e^{-it\Delta}
\right\|_{C_{t}^{0}([0,T],\mathfrak{S}^{\frac{2q}{q+1}})}\leq
 8T^{1/p^{\prime}}\|w\|_{L_{x}^{q^{\prime}}}{\rm max}(1,C_{stri}^{2})R^{2}.\label{5.05}
\end{eqnarray}
From (\ref{5.05}), we have that (\ref{5.04}) is valid.

This completes the proof of Theorem 5.4.

\bigskip
\bigskip
 \section{Probabilistic convergence of density functions
of some compact operators with full randomization on $\R^{d}$}

\setcounter{equation}{0}

\setcounter{Theorem}{0}

\setcounter{Lemma}{0}

\setcounter{section}{6}
By using the idea of  Lemma 2.8 of \cite{YZY}  and the
 idea of line 14 of 1908 in \cite{YZDY},
we  present the probabilistic convergence of density functions
of some compact operators with full randomization on $\R^{d}$.

Randomization on a single function is introduced by Lebowitz et al.
\cite{LRS} and  Bourgain
\cite{Bourgain1994, Bourgain1996} and developed in
\cite{ZF2011,ZF2012,BOP2015,BOP,BTL,BT,LMCPDE}.

In this section, by using the full randomization of
 some compact operators
 on $\R^{d}$ \cite{HY} and  Lemma 6.6 which is proved
  with the aid of
  Lemmas 6.1, 6.5, 6.3, 6.4,  we present the
probabilistic convergence of density functions   of
 some compact operators
 with full randomization on $\R^{d}$.

\indent Now we recall the  randomization of a single function on
 $\R^{d}$,
 which originated from
 \cite{BOP2015,BOP,ZF2012}. We define  $B^{d}(0,1)=
 \left\{\xi\in \R^{d}:|\xi|\leq 1\right\}$.
Let  $\psi\in C_{c}^{\infty}(\R^{d})$ be a real-valued,
even, non-negative bump function with $\supp\psi\subset B^{d}(0,1)$
  such that
\begin{eqnarray}
&&\sum\limits_{k\in \z^{d}}\psi(\xi-k)=1,\forall \xi\in\R^{d},\label{6.01}
\end{eqnarray}
which is known as  Wiener decomposition of  the frequency space:
 $\psi(D-k)f:\R^{d}\rightarrow\mathbb{C}$ is defined by
\begin{eqnarray}
(\psi(D-k)f)(x)=\mathscr{F}^{-1}\big(\psi(\xi-k)\mathscr{F}f\big)(x),
x\in \R^{d},\forall k\in \Z^{d}.\label{6.02}
\end{eqnarray}
Let $\{g^{(1)}_{k_{j}}(\omega_{j})\}_{k_{j}\in \mathbf{N}^{+}}$ be  sequences of
 independent, zero-mean, real-valued  random variables
 on the probability spaces
$
(\Omega_{j},\mathcal{A}_{j}, \mathbb{P}_{j})(1\leq j\leq d)
$ endowed with probability
distributions $\mu_{k_{j}}^{1}$ satisfying
\begin{eqnarray}
&&\Big|\int_{-\infty}^{+\infty}e^{\gamma_{k_{j}} x_{j}}d\mu_{k_{j}}^{1}(x_{j})\Big|
\leq e^{C_{k_{j}}\gamma_{k_{j}}^{2}}(1\leq j\leq d,k_{j}\in \N^{+}).\label{6.03}
\end{eqnarray}
Let $(\Omega,\mathcal{A}, \mathbb{P})=
(\prod\limits_{j=1}^{d}\Omega_{j},\prod\limits_{j=1}^{d}\mathcal{A}_{j},
\prod\limits_{j=1}^{d}\mathbb{P}_{j}),\omega=(\omega_{1},\cdot\cdot\cdot,\omega_{d}).
$
Let
$\{g^{(2)}_n(\widetilde{\omega})\}_{n\in \z}$
be a sequence of independent, zero-mean, real-valued
 random variable
 on the probability space  $(\widetilde{\Omega},\widetilde{\mathcal{A}},
\widetilde{\mathbb{P}})$
endowed with probability
distributions $\mu_{n}^{2}$ satisfying
\begin{eqnarray}
&&\Big|\int_{-\infty}^{+\infty}e^{\gamma_{n} x}d\mu_{n}^{2}(x)\Big|
\leq e^{C_{n}\gamma_{n}^2}(n\in \N^{+}).\label{6.04}
\end{eqnarray}
For $f\in L^{2}(\R^{d})$, we define its randomization by
\begin{eqnarray}
&&f^{\omega}:=\sum\limits_{k=(k_{1},\>\cdot\cdot\cdot,\>k_{d})\in\z^{d}}
\prod\limits_{j=1}^{d}g^{(1)}_{k_{j}}(\omega_{j})\psi(D-k)f.\label{6.05}
\end{eqnarray}
We define
\begin{eqnarray*}
\|f\|_{L_{\omega,\>\widetilde{\omega}}^{p}(\Omega\times \widetilde{\Omega} )}
:=\left[\int_{\Omega\times \widetilde{\Omega}}|f(\omega,\widetilde{\omega})|^{p}
d(\mathbb{P}\times\mathbb {\widetilde{P})}\right]^{\frac{1}{p}}
=\left[\int_{\Omega}\int_{\widetilde{\Omega}}|f(\omega,\widetilde{\omega})|^{p}
d\mathbb{P}(\omega)d\mathbb{\widetilde{P}}(\widetilde{\omega})\right]^{\frac{1}{p}}.
\end{eqnarray*}

\noindent(Full randomization of compact operator.)Inspired by
 \cite{HY}, now we present the
full randomization of some compact operators.
Let $\gamma_{0}$ be a compact operator.
 Then we have the singular value decomposition
\begin{align}
\gamma_{0}=\sum_{n=1}^{\infty}\lambda_{n}|f_{n}\rangle\langle f_{n}|,\label{6.06}
\end{align}
where $\lambda_{n}\in \mathbf{C}$, $(f_{n})_{n=1}^{\infty}$ is orthonormal
 system in $L^{2}(\R^{d})$.
For any $(\omega,\widetilde{\omega})\in\Omega \times \widetilde{\Omega}$,
we define the full randomization of compact operator $\gamma_{0}=
\sum\limits_{n=1}^{+\infty}\lambda_{n}|f_{n}\rangle\langle f_{n}|$
by
\begin{eqnarray}
\gamma_{0}^{\omega,\>\widetilde{\omega}}:=
\sum_{n=1}^{\infty}\lambda_{n}g^{(2)}_{n}(\widetilde{\omega})|f_{n}^{\omega}
\rangle\langle f_{n}^{\omega}|.\label{6.07}
\end{eqnarray}
Obviously,
\begin{eqnarray*}
\rho_{\gamma_{0}^{\omega,\>\widetilde{\omega}}}=
\sum_{n=1}^{\infty}\lambda_{n}g^{(2)}_{n}(\widetilde{\omega})|f_{n}^{\omega}|^{2},
\rho_{e^{it\Delta}\gamma_{0}^{\omega,\>\widetilde{\omega}}e^{-it\Delta}}
=\sum_{n=1}^{\infty}\lambda_{n}g^{(2)}_{n}(\widetilde{\omega})|e^{it\Delta}f_{n}^{\omega}|^{2}.
\end{eqnarray*}

\begin{Lemma}(Convergence in measure) \label{Lemma6.1}
Let $F(t,\omega,\widetilde{\omega})$ be
a real-valued measurable function on
$[-1,1]\times\Omega\times \widetilde{\Omega}$.
For all $\epsilon>0$,
\begin{align}
\lim\limits_{t\rightarrow t_{0}}E(A_{\omega,\>\phi(\epsilon)})=0,
\label{6.08}
\end{align}
where $A_{\omega,\widetilde{\omega},\>\phi(\epsilon)}=
\left\{(\omega,\widetilde{\omega})\in(\Omega\times \widetilde{\Omega})|
|f(t,\omega,\widetilde{\omega})-
f(t_{0},\omega,\widetilde{\omega})|>\phi(\epsilon)\right\},$
$\phi>0$ is  monotonically increasing and
\begin{eqnarray*}
\lim\limits_{\epsilon\longrightarrow 0^{+}}\phi(\epsilon)=0.
\end{eqnarray*}
Then, (\ref{6.08}) is equivalent to the following statement: for all
 $\epsilon>0$, $\exists \delta>0$, when $|t-t_{0}|<\delta,$ we have
\begin{eqnarray}
E(A_{\omega,\widetilde{\omega},\>\phi(\epsilon)})<\epsilon.\label{6.09}
\end{eqnarray}
\end{Lemma}
 \noindent{\bf Proof.} For all $\epsilon>0$, since
\begin{align}
\lim\limits_{t\rightarrow t_{0}}E(A_{\omega,\widetilde{\omega},\>\phi(\epsilon)})=0,\label{6.010}
\end{align}
we have that for all $\epsilon>0$, $\exists \delta>0$,
when $|t-t_{0}|<\delta,$ we have that
(\ref{6.09}) is valid.
When (\ref{6.09}) is valid, pick $0<\alpha<\epsilon,$
$\exists \delta>0,$ when $|t-t_{0}|<\delta$, we have
\begin{eqnarray}
E(A_{\omega,\widetilde{\omega},\>\phi(\alpha)})<\alpha.\label{6.011}
\end{eqnarray}
Since $\phi>0$ is  monotonically increasing, by using
(\ref{6.011}),  we have $\phi(\alpha)<\phi(\epsilon)$,
\begin{eqnarray}
E(A_{\omega,\widetilde{\omega},\>\phi(\epsilon)})<
E(A_{\omega,\widetilde{\omega},\>\phi(\alpha)})<\alpha.\label{6.012}
\end{eqnarray}
From (\ref{6.012}), we have
\begin{eqnarray}
\lim\limits_{t\rightarrow t_{0}}
E(A_{\omega,\widetilde{\omega},\>\phi(\epsilon)})\leq\alpha.\label{6.013}
\end{eqnarray}
Let $\alpha$ go to zero, then we have that (\ref{6.08})
is valid.

 This completes the proof of Lemma 6.1.

\noindent{\bf Remark 7.}Lemma 6.1 generalizes the definition
  of Definition 1.1.7 of \cite{G}.

\begin{Lemma}\label{6.2}(Khinchin type inequalities)
Let $\{g^{(1)}_{k_{j}}(\omega_{j})\}_{k_{j}\in \mathbf {N}^{+}},\{g^{(2)}_k(\widetilde{\omega})\}_{k\in \mathbf {N}^{+}}$ be
  sequences of independent, zero-mean, real-valued  random variables
 on the probability spaces $(\Omega_{j},\mathcal{A}_{j}, \mathbb{P}_{j}),$
$(\widetilde{\Omega},\widetilde{\mathcal{A}}, \widetilde{\mathbb{P}})$
  endowed with probability
distributions $\mu^{1}_{k_{j}}(1\leq j\leq d,k_{j}\in \N^{+}),\mu^{2}_n(n\geq 1)$
which satisfy (\ref{6.03}), (\ref{6.04}), respectively.
 Then there exist  constants $C_{k_{j}}, C_{n}$
such that
\begin{eqnarray}
&&\left\|\sum_{k_{j}=1}^{\infty}a_{k_{j}}g_{k_{j}}^{(1)}(\omega_{j})\right\|_{L_{\omega_{j}}^{r}(\Omega_{j})}
\leq C_{k_{j}}r^{\frac{1}{2}}\left\|a_{k_{j}}\right\|_{l_{k_{j}}^{2}}\label{6.014},\\
&&\left\|\sum_{n=1}^{\infty}b_{n}g_{n}^{(2)}(\widetilde{\omega})\right\|_{L_{\widetilde{\omega}}^{r}(\widetilde{\Omega})}
\leq C_{n}r^{\frac{1}{2}}\left\|b_{n}\right\|_{l_{n}^{2}},\label{6.015}
\end{eqnarray}
hold for any $r\in [2,\infty)$ and $(a_{k_{j}})_{k_{j}},(b_{n})_{n}\in l_{n}^{2}$.
\end{Lemma}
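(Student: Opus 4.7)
The plan is to reduce both inequalities to the standard Khinchin-type moment bound for sub-Gaussian random variables, and since the two statements are structurally identical (only the index set and the coefficient sequence change names), I would prove the assertion for one sequence, say $\{g^{(1)}_{k_j}(\omega_j)\}_{k_j\in \Z}$, and note that the same argument applies verbatim to $\{g^{(2)}_n(\widetilde{\omega})\}$. The starting point is that the hypothesis
\begin{eqnarray*}
\left|\int_{-\infty}^{+\infty}e^{\gamma_{k_j} x}\,d\mu^{1}_{k_j}(x)\right|\leq e^{C_{k_j}\gamma_{k_j}^{2}}
\end{eqnarray*}
is exactly the sub-Gaussian condition on the law of $g^{(1)}_{k_j}$, so each $g^{(1)}_{k_j}$ has Gaussian-type tails, uniformly in $k_j$ after we absorb the constants $C_{k_j}$ into a single $C$.

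First I would compute the moment generating function of the partial sum $S_N=\sum_{k_j=1}^{N} a_{k_j} g^{(1)}_{k_j}(\omega_j)$. By independence and the sub-Gaussian hypothesis,
\begin{eqnarray*}
\mathbb{E}\bigl[e^{tS_N}\bigr]=\prod_{k_j=1}^{N}\mathbb{E}\bigl[e^{ta_{k_j}g^{(1)}_{k_j}}\bigr]\leq \prod_{k_j=1}^{N}e^{Ct^{2}a_{k_j}^{2}}=e^{Ct^{2}\|a\|_{l^{2}}^{2}}.
\end{eqnarray*}
Combining this with the Chebyshev/Markov inequality $\mathbb{P}(|S_N|>\lambda)\leq e^{-t\lambda}\mathbb{E}[e^{tS_N}]+e^{-t\lambda}\mathbb{E}[e^{-tS_N}]$ and optimizing over $t>0$ by choosing $t=\lambda/(2C\|a\|_{l^{2}}^{2})$ yields the Gaussian tail bound
\begin{eqnarray*}
\mathbb{P}\bigl(|S_N|>\lambda\bigr)\leq 2\exp\left(-\frac{\lambda^{2}}{4C\|a\|_{l^{2}}^{2}}\right).
\end{eqnarray*}

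Next I would convert this tail estimate into an $L^{r}$ bound via the layer-cake formula
\begin{eqnarray*}
\mathbb{E}|S_N|^{r}=r\int_{0}^{\infty}\lambda^{r-1}\mathbb{P}(|S_N|>\lambda)\,d\lambda,
\end{eqnarray*}
substitute $u=\lambda^{2}/(4C\|a\|_{l^{2}}^{2})$, and recognize the resulting integral as $2(4C)^{r/2}\|a\|_{l^{2}}^{r}\,\Gamma(r/2+1)$. Stirling's bound $\Gamma(r/2+1)^{1/r}\lesssim r^{1/2}$ then gives
\begin{eqnarray*}
\left\|S_N\right\|_{L^{r}_{\omega_j}(\Omega_j)}\leq C\,r^{1/2}\|a\|_{l^{2}},
\end{eqnarray*}
uniformly in $N$. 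Letting $N\to\infty$ and invoking the monotone convergence theorem (after taking absolute values and using Fatou if one prefers to avoid checking $L^{r}$-convergence of the series directly) yields (\ref{6.014}); the identical argument with $\mu^{2}_{n}$ in place of $\mu^{1}_{k_j}$ gives (\ref{6.015}).

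The main obstacle, and really the only nontrivial point, is the convergence of the infinite series $\sum_{k_j=1}^{\infty} a_{k_j} g^{(1)}_{k_j}(\omega_j)$ in $L^{r}(\Omega_j)$: one must verify that the partial sums form a Cauchy sequence in $L^{r}$. This follows from the same Chernoff estimate applied to the tail $S_N-S_M=\sum_{k_j=M+1}^{N}a_{k_j}g^{(1)}_{k_j}$, which shows $\|S_N-S_M\|_{L^{r}}\leq C r^{1/2}(\sum_{k_j=M+1}^{N}a_{k_j}^{2})^{1/2}\to 0$ as $M,N\to\infty$. The rest is bookkeeping on the absolute constants, which can all be absorbed into the single constants $C_{k_j}$, $C_{n}$ appearing in the statement.
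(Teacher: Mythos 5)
Your proof is correct and is essentially the standard argument (sub-Gaussian MGF bound via independence, Chernoff to get Gaussian tails, layer-cake plus Stirling for the $L^r$ bound) that appears in Lemma 3.1 of Burq--Tzvetkov, which is the reference the paper cites for this lemma in lieu of a proof. The one point worth making explicit, as the hypothesis (\ref{6.03}) allows $C_{k_j}$ to depend on $k_j$, is that you are implicitly assuming $\sup_{k_j}C_{k_j}<\infty$ so that the factorization $\prod e^{C_{k_j}t^2a_{k_j}^2}\leq e^{Ct^2\|a\|_{l^2}^2}$ is legitimate; this uniform boundedness is also assumed in the cited reference, so your argument matches it.
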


For the proof of  Lemma 6.2,  we refer the reader  to Lemma 3.1 of
\cite{BTL}.

\begin{Lemma}\label{lem6.3}(The estimate related to $l_{n}^{2}L_{\omega}^{p}$)
Let  $(\lambda_{n})_{n}\in l^{2}$ and $f_{n}^{\omega}$
 the randomization of $f_{n}$ be
defined as in (\ref{6.05}). Then, for $\forall \epsilon>0$,
$\exists \delta>0$,
 when $|t|<\delta<\frac{\epsilon^{d+1}}{M_{1}^{2}},$  we have
\begin{eqnarray}
&&\left\|\lambda_{n}\left\|U(t)f_{n}^{\omega}-
f_{n}^{\omega}\right\|_{L_{\omega}^{p}}\right\|_{l_{n}^{2}}\leq Cp^{d/2}\epsilon^{d+1}\left[\left(\sum\limits_{n=1}^{\infty}\lambda_{n}^{2}\right)^{1/2}+1\right]\nonumber\\
&&= Cp^{d/2}\epsilon^{d+1}\left[
\left\|\gamma_{0}\right\|_{\mathfrak{S}^{2}}+1\right].\label{6.016}
\end{eqnarray}
Here $M_{1}$ appears in (\ref{6.022}) and
$\left\|\gamma_{0}\right\|_{\mathfrak{S}^{2}}=
\left(\sum\limits_{n=1}^{\infty}\lambda_{n}^{2}\right)^{1/2}$.
\end{Lemma}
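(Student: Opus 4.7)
The plan is to bound $\|\lambda_n\|U(t)f_n^{\omega}-f_n^{\omega}\|_{L_\omega^p}\|_{l_n^2}$ by first applying the Khinchin-type inequality of Lemma 6.2 to exchange the $L^p_\omega$-norm for an $l^2$-sum over the Wiener cubes $k\in\Z^d$. Expanding $U(t)f_n^\omega-f_n^\omega$ via its randomization and recognising that the Fourier multiplier is $(e^{-it|\xi|^2}-1)$, this reduces matters to estimating
\begin{equation*}
Cp^{d/2}\Bigl[\sum_{n=1}^{\infty}|\lambda_n|^2\sum_{k\in\Z^d}\Bigl|\int_{\R^d}\psi(\xi-k)(e^{-it|\xi|^2}-1)e^{i x\cdot\xi}\mathscr{F}_x f_n(\xi)\,d\xi\Bigr|^2\Bigr]^{1/2}.
\end{equation*}

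Next I would perform two successive truncations. First, since $(\lambda_n)\in l^2$, choose $M=M(\epsilon)$ so that $\bigl[\sum_{n>M}|\lambda_n|^2\bigr]^{1/2}<\epsilon^{d+1}/2$. On this tail, use the trivial bound $|e^{-it|\xi|^2}-1|\le 2$ together with the almost-orthogonality
\begin{equation*}
\sum_{k\in\Z^d}\|\psi(\xi-k)\mathscr{F}_x f_n\|_{L^2}^2\le\|f_n\|_{L^2}^2\le 3\sum_{k\in\Z^d}\|\psi(\xi-k)\mathscr{F}_x f_n\|_{L^2}^2
\end{equation*}
(proved as in line 14 of p.\ 1908 of \cite{YZDY}) and Cauchy--Schwarz in $\xi$ to control the tail by $C\epsilon^{d+1}$. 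For the head $1\le n\le M$, the same almost-orthogonality, applied to the finitely many functions $f_1,\dots,f_M$, lets me pick $M_1=M_1(M,\epsilon)\ge 100$ such that $\bigl[\sum_{|k|\ge M_1}\|\psi(\xi-k)\mathscr{F}_x f_n\|_{L^2}^2\bigr]^{1/2}<\epsilon^{d+1}/2$ uniformly in $n\le M$. Splitting the $k$-sum at $|k|=M_1$, the high-frequency part is again handled by $|e^{-it|\xi|^2}-1|\le 2$, giving $\le\epsilon^{d+1}$; the low-frequency part uses the sharp bound $|e^{-it|\xi|^2}-1|\le|t||\xi|^2\lesssim |t|M_1^2$ on $\supp\psi(\cdot-k)$ for $|k|\le M_1$, together with the same almost-orthogonality, to produce a contribution of size $C|t|M_1^2$.

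Collecting the estimates yields an overall bound
\begin{equation*}
Cp^{d/2}\bigl(|t|M_1^2+\epsilon^{d+1}\bigr)\Bigl(\sum_{n=1}^\infty\lambda_n^2\Bigr)^{1/2}+Cp^{d/2}\epsilon^{d+1},
\end{equation*}
and the hypothesis $|t|<\delta<\epsilon^{d+1}/M_1^2$ then forces $|t|M_1^2<\epsilon^{d+1}$, so the prefactor collapses to $\epsilon^{d+1}$ and we obtain the claimed bound $Cp^{d/2}\epsilon^{d+1}[\|\gamma_0\|_{\mathfrak{S}^2}+1]$. The main obstacle — and the reason both cutoffs $M$ and $M_1$ are needed — is that $M_1$ genuinely depends on $M$ (hence on $\epsilon$ and on the particular $f_n$), so the $\delta$ selection must be done after $M_1$ is fixed; this is exactly the role of the condition $\delta<\epsilon^{d+1}/M_1^2$ in the statement. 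Once the order of quantifiers is arranged correctly ($\epsilon\Rightarrow M\Rightarrow M_1\Rightarrow \delta$), the rest is routine.
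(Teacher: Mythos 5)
Your proposal reproduces the paper's argument essentially step for step: Khinchin (Lemma 6.2) to pass to an $l^2_k$ sum, a first truncation in $n$ using $(\lambda_n)\in l^2$, the almost-orthogonality inequality $\sum_k\|\psi(\xi-k)\mathscr{F}_x f_n\|_{L^2}^2\le\|f_n\|_{L^2}^2\le 3\sum_k\|\psi(\xi-k)\mathscr{F}_x f_n\|_{L^2}^2$ to effect a second truncation in $k$ uniformly over the finitely many $n\le M$, and then $|e^{-it|\xi|^2}-1|\le 2$ on high frequencies versus $|e^{-it|\xi|^2}-1|\le|t||\xi|^2$ on the compactly supported low-frequency pieces, before invoking $\delta<\epsilon^{d+1}/M_1^2$. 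Your observation about the order of quantifiers $\epsilon\Rightarrow M\Rightarrow M_1\Rightarrow\delta$ is correct and matches the role $M_1$ plays in the paper's proof.
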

\noindent {\bf Proof.} From Lemma 6.2, we have
\begin{eqnarray}
&&\hspace{-1cm}\left\|\lambda_{n}\left\|U(t)f_{n}^{\omega}-
f_{n}^{\omega}\right\|_{L_{\omega}^{p}}\right\|_{l_{n}^{2}}\nonumber\\
&&\hspace{-1cm}\leq Cp^{d/2}\left[\sum_{n=1}^{+\infty}|\lambda_{n}|^{2}
\sum\limits_{k\in \z^{d}}\left|\int_{\SR^{d}}\psi(\xi-k)(e^{-it|\xi|^{2}}-1)
e^{i\sum\limits_{k=1}^{d}x_{k}\xi_{k}}
\mathscr{F}_{x}f_{n}(\xi)d\xi\right|^{2}\right]^{\frac{1}{2}}
.\label{6.017}
\end{eqnarray}
Since $(\lambda_{n})_{n}\in l^{2}$, for $\forall \epsilon>0$,
we know that there
exists $M\in \mathbf{N}^{+}$ such that
\begin{eqnarray}
\left[\sum\limits_{n=M+1}^{\infty}|\lambda_{n}|^{2}\right]^{1/2}<
\frac{\epsilon^{d+1}}{2}.\label{6.018}
\end{eqnarray}
By using (\ref{6.018}), the Cauchy-Schwartz inequality with respect to
 $\xi$ as well as $|e^{-it|\xi|^{2}}-1|\leq2$, since
 $\|f_{n}\|_{L^{2}}=1,$ we know that
\begin{eqnarray}
&&\hspace{-1cm}\left[\sum_{n=M+1}^{+\infty}|\lambda_{n}|^{2}
\sum\limits_{k\in \z^{d}}\left|\int_{\SR^{d}}\psi(\xi-k)(e^{-it|\xi|^{2}}-1)
e^{i\sum\limits_{k=1}^{d}x_{k}\xi_{k}}
\mathscr{F}_{x}f_{n}(\xi)d\xi\right|^{2}\right]^{\frac{1}{2}}\nonumber\\
&&\leq C\left[\sum_{n=M+1}^{+\infty}|\lambda_{n}|^{2}\sum\limits_{k\in \z^{d}}
\int_{\SR^{d}}\left|\psi(\xi-k)
\mathscr{F}_{x}f_{n}(\xi)\right|^{2}d\xi\right]^{\frac{1}{2}}\nonumber\\
&&\leq C\left[\sum_{n=M+1}^{+\infty}|\lambda_{n}|^{2}
\|f_{n}\|_{L^{2}}^{2}\right]^{\frac{1}{2}}\leq
C\left[\sum_{n=M+1}^{+\infty}|\lambda_{n}|^{2}\right]^{\frac{1}{2}}
<C\epsilon^{d+1}
.\label{6.019}
\end{eqnarray}
We  claim that for $1\leq n\leq M$, $\forall \epsilon>0$,
 we have that
\begin{eqnarray}
\left[\sum\limits_{k\in \z^{d}}\left|\int_{\SR^{d}}\psi(\xi-k)
(e^{-it|\xi|^{2}}-1)
e^{i\sum\limits_{k=1}^{d}x_{k}\xi_{k}}\mathscr{F}_{x}f_{n}(\xi)
d\xi\right|^{2}\right]^{1/2}
\leq C(\epsilon^{d+1}+|t||M_{1}^{2}).\label{6.020}
\end{eqnarray}
Here, $M_1$ depends on $M$.
We will use  the idea of    Lemma 2.8 of \cite{YZY}  and the
 idea of line 14 of 1908 in \cite{YZDY} to prove Lemma 6.3.
By using a proof similar to line 14 of 1908 in \cite{YZDY}, we have that
\begin{eqnarray}
\sum\limits_{k\in \z^{d}}\left\|\psi(\xi-k)\mathscr{F}_{x}f_{n}\right\|_{L^{2}}^{2}
\leq \|f_{n}\|_{L^{2}}^{2}\leq  3\sum\limits_{k\in \z^{d}}
\left\|\psi(\xi-k)\mathscr{F}_{x}f_{n}\right\|_{L^{2}}^{2}.\label{6.021}
\end{eqnarray}
Thus, from (\ref{6.021}), $\forall\epsilon>0$, we know
 that there
 exists $M_{1}\geq 100$ such that
\begin{eqnarray}
\left[\sum\limits_{|k|\geq M_{1}}\left\|\psi(\xi-k)
\mathscr{F}_{x}f_{n}\right\|_{L^{2}}^{2}\right]^{1/2}
\leq \frac{\epsilon^{d+1}}{2}.\label{6.022}
\end{eqnarray}
Here we use the fact that $1\leq n\leq M$.
Thus, by using the Cauchy-Schwartz inequality, $\supp\psi\subset B^{d}(0,1)$
 and $|e^{-it|\xi|^{2}}-1|\leq 2$ as well as  (\ref{6.022}),
we have that
\begin{eqnarray}
&&\left[\sum\limits_{|k|\geq M_{1}}\left|\int_{\SR^{d}}\psi(\xi-k)(e^{-it|\xi|^{2}}-1)
e^{i\sum\limits_{k=1}^{d}x_{k}\xi_{k}}\mathscr{F}_{x}f_{n}(\xi)d\xi\right|^{2}\right]^{1/2}\nonumber\\&&
\leq 2\left[\sum\limits_{|k|\geq M_{1}}\int_{\SR^{d}}\left|\psi(\xi-k)
\mathscr{F}_{x}f_{n}(\xi)\right|^{2}d\xi\right]^{1/2}\nonumber\\&&=2\left[\sum\limits_{|k|\geq M_{1}}\left\|\psi(\xi-k)\mathscr{F}_{x}f_{n}\right\|_{L^{2}}^{2}\right]^{1/2}\leq \epsilon^{d+1}.\label{6.023}
\end{eqnarray}
Here we use the fact that $1\leq n\leq M$.
Thus, by using the Cauchy-Schwartz inequality, $\supp\psi\subset B^{d}(0,1)$
 and $|e^{-it|\xi|^{2}}-1|\leq |t||\xi|^{2}$ as well as  (\ref{6.022}),
we have that
\begin{eqnarray}
&&\left[\sum\limits_{|k|\leq M_{1}}\left|\int_{\SR^{d}}\psi(\xi-k)(e^{-it|\xi|^{2}}-1)
e^{i\sum\limits_{k=1}^{d}x_{k}\xi_{k}}\mathscr{F}_{x}f_{n}(\xi)d\xi\right|^{2}\right]^{1/2}\nonumber\\&&
\leq \left[\sum\limits_{|k|\leq M_{1}}|t|^{2}|k|^{4}\left|\int_{\SR^{d}}|\psi(\xi-k)
\mathscr{F}_{x}f_{n}(\xi)|d\xi\right|^{2}\right]^{1/2}\nonumber\\&&
\leq |t|M_{1}^{2}\left[\sum\limits_{|k|\leq M_{1}}\left\|\psi(\xi-k)
\mathscr{F}_{x}f_{n}\right\|_{L^{2}}^{2}\right]^{1/2}\leq C|t|M_{1}^{2}\left[\sum\limits_{k\in \z^{d}}\left\|\psi(\xi-k)\mathscr{F}_{x}f_{n}\right\|_{L^{2}}^{2}\right]^{1/2}\nonumber\\
&&\leq C|t|M_{1}^{2}\|f_{n}\|_{L^{2}}\leq C|t|M_{1}^{2}\label{6.024}
\end{eqnarray}
Combining (\ref{6.023}) with (\ref{6.024}), we derive that (\ref{6.020})
is valid.
From (\ref{6.020})-(\ref{6.022}), by using $(a+b)^{1/2}\leq a^{1/2}+b^{1/2}$
 and $|e^{-it|\xi|^{2}}-1|\leq2$
 as well as $\|f_{n}\|_{L^{2}(\SR^{d})}=1(n\in \mathbf{N}^{+})$, when
 $|t|<\delta<\frac{\epsilon^{d+1}}{M_{1}^{2}},$   we know that
\begin{eqnarray}
&&\left\|\lambda_{n}\left\|U(t)f_{n}^{\omega}-f_{n}^{\omega}\right\|_{L_{\omega}^{p}}\right\|_{l_{n}^{2}}\nonumber\\
&&\leq Cp^{d/2}\left[\sum_{n=1}^{+\infty}|\lambda_{n}|^{2}
\sum\limits_{k\in \z^{d}}\left|\int_{\SR^{d}}\psi(\xi-k)(e^{-it|\xi|^{2}}-1)
e^{i\sum\limits_{k=1}^{d}x_{k}\xi_{k}}
\mathscr{F}_{x}f_{n}(\xi)d\xi\right|^{2}\right]^{\frac{1}{2}}\nonumber\\&&
\leq Cp^{d/2}\left[\sum_{n=1}^{M}|\lambda_{n}|^{2}\sum\limits_{k\in \z^{d}}
\left|\int_{\SR^{d}}\psi(\xi-k)(e^{-it|\xi|^{2}}-1)
e^{i\sum\limits_{k=1}^{d}x_{k}\xi_{k}}
\mathscr{F}_{x}f_{n}(\xi)d\xi\right|^{2}\right]^{\frac{1}{2}}\nonumber\\&&
\qquad + Cp^{d/2}\left[\sum_{n=M+1}^{+\infty}|\lambda_{n}|^{2}\sum\limits_{k\in \z^{d}}
\left|\int_{\SR^{d}}\psi(\xi-k)(e^{-it|\xi|^{2}}-1)
e^{i\sum\limits_{k=1}^{d}x_{k}\xi_{k}}
\mathscr{F}_{x}f_{n}(\xi)d\xi\right|^{2}\right]^{\frac{1}{2}}\nonumber\\
&&\leq Cp^{d/2}\left(|t|M_{1}^{2}+\epsilon^{d+1}\right)
\left[\sum\limits_{n=1}^{M}|\lambda_{n}|^{2}\right]^{1/2}+Cp^{d/2}
\left[\sum_{n=M+1}^{+\infty}|\lambda_{n}|^{2}\sum\limits_{k\in \z^{d}}
\left|\mathscr{F}_{x}f_{n}(k)\right|^{2}\right]^{\frac{1}{2}}\nonumber\\
&&\leq Cp^{d/2}\left(|t|M_{1}^{2}+\epsilon^{d+1}\right)
\left(\sum\limits_{n=1}^{M}|\lambda_{n}|^{2}\right)^{1/2}
+Cp^{d/2}\left[\sum_{n=M+1}^{+\infty}|\lambda_{n}|^{2}\right]^{1/2}\nonumber\\
&&\leq Cp^{d/2}\left(|t|M_{1}^{2}+\epsilon^{d+1}\right)
\left(\sum\limits_{n=1}^{M}|\lambda_{n}|^{2}\right)^{1/2}
+Cp^{d/2}\epsilon^{d+1}\nonumber\\
&&\leq Cp^{d/2}\left(|t|M_{1}^{2}+\epsilon^{d+1}\right)
\left(\sum\limits_{n=1}^{\infty}|\lambda_{n}|^{2}\right)^{1/2}+Cp^{d/2}
\epsilon^{d+1}\nonumber\\
&&\leq Cp^{d/2}\left[\left(|t|M_{1}^{2}+\epsilon^{d+1}\right)
\left(\sum\limits_{n=1}^{\infty}|\lambda_{n}|^{2}\right)^{1/2}+\epsilon^{d+1}\right]\nonumber\\
&&\leq Cp^{d/2}\left[\epsilon^{d+1}\left(\sum\limits_{n=1}^{\infty}|\lambda_{n}|^{2}\right)^{1/2}
+\epsilon^{d+1}\right]\nonumber\\
&&\leq Cp^{d/2}\epsilon^{d+1}
\left[\left(\sum\limits_{n=1}^{\infty}|\lambda_{n}|^{2}\right)^{1/2}+1\right]\leq Cp^{d/2}\epsilon^{d+1}
\left[\left\|\gamma_{0}\right\|_{\mathfrak{S}^{2}}+1\right].\label{6.025}
\end{eqnarray}

This completes the proof of Lemma 6.3.

\noindent{\bf Remark 8.}To establish Lemma 6.3, we pay more attention to the properties
$(\lambda_{n})_{n}\in l^{2}$ and
\begin{eqnarray*}
\sum\limits_{k\in \z^{d}}\left\|\psi(\xi-k)\mathscr{F}_{x}f_{n}\right\|_{L^{2}}^{2}
\leq \|f_{n}\|_{L^{2}}^{2}\leq  3\sum\limits_{k\in \z^{d}}
\left\|\psi(\xi-k)\mathscr{F}_{x}f_{n}\right\|_{L^{2}}^{2}
\end{eqnarray*}
which plays the key role in controlling
\begin{eqnarray*}
\hspace{-1cm}\left[\sum_{n=1}^{+\infty}|\lambda_{n}|^{2}
\sum\limits_{k\in \z^{d}}\left|\int_{\SR^{d}}\psi(\xi-k)(e^{-it|\xi|^{2}}-1)
e^{i\sum\limits_{k=1}^{d}x_{k}\xi_{k}}
\mathscr{F}_{x}f_{n}(\xi)d\xi\right|^{2}\right]^{\frac{1}{2}}.
\end{eqnarray*}

\begin{Lemma}\label{lem6.4}(Some estimates related to $pth$ moment)
Let  $f_{j}^{\omega}$  be
defined as in (\ref{6.05}). Then,  there exists $C>0$ such that
\begin{eqnarray}
&&\|f_{j}^{\omega}\|_{L_{\omega}^{p}}\leq Cp^{d/2}\|f_{j}\|_{L^{2}}=Cp^{d/2},
\|U(t)f_{j}^{\omega}\|_{L_{\omega}^{p}}\leq Cp^{d/2}\|f_{j}\|_{L^{2}}=Cp^{d/2}.\label{6.026}
\end{eqnarray}
\end{Lemma}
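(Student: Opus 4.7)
The plan is to reduce both inequalities to a single iterated application of the Khinchin-type bounds in Lemma 6.2 on the product probability space $\Omega=\prod_{j=1}^{d}\Omega_{j}$, followed by an almost-orthogonality argument exploiting that the bumps $\{\psi(\cdot-k)\}_{k\in\SZ^{d}}$ have bounded overlap. First I would write, from the definition (\ref{6.05}),
\begin{eqnarray*}
f_{j}^{\omega}(x)=\sum_{k=(k_{1},\dots,k_{d})\in\SZ^{d}}
\Big(\prod_{i=1}^{d}g^{(1)}_{k_{i}}(\omega_{i})\Big)(\psi(D-k)f_{j})(x),
\end{eqnarray*}
and treat $x$ as a parameter. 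Applying Lemma 6.2 in $\omega_{d}$, with the remaining factors frozen, converts the $k_{d}$-sum into an $\ell^{2}_{k_{d}}$-norm at the cost of $C p^{1/2}$. Iterating this procedure in $\omega_{d-1},\dots,\omega_{1}$, using Fubini and Minkowski's integral inequality to swap the sequential $L^{p}_{\omega_{i}}$-norms with the partial $\ell^{2}$-norms already produced, yields after $d$ steps
\begin{eqnarray*}
\|f_{j}^{\omega}(x)\|_{L^{p}_{\omega}(\Omega)}
\leq C p^{d/2}\Big(\sum_{k\in\SZ^{d}}|(\psi(D-k)f_{j})(x)|^{2}\Big)^{1/2}.
\end{eqnarray*}

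To conclude, I would appeal to the almost-orthogonality identity (\ref{6.021}) (which is exactly the property used at line 14 of p.\ 1908 in \cite{YZDY}) coming from $\supp\psi\subset B^{d}(0,1)$ and $\sum_{k}\psi(\xi-k)=1$: integrating the square function in $x$ and using Plancherel gives
\begin{eqnarray*}
\Big\|\Big(\sum_{k\in\SZ^{d}}|\psi(D-k)f_{j}|^{2}\Big)^{1/2}\Big\|_{L^{2}_{x}}^{2}
=\sum_{k\in\SZ^{d}}\|\psi(\xi-k)\mathscr{F}_{x}f_{j}\|_{L^{2}}^{2}
\leq \|f_{j}\|_{L^{2}}^{2},
\end{eqnarray*}
after which Minkowski in the mixed space $L^{p}_{\omega}L^{2}_{x}$ vs.\ $L^{2}_{x}L^{p}_{\omega}$ delivers the stated bound $C p^{d/2}\|f_{j}\|_{L^{2}}=C p^{d/2}$ (using $\|f_{j}\|_{L^{2}}=1$), in exactly the same form it is subsequently invoked in (\ref{6.017}) of Lemma 6.3.

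For the second estimate, I would use that $U(t)=e^{it\Delta}$ is a Fourier multiplier with symbol $e^{-it|\xi|^{2}}$, so it commutes with each frequency projection $\psi(D-k)$ and preserves $L^{2}$. Consequently
\begin{eqnarray*}
U(t)f_{j}^{\omega}=\sum_{k\in\SZ^{d}}\prod_{i=1}^{d}g^{(1)}_{k_{i}}(\omega_{i})\,\psi(D-k)U(t)f_{j},
\end{eqnarray*}
and the identical iterated-Khinchin plus almost-orthogonality argument applies with $f_{j}$ replaced by $U(t)f_{j}$, yielding the same factor $Cp^{d/2}\|U(t)f_{j}\|_{L^{2}}=Cp^{d/2}\|f_{j}\|_{L^{2}}=Cp^{d/2}$.

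The main obstacle I anticipate is the careful bookkeeping when applying Khinchin's inequality $d$ times in succession on the product space: at each step one must use Fubini and Minkowski's integral inequality to move the new $L^{p}_{\omega_{i}}$-norm past the already-accumulated $\ell^{2}$-sums over $(k_{i+1},\dots,k_{d})$ without losing any constant beyond the desired $p^{1/2}$ per coordinate. Once this iteration is set up cleanly, the remaining ingredients---commutation of $U(t)$ with $\psi(D-k)$, Plancherel, and the bounded-overlap Plancherel identity (\ref{6.021})---are routine.
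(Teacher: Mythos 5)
Your overall strategy is the right one: iterate a Khinchin-type bound once per coordinate of the product space $\Omega=\prod_{i=1}^{d}\Omega_{i}$ to pick up $p^{d/2}$, control the resulting $\ell^{2}_{k}$-square function of the Wiener pieces $\psi(D-k)f_{j}$ via the almost-orthogonality (\ref{6.021}), and exploit that $U(t)$ commutes with each $\psi(D-k)$ and is unitary on $L^{2}$. The paper offers no proof of Lemma 6.4 beyond a citation to Lemma 2.3 of \cite{BOP}, so there is no in-paper argument to mirror; your reconstruction captures the correct ingredients. There are, however, two places where the sketch does not quite close as written.

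First, the interchange you attribute to Minkowski goes the wrong way. After applying Lemma 6.2 in the innermost variable, you face a quantity of the shape $\bigl\|\,\|c_{k}(\omega')\|_{\ell^{2}_{k}}\bigr\|_{L^{p}_{\omega'}}$, and you want to push the outer $L^{p}_{\omega'}$ inside so as to apply scalar Khinchin again. But for $p\geq 2$ Minkowski gives $\|c\|_{\ell^{2}_{k}L^{p}_{\omega'}}\leq\|c\|_{L^{p}_{\omega'}\ell^{2}_{k}}$, which is a lower bound for the expression you hold, not an upper bound. What closes this step is the Hilbert-space-valued (here $\ell^{2}_{k}$-valued) extension of Lemma 6.2 — a standard fact, obtainable from the scalar version by hypercontractivity or by expanding in an orthonormal basis and using Fubini — rather than a mixed-norm swap. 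The ``bookkeeping obstacle'' you flag at the end is exactly this point, and it is not bookkeeping: it genuinely requires the vector-valued Khinchin inequality.

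Second, your conclusion delivers an $L^{2}_{x}L^{p}_{\omega}$ bound, whereas the lemma is stated with no $x$-norm and is invoked in the proof of Theorem 6.7 (display (\ref{6.038})) pointwise in $x$: the H\"older step $\|fg\|_{L^{r}_{\omega}}\leq\|f\|_{L^{2r}_{\omega}}\|g\|_{L^{2r}_{\omega}}$ there is performed at fixed $x$, and the right-hand side of (\ref{6.034}) is a constant. What is needed is the uniform-in-$x$ bound $\sup_{x}\|f_{j}^{\omega}(x)\|_{L^{p}_{\omega}}\leq Cp^{d/2}\|f_{j}\|_{L^{2}}$. This follows from your correct intermediate estimate
\begin{eqnarray*}
\|f_{j}^{\omega}(x)\|_{L^{p}_{\omega}}\leq Cp^{d/2}\Bigl(\sum_{k}|\psi(D-k)f_{j}(x)|^{2}\Bigr)^{1/2}
\end{eqnarray*}
by replacing Plancherel with the Bernstein step: since $\supp\psi\subset B^{d}(0,1)$, Cauchy--Schwarz in $\xi$ gives $|\psi(D-k)f_{j}(x)|\leq C\|\psi(\cdot-k)\mathscr{F}_{x}f_{j}\|_{L^{2}}$ uniformly in $x$, and then (\ref{6.021}) finishes. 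This is precisely the ``Cauchy--Schwarz with respect to $\xi$'' move the paper already uses in (\ref{6.019}). The identical argument handles $U(t)f_{j}^{\omega}$, since $|\psi(\xi-k)e^{-it|\xi|^{2}}|=|\psi(\xi-k)|$.
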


For the proof of Lemma 6.4, we refer the readers to Lemma 2.3 of \cite{BOP}.

\begin{Lemma}\label{lem6.5}(Stochastic continuity at zero)Let $F(t,\omega,\widetilde{\omega})$ be
a real-valued measurable function on
$[-1,1]\times\Omega\times \widetilde{\Omega}$.
    For arbitrary $p\geq2$,
$\forall \epsilon>0$, $\forall \alpha>0$, $\exists \delta>0$,  when $|t|<\delta,$
the following inequality holds
\begin{eqnarray}
(\mathbb{P}\times\widetilde{\mathbb P})\left(E_{\alpha,\>t}\right)<
\left[\frac{C\epsilon^{d+1} p^{(2d+1)/2}}{\alpha}\right]^{p},\label{6.027}
\end{eqnarray}
where $E_{\alpha,\>t}\definition \left\{(\omega,\widetilde{\omega})\in
(\Omega\times\widetilde{\Omega})||F(t,\omega,\widetilde{\omega})|>\alpha\right\}$.
Then,  we have
\begin{eqnarray}
\lim\limits_{t\longrightarrow0}(\mathbb{P}\times\widetilde{\mathbb P})
(E_{Ce\epsilon^{1/2}\left(\epsilon ln
\frac{1}{\epsilon}\right)^{(2d+1)/2},\>t})=0.\label{6.028}
\end{eqnarray}
\end{Lemma}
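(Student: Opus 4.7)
The plan is to convert the moment-style hypothesis into the claimed convergence in measure by optimising the free index $p$ as a function of the scale $\epsilon$, and then to repackage the resulting quantified statement as the limit in (\ref{6.028}) via Lemma 6.1. The algebraic observation driving the whole argument is that the Chebyshev-type bound $(K/\alpha)^{p}$ collapses to exactly $\epsilon$ when one tunes the ratio to $K/\alpha=1/e$ and chooses $p=\ln(1/\epsilon)$; all the arithmetic is engineered so that this pairing is achievable.

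Concretely, for fixed $\epsilon\in(0,e^{-2})$ (a range that ensures the choice $p:=\ln(1/\epsilon)\geq 2$ is admissible in the hypothesis), I set
$$\alpha:=Ce\,\epsilon^{1/2}\bigl(\epsilon\ln\tfrac{1}{\epsilon}\bigr)^{(2d+1)/2}=Ce\,\epsilon^{d+1}\bigl(\ln\tfrac{1}{\epsilon}\bigr)^{(2d+1)/2},$$
where the second equality uses $\epsilon^{1/2}\cdot\epsilon^{(2d+1)/2}=\epsilon^{(2d+2)/2}=\epsilon^{d+1}$. Plugging $p=\ln(1/\epsilon)$ and this $\alpha$ into the ratio appearing in the hypothesis yields, by direct cancellation,
$$\frac{C\epsilon^{d+1}\,p^{(2d+1)/2}}{\alpha}=\frac{C\epsilon^{d+1}(\ln(1/\epsilon))^{(2d+1)/2}}{Ce\,\epsilon^{d+1}(\ln(1/\epsilon))^{(2d+1)/2}}=\frac{1}{e}.$$
The hypothesis then provides $\delta=\delta(\epsilon)>0$ such that $|t|<\delta$ forces
$$(\mathbb{P}\times\widetilde{\mathbb{P}})(E_{\alpha,t})<\Bigl(\tfrac{1}{e}\Bigr)^{p}=e^{-\ln(1/\epsilon)}=\epsilon.$$

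To close, I would set $\phi(\epsilon):=\alpha=Ce\,\epsilon^{d+1}(\ln(1/\epsilon))^{(2d+1)/2}$ and verify the two structural requirements of Lemma 6.1: the limit $\phi(\epsilon)\to 0^{+}$ as $\epsilon\to 0^{+}$ is clear since the polynomial factor $\epsilon^{d+1}$ dominates the logarithmic factor, and monotonicity of $\phi$ on a right neighbourhood of $0$ follows from the sign computation
$$\phi'(\epsilon)=Ce\,\epsilon^{d}\bigl(\ln\tfrac{1}{\epsilon}\bigr)^{(2d-1)/2}\Bigl[(d+1)\ln\tfrac{1}{\epsilon}-\tfrac{2d+1}{2}\Bigr],$$
which is strictly positive once $\ln(1/\epsilon)>(2d+1)/(2d+2)$. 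The estimate derived in the previous paragraph is now precisely the ``$\forall\epsilon\,\exists\delta$'' condition (\ref{1.037}) of Lemma 6.1, with $t_{0}=0$ and the product space $\Omega\times\widetilde{\Omega}$ playing the role of $\Omega$, so Lemma 6.1 delivers (\ref{6.028}).

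The main obstacle is essentially combinatorial rather than analytic: one must spot the exact pairing of $p=\ln(1/\epsilon)$ with the rate $\phi(\epsilon)=Ce\,\epsilon^{d+1}(\ln(1/\epsilon))^{(2d+1)/2}$ that makes the ratio collapse to $1/e$ and the $p$-th power collapse to $\epsilon$. A secondary bookkeeping point is the restriction $p\geq 2$ forcing $\epsilon<e^{-2}$; this is harmless because monotonicity of $\phi$ near $0$ gives the inclusion $E_{\phi(\epsilon),t}\subset E_{\phi(\epsilon'),t}$ whenever $\epsilon>\epsilon'$, so convergence for arbitrarily small $\epsilon'$ implies convergence for every larger $\epsilon$ in the regime where $\phi$ is still defined.
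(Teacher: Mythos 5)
Your argument is correct and coincides with the paper's own proof of Lemma 6.5: the paper's choice $p=\bigl(\alpha/(Ce\,\epsilon^{d+1})\bigr)^{2/(2d+1)}$ together with $\alpha=Ce\,\epsilon^{1/2}\bigl(\epsilon\ln\frac{1}{\epsilon}\bigr)^{(2d+1)/2}$ simplifies to exactly your $p=\ln(1/\epsilon)$, and both proofs then reduce the hypothesis to the $\epsilon$-$\delta$ form required by Lemma 6.1. Your additional remarks on the admissibility constraint $p\geq 2$ and the explicit derivative computation establishing monotonicity of $\phi$ make the write-up somewhat more careful than the paper's, which simply asserts monotonicity on $(0,\tfrac{1}{2}]$.
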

\noindent{\bf Proof.} Take
\begin{eqnarray}
&&p=\left(\frac{\alpha}{Ce\epsilon^{d+1}}\right)^{\frac{2}{2d+1}},
\alpha=Ce\epsilon^{1/2}\left(\epsilon ln
\frac{1}{\epsilon}\right)^{(2d+1)/2}.\label{6.029}
\end{eqnarray}
From  (\ref{6.027}), when $|t|<\epsilon^{d+1},$ we have
\begin{eqnarray}
&&(\mathbb{P}\times\widetilde{\mathbb P})\left(E_{Ce\epsilon^{1/2}\left(\epsilon ln
\frac{1}{\epsilon}\right)^{(2d+1)/2},\>t}\right)
 \leq \epsilon.\label{6.030}
\end{eqnarray}
From Lemma 6.1, we know that Lemma 6.5 is valid since
$Ce\epsilon^{1/2}\left(\epsilon ln
\frac{1}{\epsilon}\right)^{(2d+1)/2}$
is  monotonically increasing with respect to $\epsilon\in (0,\frac{1}{2}]$ and
\begin{eqnarray*}
\lim\limits_{\epsilon\longrightarrow 0^{+}}Ce\epsilon^{1/2}\left(\epsilon ln
\frac{1}{\epsilon}\right)^{(2d+1)/2}=0.
\end{eqnarray*}

This completes the proof of Lemma 6.5.

\begin{Lemma}\label{lem6.6}
Let  $F(t,\omega,\widetilde{\omega})$ be
a real-valued measurable function on
$[-1,1]\times\Omega\times \widetilde{\Omega}$ and
 $p\geq2.$ For $\forall \epsilon>0,$ $\exists\delta>0,$ when $|t|<\delta,$
  the following inequality holds
\begin{eqnarray}
A(t):=\left\|F(t,\omega,\widetilde{\omega})\right\|_{L_{\omega,\>\widetilde{\omega}}^{p}
(\Omega\times \widetilde{\Omega})}\leq Cp^{(2d+1)/2}\epsilon^{d+1}
(\|\gamma_{0}\|_{\mathfrak{S}^{2}}+1).\label{6.031}
\end{eqnarray}
Then, we have
\begin{eqnarray}
&&\hspace{-1.5cm}\lim\limits_{t\longrightarrow 0}(\mathbb{P}\times\widetilde{\mathbb P})
\left(\left\{(\omega, \widetilde{\omega})\in (\Omega\times \widetilde{\Omega})|
|F|>C(\|\gamma_{0}\|_{\mathfrak{S}^{2}}+1)e\epsilon^{1/2}\left(\epsilon ln
\frac{1}{\epsilon}\right)^{(2d+1)/2}\right\}\right)=0.\nonumber\\&&\label{6.032}
\end{eqnarray}

\end{Lemma}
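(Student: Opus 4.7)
The plan is to mimic the strategy used for Lemma 6.5, since the statement of Lemma 6.6 differs only by the additional factor $(\|\gamma_{0}\|_{\mathfrak{S}^{2}}+1)$ on the right-hand side of the hypothesis. First I would apply Chebyshev's inequality to convert the $L^{p}_{\omega,\widetilde{\omega}}$ moment bound (\ref{6.031}) into a tail probability bound. Writing $K_{\epsilon} := C\epsilon^{d+1}(\|\gamma_{0}\|_{\mathfrak{S}^{2}}+1)$, the hypothesis gives $A(t)\leq K_{\epsilon} p^{(2d+1)/2}$ whenever $|t|<\delta$, so for every $\alpha>0$
\begin{eqnarray*}
(\mathbb{P}\times\widetilde{\mathbb{P}})\left(\left\{|F(t,\omega,\widetilde{\omega})|>\alpha\right\}\right)
\leq \frac{A(t)^{p}}{\alpha^{p}}
\leq \left[\frac{K_{\epsilon}\,p^{(2d+1)/2}}{\alpha}\right]^{p}.
\end{eqnarray*}
This is exactly the hypothesis (\ref{6.027}) of Lemma 6.5 but with $C\epsilon^{d+1}$ replaced by $K_{\epsilon}$, so the subsequent optimization can be carried out in an identical manner.

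Next I would optimize in $p$ and $\alpha$ following the choices made in the proof of Lemma 6.5. Specifically, I set
\begin{eqnarray*}
\alpha = Ce(\|\gamma_{0}\|_{\mathfrak{S}^{2}}+1)\epsilon^{1/2}\left(\epsilon\ln\tfrac{1}{\epsilon}\right)^{(2d+1)/2},\qquad
p=\ln\tfrac{1}{\epsilon}.
\end{eqnarray*}
A short computation using the exponent identity $-d-\tfrac{1}{2}+\tfrac{2d+1}{2}=0$ yields
$\alpha/(eK_{\epsilon}) = \left(\ln(1/\epsilon)\right)^{(2d+1)/2}=p^{(2d+1)/2}$, so the bracketed factor above equals $e^{-1}$. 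Consequently the tail probability is bounded above by $e^{-p} = \epsilon$ once $|t|<\delta$, where $\delta=\delta(\epsilon)$ comes from the hypothesis (\ref{6.031}). For $\epsilon$ sufficiently small we also have $p=\ln(1/\epsilon)\geq 2$, so the choice of exponent is admissible.

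Finally I would invoke Lemma 6.1 with the function
\begin{eqnarray*}
\phi(\epsilon) := Ce(\|\gamma_{0}\|_{\mathfrak{S}^{2}}+1)\epsilon^{1/2}\left(\epsilon\ln\tfrac{1}{\epsilon}\right)^{(2d+1)/2}
= Ce(\|\gamma_{0}\|_{\mathfrak{S}^{2}}+1)\epsilon^{d+1}\left(\ln\tfrac{1}{\epsilon}\right)^{(2d+1)/2}.
\end{eqnarray*}
A direct computation of $\phi'(\epsilon)$ shows $\phi$ is strictly positive and monotonically increasing on some interval $(0,\epsilon_{0}]$ (the derivative has the sign of $(d+1)\ln(1/\epsilon)-(2d+1)/2$, which is positive for small $\epsilon$), and clearly $\phi(\epsilon)\to 0$ as $\epsilon\to 0^{+}$. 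The two hypotheses of Lemma 6.1 are therefore satisfied on this interval, and Lemma 6.1 converts the uniform-in-$\epsilon$ bound $(\mathbb{P}\times\widetilde{\mathbb{P}})(\{|F|>\phi(\epsilon)\})\leq \epsilon$ into the desired limit (\ref{6.032}).

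The proof is essentially a Chebyshev-plus-optimization template, so there is no serious analytic obstacle; the only delicate point is verifying the monotonicity of $\phi$ near the origin and ensuring that the admissibility condition $p\geq 2$ and the threshold $|t|<\delta$ can be arranged simultaneously for all small enough $\epsilon$, both of which are elementary.
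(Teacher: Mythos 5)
Your proposal is correct and follows essentially the same route as the paper's own proof, which simply applies Chebyshev's inequality, takes $\alpha=Ce(\|\gamma_{0}\|_{\mathfrak{S}^{2}}+1)\epsilon^{1/2}(\epsilon\ln\tfrac{1}{\epsilon})^{(2d+1)/2}$ and $p=\ln\tfrac{1}{\epsilon}$ as in Lemma 6.5, and then invokes Lemma 6.1. Your explicit verification that the bracketed factor collapses to $e^{-1}$ and your more careful treatment of the monotonicity interval for $\phi$ are minor elaborations of what the paper leaves implicit.
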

\noindent {\bf Proof.} By using (\ref{6.031}), when $|t|<\delta,$ we have
 \begin{eqnarray}
(\mathbb{P}\times\widetilde{\mathbb P})\left(E_{\alpha,\>t}\right)\leq
\left[\frac{A(t)}{\alpha}\right]^{p}\leq\left[\frac{C\epsilon^{d+1}
(\|\gamma_{0}\|_{\mathfrak{S}^{2}}+1)p^{(2d+1)/2}}{\alpha}\right]^{p}.\label{6.033}
\end{eqnarray}
Here $E_{\alpha,\>t}\definition \left\{(\omega,\widetilde{\omega})\in
(\Omega\times\widetilde{\Omega})||F(t,\omega,\widetilde{\omega})|>\alpha\right\}$.
In particular, we take
$\alpha=C(\|\gamma_{0}\|_{\mathfrak{S}^{2}}+1)e\epsilon^{1/2}\left(\epsilon ln
\frac{1}{\epsilon}\right)^{(2d+1)/2}.$
Following the idea of Lemma 6.5, we obtain that Lemma 6.6 is valid.

This completes the proof of Lemma 6.6.

\begin{Theorem}\label{Theorem6.7} (Stochastic continuity at zero related to Schatten norms)
Let $d\geq 1$ and $r\in [2,\infty)$, $(f_{j})_{j=1}^{\infty}$ is an
 orthonormal system in $L^{2}(\R^{d})$.
Then, for any $\gamma_{0}\in\mathfrak{S}^{2}$ and $\forall \epsilon>0$
 which appears in Lemma 6.4,
 $\exists \delta>0(<\frac{\epsilon^{d+1}}{2M_{1}^{2}}),$ when $|t|<\delta,$
 we have
\begin{align}
\left\|\sum_{n=1}^{\infty}\lambda_{n}g^{(2)}_{n}(\widetilde{\omega})|f_{n}^{\omega}|^{2}-
\sum_{n=1}^{\infty}\lambda_{n}g^{(2)}_{n}(\widetilde{\omega})|e^{it\triangle}f_{n}^{\omega}
|^{2}\right\|_{L_{\omega,\>\widetilde{\omega}}^{r}
(\Omega\times\widetilde{\Omega})}
\leq Cr^{\frac{2d+1}{2}}\epsilon^{d+1}(\left\|\gamma_{0}\right\|_{\mathfrak{S}^{2}}+1).\label{6.034}
\end{align}
Moreover, we have
\begin{eqnarray}
&&\hspace{-1cm}\lim\limits_{t\longrightarrow0}(\mathbb{P}\times\widetilde{\mathbb P})
\left(\left\{(\omega, \widetilde{\omega})\in (\Omega\times \widetilde{\Omega})|
|F(t,\omega,\widetilde{\omega})|>C(\left\|\gamma_{0}\right\|_{\mathfrak{S}^{2}}+1)e\epsilon^{1/2}\left(\epsilon ln
\frac{1}{\epsilon}\right)^{(2d+1)/2}\right\}\right)\nonumber\\&&=0.\label{6.035}
\end{eqnarray}
Here $F(t,\omega,\widetilde{\omega})=\sum\limits_{n=1}^{\infty}
\lambda_{n}g^{(2)}_{n}(\widetilde{\omega})|f_{n}^{\omega}|^{(2)}-
\sum\limits_{n=1}^{\infty}\lambda_{n}g^{(2)}_{n}(\widetilde{\omega})
|e^{it\triangle}f_{n}^{\omega}|^{2}$ and
\begin{eqnarray*}
\rho_{\gamma_{0}^{\omega,\>\widetilde{\omega}}}=
\sum_{n=1}^{\infty}\lambda_{n}g^{(2)}_{n}(\widetilde{\omega})|f_{n}^{\omega}|^{2},
\rho_{e^{it\Delta}\gamma_{0}^{\omega,\>\widetilde{\omega}}e^{-it\Delta}}
=\sum_{n=1}^{\infty}\lambda_{n}g^{(2)}_{n}(\widetilde{\omega})|e^{it\Delta}f_{n}^{\omega}|^{2}
\end{eqnarray*}
and
$\rho_{\gamma_{0}^{\omega,\>\widetilde{\omega}}}, \rho_{e^{it\Delta}
   \gamma_{0}^{\omega,\>\widetilde{\omega}}e^{-it\Delta}}$ denote the
 density function of $\gamma_{0}^{\omega,\>\widetilde{\omega}}, e^{it\Delta}
   \gamma_{0}^{\omega,\>\widetilde{\omega}}e^{-it\Delta}$, respectively.
\end{Theorem}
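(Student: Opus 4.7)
The plan is to decompose the difference of densities using the algebraic identity $|a|^2-|b|^2=\mathrm{Re}\bigl((a-b)\overline{(a+b)}\bigr)$, and then peel off the two independent randomizations $\widetilde{\omega}$ and $\omega$ separately using Khinchin's inequality, Minkowski's inequality, Cauchy--Schwarz, and the key Lemmas 6.3 and 6.4. Precisely, I would first rewrite
\begin{eqnarray*}
F(t,\omega,\widetilde{\omega})=\sum_{n=1}^{\infty}\lambda_{n}g^{(2)}_{n}(\widetilde{\omega})\,a_{n}(t,\omega,x),
\qquad a_{n}(t,\omega,x):=|f_{n}^{\omega}|^{2}-|U(t)f_{n}^{\omega}|^{2},
\end{eqnarray*}
and observe the pointwise bound
$|a_{n}|\le |f_{n}^{\omega}-U(t)f_{n}^{\omega}|\,|f_{n}^{\omega}+U(t)f_{n}^{\omega}|$.

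Next, for each fixed $(x,\omega)$ I would apply Khinchin's inequality (Lemma 6.2) in the variable $\widetilde{\omega}$ to the sum $\sum_{n}\lambda_{n}g^{(2)}_{n}(\widetilde{\omega})a_{n}$, obtaining
\begin{eqnarray*}
\|F\|_{L^{r}_{\widetilde{\omega}}}\le Cr^{1/2}\Bigl(\sum_{n}|\lambda_{n}|^{2}|a_{n}|^{2}\Bigr)^{1/2}.
\end{eqnarray*}
Taking the $L^{r}_{\omega}$ norm and using Minkowski's inequality (permissible since $r\ge 2$) to exchange $L^{r}_{\omega}$ with $\ell^{2}_{n}$ gives
\begin{eqnarray*}
\|F\|_{L^{r}_{\omega,\widetilde{\omega}}}\le Cr^{1/2}\Bigl(\sum_{n}|\lambda_{n}|^{2}\|a_{n}\|_{L^{r}_{\omega}}^{2}\Bigr)^{1/2}.
\end{eqnarray*}
Cauchy--Schwarz in $\omega$ then yields $\|a_{n}\|_{L^{r}_{\omega}}\le \|f_{n}^{\omega}-U(t)f_{n}^{\omega}\|_{L^{2r}_{\omega}}\,\|f_{n}^{\omega}+U(t)f_{n}^{\omega}\|_{L^{2r}_{\omega}}$, and Lemma 6.4 together with the triangle inequality bounds the second factor by $C(2r)^{d/2}$.

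Substituting and invoking Lemma 6.3 with $p=2r$ (valid for $|t|<\delta<\epsilon^{d+1}/(2M_{1}^{2})$) gives
\begin{eqnarray*}
\|F\|_{L^{r}_{\omega,\widetilde{\omega}}}
\le Cr^{1/2}\cdot(2r)^{d/2}\cdot\bigl\|\lambda_{n}\|U(t)f_{n}^{\omega}-f_{n}^{\omega}\|_{L^{2r}_{\omega}}\bigr\|_{\ell^{2}_{n}}
\le Cr^{(2d+1)/2}\epsilon^{d+1}\bigl(\|\gamma_{0}\|_{\mathfrak{S}^{2}}+1\bigr),
\end{eqnarray*}
which proves (6.034). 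For the convergence statement (6.035), I would simply feed this moment bound into Lemma 6.6, which is precisely calibrated to convert an estimate of the form $\|F\|_{L^{r}_{\omega,\widetilde{\omega}}}\le Cr^{(2d+1)/2}\epsilon^{d+1}(\|\gamma_{0}\|_{\mathfrak{S}^{2}}+1)$ into the desired probabilistic smallness of $F$ at the threshold $Ce(\|\gamma_{0}\|_{\mathfrak{S}^{2}}+1)\epsilon^{1/2}(\epsilon\ln(1/\epsilon))^{(2d+1)/2}$ via Chebyshev's inequality with the optimal choice $r=(\alpha/[Ce\epsilon^{d+1}(\|\gamma_{0}\|_{\mathfrak{S}^{2}}+1)])^{2/(2d+1)}$.

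The main obstacle is bookkeeping the powers of $r$ correctly: one must pair the $r^{1/2}$ produced by Khinchin in $\widetilde{\omega}$ with the $(2r)^{d/2}$ from Lemma 6.4 (the $L^{2r}_{\omega}$ control of the sum $f_{n}^{\omega}+U(t)f_{n}^{\omega}$) and another $(2r)^{d/2}$ coming from Lemma 6.3 (the randomized Khinchin-type estimate on the difference $U(t)f_{n}^{\omega}-f_{n}^{\omega}$), so that the final exponent of $r$ is exactly $(2d+1)/2$, matching the logarithmic factor appearing in (6.035). The other delicate point is justifying the exchange of $L^{r}_{\omega}$ with $\ell^{2}_{n}$ by Minkowski, but this is automatic for $r\ge 2$.
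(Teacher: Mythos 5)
Your proposal is correct and follows essentially the same route as the paper: Khinchin in $\widetilde\omega$ (Lemma 6.2), Minkowski to exchange $L^{r}_{\omega}$ and $\ell^{2}_{n}$, Cauchy--Schwarz/H\"older in $\omega$, then Lemmas 6.4 and 6.3 with $p=2r$, and finally Lemma 6.6 for the probabilistic limit. The only cosmetic difference is the algebraic decomposition of $|a|^{2}-|b|^{2}$: you use $\mathrm{Re}\bigl((a-b)\overline{(a+b)}\bigr)$ to get a single factored term, whereas the paper writes $|a|^{2}-|b|^{2}=(a-b)\bar a+b(\bar a-\bar b)$ and bounds the two pieces $I_{1},I_{2}$ separately; both are equivalent and produce the same $r^{1/2}\cdot(2r)^{d/2}\cdot(2r)^{d/2}=C\,r^{(2d+1)/2}$ bookkeeping.
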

\noindent{\bf Proof.} Obviously,
\begin{eqnarray}
|e^{it\triangle}f_{n}^{\omega}|^{2}-|f_{n}^{\omega}|^{2}=I_{1}+I_{2},\label{6.036}
\end{eqnarray}
where
\begin{eqnarray}
I_{1}=\left(e^{it\triangle}f_{n}^{\omega}-f_{n}^{\omega}\right)
e^{-it\triangle}\bar{f}_{n}^{\omega},
I_{2}=\left(e^{-it\triangle}
\bar{f}_{n}^{\omega}-\bar{f}_{n}^{\omega}\right)f_{n}^{\omega}.\label{6.037}
\end{eqnarray}
By using (\ref{6.036}), (\ref{6.037}), Minkowski inequality
 and the H\"older inequality as well as Lemmas 6.4, 6.5,
$\forall \epsilon>0$ which appears in Lemma 6.4,
 $\exists \delta>0(<\frac{\epsilon^{d+1}}{2M_{1}^{2}}),$ when $|t|<\delta,$
we have
\begin{eqnarray}
&&\left\|\sum_{n=1}^{\infty}\lambda_{n}g^{2}_{n}(\widetilde{\omega})|f_{n}^{\omega}|^{2}-
\sum_{n=1}^{\infty}\lambda_{n}g^{2}_{n}(\widetilde{\omega})|e^{it\triangle}f_{n}^{\omega}
|^{2}\right\|_{L_{\omega,\>\widetilde{\omega}}^{r}
(\Omega\times\widetilde{\Omega})}\nonumber\\
&&=\left\|\sum_{n=1}^{\infty}\lambda_{n}g^{2}_{n}(\widetilde{\omega})
(I_{1}+I_{2})\right\|_{L_{\omega,\>\widetilde{\omega}}^{r}
(\Omega\times\widetilde{\Omega})}\nonumber\\
&&\leq \left\|\sum_{n=1}^{\infty}\lambda_{n}g^{2}_{n}(\widetilde{\omega})
I_{1}\right\|_{L_{\omega,\>\widetilde{\omega}}^{r}
(\Omega\times\widetilde{\Omega})}+\left\|\sum_{n=1}^{\infty}\lambda_{n}g^{2}_{n}
(\widetilde{\omega})I_{2}\right\|_{L_{\omega,\>\widetilde{\omega}}^{r}
(\Omega\times\widetilde{\Omega})}\nonumber\\
&&\leq Cr^{1/2}\left[\left\|\lambda_{n}I_{1}\right\|_{L_{\omega}^{r}l_{n}^{2}}+
\left\|\lambda_{n}I_{2}\right\|_{L_{\omega}^{r}l_{n}^{2}}\right]\nonumber\\
&&\leq Cr^{1/2}\left[\left\|\lambda_{n}I_{1}\right\|_{l_{n}^{2}L_{\omega}^{r}}+
\left\|\lambda_{n}I_{2}\right\|_{l_{n}^{2}L_{\omega}^{r}}\right]\nonumber\\
&&\leq Cr^{1/2}\left[\left\|\lambda_{n}
\left\|\left(e^{it\triangle}f_{n}^{\omega}-f_{n}^{\omega}\right)
\right\|_{L_{\omega}^{2r}}\|e^{-it\triangle}
\bar{f}_{n}^{\omega}\|_{L_{\omega}^{2r}}\right\|_{l_{n}^{2}}
\right]\nonumber\\&&\qquad\qquad+Cr^{1/2}\left[\left\|\lambda_{n}
\left\|\left(e^{-it\triangle}\bar{f}_{n}^{\omega}-\bar{f}_{n}^{\omega}\right)
\right\|_{L_{\omega}^{2r}}\|f_{n}^{\omega}\|_{L_{\omega}^{2r}}
\right\|_{l_{n}^{2}}
\right]\nonumber\\
&&\leq Cr^{\frac{2d+1}{2}}(|t|M_{1}^{2}+\epsilon^{d+1})
(\left\|\gamma_{0}\right\|_{\mathfrak{S}^{2}}+1)\leq Cr^{\frac{2d+1}{2}}\epsilon^{d+1}
(\left\|\gamma_{0}\right\|_{\mathfrak{S}^{2}}+1).\label{6.038}
\end{eqnarray}
Combining (\ref{6.038}) with Lemma 6.6,
since
$Ce\epsilon^{1/2}\left(\epsilon ln
\frac{1}{\epsilon}\right)^{3/2}$
is  monotonically increasing with respect to $\epsilon\in (0,\frac{1}{2}]$ and
\begin{eqnarray*}
\lim\limits_{\epsilon\longrightarrow 0^{+}}Ce\epsilon^{1/2}\left(\epsilon ln
\frac{1}{\epsilon}\right)^{d+\frac{1}{2}}=0,
\end{eqnarray*}
we have that (\ref{6.035}) is valid.

This completes the proof of Theorem 6.1.

\noindent{\bf Remark 9.} For $\gamma_{0}\in \mathfrak{S}^{\beta}(\beta<2)$,
Bez et al. \cite{BLN}
\begin{eqnarray}
\lim\limits_{t\longrightarrow0}\rho_{\gamma(t)}(x)=\lim\limits_{t\longrightarrow0}
\sum\limits_{j=1}^{\infty}\lambda_{j}|e^{it\partial_{x}^{2}}f_{j}|^{2}
=\rho_{\gamma_{0}}(x)=\sum\limits_{j=1}^{\infty}\lambda_{j}|f_{j}|^{2},
\qquad  a.e. x\in \R,\label{6.039}
\end{eqnarray}
where $\rho_{\gamma(t)}$ is the density function of
 $\gamma(t)$ and
$\gamma(t)=\sum\limits_{j=1}^{+\infty}
\lambda_{j}|e^{it\partial_{x}^{2}}f_{j}\rangle\langle e^{it\partial_{x}^{2}}f_{j}|$
 is the solution to
the operator-valued equation
to
$\left\{
        \begin{array}{ll}
         i\frac{d\gamma(t)}{dt}=[-\partial_{x}^{2},\gamma(t)] \\
          \gamma(0)=\sum\limits_{j=1}^{\infty}\lambda_{j}|f_{j}\rangle\langle f_{j}|.
        \end{array}
      \right.$
Here $(f_{j})_{j\in \mathbf{N}^{+}}$ are orthonormal functions in $\dot{H}^{1/4}(\R)$.
Thus, when $d=1$, Theorem 6.1 improves the result of Corollary 1.2 of \cite{BLN}
in the probabilistic sense since $\gamma_{0}\in \mathfrak{S}^{2}$.
(\ref{6.035}) is called {\bf the probabilistic convergence of density functions
   of some compact operators with full randomization on $\R^{d}.$}

\bigskip
\bigskip

\section{Probabilistic convergence of density functions
   of some compact operators with full randomization on  $\T^{d}$}

\setcounter{equation}{0}

\setcounter{Theorem}{0}

\setcounter{Lemma}{0}

\setcounter{section}{7}
By using the idea of \cite{HY} and idea of    Lemma 3.2 of \cite{WYY},
we  present the probabilistic convergence of density functions
of some compact operators with full randomization on $\T^{d}$.

Combining   the  randomization of  compact operator  \cite{HY} with
  the randomization of  single function on  $\T^{d}$ \cite{CLS,YZY},
we firstly present the full randomization of some compact operators on $\T^{d}$.
Then, by using  the full randomization of some compact operators on $\T^{d}$,
we present the probabilistic convergence of density functions
   of some  compact operators with full randomization on torus.

Let $\{g^{(1)}_{k_{j}}(\omega)\}_{k_{j}\in \mathbf{N}^{+}},\{g^{(2)}_k(\widetilde{\omega})\}_{k\in \mathbf{N}^{+}}$ be
  sequences of independent, zero-mean, real-valued  random variables
 on the probability spaces $(\Omega_{j},\mathcal{A}_{j}, \mathbb{P}_{j}),$
$(\widetilde{\Omega},\widetilde{\mathcal{A}}, \widetilde{\mathbb{P}})$
  endowed with probability
distributions $\mu^{1}_{k_{j}}(1\leq j\leq d,k_{j}\in \N^{+}),\mu^{2}_n(n\in \N^{+})$
 which satisfy (\ref{6.03}), (\ref{6.04}), respectively.
Let $(\Omega,\mathcal{A}, \mathbb{P})=
(\prod\limits_{j=1}^{d}\Omega_{j},\prod\limits_{j=1}^{d}\mathcal{A}_{j},
\prod\limits_{j=1}^{d}\mathbb{P}_{j}),\omega=(\omega_{1},\cdot\cdot\cdot,\omega_{d}).
$

Now we recall the randomization of a single function on $\T^{n}$.
Let $f_{n}(n\in \mathbf{N}^{+})$ be an orthonormal system in
 $L^{2}(\T^{d}).$
For $f_{n}\in L^{2}(\T^{d})(n\in \mathbf{N}^{+})$, inspired by \cite{CLS,WYY,YZY},
 we define its randomization by
\begin{eqnarray}
&&f_{n}^{\omega}:=\sum\limits_{k=(k_{1},\>\cdot\cdot\cdot,\>k_{d})\in\z^{d}}
\prod\limits_{j=1}^{d}g^{(1)}_{k_{j}}(\omega_{j})
e^{i\sum\limits_{j=1}^{k}x_{j}k_{j}}\mathscr{F}_{x}f_{n}(k).\label{7.01}
\end{eqnarray}

\noindent(Full randomization of some compact operators.)Inspired by \cite{HY},
we present the full randomization of some compact operators.
Let $\gamma_{0}\in L^{2}(\T^{d})$ be a compact operator.
 Then we have the singular value decomposition
\begin{align}
\gamma_{0}=\sum_{n=1}^{\infty}\lambda_{n}|f_{n}\rangle\langle f_{n}|,\label{7.02}
\end{align}
where $\lambda_{n}\in \mathbf {C}$, $(f_{n})_{n=1}^{\infty}$
 is an orthonormal system in $L^{2}(\T^{d})$.
For any $(\omega,\widetilde{\omega})\in\Omega \times \widetilde{\Omega}$,
 inspired by \cite{HY} and
(\ref{7.01}),
we define the full randomization of  $\gamma_{0}
=\sum\limits_{n=1}^{+\infty}\lambda_{n}|f_{n}\rangle\langle f_{n}|$
by
\begin{eqnarray}
\gamma_{0}^{\omega,\>\widetilde{\omega}}:=
\sum_{n=1}^{\infty}a_{n}g^{(2)}_{n}(\widetilde{\omega})|f_{n}^{\omega}
\rangle\langle f_{n}^{\omega}|.\label{7.03}
\end{eqnarray}

\begin{Lemma}\label{lem7.1}(The estimate related to $l_{n}^{2}L_{\omega}^{p}$)
Let $\|f_{n}\|_{L^{2}(\ST^{d})}=1=\left[\sum\limits_{k\in \z^{d}}
\left|\mathscr{F}_{x}f_{n}(k)\right|^{2}\right]^{1/2}(n\in \mathbf{N}^{+})$,
 $(\lambda_{n})_{n}\in l^{2}$ and $f_{n}^{\omega}$
 the randomization of $f_{n}$ be
defined as in (\ref{7.01}). Then, for $\forall \epsilon>0$,
 $\exists \delta>0$,
 when $|t|<\delta<\frac{\epsilon^{d+1}}{M_{1}^{2}},$  we have
\begin{eqnarray}
&&\left\|\lambda_{n}\left\|U(t)f_{n}^{\omega}-f_{n}^{\omega}
\right\|_{L_{\omega}^{p}}\right\|_{l_{n}^{2}}\leq Cp^{d/2}\epsilon^{d+1}
\left[\left(\sum\limits_{n=1}^{\infty}\lambda_{n}^{2}\right)^{1/2}+1\right]\nonumber\\
&&= Cp^{d/2}\epsilon^{d+1}
\left[\left\|\gamma_{0}\right\|_{\mathfrak{S}^{2}}+1\right].
\label{7.04}
\end{eqnarray}
Here $M_{1}$ appears in (\ref{7.08}) and
$\left\|\gamma_{0}\right\|_{\mathfrak{S}^{2}}=
\left(\sum\limits_{n=1}^{\infty}\lambda_{n}^{2}\right)^{1/2}$.
\end{Lemma}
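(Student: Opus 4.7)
The plan is to follow the template already used for the $\R^d$ version (Lemma 6.3) but adapted to the torus, where the Fourier representation becomes a sum over $\Z^d$ rather than an integral against the bump function $\psi$. First I will apply the Khinchin-type inequality of Lemma 6.2 to pull the $L^p_\omega$ norm inside. Since $f_n^\omega = \sum_{k\in\Z^d} \prod_j g^{(1)}_{k_j}(\omega_j)\, e^{ix\cdot k} \mathscr{F}_x f_n(k)$ and $U(t) f_n^\omega = \sum_k \prod_j g^{(1)}_{k_j}(\omega_j)\, e^{-it|k|^2} e^{ix\cdot k} \mathscr{F}_x f_n(k)$, the difference $U(t)f_n^\omega - f_n^\omega$ is a Gaussian-type random series with coefficients $(e^{-it|k|^2}-1)\mathscr{F}_x f_n(k)$. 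Lemma 6.2 then yields
\begin{equation*}
\bigl\|\lambda_n \|U(t)f_n^\omega - f_n^\omega\|_{L^p_\omega}\bigr\|_{l^2_n}
\leq C p^{d/2} \Bigl[\sum_{n=1}^\infty |\lambda_n|^2 \sum_{k\in\Z^d} |(e^{-it|k|^2}-1)\mathscr{F}_x f_n(k)|^2\Bigr]^{1/2}.
\end{equation*}

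Next I will split the sum in $n$ into a tail $n\geq M+1$ and a head $1\leq n\leq M$, where $M$ is chosen so that $\bigl(\sum_{n>M}|\lambda_n|^2\bigr)^{1/2}<\epsilon^{d+1}$; this is possible because $(\lambda_n)\in l^2$. For the tail I simply use $|e^{-it|k|^2}-1|\leq 2$ together with Parseval, which immediately bounds that contribution by $C\epsilon^{d+1}$.

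For the head $1\leq n\leq M$ I will further decompose the $k$-sum into low frequencies $|k|\leq M_1$ and high frequencies $|k|>M_1$. Because there are only finitely many $f_n$ involved and each has unit $L^2(\T^d)$ norm, I can pick $M_1$ (depending on $M$, hence on $\epsilon$) so that $\bigl[\sum_{|k|>M_1}|\mathscr{F}_x f_n(k)|^2\bigr]^{1/2}<\epsilon^{d+1}/2$ uniformly in $n\leq M$; this controls the high-frequency piece by $C\epsilon^{d+1}$ again via $|e^{-it|k|^2}-1|\leq 2$. For the low-frequency piece I will use the sharper bound $|e^{-it|k|^2}-1|\leq |t||k|^2\leq |t|M_1^2$, which gives a contribution of size $|t|M_1^2$. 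Choosing $\delta<\epsilon^{d+1}/M_1^2$ forces $|t|M_1^2<\epsilon^{d+1}$, so the head contributes at most $C\epsilon^{d+1}\bigl(\sum_{n=1}^M \lambda_n^2\bigr)^{1/2}\leq C\epsilon^{d+1}\|\gamma_0\|_{\mathfrak{S}^2}$.

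Combining the two pieces yields the claimed bound. The main conceptual step, and really the only subtle one, is the order in which the two cut-offs $M$ (in $n$) and $M_1$ (in $k$) are chosen: $M$ must be fixed first using only $(\lambda_n)\in l^2$, because the high-frequency threshold $M_1$ must be allowed to depend on the finitely many $f_1,\dots,f_M$; only then can $\delta$ be chosen depending on $M_1$. This is where the proof on $\T^d$ is actually cleaner than on $\R^d$, since the Wiener-type inequality (6.21) used there is replaced here by Parseval's identity $\|f_n\|_{L^2(\T^d)}^2 = \sum_k |\mathscr{F}_x f_n(k)|^2$. No new analytic obstacle appears beyond this bookkeeping, and the routine calculation then proceeds exactly as in Lemma 6.3.
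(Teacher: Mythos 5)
Your proposal is correct and follows essentially the same route as the paper: apply Lemma 6.2 (Khinchin) to reduce to a deterministic double sum, truncate the $n$-sum at some $M$ using $(\lambda_n)\in l^2$, then for the finitely many $f_1,\dots,f_M$ truncate the $k$-sum at some $M_1$ using $\sum_k|\mathscr{F}_xf_n(k)|^2=1$, bound the low-frequency piece via $|e^{-it|k|^2}-1|\leq|t|M_1^2$, and finally restrict $|t|<\delta<\epsilon^{d+1}/M_1^2$. You also correctly identify the key logical point (the order of choosing $M$, $M_1$, $\delta$) and the simplification relative to the $\R^d$ case, namely that Parseval on $\T^d$ replaces the almost-orthogonality estimate (6.21).
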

\noindent {\bf Proof.} From Lemma 6.2, we have
\begin{eqnarray}
&&\left\|\lambda_{n}\left\|U(t)f_{n}^{\omega}-f_{n}^{\omega}\right\|_{L_{\omega}^{p}}\right\|_{l_{n}^{2}}\leq Cp^{d/2}\left[\sum_{n=1}^{+\infty}|\lambda_{n}|^{2}\sum\limits_{k\in \z^{d}}\left|(e^{-itk^{2}}-1)\mathscr{F}_{x}f_{n}(k)\right|^{2}\right]^{\frac{1}{2}}
.\label{7.05}
\end{eqnarray}
Since $(\lambda_{n})_{n}\in l^{2}$, for $\forall \epsilon>0$,
we know that there exists $M\in \mathbf{N}^{+}$ such that
\begin{eqnarray}
\left[\sum\limits_{n=M+1}^{\infty}|\lambda_{n}|^{2}\right]^{1/2}<\epsilon^{d+1}.\label{7.06}
\end{eqnarray}
We  claim that for $1\leq n\leq M$ and $\forall \epsilon>0$,
 we have that
\begin{eqnarray}
\left[\sum\limits_{k\in \z^{d}}\left|(e^{-it|k|^{2}}-1)
\mathscr{F}_{x}f_{n}(k)\right|^{2}\right]^{1/2}\leq C
(\epsilon^{d+1}+|t||M_{1}^{2}).\label{7.07}
\end{eqnarray}
Here, $M_1$ depends on $M$.
We will use the idea of    Lemma 3.2 of \cite{WYY} to prove Lemma 7.1.
Since $1\leq n\leq M,$ for $\forall \epsilon>0$, we know that there
 exists $M_{1}$ which depends on $M$ such that
\begin{eqnarray}
\left[\sum\limits_{|k|\geq M_{1}+1}
\left|\mathscr{F}_{x}f_{n}(k)\right|^{2}\right]^{1/2}<\frac{\epsilon^{d+1}}{2}.\label{7.08}
\end{eqnarray}
By using the fact that $|e^{-it|k|^{2}}-1|\leq |t|k^{2}$, we have that
\begin{eqnarray}
\left[\sum\limits_{|k|\leq M_{1}}\left|(e^{-it|k|^{2}}-1)
\mathscr{F}_{x}f_{n}(k)\right|^{2}\right]^{1/2}\leq C|t|M_{1}^{2}.\label{7.09}
\end{eqnarray}
Combining (\ref{7.08}) with (\ref{7.09}), we derive that (\ref{7.07}) is valid.
From (\ref{7.05})-(\ref{7.07}), by using $(a+b)^{1/2}\leq a^{1/2}+b^{1/2}$
 and $|e^{-it|k|^{2}}-1|\leq2$ as well as $\|f_{n}\|_{L^{2}(\ST^{d})}=\left[\sum\limits_{k\in \z^{d}}
\left|\mathscr{F}_{x}f_{n}(k)\right|^{2}\right]^{1/2}=1(n\in \mathbf{N}^{+})$,
 when $|t|<\delta<\frac{\epsilon^{d+1}}{M_{1}^{2}},$   we know that
\begin{eqnarray}
&&\left\|\lambda_{n}\left\|U(t)f_{n}^{\omega}-f_{n}^{\omega}\right\|_{L_{\omega}^{p}}\right\|_{l_{n}^{2}}\nonumber\\
&&\leq Cp^{d/2}\left[\sum_{n=1}^{+\infty}|\lambda_{n}|^{2}\sum\limits_{k\in \z^{d}}\left|(e^{-itk^{2}}-1)\mathscr{F}_{x}f_{n}(k)\right|^{2}\right]^{\frac{1}{2}}\nonumber\\&&
\leq Cp^{d/2}\left[\sum_{n=1}^{M}|\lambda_{n}|^{2}\sum\limits_{k\in \z^{d}}\left|(e^{-itk^{2}}-1)\mathscr{F}_{x}f_{n}(k)\right|^{2}\right]^{\frac{1}{2}}\nonumber\\&&\qquad + Cp^{d/2}\left[\sum_{n=M+1}^{+\infty}|\lambda_{n}|^{2}\sum\limits_{k\in \z^{d}}\left|(e^{-itk^{2}}-1)\mathscr{F}_{x}f_{n}(k)\right|^{2}\right]^{\frac{1}{2}}\nonumber\\
&&\leq Cp^{d/2}\left(|t|M_{1}^{2}+\epsilon^{d+1}\right)
\left[\sum\limits_{n=1}^{M}|\lambda_{n}|^{2}\right]^{1/2}+Cp^{d/2}\left[\sum_{n=M+1}^{+\infty}|\lambda_{n}|^{2}\sum\limits_{k\in \z^{d}}\left|\mathscr{F}_{x}f_{n}(k)\right|^{2}\right]^{\frac{1}{2}}\nonumber\\
&&\leq Cp^{d/2}\left(|t|M_{1}^{2}+\epsilon^{d+1}\right)
\left(\sum\limits_{n=1}^{M}|\lambda_{n}|^{2}\right)^{1/2}+Cp^{d/2}
\left[\sum_{n=M+1}^{+\infty}|\lambda_{n}|^{2}\right]^{1/2}\nonumber\\
&&\leq Cp^{d/2}\left(|t|M_{1}^{2}+\epsilon^{d+1}\right)
\left(\sum\limits_{n=1}^{M}|\lambda_{n}|^{2}\right)^{1/2}+Cp^{d/2}\epsilon^{d+1}\nonumber\\
&&\leq Cp^{d/2}\left(|t|M_{1}^{2}+\epsilon^{d+1}\right)
\left(\sum\limits_{n=1}^{\infty}|\lambda_{n}|^{2}\right)^{1/2}+Cp^{d/2}\epsilon^{d+1}\nonumber\\
&&\leq Cp^{d/2}\left[\left(|t|M_{1}^{2}+\epsilon^{d+1}\right)
\left(\sum\limits_{n=1}^{\infty}|\lambda_{n}|^{2}\right)^{1/2}+\epsilon^{d+1}\right]\nonumber\\
&&\leq Cp^{d/2}\left[\epsilon^{d+1}
\left(\sum\limits_{n=1}^{\infty}|\lambda_{n}|^{2}\right)^{1/2}+\epsilon^{d+1}\right]\nonumber\\
&&\leq Cp^{d/2}\epsilon^{d+1}
\left[\left(\sum\limits_{n=1}^{\infty}|\lambda_{n}|^{2}\right)^{1/2}+1\right]\leq Cp^{d/2}\epsilon^{d+1}
\left[\left\|\gamma_{0}\right\|_{\mathfrak{S}^{2}}+1\right].\label{7.010}
\end{eqnarray}

This completes the proof of Lemma 7.1.

\noindent{\bf Remark 10.}
To establish Lemma 7.1, we pay more attention to the properties
$(\lambda_{n})_{n}\in l^{2}$ and
\begin{eqnarray}
\left[\sum\limits_{k\in \z^{d}}\left|(e^{-it|k|^{2}}-1)
\mathscr{F}_{x}f_{n}(k)\right|^{2}\right]^{1/2}\leq 2\left[\sum\limits_{k\in \z^{d}}\left|
\mathscr{F}_{x}f_{n}(k)\right|^{2}\right]^{1/2}=2.\label{7.011}
\end{eqnarray}
which play the key   role in controlling
\begin{eqnarray*}
\left[\sum_{n=1}^{+\infty}|\lambda_{n}|^{2}
\sum\limits_{k\in \z^{d}}\left|(e^{-itk^{2}}-1)
\mathscr{F}_{x}f_{n}(k)\right|^{2}\right]^{\frac{1}{2}}.
\end{eqnarray*}

\begin{Lemma}\label{lem7.2}(Some estimates related to $pth$ moment)
Let $\|f_{n}\|_{L^{2}(\ST^{d})}=1(n\in \mathbf{N}^{+})$ and $f_{n}^{\omega}$
 the randomization of $f_{n}$ be
defined as in (\ref{7.01}). Then,  there exists $C>0$ such that
\begin{eqnarray}
&&\|f_{n}^{\omega}\|_{L_{\omega}^{p}}\leq Cp^{d/2}\|f_{n}\|_{L^{2}}=Cp^{d/2},
\|U(t)f_{n}^{\omega}\|_{L_{\omega}^{p}}\leq Cp^{d/2}\|f_{n}\|_{L^{2}}=Cp^{d/2}.\label{7.012}
\end{eqnarray}
\end{Lemma}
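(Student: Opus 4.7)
The plan is to derive both inequalities by a pointwise-in-$x$ application of the Khinchin-type inequality (Lemma 6.2), iterated once for each of the $d$ independent randomization variables $\omega_1,\dots,\omega_d$, and to reduce the $U(t)f_n^{\omega}$ estimate to the estimate for $f_n^{\omega}$ by exploiting that the Schr\"odinger multiplier $e^{-it|k|^{2}}$ has modulus one on each frequency mode.

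First I would fix $x\in\T^{d}$ and rewrite the randomization $f_n^{\omega}(x)$ from (\ref{7.01}) as the iterated random series
\begin{eqnarray*}
f_n^{\omega}(x)=\sum_{k_1\in\Z}g^{(1)}_{k_1}(\omega_1)\sum_{k_2\in\Z}g^{(1)}_{k_2}(\omega_2)\cdots\sum_{k_d\in\Z}g^{(1)}_{k_d}(\omega_d)\,e^{ix\cdot k}\mathscr{F}_{x}f_n(k),
\end{eqnarray*}
with $k=(k_1,\dots,k_d)$. Since $(\Omega,\mathbb{P})=\prod_{j=1}^{d}(\Omega_j,\mathbb{P}_j)$, Fubini lets me compute $\|f_n^{\omega}(x)\|_{L_{\omega}^{p}}$ as the iterated $L^{p}$-norm in $\omega_d,\omega_{d-1},\dots,\omega_1$. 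Applying Lemma 6.2 to the innermost sum in $\omega_d$, with $\omega_1,\dots,\omega_{d-1}$ and $x$ held fixed, produces a factor $Cp^{1/2}$ and replaces that sum by its $\ell_{k_d}^{2}$-norm; because $|e^{ix\cdot k}|=1$, this $\ell_{k_d}^{2}$-norm is independent of $x$. Iterating the same step $d$ times over $\omega_{d-1},\dots,\omega_1$ yields
\begin{eqnarray*}
\|f_n^{\omega}(x)\|_{L_{\omega}^{p}}\le C^{d}p^{d/2}\Big(\sum_{k\in\Z^{d}}|\mathscr{F}_{x}f_n(k)|^{2}\Big)^{1/2}=Cp^{d/2}\|f_n\|_{L^{2}(\T^{d})},
\end{eqnarray*}
where the last equality is Parseval's identity on $\T^{d}$ together with the normalization $\|f_n\|_{L^{2}(\T^{d})}=1$.

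For the second inequality, I would use the fact that $U(t)f_n^{\omega}=e^{it\Delta}f_n^{\omega}$ is obtained from $f_n^{\omega}$ by replacing each Fourier coefficient $\mathscr{F}_{x}f_n(k)$ with $e^{-it|k|^{2}}\mathscr{F}_{x}f_n(k)$. Since $|e^{-it|k|^{2}}|=1$, the modulus of the inner Fourier data is unchanged, so the identical iterated Khinchin argument gives $\|U(t)f_n^{\omega}(x)\|_{L_{\omega}^{p}}\le Cp^{d/2}\|f_n\|_{L^{2}(\T^{d})}$ uniformly in $x$ and $t$. The argument is essentially routine; the only mild care required is the bookkeeping of the iteration of Khinchin over the $d$ independent factors so that the $p^{1/2}$ powers combine correctly into $p^{d/2}$, and the observation that the torus exponentials $e^{ix\cdot k}$ and the propagator symbols $e^{-it|k|^{2}}$ drop out in modulus, so that the final bound is uniform in $x$ and $t$ rather than only in an integral sense.
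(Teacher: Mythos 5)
Your proof is correct and takes essentially the same route as the paper: both write $f_n^{\omega}$ and $U(t)f_n^{\omega}$ as random Fourier series, invoke the Khinchin inequality of Lemma 6.2 (iterated over the $d$ independent variables to produce $p^{d/2}$), and then use $|e^{ix\cdot k}|=|e^{-it|k|^2}|=1$ together with Parseval to conclude. Your write-up is more explicit than the paper's one-line citation of Lemma 6.2; the only step left implicit in both is the Minkowski inequality (valid since $p\geq 2$) needed to interchange the $L^{p}_{\omega}$ norm with the $\ell^{2}_{k}$ norm between successive applications of Khinchin.
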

\noindent{\bf Proof.}Since
\begin{eqnarray*}
&&f_{n}^{\omega}:=\sum\limits_{k=(k_{1},\>\cdot\cdot\cdot,\>k_{d})\in\z^{d}}
\prod\limits_{j=1}^{d}g^{(1)}_{k_{j}}(\omega_{j})
e^{i\sum\limits_{j=1}^{k}x_{j}k_{j}}\mathscr{F}_{x}f_{n}(k),
\end{eqnarray*}
by using Lemma 6.2, we have
\begin{eqnarray*}
\left\|f_{n}^{\omega}\right\|_{L_{\omega}^{p}}\leq Cp^{d/2}
\left[\sum\limits_{k=(k_{1},\>\cdot\cdot\cdot,\>k_{d})\in\z^{d}}
|\mathscr{F}_{x}f_{n}(k)|^{2}\right]^{1/2}=Cp^{d/2}.
\end{eqnarray*}
Since
\begin{eqnarray*}
&&U(t)f_{n}^{\omega}:=\sum\limits_{k=(k_{1},\>\cdot\cdot\cdot,\>k_{d})\in\z^{d}}e^{-it|k|^{2}}
\prod\limits_{j=1}^{d}g^{(1)}_{k_{j}}(\omega_{j})
e^{i\sum\limits_{j=1}^{k}x_{j}k_{j}}\mathscr{F}_{x}f_{n}(k),
\end{eqnarray*}
by using Lemma 6.2, we have
\begin{eqnarray*}
\left\|U(t)f_{n}^{\omega}\right\|_{L_{\omega}^{p}}\leq
 Cp^{d/2}\left[\sum\limits_{k=(k_{1},\>\cdot\cdot\cdot,\>k_{d})\in\z^{d}}
|\mathscr{F}_{x}f_{n}(k)|^{2}\right]^{1/2}=Cp^{d/2}.
\end{eqnarray*}

This completes the proof of Lemma 7.2.

\begin{Theorem}\label{7.3}(Stochastic continuity at zero related to Schatten norm)
 Let $\|f_{n}\|_{L^{2}(\ST^{d})}=1(n\in \mathbf{N}^{+})$  and $r\in [2,\infty)$.
Then, for any $\gamma_{0}\in\mathfrak{S}^{2}$ and $\forall \epsilon>0$
 which appears in Lemma 7.1,
 $\exists \delta>0,$ when $|t|<\delta<\frac{\epsilon^{d+1}}{M_{1}^{2}},$
 we have
\begin{eqnarray}
&&\left\|\sum_{n=1}^{\infty}\lambda_{n}g^{(2)}_{n}(\widetilde{\omega})|f_{n}^{\omega}|^{2}-
\sum_{n=1}^{\infty}\lambda_{n}g^{(2)}_{n}(\widetilde{\omega})|e^{it\triangle}f_{n}^{\omega}
|^{2}\right\|_{L_{\omega,\>\widetilde{\omega}}^{r}
(\Omega\times\widetilde{\Omega})}
\nonumber\\&&\leq Cr^{\frac{(2d+1)}{2}}\epsilon^{d+1}
\left[\left\|\gamma_{0}\right\|_{\mathfrak{S}^{2}}+1\right].\label{7.013}
\end{eqnarray}
Moreover, we have
\begin{eqnarray}
&&\hspace{-1cm}\lim\limits_{t\longrightarrow0}(\mathbb{P}\times\widetilde{\mathbb P})
\left(\left\{(\omega, \widetilde{\omega})\in (\Omega\times \widetilde{\Omega})|
|F(t,\omega,\widetilde{\omega})|>C
\left[\left\|\gamma_{0}\right\|_{\mathfrak{S}^{2}}+1\right]e\epsilon^{1/2}\left(\epsilon ln
\frac{1}{\epsilon}\right)^{(2d+1)/2}\right\}\right)\nonumber\\&&=0.
\label{7.014}
\end{eqnarray}
Here $F(t,\omega,\widetilde{\omega})=\sum
\limits_{n=1}^{\infty}\lambda_{n}g^{(2)}_{n}(\widetilde{\omega})|f_{n}^{\omega}|^{2}-
\sum\limits_{n=1}^{\infty}\lambda_{n}g^{(2)}_{n}(\widetilde{\omega})|e^{it\triangle}f_{n}^{\omega}|^{2}$
and $M_{1}$ appears in (\ref{7.08}) and
\begin{eqnarray*}
\rho_{\gamma_{0}^{\omega,\>\widetilde{\omega}}}=
\sum_{n=1}^{\infty}\lambda_{n}g^{(2)}_{n}(\widetilde{\omega})|f_{n}^{\omega}|^{2},
\rho_{e^{it\Delta}\gamma_{0}^{\omega,\>\widetilde{\omega}}e^{-it\Delta}}
=\sum_{n=1}^{\infty}\lambda_{n}g^{(2)}_{n}(\widetilde{\omega})|e^{it\Delta}f_{n}^{\omega}|^{2}.
\end{eqnarray*}
and
  $\rho_{\gamma_{0}^{\omega,\>\widetilde{\omega}}}, \rho_{e^{it\Delta}
   \gamma_{0}^{\omega,\>\widetilde{\omega}}e^{-it\Delta}}$ denote the
 density function of $\gamma_{0}^{\omega,\>\widetilde{\omega}}, e^{it\Delta}
   \gamma_{0}^{\omega,\>\widetilde{\omega}}e^{-it\Delta}$, respectively.

\end{Theorem}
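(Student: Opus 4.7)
The plan is to mirror the argument used for Theorem 6.7 (the $\mathbb{R}^d$ version), replacing the Wiener-decomposition randomization by the Fourier-coefficient randomization on $\mathbb{T}^d$, and substituting Lemmas 7.1, 7.2 for their counterparts 6.3, 6.4. The inequality (\ref{7.013}) is the main analytic estimate; the probabilistic convergence (\ref{7.014}) will follow by the same Chebyshev/optimization trick used in Lemma 6.6.

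First I would expand the difference of squares pointwise:
\begin{eqnarray*}
|e^{it\Delta}f_{n}^{\omega}|^{2}-|f_{n}^{\omega}|^{2}
=\bigl(e^{it\Delta}f_{n}^{\omega}-f_{n}^{\omega}\bigr)\,\overline{e^{it\Delta}f_{n}^{\omega}}
+\bigl(\overline{e^{it\Delta}f_{n}^{\omega}}-\overline{f_{n}^{\omega}}\bigr)f_{n}^{\omega}
=:I_{1,n}+I_{2,n}.
\end{eqnarray*}
Multiplying by $\lambda_{n}g^{(2)}_{n}(\widetilde{\omega})$, summing in $n$, and applying the triangle inequality in $L^{r}_{\omega,\widetilde{\omega}}$ reduces the proof to bounding the two sums $\sum_{n}\lambda_{n}g^{(2)}_{n}(\widetilde{\omega})I_{j,n}$ for $j=1,2$ separately.

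Next I would apply Khinchin's inequality (Lemma 6.2) in the $\widetilde{\omega}$ variable to replace the sum in $n$ by the $\ell^{2}_{n}$ norm, picking up a factor $Cr^{1/2}$; Minkowski's inequality in mixed norms then allows me to exchange $L^{r}_{\omega}$ and $\ell^{2}_{n}$, giving a bound of the form
\begin{eqnarray*}
Cr^{1/2}\Bigl\|\lambda_{n}\bigl\|I_{j,n}\bigr\|_{L^{r}_{\omega}}\Bigr\|_{\ell^{2}_{n}}.
\end{eqnarray*}
Then I would use the H\"older inequality $\|I_{j,n}\|_{L^{r}_{\omega}}\le \|U(t)f_{n}^{\omega}-f_{n}^{\omega}\|_{L^{2r}_{\omega}}\cdot\|U(\pm t)f_{n}^{\omega}\|_{L^{2r}_{\omega}}$ (or the analogous factorization with $f_{n}^{\omega}$ itself). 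The second factor is controlled uniformly in $n$ and $t$ by $C(2r)^{d/2}$ via Lemma 7.2, while the first factor, once $\ell^{2}_{n}$ is put outside, is precisely what Lemma 7.1 estimates: for $|t|<\delta<\epsilon^{d+1}/M_{1}^{2}$ it is bounded by $C(2r)^{d/2}\epsilon^{d+1}(\|\gamma_{0}\|_{\mathfrak{S}^{2}}+1)$. Combining, the total bound becomes $Cr^{(2d+1)/2}\epsilon^{d+1}(\|\gamma_{0}\|_{\mathfrak{S}^{2}}+1)$, which is exactly (\ref{7.013}).

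For (\ref{7.014}) I would apply the abstract Chebyshev-type statement embodied in Lemma 6.6 (with $F(t,\omega,\widetilde{\omega})$ as given): using (\ref{7.013}) in the form $A(t)\le Cp^{(2d+1)/2}\epsilon^{d+1}(\|\gamma_{0}\|_{\mathfrak{S}^{2}}+1)$, I would plug in the optimized choice $p=(\alpha/(Ce(\|\gamma_{0}\|_{\mathfrak{S}^{2}}+1)\epsilon^{d+1}))^{2/(2d+1)}$ and $\alpha=Ce(\|\gamma_{0}\|_{\mathfrak{S}^{2}}+1)\epsilon^{1/2}(\epsilon\log(1/\epsilon))^{(2d+1)/2}$, exactly as in Lemma 6.5, and then invoke Lemma 6.1 together with the monotonicity of $\epsilon\mapsto\epsilon^{1/2}(\epsilon\log(1/\epsilon))^{(2d+1)/2}$ on a neighborhood of $0$ and its limit zero at $0^{+}$ to conclude the probability of the exceptional set tends to $0$ as $t\to 0$.

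The only genuinely delicate step is the inner claim (\ref{7.07}) used in Lemma 7.1, which has already been proved, so the main obstacle here is simply tracking the polynomial powers of $p$ produced by Khinchin's inequality (one factor $r^{1/2}$ from the $\widetilde{\omega}$-integration and two factors of $(2r)^{d/2}$ from the two $\omega$-integrations via Lemmas 7.1, 7.2) to end up with exactly the $r^{(2d+1)/2}$ appearing in (\ref{7.013}); once that bookkeeping matches, the remaining derivation of (\ref{7.014}) is a direct transcription of the $\mathbb{R}^{d}$ argument in Theorem 6.7.
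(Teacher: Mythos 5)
Your proposal follows the paper's own proof essentially step for step: the same decomposition $|e^{it\Delta}f_n^\omega|^2-|f_n^\omega|^2=I_{1,n}+I_{2,n}$, the same sequence Khinchin ($r^{1/2}$) $\to$ Minkowski exchange of $L^r_\omega$ and $\ell^2_n$ $\to$ H\"older $\to$ Lemmas 7.1 and 7.2 each at $p=2r$ producing $(2r)^{d/2}$, giving $r^{(2d+1)/2}$, and the same application of Lemma 6.6 with the optimized choice of $p$ and $\alpha$ for the probabilistic conclusion. The bookkeeping of the powers of $r$ matches exactly, so the proposal is correct and not materially different from the paper.
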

\noindent{\bf Proof.} Obviously,
\begin{eqnarray}
|e^{it\triangle}f_{n}^{\omega}|^{2}-|f_{n}^{\omega}|^{2}=I_{1}+I_{2},\label{7.015}
\end{eqnarray}
where
\begin{eqnarray}
I_{1}=\left(e^{it\triangle}f_{n}^{\omega}-f_{n}^{\omega}\right)e^{-it\triangle}\bar{f}_{n}^{\omega},
I_{2}=\left(e^{-it\triangle}\bar{f}_{n}^{\omega}-\bar{f}_{n}^{\omega}\right)f_{n}^{\omega}.\label{7.016}
\end{eqnarray}
By using (\ref{7.015}), (\ref{7.016}),   the Minkowski inequality and the
 H\"older inequality as well as Lemmas 7.1, 7.2,
for $\forall \epsilon>0$ which appears in Lemma 7.1,
 $\exists \delta>0(<\frac{\epsilon^{d+1}}{M_{1}^{2}}),$ when $|t|<\delta,$
we have
\begin{eqnarray}
&&\left\|\sum_{n=1}^{\infty}\lambda_{n}g^{(2)}_{n}(\widetilde{\omega})|f_{n}^{\omega}|^{2}-
\sum_{n=1}^{\infty}\lambda_{n}g^{(2)}_{n}(\widetilde{\omega})|e^{it\triangle}f_{n}^{\omega}
|^{2}\right\|_{L_{\omega,\>\widetilde{\omega}}^{r}
(\Omega\times\widetilde{\Omega})}\nonumber\\
&&=\left\|\sum_{n=1}^{\infty}\lambda_{n}g^{(2)}_{n}(\widetilde{\omega})
(I_{1}+I_{2})\right\|_{L_{\omega,\>\widetilde{\omega}}^{r}
(\Omega\times\widetilde{\Omega})}\nonumber\\
&&\leq \left\|\sum_{n=1}^{\infty}\lambda_{n}g^{(2)}_{n}(\widetilde{\omega})
I_{1}\right\|_{L_{\omega,\>\widetilde{\omega}}^{r}
(\Omega\times\widetilde{\Omega})}+
\left\|\sum_{n=1}^{\infty}\lambda_{n}g^{(2)}_{n}(\widetilde{\omega})
I_{2}\right\|_{L_{\omega,\>\widetilde{\omega}}^{r}
(\Omega\times\widetilde{\Omega})}\nonumber\\
&&\leq Cr^{1/2}\left[\left\|\lambda_{n}I_{1}\right\|_{L_{\omega}^{r}l_{n}^{2}}+
\left\|\lambda_{n}I_{2}\right\|_{L_{\omega}^{r}l_{n}^{2}}\right]\nonumber\\
&&\leq Cr^{1/2}\left[\left\|\lambda_{n}I_{1}\right\|_{l_{n}^{2}L_{\omega}^{r}}+
\left\|\lambda_{n}I_{2}\right\|_{l_{n}^{2}L_{\omega}^{r}}\right]\nonumber\\
&&\leq Cr^{1/2}\left[\left\|\lambda_{n}
\left\|\left(e^{it\triangle}f_{n}^{\omega}-f_{n}^{\omega}\right)
\right\|_{L_{\omega}^{2r}}\|e^{-it\triangle}\bar{f}_{n}^{\omega}\|_{L_{\omega}^{2r}}\right\|_{l_{n}^{2}}
\right]\nonumber\\&&\qquad\qquad+Cr^{1/2}\left[\left\|\lambda_{n}
\left\|\left(e^{-it\triangle}\bar{f}_{n}^{\omega}-\bar{f}_{n}^{\omega}\right)
\right\|_{L_{\omega}^{2r}}\|f_{n}^{\omega}\|_{L_{\omega}^{2r}}\right\|_{l_{n}^{2}}
\right]\nonumber\\
&&\leq Cr^{\frac{(2d+1)}{2}}\left[\left(|t|M_{1}^{2}+\epsilon^{d+1}\right)
\left(\sum\limits_{n=1}^{\infty}|\lambda_{n}|^{2}\right)^{1/2}+\epsilon^{d+1}\right]
\nonumber\\
&&\leq Cr^{\frac{(2d+1)}{2}}\left[\left(|t|M_{1}^{2}+\epsilon^{d+1}\right)
\left\|\gamma_{0}\right\|_{\mathfrak{S}^{2}}+\epsilon^{d+1}\right]\nonumber\\
&&\leq Cr^{\frac{(2d+1)}{2}}\epsilon^{d+1}
\left[\left\|\gamma_{0}\right\|_{\mathfrak{S}^{2}}+1\right].\label{7.017}
\end{eqnarray}
Combining  (\ref{7.017}) with Lemma 6.6,  since
$Ce\epsilon^{1/2}\left(\epsilon ln
\frac{1}{\epsilon}\right)^{(2d+1)/2}$
is  monotonically increasing with respect to $\epsilon\in (0,\frac{1}{2}]$ and
\begin{eqnarray*}
\lim\limits_{\epsilon\longrightarrow 0^{+}}Ce\epsilon^{1/2}\left(\epsilon ln
\frac{1}{\epsilon}\right)^{d+\frac{1}{2}}=0,
\end{eqnarray*}
we have that (\ref{7.014}) is valid.

This completes the proof of Theorem 7.1.

\noindent{\bf Remark 11.} (\ref{7.014}) is called {\bf the probabilistic
 convergence of density functions
   of some compact operators with full randomization on $\T^{d}.$}

\bigskip
\bigskip

\section{Probabilistic convergence of density functions
  of some compact operators with full randomization on $\Theta=\left\{x\in \R^{3}|
  \sum\limits_{j=1}^{3}x_{j}^{2}<1\right\}$}

\setcounter{equation}{0}

\setcounter{Theorem}{0}

\setcounter{Lemma}{0}

\setcounter{section}{8}
By using the idea of  \cite{HY,BT} and  the idea of    Lemmas 3.2, 3.3 of \cite{WYY},
we  present the probabilistic convergence of density functions
of some compact operators with full randomization on $\Theta=\left\{x\in \R^{3}:|x|<1\right\}.$

Combining   the  randomization of some  compact operators  \cite{HY} with
  the randomization of  single function on  $\Theta=\left\{x\in \R^{3}:|x|<1\right\}$ \cite{BT},
we firstly present the full randomization of  compact operator on
 the three dimensional unit ball $\Theta=\left\{x\in \R^{3}:|x|<1\right\}$.
Then, by using  the full randomization of some compact operators on the
 three  dimensional unit ball $\Theta=\left\{x\in \R^{3}:|x|<1\right\}$,
we present the probabilistic convergence of density functions
   of some compact operators with full randomization on the three
 dimensional unit ball $\Theta=\left\{x\in \R^{3}:|x|<1\right\}$.

Now we recall the randomization of a single function on $\Theta$.
Let $f_{n}(n\in N^{+})$ be an orthonormal system in
 $L^{2}(\Theta),\Theta=\left\{x\in \R^{3}:|x|<1\right\}$
  and be radial. Then, we have
$f_{n}=\sum\limits_{m=1}^{+\infty}c_{n,\>m}e_{m}$ and
$e^{it\Delta}f_{n}=\sum\limits_{m=1}^{+\infty}e^{-it(m\pi)^{2}}c_{n,\>m}e_{m},$
where $e_{k}=\frac{sin k\pi |x|}{(2\pi)^{1/2}|x|},c_{n,\>m}=\int_{\Theta}f_{n}e_{m}dx$.
Moreover $e_n$ are the radial eigenfunctions of the Laplace operator
 $-\Delta$ with Dirichlet
boundary conditions, associated to eigenvalues $\lambda_{n}^{2} =(n\pi)^2$
 \cite{BT2007,WYY}.
 For any $\omega_{1}\in\Omega_{1}$, inspired by \cite{BT},
we define its randomization by
\begin{eqnarray}
f_{n}^{\omega_{1}}=
\sum\limits_{m=1}^{+\infty}g^{(1)}_m(\omega_{1})\frac{c_{n,\>m}}{m\pi}e_{m}.\label{8.01}
\end{eqnarray}

\noindent(Full randomization of some compact operators.)Inspired by \cite{HY}, now
 we present the full randomization of some compact operators.
Let $\gamma_{0}$ be  a compact operator on $L^{2}(\Theta)$.
 Then we have the singular value decomposition
\begin{align}
\gamma_{0}=\sum\limits_{n=1}^{\infty}\lambda_{n}|f_{n}\rangle\langle f_{n}|,\label{8.02}
\end{align}
where $\lambda_{n}\in \mathbf{C}$, $(f_{n})_{n=1}^{\infty}$
 is orthonormal system in $L^{2}(\Theta)$.
For any $(\omega_{1},\widetilde{\omega})\in\Omega_{1} \times \widetilde{\Omega}$,
inspired by \cite{HY} and
(\ref{8.01}),
we define the full randomization of  $\gamma_{0}
=\sum\limits_{n=1}^{+\infty}\lambda_{n}|f_{n}\rangle\langle f_{n}|$
by
\begin{eqnarray}
\gamma_{0}^{\omega_{1},\>\widetilde{\omega}}:=
\sum_{n=1}^{\infty}\lambda_{n}g^{(2)}_{n}(\widetilde{\omega})|f_{n}^{\omega_{1}}
\rangle\langle f_{n}^{\omega_{1}}|.\label{8.03}
\end{eqnarray}

\begin{Lemma}(Convergence in measure) \label{Lemma8.1}
Let $F(t,\omega_{1},\widetilde{\omega})$ be
a real-valued measurable function on
$[-1,1]\times\Omega_{1}\times \widetilde{\Omega}$.
For all $\epsilon>0$,
\begin{align}
\lim\limits_{t\rightarrow t_{0}}E(A_{\omega_{1},\widetilde{\omega},\>\phi(\epsilon)})=0,
\label{8.04}
\end{align}
where $A_{\omega_{1},\widetilde{\omega},\>\phi(\epsilon)}=
\left\{(\omega_{1},\widetilde{\omega})\in (\Omega_{1}\times\widetilde{\Omega})||f(t,\omega_{1},\widetilde{\omega})-
f(t_{0},\omega_{1},\widetilde{\omega})|>\phi(\epsilon)\right\},$
$\phi>0$ is  monotonically increasing and
\begin{eqnarray*}
\lim\limits_{\epsilon\longrightarrow 0^{+}}\phi(\epsilon)=0.
\end{eqnarray*}
Then, (\ref{8.04}) is equivalent to the following statement: for all
 $\epsilon>0$, $\exists \delta>0$, when $|t-t_{0}|<\delta,$ we have
\begin{eqnarray}
E(A_{\omega_{1},\widetilde{\omega},\>\phi(\epsilon)})<\epsilon.\label{8.05}
\end{eqnarray}
\end{Lemma}

Lemma 8.1 can be proved similarly to Lemma 6.1.

\begin{Lemma}\label{lem8.2}(The estimate related to $l_{n}^{2}L_{\omega_{1}}^{p}$)
Let  $f_{n}^{\omega_{1}}$
  be
defined as in (\ref{8.01}). Then, for $\forall \epsilon>0,$
  $\exists \delta>0$, when $|t|<\delta(<\frac{\epsilon^{2}}{2M_{1}^{2}}),$  we have
\begin{eqnarray}
\left\|\lambda_{n}\left\|U(t)f_{n}^{\omega_{1}}-f_{n}^{\omega_{1}}\right\|_{L_{\omega_{1}}^{p}}\right\|_{l_{n}^{2}}\leq Cp^{1/2}\epsilon^{2}\left[\left\|\gamma_{0}\right\|_{\mathfrak{S}^{2}}+1\right].\label{8.06}
\end{eqnarray}
Here $M_{1}$ appears in (\ref{8.010}).
\end{Lemma}
\noindent {\bf Proof.} From Lemma 6.2, we have
\begin{eqnarray}
&&\left\|\lambda_{n}\left\|U(t)f_{n}^{\omega_{1}}-f_{n}^{\omega_{1}}\right\|_{L_{\omega_{1}}^{p}}\right\|_{l_{n}^{2}}\nonumber\\
&&\leq Cp^{1/2}\left[\sum_{n=1}^{+\infty}|\lambda_{n}|^{2}\sum\limits_{k\in \z}\left|(e^{-it(k\pi)^{2}}-1)\frac{c_{n,k}e_{k}}{k\pi}\right|^{2}\right]^{\frac{1}{2}}
.\label{8.07}
\end{eqnarray}
Since $(\lambda_{n})_{n}\in l^{2}$, for $\forall \epsilon>0$, we know that
 there exists $M\in \mathbf{N}^{+}$ such that
\begin{eqnarray}
\left[\sum\limits_{n=M+1}^{\infty}|\lambda_{n}|^{2}\right]^{1/2}<\epsilon^{2}.\label{8.08}
\end{eqnarray}
We  claim that for $1\leq n\leq M$, $\forall \epsilon>0$,  we have that
\begin{eqnarray}
\left[\sum\limits_{k\in \z}\left|(e^{-it(k\pi)^{2}}-1)
\frac{c_{n,k}e_{k}}{k\pi}\right|^{2}\right]^{1/2}\leq C(\epsilon^{2}+|t||M_{1}^{2}).\label{8.09}
\end{eqnarray}
Here, $M_1$ depends on $M$.
We will use the idea of    Lemmas 3.2, 3.3 of \cite{WYY} to prove Lemma 8.1.
Since $1\leq n\leq M,$ for $\forall \epsilon>0$, we know that there exists $M_{1}$
 which depends on $M$ such that
\begin{eqnarray}
\left[\sum\limits_{|k|\geq M_{1}+1}\left|c_{n,k}\right|^{2}\right]^{1/2}<
\frac{\epsilon^{2}}{2}.\label{8.010}
\end{eqnarray}
By using the fact that $|\frac{e_{k}}{k\pi}|\leq C$ and $|e^{-it(k\pi)^{2}}-1|\leq 2$,
\begin{eqnarray}
\left[\sum\limits_{|k|\geq M_{1}+1}\left|(e^{-it(k\pi)^{2}}-1)
\frac{c_{k}e_{k}}{k\pi}\right|^{2}\right]^{1/2}\leq
\left[\sum\limits_{|k|\geq M_{1}+1}\left|c_{n,k}\right|^{2}\right]^{1/2}
\leq \frac{\epsilon^{2}}{2}.\label{8.011}
\end{eqnarray}
By using the fact that $|\frac{e_{k}}{k\pi}|\leq C$
 and $|e^{-it(k\pi)^{2}}-1|\leq |t||k|^{2}$, we have that
\begin{eqnarray}
\left[\sum\limits_{|k|\leq M_{1}}\left|(e^{-it|k\pi|^{2}}-1)
\frac{c_{n,k}e_{k}}{k\pi}\right|^{2}\right]^{1/2}\leq C|t|M_{1}^{2}.\label{8.012}
\end{eqnarray}
Combining (\ref{8.011}) with (\ref{8.012}), we derive that (\ref{8.010}) is valid.
From (\ref{8.08})-(\ref{8.010}), by using $(a+b)^{1/2}\leq a^{1/2}+b^{1/2}$,
 $|\frac{e_{k}}{k\pi}|\leq C$ and $|e^{-it|k\pi|^{2}}-1|\leq2$ as well as
  $\|f_{n}\|_{L^{2}(\Theta)}=1=\left[\sum\limits_{k\in \z}
  \left|c_{n,k}\right|^{2}\right]^{1/2}(n\in \mathbf{N}^{+})$,  $\exists \delta>0$,  when
   $|t|<\delta<\frac{\epsilon^{2}}{M_{1}^{2}},$   we know that
\begin{eqnarray}
&&\left\|\lambda_{n}\left\|U(t)f_{n}^{\omega_{1}}-f_{n}^{\omega_{1}}\right\|_{L_{\omega_{1}}^{p}}
\right\|_{l_{n}^{2}}\nonumber\\
&&\leq Cp^{1/2}\left[\sum_{n=1}^{+\infty}|\lambda_{n}|^{2}\sum\limits_{k\in \z}\left|(e^{-it|k\pi|^{2}}-1)\frac{c_{n,k}e_{k}}{k\pi}\right|^{2}\right]^{\frac{1}{2}}\nonumber\\&&
\leq Cp^{1/2}\left[\sum_{n=1}^{M}|\lambda_{n}|^{2}\sum\limits_{k\in \z}\left|(e^{-it|k\pi|^{2}}-1)\frac{c_{n,k}e_{k}}{k\pi}\right|^{2}\right]^{\frac{1}{2}}\nonumber\\&&\qquad + Cp^{1/2}\left[\sum_{n=M+1}^{+\infty}|\lambda_{n}|^{2}\sum\limits_{k\in \z}\left|(e^{-it|k\pi|^{2}}-1)\frac{c_{n,k}e_{k}}{k\pi}\right|^{2}\right]^{\frac{1}{2}}\nonumber\\
&&\leq Cp^{1/2}\left(|t|M_{1}^{2}+\epsilon^{2}\right)\left[\sum\limits_{n=1}^{M}
|\lambda_{n}|^{2}\right]^{1/2}
+Cp^{1/2}\left[\sum_{n=M+1}^{+\infty}|\lambda_{n}|^{2}\sum\limits_{k\in \z}
\left|c_{n,k}\right|^{2}\right]^{\frac{1}{2}}\nonumber\\
&&\leq Cp^{1/2}\left(|t|M_{1}^{2}+\epsilon^{2}\right)\left(\sum\limits_{n=1}^{M}
|\lambda_{n}|^{2}\right)^{1/2}
+Cp^{1/2}\left[\sum_{n=M+1}^{+\infty}|\lambda_{n}|^{2}\right]^{1/2}\nonumber\\
&&\leq Cp^{1/2}\left(|t|M_{1}^{2}+\epsilon^{2}\right)\left(\sum\limits_{n=1}^{M}
|\lambda_{n}|^{2}\right)^{1/2}+Cp^{1/2}\epsilon^{2}\nonumber\\
&&\leq Cp^{1/2}\left(|t|M_{1}^{2}+\epsilon^{2}\right)\left(\sum\limits_{n=1}^{\infty}
|\lambda_{n}|^{2}\right)^{1/2}+Cp^{1/2}\epsilon^{2}\nonumber\\
&&\leq Cp^{1/2}\left[\left(|t|M_{1}^{2}+\epsilon^{2}\right)\left(\sum\limits_{n=1}^{\infty}
|\lambda_{n}|^{2}\right)^{1/2}+\epsilon^{2}\right]\nonumber\\
&&\leq Cp^{1/2}\left[\epsilon^{2}\left(\sum\limits_{n=1}^{\infty}|\lambda_{n}|^{2}\right)^{1/2}+\epsilon^{2}\right]\nonumber\\
&&\leq Cp^{1/2}\epsilon^{2}\left[\left(\sum\limits_{n=1}^{\infty}|\lambda_{n}|^{2}\right)^{1/2}+1\right]\leq Cp^{1/2}\epsilon^{2}\left[\left\|\gamma_{0}\right\|_{\mathfrak{S}^{2}}+1\right].\label{8.013}
\end{eqnarray}

This completes the proof of Lemma 8.2.

\noindent{\bf Remark 12.}
To establish Lemma 8.1, we pay more attention to the properties
$(\lambda_{n})_{n}\in l^{2}$ and
\begin{eqnarray}
\left[\sum\limits_{k\in \z}\left|(e^{-it(k\pi)^{2}}-1)
\frac{c_{n,k}e_{k}}{k\pi}\right|^{2}\right]^{1/2}\leq 2\left[\sum\limits_{k\in \z}
\left|c_{n,k}\right|^{2}\right]^{1/2}=2.\label{8.014}
\end{eqnarray}
which play the key   role in controlling
\begin{eqnarray*}
\left[\sum_{n=1}^{+\infty}|\lambda_{n}|^{2}\sum\limits_{k\in \z}\left|(e^{-it(k\pi)^{2}}-1)
\frac{c_{n,k}e_{k}}{k\pi}\right|^{2}\right]^{\frac{1}{2}}.
\end{eqnarray*}

\begin{Lemma}\label{lem8.3}(Some estimates related to $pth$ moment)
Let  $f_{n}^{\omega_{1}}$ be
defined as in (\ref{8.01}). Then,  there exists $C>0$ such that
\begin{eqnarray}
&&\|f_{n}^{\omega_{1}}\|_{L_{\omega_{1}}^{p}}\leq Cp^{1/2}\|f_{n}\|_{L^{2}}=Cp^{1/2},
\|U(t)f_{n}^{\omega_{1}}\|_{L_{\omega_{1}}^{p}}\leq Cp^{1/2}\|f_{n}\|_{L^{2}}=Cp^{1/2}.\label{8.015}
\end{eqnarray}
\end{Lemma}
\noindent{\bf Proof.} Since
\begin{eqnarray*}
f_{n}^{\omega_{1}}=\sum\limits_{m=1}^{+\infty}g^{(1)}_m(\omega_{1})\frac{c_{n,\>m}}{m\pi}e_{m},
\end{eqnarray*}
by using Lemma 6.2 and $|\frac{e_{m}}{m\pi}|\leq C$ and $\|f_{n}\|_{L^{2}(\Theta)}
=\left[\sum\limits_{m=1}^{\infty}
\left|c_{n,\>m}\right|^{2}\right]^{1/2}=1(n\in \N^{+})$, we have
\begin{eqnarray*}
&&\|f_{n}^{\omega_{1}}\|_{L_{\omega_{1}}^{p}}\leq Cp^{1/2}
\left[\sum\limits_{m=1}^{\infty}|\frac{c_{n,\>m}}{m\pi}e_{m}|^{2}\right]^{1/2}
\leq Cp^{1/2}\left[\sum\limits_{m=1}^{\infty}|c_{n,\>m}|^{2}\right]^{1/2}=Cp^{1/2}.
\end{eqnarray*}
Since
$e^{it\Delta}f_{n}^{\omega_{1}}=\sum\limits_{m=1}^{+\infty}g^{(1)}_m(\omega_{1})e^{-it(m\pi)^{2}}c_{n,\>m}e_{m},$
by using Lemma 6.2 and $|\frac{e_{m}}{m\pi}|\leq C$
and $\|f_{n}\|_{L^{2}(\Theta)}=\left[\sum\limits_{m=1}^{\infty}|c_{n,\>m}|^{2}\right]^{1/2}
=1(n\in \mathbf{N}^{+})$ as well as $|e^{-it(m\pi)^{2}}|=1$, we have
\begin{eqnarray*}
&&\|f_{n}^{\omega_{1}}\|_{L_{\omega_{1}}^{p}}\leq Cp^{1/2}
\left[\sum\limits_{m=1}^{\infty}\left|\frac{c_{n,\>m}}{m\pi}e_{m}\right|^{2}\right]^{1/2}
\leq Cp^{1/2}\left[\sum\limits_{m=1}^{\infty}|c_{n,\>m}|^{2}\right]^{1/2}=Cp^{1/2}.
\end{eqnarray*}

This completes the proof of Lemma 8.3.

\begin{Lemma}\label{lem8.4}(Stochastic continuity at zero)Let $F(t,\omega_{1},\widetilde{\omega})$ be
a real-valued measurable function on
$[-1,1]\times\Omega_{1}\times \widetilde{\Omega}$.
    For arbitrary $p\geq2$,
$\forall \epsilon>0$, $\forall \alpha>0$, $\exists \delta>0$,  when $|t|<\delta,$
the following inequality holds
\begin{eqnarray}
(\mathbb{P}\times\widetilde{\mathbb P})\left(E_{\alpha,\>t}\right)<
\left[\frac{C\epsilon^{d+1} p^{(2d+1)/2}}{\alpha}\right]^{p},\label{8.016}
\end{eqnarray}
where $E_{\alpha,\>t}\definition \left\{(\omega_{1},\widetilde{\omega})\in
(\Omega_{1}\times\widetilde{\Omega})||F(t,\omega_{1},\widetilde{\omega})|>\alpha\right\}$.
Then,  we have
\begin{eqnarray}
\lim\limits_{t\longrightarrow0}(\mathbb{P}\times\widetilde{\mathbb P})
(E_{Ce\epsilon^{1/2}\left(\epsilon ln
\frac{1}{\epsilon}\right)^{(2d+1)/2},\>t})=0.\label{8.017}
\end{eqnarray}
\end{Lemma}

Lemma 8.4 can be proved similarly to Lemma 6.5.

\begin{Lemma}\label{lem8.5}
Let  $F(t,\omega_{1},\widetilde{\omega})$ be
a real-valued measurable function on
$[-1,1]\times\Omega_{1}\times \widetilde{\Omega}$ and
 $p\geq2.$ For $\forall \epsilon>0,$ $\exists\delta>0,$ when $|t|<\delta,$
  the following inequality holds
\begin{eqnarray}
A(t):=\left\|F(t,\omega_{1},\widetilde{\omega})\right\|_{L_{\omega_{1},\>\widetilde{\omega}}^{p}
(\Omega_{1}\times \widetilde{\Omega})}\leq Cp^{(2d+1)/2}\epsilon^{d+1}
(\|\gamma_{0}\|_{\mathfrak{S}^{2}}+1).\label{8.018}
\end{eqnarray}
Then, we have
\begin{eqnarray}
&&\hspace{-1.5cm}\lim\limits_{t\longrightarrow 0}(\mathbb{P}\times\widetilde{\mathbb P})
\left(\left\{(\omega_{1}, \widetilde{\omega})\in (\Omega_{1}\times \widetilde{\Omega})|
|F|>C(\|\gamma_{0}\|_{\mathfrak{S}^{2}}+1)e\epsilon^{1/2}\left(\epsilon ln
\frac{1}{\epsilon}\right)^{(2d+1)/2}\right\}\right)=0.\nonumber\\&&\label{8.019}
\end{eqnarray}

\end{Lemma}

 Lemma 8.5 can be proved similarly to Lemma 6.6.

\begin{Theorem}\label{Theorem8.1}(Stochastic continuity at
zero related to Schatten norms) Let  $r\in [2,\infty)$.
Then, for any $\gamma_{0}\in\mathfrak{S}^{2}$ and $\forall \epsilon>0$
 which appears in Lemma 8.1,
 $\exists \delta>0(<\epsilon^{2} M_{1}^{-2}),$ when $|t|<\delta,$
 we have
\begin{align}
\left\|\sum_{n=1}^{\infty}\lambda_{n}g^{(2)}_{n}(\widetilde{\omega})|f_{n}^{\omega_{1}}|^{2}-
\sum_{n=1}^{\infty}\lambda_{n}g^{(2)}_{n}(\widetilde{\omega})|e^{it\triangle}f_{n}^{\omega_{1}}
|\right\|_{L_{\omega_{1},\>\widetilde{\omega}}^{r}
(\Omega_{1}\times\widetilde{\Omega})}
\leq Cr^{\frac{3}{2}}\epsilon^{2}
\left[\left\|\gamma_{0}\right\|_{\mathfrak{S}^{2}}+1\right].\label{8.020}
\end{align}
Moreover, we have
\begin{eqnarray}
&&\lim\limits_{t\longrightarrow0}(\mathbb{P}\times\widetilde{\mathbb P})
\left(\left\{(\omega_{1}, \widetilde{\omega})\in
 (\Omega_{1}\times \widetilde{\Omega})|
|F(t,\omega_{1},\widetilde{\omega})|>Ce
\left[\left\|\gamma_{0}\right\|_{\mathfrak{S}^{2}}+1\right]
\epsilon^{\frac{1}{2}}\left(\epsilon ln \frac{1}{\epsilon}\right)^{3/2}\right\}\right)
\nonumber\\&&=0.\label{8.021}
\end{eqnarray}
Here $F(t,\omega_{1},\widetilde{\omega})=
\sum\limits_{n=1}^{\infty}\lambda_{n}g^{(2)}_{n}(\widetilde{\omega})
|f_{n}^{\omega_{1}}|^{2}-
\sum\limits_{n=1}^{\infty}\lambda_{n}g^{(2)}_{n}(\widetilde{\omega})
|e^{it\triangle}f_{n}^{\omega_{1}}|^{2}$ and $M_{1}$ appears in (\ref{8.010}) and
\begin{eqnarray*}
\rho_{\gamma_{0}^{\omega_{1},\>\widetilde{\omega}}}=
\sum_{n=1}^{\infty}\lambda_{n}g^{(2)}_{n}(\widetilde{\omega})|f_{n}^{\omega_{1}}|^{2},
\rho_{e^{it\Delta}\gamma_{0}^{\omega_{1},\>\widetilde{\omega}}e^{-it\Delta}}
=\sum_{n=1}^{\infty}\lambda_{n}g^{(2)}_{n}(\widetilde{\omega})|e^{it\Delta}f_{n}^{\omega_{1}}|^{2}.
\end{eqnarray*}
and  $\rho_{\gamma_{0}^{\omega_{1},\>\widetilde{\omega}}}, \rho_{e^{it\Delta}
   \gamma_{0}^{\omega_{1},\>\widetilde{\omega}}e^{-it\Delta}}$ denote the
 density function of $\gamma_{0}^{\omega_{1},\>\widetilde{\omega}}, e^{it\Delta}
   \gamma_{0}^{\omega_{1},\>\widetilde{\omega}}e^{-it\Delta}$, respectively.

\end{Theorem}
\noindent{\bf Proof.} Obviously,
\begin{eqnarray}
|e^{it\triangle}f_{n}^{\omega_{1}}|^{2}-|f_{n}^{\omega_{1}}|^{2}=I_{1}+I_{2},\label{8.022}
\end{eqnarray}
where
\begin{eqnarray}
I_{1}=\left(e^{it\triangle}f_{n}^{\omega_{1}}-f_{n}^{\omega_{1}}\right)e^{-it\triangle}\bar{f}_{n}^{\omega_{1}},
I_{2}=\left(e^{-it\triangle}\bar{f}_{n}^{\omega_{1}}-\bar{f}_{n}^{\omega_{1}}\right)f_{n}^{\omega_{1}}.\label{8.023}
\end{eqnarray}
By using (\ref{8.022}), (\ref{8.023}), the Minkowski inequality and the H\"older
 inequality as well as Lemmas 8.2, 8.3, for
$\forall \epsilon>0$ which appears in Lemma 8.2,
 $\exists \delta>0(<\epsilon^{2} M_{1}^{-2}),$ when $|t|<\delta,$
we have
\begin{eqnarray}
&&\left\|\sum_{n=1}^{\infty}\lambda_{n}g^{(2)}_{n}(\widetilde{\omega})|f_{n}^{\omega_{1}}|^{2}-
\sum_{n=1}^{\infty}\lambda_{n}g^{(2)}_{n}(\widetilde{\omega})|e^{it\triangle}f_{n}^{\omega_{1}}
|\right\|_{L_{\omega_{1},\>\widetilde{\omega}}^{r}
(\Omega\times\widetilde{\Omega})}\nonumber\\
&&=\left\|\sum_{n=1}^{\infty}\lambda_{n}g^{(2)}_{n}(\widetilde{\omega})(I_{1}+I_{2})\right\|_{L_{\omega,\>\widetilde{\omega}}^{r}
(\Omega_{1}\times\widetilde{\Omega})}\nonumber\\
&&\leq \left\|\sum_{n=1}^{\infty}\lambda_{n}g^{(2)}_{n}(\widetilde{\omega})I_{1}\right\|_{L_{\omega,\>\widetilde{\omega}}^{r}
(\Omega_{1}\times\widetilde{\Omega})}+\left\|\sum_{n=1}^{\infty}\lambda_{n}g^{(2)}_{n}(\widetilde{\omega})
I_{2}\right\|_{L_{\omega_{1},\>\widetilde{\omega}}^{r}
(\Omega_{1}\times\widetilde{\Omega})}\nonumber\\
&&\leq Cr^{1/2}\left[\left\|\lambda_{n}I_{1}\right\|_{L_{\omega_{1}}^{r}l_{n}^{2}}
+\left\|\lambda_{n}I_{2}\right\|_{L_{\omega_{1}}^{r}l_{n}^{2}}\right]\nonumber\\
&&\leq Cr^{1/2}\left[\left\|\lambda_{n}I_{1}\right\|_{l_{n}^{2}L_{\omega_{1}}^{r}}
+\left\|\lambda_{n}I_{2}\right\|_{l_{n}^{2}L_{\omega_{1}}^{r}}\right]\nonumber\\
&&\leq Cr^{1/2}\left[\left\|\lambda_{n}
\left\|\left(e^{it\triangle}f_{n}^{\omega_{1}}-f_{n}^{\omega_{1}}\right)
\right\|_{L_{\omega_{1}}^{2r}}\|e^{-it\triangle}\bar{f}_{n}^{\omega_{1}}\|_{L_{\omega_{1}}^{2r}}\right\|_{l_{n}^{2}}
\right]\nonumber\\&&\qquad\qquad+Cr^{1/2}\left[\left\|\lambda_{n}
\left\|\left(e^{-it\triangle}\bar{f}_{n}^{\omega_{1}}-\bar{f}_{n}^{\omega_{1}}\right)
\right\|_{L_{\omega_{1}}^{2r}}\|f_{n}^{\omega_{1}}\|_{L_{\omega_{1}}^{2r}}\right\|_{l_{n}^{2}}
\right]\nonumber\\
&&\leq Cr^{\frac{3}{2}}(|t|M_{1}^{2}+\epsilon^{2})\left[\left\|\gamma_{0}\right\|_{\mathfrak{S}^{2}}+1\right]\nonumber\\
&&\leq Cr^{\frac{3}{2}}\epsilon^{2}\left[\left\|\gamma_{0}\right\|_{\mathfrak{S}^{2}}+1\right].\label{8.024}
\end{eqnarray}
Combining (\ref{8.024}) with Lemma 8.5,
we have that (\ref{8.021}) is valid since
$Ce\epsilon^{1/2}\left(\epsilon ln
\frac{1}{\epsilon}\right)^{3/2}$
is  monotonically increasing with respect to $\epsilon\in (0,\frac{1}{2}]$ and
\begin{eqnarray*}
\lim\limits_{\epsilon\longrightarrow 0^{+}}Ce\epsilon^{1/2}\left(\epsilon ln
\frac{1}{\epsilon}\right)^{3/2}=0.
\end{eqnarray*}

This completes the proof of Theorem 8.1.

\noindent{\bf Remark 13.} (\ref{8.021}) is called {\bf the probabilistic
 convergence of density functions
   of some compact operators with full randomization on $\Theta.$}

\bigskip
\bigskip

\leftline{\large \bf Acknowledgments}

This work is supported by National Natural Science Foundation of China under the grant number  12371242.

\bigskip
\bigskip

\baselineskip=18pt

\end{document}